\newcommand{\K}{\mathbb{C}}
\newcommand{\F}{\operatorname{F}}
\newcommand{\spa}{\mathfrak{sp}}
\newcommand{\Sp}{\operatorname{Sp}}
\newcommand{\g}{\mathfrak{g}}
\newcommand{\Spec}{\operatorname{Spec}}
\newcommand{\Z}{\mathbb{Z}}
\newcommand{\A}{\mathcal{A}}
\newcommand{\gr}{\operatorname{gr}}
\newcommand{\Dcal}{\mathcal{D}}
\newcommand{\Str}{\mathcal{O}}
\newcommand{\h}{\mathfrak{h}}
\newcommand{\Aut}{\operatorname{Aut}}
\newcommand{\Der}{\operatorname{Der}}
\newcommand{\Symp}{\operatorname{Symp}}
\newcommand\Ham{\operatorname{H}}
\newcommand\Quant{\mathcal{Q}}
\newcommand\M{\mathcal{M}}
\newcommand\Loc{\operatorname{Loc}}
\newcommand\Ext{\operatorname{Ext}}
\newcommand\G{\operatorname{G}}
\newcommand\Lie{\operatorname{Lie}}
\newcommand\Per{\operatorname{Per}}
\newcommand\GL{\operatorname{GL}}
\newcommand\gl{\mathfrak{gl}}
\newcommand\Span{\operatorname{Span}}
\newcommand\red{/\!\!/\!\!/}
\newcommand\I{\mathcal{I}}
\newcommand\m{\mathfrak{m}}
\newcommand\J{\mathcal{J}}
\newcommand\End{\operatorname{End}}
\newcommand\ad{\operatorname{ad}}
\newcommand\Orb{\mathbb{O}}
\renewcommand\sl{\mathfrak{sl}}
\newcommand\Walg{\mathcal{W}}
\newcommand\Hom{\operatorname{Hom}}
\newcommand\Nil{\mathcal{N}}
\newcommand\p{\mathfrak{p}}
\renewcommand\a{\mathfrak{a}}
\newcommand\quo{/\!/}
\newcommand\vv{\mathbf{v}}
\newcommand\ww{\mathbf{d}}
\newcommand\SL{\operatorname{SL}}
\newcommand\z{\mathfrak{z}}
\newcommand\W{\mathbb{A}}
\newcommand\Pic{\operatorname{Pic}}
\newcommand\Tt{\mathbb{T}}
\newcommand\Ss{\mathbb{S}}
\newcommand\Aa{\mathbb{A}}
\newcommand\Bb{\mathbb{B}}
\newcommand\id{\operatorname{id}}
\newcommand\grad{\operatorname{grad}}
\newcommand\tr{\operatorname{tr}}
\newcommand\X{\mathcal{X}}
\newcommand\Sl{\operatorname{S}}
\newcommand\XS{\operatorname{X}}
\newcommand\D{\mathrm{D}}
\newcommand\Gr{\mathcal{G}}
\newcommand\rk{\operatorname{rk}}
\newcommand\param{\mathfrak{c}}
\newcommand\Halg{\mathbf{H}}
\newcommand\Pro{\mathcal{P}}
\newcommand\diag{\operatorname{diag}}
\newcommand\skewperp{\angle}
\newcommand\Afr{\mathfrak{A}}
\newcommand\ZZ{\mathbb{Z}}
\newtheorem{Thm}{Theorem}[subsection]
\newtheorem{Prop}[Thm]{Proposition}
\newtheorem{Cor}[Thm]{Corollary}
\newtheorem{Lem}[Thm]{Lemma}
\theoremstyle{definition}
\newtheorem{Ex}[Thm]{Example}
\newtheorem{defi}[Thm]{Definition}
\newtheorem{Rem}[Thm]{Remark}
\numberwithin{equation}{section}
\title{Isomorphisms of quantizations via quantization of resolutions}
\author{Ivan Losev}
\thanks{Supported by the NSF grant DMS-0900907}
\thanks{MSC 2010: Primary 16S99, 53D55; Secondary 16G20, 17B08, 17B63, 53D20}
\thanks{Address: MIT, Dept. of Math., 77 Massachusetts Avenue, Cambridge MA02139, USA}
\thanks{E-mail: ivanlosev@math.mit.edu}
\begin{document}
\begin{abstract}
In this paper we prove the existence of isomorphisms between certain non-commutative algebras that are interesting
from the representation theoretic perspective and arise as quantizations of certain Poisson algebras. We show that
quantizations of Kleinian singularities obtained by three different constructions are isomorphic to each other. The constructions are via symplectic reflection algebras, quantum Hamiltonian reduction, and W-algebras.
Next, we prove that parabolic W-algebras in type A are isomorphic to quantum Hamiltonian reductions
associated to quivers of type A. Finally, we show that the symplectic reflection algebras
for wreath-products of the symmetric group and a Kleinian group are isomorphic to certain quantum
Hamiltonian reductions. Our results involving W-algebras are new, while for those dealing with symplectic
reflection algebras we just find  new proofs. A key ingredient in our proofs is the study of quantizations
of symplectic resolutions of appropriate Poisson varieties.
\end{abstract}
\maketitle
\tableofcontents
\section{Introduction}
In this paper we prove the existence of isomorphisms between quantizations of certain graded Poisson algebras.
We start by giving all necessary general definitions.

A base field will be the field $\K$ of complex numbers.
Let $A$ be a commutative associative unital algebra over $\K$ equipped with a Poisson bracket
$\{\cdot,\cdot\}$. We suppose that $A$ is graded, $A=\bigoplus_{i\in \Z_{\geqslant 0}} A_i$,
and that the Poisson bracket has degree $-d$ with $d>0$: $\{A_i,A_j\}\subset A_{i+j-d}$
(in most examples, $d=2$).

Now let $\A$ be an associative unital algebra equipped with an increasing exhaustive filtration $\F_i\A, i\geqslant 0$.
Suppose that $[\F_i \A, \F_j \A]\subset \F_{i+j-d}\A$. Then the associated graded algebra
$\gr \A$ is commutative. Moreover, it comes equipped with a natural Poisson bracket of degree $-d$
induced by taking the top degree term in the Lie bracket of elements of $\A$.
We say that $\A$ is a {\it quantization} of  $A$ if the graded Poisson algebras $\gr\A$ and $A$ are isomorphic.

Perhaps, the easiest example is as follows. Let $V$ be a vector space equipped with a symplectic
(=skew-symmetric and non-degenerate) form $\omega$. Then $SV$ becomes a Poisson algebra with bracket uniquely
determined from $\{u,v\}=\omega(u,v), u,v\in V$. This algebra is graded in the standard way, the Poisson bracket
has degree $-2$. It admits a (unique, in fact) quantization called the {\it Weyl algebra} $\W(V)$ of $V$.
This is  the quotient of the tensor algebra $T(V)$ by the relations $u\otimes v-v\otimes u-\omega(u,v)=0$.
The algebra $\W(V)$ is filtered by the order of a monomial.

In general, a graded Poisson algebra $A$ admits many non-isomorphic quantizations and it is difficult to
describe them. Many constructions (such as a Hamiltonian reduction, see Subsection \ref{SUBSECTION_class_red}
for the definition) produce quantizations depending on a parameter (for a quantum Hamiltonian reduction a parameter
will be a character of a Lie algebra used to reduce). Basically, this paper concentrates on the following problem:
suppose we have two families of quantizations of $A$ parametrized by two (isomorphic) parameter spaces. Given a parameter for the first family determine a parameter for the second one producing an isomorphic quantization.

Of course, this problem is too vague and too general to approach. We will deal with some cases
that are interesting from the representation theoretic perspective. Also in all of the cases we
are going to consider that algebra $A$ will have some nice algebro-geometric properties.
 Namely, we will assume that $A$ is finitely generated and integral, and that the Poisson variety $X_0:=\Spec(A)$
admits a symplectic resolution of singularities, see Subsection \ref{SUBSECTION_sympl_general} for a definition.
The existence of a symplectic resolution is crucial for us, because we will approach quantizations
of $A$ via the quantization of  the structure sheaf or more general vector bundles on the resolution.

%


For different classes of varieties $X_0$ we have various classes of quantizations. Let us describe them briefly.

The first class of varieties comes from Lie theory.
A {\it Slodowy slice} $\Sl(=\Sl(\Orb))$ in a semisimple Lie algebra $\g$ is a transversal slice to a nilpotent orbit
$\Orb$. The algebra $\K[\Sl]$ comes equipped with a natural grading and a natural Poisson bracket of degree $-2$.
Choose a parabolic subgroup $P\subset G$, where $G$ is a connected algebraic group
corresponding to $\g$. Then $X_0:=\Sl\cap G\p^\perp$ is a Poisson subvariety
of $\Sl$ (a parabolic Slodowy slice). The algebra $\K[\Sl]$ admits a natural quantization -- a finite W-algebra $\Walg$. We will recall two definitions of $\Walg$ in Subsection \ref{SUBSECTION_Walg_def}. So one can hope
to construct quantizations of $X_0$ as quotients of $\Walg$. This can be done under some restrictions on $\g,\Orb$
and $P$, the corresponding quotients ({\it parabolic
W-algebras}) are defined in Subsection \ref{W_parab}.  In this paper we are concerned with two cases:

1) $\g$ is of type $A,D$ or $E$, and $\Orb$ is a subprincipal nilpotent orbit.

2) $\g$ is of type $A$, $\Orb$ is any nilpotent orbit, and $P$ is a certain parabolic
subgroup (of course, we want $\Sl\cap G\p^\perp$ to be non-empty).

Let us proceed to the second class of varieties.  Now suppose that $X_0$ is obtained by Hamiltonian reduction of a symplectic vector space by a reductive group, the definition is  recalled in Subsection \ref{SUBSECTION_class_red}. One can hope to get a quantization of $\K[X_0]$ by replacing a classical
Hamiltonian reduction  with a quantum one, Subsection \ref{SUBSECTION_quant_red_alg}, and this hope is fulfilled
under some technical assumptions on the action. One special case of reductive group actions
on symplectic vector spaces comes from quivers. The corresponding classical reductions
are Nakajima's quiver varieties, see, for example, \cite{Nakajima}. These varieties together with
the corresponding quantizations  are recalled in Subsection
\ref{SUBSECTION_quiver_var}. Each quiver variety
is constructed from a quiver together with two dimension vectors.  In this paper we are concerned with two classes
of quivers and of dimension vectors:

1) Finite Dynkin quivers of type $A$ and  dimension vectors subject to certain inequalities.

2) Affine Dynkin quivers and  dimension vectors as in \cite{EGGO}.

Finally, the third class of varieties we consider is as follows. Let $V$ be a symplectic
 vector space, and  $\Gr$ be a finite subgroup of $\Sp(V)$.    Consider the symplectic quotient singularity $X_0=V^*/\Gr$. Here one can produce quantizations of $\K[X_0]$ by studying deformations of the "orbifold"
algebra $SV\#\Gr$, that is the semidirect product of the symmetric algebra $SV$ and
the group algebra $\K\Gr$. These deformations are known as symplectic reflection algebras (SRA)
and appeared first in this context in \cite{CBH} in the special case when $V=\K^2$
and in \cite{EG} in the general case. For a deformation of $\K[X_0]$ one takes a
so called {\it spherical subalgebra} $e\Halg e$, where $e$ is the trivial idempotent in
$\Gr$. For details see Subsection \ref{SUBSECTION_SRA_def}. We are interested in the case
when $V=\K^{2n}$ and $\Gr$ is the wreath-product $\Gamma_n:=S_n\ltimes \Gamma^n$, where
$\Gamma$ is a Kleinian group, i.e., a finite subgroup of $\SL_2(\K)$.

In some cases one Poisson variety can be obtained using two different constructions.
For example, the Kleinian singularity $\K^2/\Gamma$ can be obtained both as a quiver
variety  and as the intersection of the Slodowy slice with the null-cone in an appropriate
Lie algebra $\g$.
In this case all three families of quantizations are the same, see  Theorems \ref{Thm:Klein_W},
\ref{Thm:iso_Kleinian2} for the precise statements including the explicit correspondence
between the parameters.

Parabolic Slodowy varieties for $\g=\sl_N$ can be realized as quiver varieties for the Dynkin quiver of type
$A$, this was first discovered by Nakajima, \cite{Nakajima}. The corresponding quantizations
are also the same, see Theorem \ref{Thm:type_A}.

Finally, the quotient singularity $\K^{2n}/\Gamma_n$ for $n>1$ can also be constructed as a quiver variety
(but no Slodowy type construction is known, in general).  Again, the two families of quantizations are the same,
see Theorem \ref{Thm_SRA_iso}. We remark that the results relating the SRA quantizations with quantum
Hamiltonian reductions were known before, see Subsection \ref{SUBSECTION_SRA_main} for references.
However, our proofs here are new.

%
\subsection{Content of the paper}
Our arguments are based on the study of partially non-commutative deformations of
the symplectic resolutions of the Poisson varieties of interest. Such deformations
 were studied in \cite{KV} (purely commutative deformations) and in \cite{BK1}
(quantizations=non-commutative deformations). The results of these papers (with appropriate
modifications we need) are explained in Section \ref{SECTION_deformation}.
One of very pleasant features of resolutions is that their deformations
are very easy to control, they are basically parameterized by the second cohomology
via a certain {\it period map}.
It is this feature that was a reason for us to consider deformations of the resolutions.
In Subsection \ref{SUBSECTION_def_classic} we explain results of \cite{KV} including the existence
of a universal commutative formal deformation. In Subsection \ref{SUBSECTION_Basic_def}
we explain some general definitions and constructions including various compatibilities
with group actions, in particular, notions of {\it graded} and {\it even}
quantizations.  In Subsection \ref{SUBSECTION_non_comm_period} we recall the main construction
of \cite{BK1}, the {\it non-commutative period map}, that classifies quantizations of symplectic
varieties  under  certain vanishing assumptions on a variety.
We also study the compatibility of
this map with the group actions introduced in Subsection \ref{SUBSECTION_Basic_def}.
The compatibility results seem to be pretty standard but we could not find a reference.

All Poisson varieties we consider and most of their quantizations are obtained by
Hamiltonian reduction. In Section \ref{SECTION_red} we recall general definitions and
results regarding both classical and quantum Hamiltonian reduction. This section
again basically does not contain new results. We recall the definition
of a classical Hamiltonian reduction both for Poisson algebras and for Poisson varieties
in Subsection \ref{SUBSECTION_class_red}. In Subsection \ref{SUBSECTION_DH} we establish
an algebro-geometric version of the Duistermaat-Heckman theorem, \cite{DH}. For us this theorem
is a recipe to compute the period map for Hamiltonian reductions. The reason why we need this result
is as follows. If a symplectic variety is obtained by Hamiltonian reduction, then one can produce
its (formal) deformations also using the Hamiltonian reduction. The Duistermaat-Heckman theorem
can be viewed as a tool to identify two commutative
deformations related to different constructions of a given variety by Hamiltonian reduction. In the next two subsections, \ref{SUBSECTION_quant_red_alg},\ref{SUBSECTION_quant_red_sheaf}, we recall generalities on quantum Hamiltonian reduction.

Our goal in Section \ref{SECTION_sympl_resol} is to recall several examples of symplectic resolutions
of singularities as well as isomorphisms between them. There are no new results in this section either.
In Subsection \ref{SUBSECTION_sympl_general}
we recall the general definition and some general vanishing properties for them.
In Subsection \ref{SUBSECTION_quiver_var}
we provide necessary information on Nakajima quiver varieties, including the definitions
of both affine and non-affine quiver varieties. Also we recall sufficient conditions to obtain symplectic
resolutions using the quiver variety construction, as well as sufficient conditions for a quantum
Hamiltonian reduction to provide a quantization of an affine quiver variety. Then we recall the Slodowy
slices and their ramifications, Subsection \ref{SUBSECTION_Slodowy}. After this, in Subsection \ref{SUBSECTION_resolution}, we  recall different facts about the quotients $\K^{2n}/\Gamma_n$ and their resolutions including the construction via quiver varieties and the existence of some special vector bundles
("weakly Procesi bundles"). In Subsection \ref{SUBSECTION_Klein_classical} we consider the $n=1$ case in more
detail recalling the constructions of the Kleinian singularities and of their minimal (=symplectic)
resolutions via the Slodowy varieties. One of important results here
is the comparison between two families of line bundles on the resolution: one coming from the Slodowy
construction and other from the quiver varieties construction. This comparison is one of ingredients
in constructing a correspondence between the quantization parameters in Theorem \ref{Thm:Klein_W}.
Finally, in Subsection \ref{SUBSECTION_A_quiver_vs_Slod} we recall
the isomorphisms between parabolic Slodowy varieties in type $A$ and quiver varieties of type $A$
due to Maffei, \cite{Maffei}, and establish some easy properties of these isomorphisms.

A common feature of our main results is that one of the families of quantizations we consider always
comes from quiver varieties, while the other is obtained by a different (W-algebra or symplectic
reflection algebra) construction. In Section \ref{SECTION_Walg} we establish results involving W-algebras.
In Subsection \ref{SUBSECTION_Walg_def} we recall the definitions of W-algebras due to the author,
\cite{Wquant} and Premet, \cite{Premet1}. Then in Subsection \ref{W_parab} we introduce certain ramifications
of W-algebras, the {\it parabolic} W-algebras, and relate them to quantizations of appropriate parabolic Slodowy varieties. Next, in Subsection \ref{SUBSECTION_W_main}, we state two main results relating parabolic W-algebras
to quantum Hamiltonian reductions of quiver varieties: Theorem \ref{Thm:Klein_W} (Kleinian case)
and Theorem \ref{Thm:type_A} (type $A$ case).

The strategy of the proofs of these theorem is the same. First, we have two constructions
of commutative formal deformations of the resolutions, both obtained via Hamiltonian reduction.
We use the Duistermaat-Heckman theorem and the results on the relation between appropriate
line bundles to establish an isomorphism between the two commutative deformations.
Then  the algebras, for which we need to establish an isomorphism,
are, roughly speaking, the algebras of global sections of  quantizations of the two
 commutative deformations. The commutative deformations are again  obtained by two different quantum
Hamiltonian reductions. Isomorphisms of interest
follow from the claim that the corresponding quantizations are isomorphic. Thanks to
results of Subsection \ref{SUBSECTION_non_comm_period}, it is enough to show that both quantizations
are {\it canonical} in the sense of Bezrukavnikov and Kaledin or, equivalently, even.
So we need to understand when a quantization obtained by a quantum Hamiltonian reduction is even.
This is done in Subsection \ref{SUBSECTION_even_red}. There we obtain a general result answering this
question, Theorem \ref{Thm:even_quant}, which seems to be of independent interest.
After this completing the proofs of Theorems \ref{Thm:Klein_W}, \ref{Thm:type_A} is pretty
easy, see Subsection \ref{SUBSECTION_W_proofs}.

The SRA side is studied in Section \ref{SECTION_SRA}. The definition of the symplectic reflection
algebras is recalled in Subsection \ref{SUBSECTION_SRA_def}. The main result, Theorem \ref{Thm_SRA_iso},
comparing the family of quantizations of $\K^{2n}/\Gamma_n$ coming from an SRA with that obtained
by quantum Hamiltonian reduction is stated in Subsection \ref{SUBSECTION_SRA_main}. This theorem
is proved in the rest of Section \ref{SECTION_SRA}.  The scheme of this proof
will be explained in the end of Subsection \ref{SUBSECTION_SRA_main}.

\subsection{Conventions and notation}\label{SUBSECTION_conventions} Let us list some conventions we use in the paper.

{\it Quivers}. Recall that by a quiver one means an oriented graph possibly with multiple arrows or loops.
More precisely, a quiver $Q$ consists of two sets, a set $Q_0$ of vertices and a set
$Q_1$ of arrows, and two maps $t$ (tail) and $h$ (head) from $Q_1$ to $Q_0$.

Associated to a quiver is the vector space $\K^{Q_0}$ with the "scalar product" $\alpha\cdot\beta=\sum_{i\in Q_0}
\alpha_i\beta_i$. Let $\epsilon_i, i\in Q_0,$ denote the tautological basis in $\K^{Q_0}$.

{\it Quotients.} In this paper we deal with several kinds of quotients. Geometric quotients for an action of
an algebraic group $G$ on a scheme $X$
(e.g., quotients for finite group actions or quotients by free group actions) are denoted
by $X/G$. The categorical quotient of  an affine variety $X$ by an action of a reductive group $G$
is denoted by $X\quo G$. The GIT quotient of $X$ corresponding to a stability condition
$\theta$ will be denoted by $X\quo^\theta G$. The set of $\theta$-semistable points in
$X$ will be denoted by $X^{\theta,ss}$.

Finally, "$\red$" means a (classical or quantum) Hamiltonian
reduction (either of an algebra or of a variety).

{\it Sheaves.} We usually denote sheaves as $\mathcal{A}_X$, where $X$ is a (formal) scheme,
where the sheaf lives. For a sheaf $\mathcal{A}_X$ by $\mathcal{A}(X)$ we denote its global
sections.

\begin{longtable}{p{2.5cm} p{13.5cm}}
$\widehat{\otimes}$&completed tensor product of complete topological vector spaces/modules.\\
$(X)$& the two-sided ideal in an associative algebra generated by a subset $X$.\\
$\W_h(V)$& the homogenized Weyl algebra of a symplectic vector space $V$.\\
$\W_{2n,h}$&$:=\W_h(\K^{2n})$.\\
$\Aut(\mathcal{A})$& the automorphism group of an algebra (or a sheaf of algebras) $\mathcal{A}$.\\
$\Dcal_{h,X}$& the sheaf of "homogenized" differential operators on a smooth scheme $X$.\\
$\Der(A)$& the Lie algebra of derivations of an algebra $A$.\\
$G_x$& the stabilizer of $x$ in $G$.\\
$\gr \A$& the associated graded vector space of a filtered
vector space $\A$.\\
$H^i_{DR}(X)$&$i$-th De Rham cohomology of a smooth (formal) scheme $X$.\\
$\K[X]$& the algebra of regular functions on a (formal) scheme $X$.\\
$\Str_X$& the structure sheaf of a (formal) scheme $X$.\\
$R\Gamma$& the group algebra of a group $\Gamma$ with coefficients in a ring $R$.\\
$SV$& the symmetric algebra of a vector space $V$.\\
$\mathcal{T}_{X/S}$& the relative tangent bundle of a (formal) scheme $X/S$.\\
$T^*X$& the cotangent bundle of a smooth scheme $X$.\\
$U_h(\g)$& homogenized universal enveloping algebra of a Lie algebra $\g$, i.e.,
the quotient of $T(\g)[h]$ by the relations $\xi\otimes \eta-\eta\otimes \xi- h[\xi,\eta]=0$.\\
$U^\perp$& the annihilator of a subspace $U\subset V$ in $V^*$.\\
$X^G$& $G$-invariants in a $G$-set $X$.\\
$X^{\wedge_Y}$& the completion of a scheme $X$ along a subscheme $Y$.\\
$\iota_\xi$& the contraction with a vector field $\xi$.\\
$\Omega^i_{X/S}$& the bundle of relative $i$-forms on a (formal) scheme
$X/S$.
\end{longtable}

{\bf Acknowledgements.} This research was initiated by a question that
 A. Premet asked me in  the beginning of 2008.
Numerous discussions with P. Etingof were of great importance for
this project. Also I would like to thanks D. Maulik and T. Schedler for discussing various aspects
of the paper.

\section{Deformations of symplectic schemes}\label{SECTION_deformation}
\subsection{Commutative deformations}\label{SUBSECTION_def_classic}
In this subsection we will discuss the existence of a universal (commutative)  deformation of a symplectic
variety $X_0$ over $\K$. Basically, all results here are taken from \cite{KV}.
The deformations are  controlled by a certain {\it period map} which has a very explicit description.
Throughout the section we suppose that $H^1(X_0,\Str_{X_0})=H^2(X_0,\Str_{X_0})=\{0\}$, although
most of the results we provide hold in greater generality.
Let $\Omega_0$ denote the symplectic form on $X_0$.

Let $S$  be the $n$-dimensional formal polydisc, i.e., the completion $(\K^n)^{\wedge_0}$ of the
affine space $\K^n$ at 0. Let $X$  be a formal scheme over $S$ with zero fiber $X_0$. More precisely, we suppose
that $X$ is a formal scheme equipped with a morphism  $\rho:X\rightarrow S$  such that $\rho^{-1}(0)=X_0$, and  $X$ coincides with its completion along $X_0$. In this case we will say that $X$ is a {\it formal deformation} of
$X_0$ over $S$. We also assume that the deformation $X$ is symplectic, i.e., $X/S$ is equipped with
a  symplectic form $\Omega\in \Omega^2(X/S)$, whose restriction to $X_0$ coincides with $\Omega_0$.
Here "symplectic", as usual, stands for ``closed and non-degenerate'', where the latter means that
the natural map between the relative tangent and cotangent bundles induced by $\Omega$ is an isomorphism.

The universal deformation of $X_0$ will be a formal scheme $X$ over the formal scheme $\operatorname{Per}$
that, by definition, is the formal neighborhood of the cohomology class $[\Omega_0]$ in $H^2_{DR}(X_0)$.


The $\K[S]$-module $H^2_{DR}(X/S)$ is naturally isomorphic to $H^2_{DR}(X_0)\widehat{\otimes} \K[S]$
(via the Gauss-Manin connection to be recalled in the end of the subsection). The cohomology class $[\Omega]$ is then an element of $H^2_{DR}(X_0)\widehat{\otimes} \K[S]$ and as such defines a linear map $H^2_{DR}(X_0)^*\rightarrow \K[S]$. This linear map produces a morphism $S\rightarrow \operatorname{Per}$
of formal schemes. So to a formal symplectic deformation $(X/S,\Omega)$  of  $(X_0,\Omega_0)$
one assigns a morphism $p_{X/S}:S\rightarrow \operatorname{Per}$.

\begin{Prop}\label{Prop:comm_def}
Recall that $X_0$ is a symplectic variety with $H^1(X_0,\Str_{X_0})=H^2(X_0,\Str_{X_0})=\{0\}$.
The map $X/S\mapsto p_{X/S}$ is a bijection between
\begin{itemize} \item the set of isomorphism classes of symplectic formal
deformations $X/S$ of  $X_0$
\item and the set of formal scheme morphisms $S\rightarrow \Per$. \end{itemize}
The inverse map assigns to $p:S\rightarrow \Per$ the pull-back $p^*(X_{univ}/\Per)$ of some {\rm universal}
deformation $X_{univ}/\Per$.
\end{Prop}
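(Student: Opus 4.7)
The plan is to follow the deformation-theoretic strategy of Kaledin--Verbitsky (\cite{KV}). The proposition packages three facts: the tangent space to the functor of symplectic formal deformations of $(X_0,\Omega_0)$ is $H^2_{DR}(X_0)$; this functor is formally smooth (unobstructed); and the period map $X/S\mapsto p_{X/S}$ coincides with the natural identification arising from pro-representability.

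First I would identify the tangent space. A first-order deformation of the scheme $X_0$ over $\K[\epsilon]/(\epsilon^2)$ is classified by $H^1(X_0,\mathcal{T}_{X_0})$; contraction with $\Omega_0$ gives an isomorphism $\mathcal{T}_{X_0}\cong \Omega^1_{X_0}$, converting this group to $H^1(X_0,\Omega^1_{X_0})$. Equipping the deformation with a closed relative 2-form that restricts to $\Omega_0$ amounts to lifting the class through $\mathbb{H}^2$ of the truncated de Rham complex $\Str_{X_0}\to \Omega^1_{X_0}$, and under the vanishings $H^1(X_0,\Str_{X_0})=H^2(X_0,\Str_{X_0})=0$ the hypercohomology spectral sequence collapses to give an isomorphism with $H^2_{DR}(X_0)$. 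A direct computation then verifies that the derivative at $0$ of $S\mapsto p_{X/S}$ is this canonical identification.

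Second I would establish unobstructedness. At the $n$-th order, given a symplectic deformation over $S_n:=\Spec \K[t]/\m^{n+1}$, the obstruction to lifting the scheme deformation to $S_{n+1}$ lives in $H^2(X_0,\mathcal{T}_{X_0})\cong H^2(X_0,\Omega^1_{X_0})$, while the obstruction to then lifting the symplectic form is controlled by $H^2(X_0,\Str_{X_0})=0$. The cleanest packaging is through the Ran--Kawamata $T^1$-lifting criterion, or equivalently a Bogomolov--Tian--Todorov argument: flat transport of $[\Omega]$ by the Gauss--Manin connection kills the higher obstructions, and the vanishing hypotheses absorb the remaining pieces of Hodge type $(0,\ast)$. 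With smoothness and tangent space $H^2_{DR}(X_0)$ in hand, Schlessinger yields a universal formal family $X_{univ}/\Per$ over $\Per=H^2_{DR}(X_0)^{\wedge_{[\Omega_0]}}$ whose period map is tautologically the identity.

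The bijection then follows: $p_{p^*X_{univ}/S}=p$ by functoriality of Gauss--Manin, and the converse, that $X/S\cong p_{X/S}^*X_{univ}$, is established order-by-order in $\K[S]/\m^{n+1}$, using smoothness and the tangent-space identification to upgrade equality of periods into isomorphism of deformations. The hard part will be the unobstructedness in step two: for a general smooth scheme, deformations can be obstructed, and adding the symplectic constraint could in principle introduce new obstructions. Showing that it does not is the heart of \cite{KV}, and the vanishings $H^i(X_0,\Str_{X_0})=0$ for $i=1,2$ are precisely what channels the argument through Hodge theory in our non-projective setting.
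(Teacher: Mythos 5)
The paper itself offers no proof of this proposition; it simply defers to Theorem 3.6 of \cite{KV} with the remark that one passes from deformations over Artinian local schemes to the formal polydisc by taking an inverse limit. Your outline (compute the first-order tangent space, show formal smoothness, invoke Schlessinger for pro-representability, and match the resulting classifying map with $p_{X/S}$) is precisely the logical skeleton of the Kaledin--Verbitsky argument being cited, and you correctly locate the real content of the theorem in the unobstructedness step. So there is no divergence of route to report: you are reconstructing the argument the paper points to.

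Two small cautions. First, the complex whose $\mathbb{H}^2$ controls the class of the relative closed $2$-form is the stupid truncation $\Omega^{\geqslant 1}_{X_0}=[\Omega^1_{X_0}\rightarrow\Omega^2_{X_0}\rightarrow\cdots]$, not $[\Str_{X_0}\rightarrow\Omega^1_{X_0}]$; the triangle $\Omega^{\geqslant 1}_{X_0}\rightarrow\Omega^\bullet_{X_0}\rightarrow\Str_{X_0}$ together with $H^1(X_0,\Str_{X_0})=H^2(X_0,\Str_{X_0})=0$ gives the isomorphism $\mathbb{H}^2(X_0,\Omega^{\geqslant 1}_{X_0})\cong H^2_{DR}(X_0)$ that you want. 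Second, the two-stage picture "first lift the scheme, then lift the form, with separate obstruction groups" oversimplifies the actual mechanism in \cite{KV}: the deformation and the symplectic form are produced simultaneously (via what they call the twistor deformation), and it is this joint construction, not a naive cancellation of two obstruction classes, that makes the functor smooth. You flag this honestly as the hard part, which is the right assessment, but a complete write-up would need to engage with the twistor-family construction rather than a generic $T^1$-lifting argument.
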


This statement is a variant of Theorem 3.6 from \cite{KV} (that theorem deals with deformations
of $X_0$ over local Artinian schemes so our statement is the "inverse limit" of theirs).

Below we will need a "graded" version of Proposition \ref{Prop:comm_def}.
Suppose that $\K^\times$ acts on $X_0$ such that $t.\Omega_0=t^2\Omega_0$.
  In particular, $\Omega_0$
is exact. Equip $S$ (=$(\K^n)^{\wedge_0}$)
with the $\K^\times$-action induced from the action $t.v=tv, t\in \K^\times, v\in \K^n$.
We say that a symplectic formal deformation $X/S$ is {\it graded} if $X$ is equipped with a $\K^\times$-action
such that the morphism $X\rightarrow S$ is equivariant and the symplectic form $\Omega$ on $X/S$
satisfies $t.\Omega=t^2\Omega$. Then $p_{X/S}$  is restricted from a linear
map $\K^n\rightarrow H^2_{DR}(X_0)$ also denoted by $p_{X/S}$.

Performing an easy modification of the argument in \cite{KV} one gets the following result.
\begin{Prop}\label{Prop:comm_def_graded}
We preserve the assumptions of Proposition \ref{Prop:comm_def} and of the previous paragraph.
The universal deformation $X_{univ}/\Per$ can be made graded in such a way that
the map $X/S\rightarrow p_{X/S}$  is a bijection between
\begin{itemize}
\item the set of isomorphism classes of graded symplectic  formal
deformations $X/S$ of  $X_0$.
\item and the set of linear maps $\K^n\rightarrow H^2_{DR}(X_0)$.
\end{itemize}
To a $\K^\times$-equivariant morphism $p:S\rightarrow \Per$ the inverse map assigns  the pull-back $p^*(X_{univ}/\Per)$ of $X_{univ}/\Per$.
\end{Prop}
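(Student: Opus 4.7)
The plan is to upgrade Proposition \ref{Prop:comm_def} by tracking a $\K^\times$-action throughout. The task splits naturally into two parts: (i) equipping the universal deformation $X_{univ}/\Per$ with a canonical $\K^\times$-action compatible with the given action on the central fibre $X_0$, and (ii) identifying $\K^\times$-equivariant formal morphisms $S\to\Per$ with $\K$-linear maps $\K^n\to H^2_{DR}(X_0)$.

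For (i), the $\K^\times$-action on $X_0$ induces a natural $\K^\times$-action on $H^2_{DR}(X_0)$, under which $[\Omega_0]$ transforms with weight $2$, and hence an action on the ambient affine space inside which $\Per$ sits. For each $t\in \K^\times$, the rescaled universal family (obtained by acting by $t$ on the total space of $X_{univ}$, so that the relative symplectic form rescales by $t^2$) is again a universal symplectic formal deformation of $X_0$; by the uniqueness clause of Proposition \ref{Prop:comm_def} -- which rests on $H^1(X_0,\Str_{X_0})=H^2(X_0,\Str_{X_0})=0$ ruling out nontrivial infinitesimal automorphisms -- this family is canonically isomorphic to the base-change of $X_{univ}$ along the corresponding translation on $\Per$. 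The canonicity of the isomorphism lets the pointwise isomorphisms be assembled into a coherent group action; this is a direct equivariant enhancement of the construction in \cite{KV}.

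For (ii), functoriality of the Gauss-Manin connection and of the class $[\Omega]$ under $\K^\times$ shows that, for any graded deformation $X/S$, the map $p_{X/S}:S\to\Per$ is $\K^\times$-equivariant; conversely, pulling the graded $X_{univ}$ back along any equivariant $p$ yields a graded deformation. Thus the bijection of Proposition \ref{Prop:comm_def} restricts to a bijection between graded deformations and $\K^\times$-equivariant formal morphisms. Expanding such a $p$ as a formal power series in coordinates on $S$ and comparing $\K^\times$-weights, one finds that $p$ is completely pinned down by a single homogeneous piece, which may be packaged as an arbitrary element of $\Hom(\K^n,H^2_{DR}(X_0))$; the remaining Taylor coefficients are either forced to vanish or forced to coincide with fixed expressions in that piece. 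This produces the desired identification, and combined with (i), gives the claimed bijection.

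The main obstacle I anticipate lies in step (i), specifically the treatment of the $\K^\times$-action on $\Per$. Since $[\Omega_0]$ is not a fixed point of the action on $H^2_{DR}(X_0)$, $\Per$ is not literally preserved by the naive action on the ambient vector space, so "equivariance" must be formulated carefully, most naturally by working relative to the linear space $H^2_{DR}(X_0)$ rather than inside the abstract formal completion. Once this is done, the rest of the argument is a routine $\K^\times$-equivariant bookkeeping on top of the proof of Proposition \ref{Prop:comm_def}.
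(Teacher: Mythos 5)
Your overall plan — give $X_{univ}/\Per$ a $\K^\times$-structure and then match $\K^\times$-equivariant $S\to\Per$ with linear maps $\K^n\to H^2_{DR}(X_0)$ — is reasonable, and it is a genuinely different route from the paper's, which instead re-runs the Kodaira--Spencer/\v{C}ech--De~Rham construction of \cite{KV} equivariantly from scratch. But as written there are two substantive gaps, and your diagnosis of the "main obstacle" rests on a false premise.

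First, the $\K^\times$-action that $X_0$ induces on $H^2_{DR}(X_0)$ is \emph{trivial}: $\K^\times$ is connected and $H^2_{DR}(X_0)$ is the (finite-dimensional) topological $H^2$ of a smooth variety, so a connected group acts trivially on it. Consequently $[\Omega_0]$ does not "transform with weight $2$"; on the contrary, from $t.\Omega_0=t^2\Omega_0$ and triviality of the action on cohomology one gets $[\Omega_0]=t^2[\Omega_0]$, i.e. $[\Omega_0]=0$ — which is exactly the remark in the paper that $\Omega_0$ is exact. So $[\Omega_0]$ \emph{is} a fixed point of the naive action on $H^2_{DR}(X_0)$, and your worry that $\Per$ is not preserved is moot. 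The real issue you should be flagging is the opposite one: the naive (trivial) action on $\Per$ preserves $\Per$ but is useless for the classification (equivariance would force $p_{X/S}$ to be constant). The $\K^\times$-action on $\Per$ that makes $X_{univ}$ graded is a nontrivial scaling action — whose weight is pinned down by the weight $2$ of $\Omega$ under the Gauss--Manin identification $H^2_{DR}(X/S)\cong H^2_{DR}(X_0)\widehat\otimes\K[S]$ — and it is not "induced" in any functorial way from the action on $X_0$. Your two descriptions of the action (the "natural" one, and the one built by rescaling $\Omega_{univ}$ and invoking uniqueness) are therefore in conflict; only the second one is correct, and it should be stated and used from the start.

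Second, the uniqueness clause of Proposition~\ref{Prop:comm_def} identifies only \emph{isomorphism classes}, not objects: it does not produce a canonical isomorphism. Deformations of a symplectic resolution have large automorphism groups (generated by time-one flows of Hamiltonian vector fields, roughly $H^0_{DR}$-worth of data, not something killed by $H^1(X_0,\Str_{X_0})=0$; the hypotheses of Proposition~\ref{Prop:comm_def} rule out obstructions and extra tangent directions, not automorphisms). So the choice of $\psi_t:\,{}^tX_{univ}\xrightarrow{\sim} m_t^*X_{univ}$ is not canonical, and the relations $\psi_s\psi_t=\psi_{st}$ do not hold automatically. One must correct the $\psi_t$ order by order using a vanishing-of-$H^1(\K^\times,\operatorname{Aut})$ argument — exactly the cocycle argument the paper carries out for quantizations in Subsection~\ref{SUBSECTION_Basic_def}, where "$\K^\times$-fixed isomorphism class" is upgraded to "homogeneous quantization." You need the commutative analog of that argument here; without it, step (i) is incomplete. (The same cocycle argument is also what shows that two graded deformations are isomorphic if and only if they are $\K^\times$-equivariantly isomorphic, a point you need implicitly in step (ii) when passing from isomorphism classes of graded deformations to $\K^\times$-equivariant morphisms.) Once these repairs are made, step (ii) is essentially routine weight bookkeeping and is fine.
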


Now let us recall the Gauss-Manin connection on $H^2_{DR}(X/S)$, where
$S$ still denotes $(\K^n)^\wedge_0$. Let $[\alpha]$ be an element of $H^2_{DR}(X/S)$ and $\xi\in \K^n$.
We need to define the covariant derivative $\partial_\xi [\alpha]$.

Let $X=\bigcup_i X^i$ be an open covering  by affine formal subschemes.  Pick sections $\alpha^i\in \Omega^2(X^i), \alpha^{ij}\in \Omega^1(X^{ij}), \alpha^{ijk}\in \K[X^{ijk}]$, such that the images of these forms in $\bigwedge^\bullet\mathcal{T}^*(X/S)$ form a \v{C}ech-De Rham cocycle  representing $[\alpha]$. Here, as usual, $X^{ij}:=X^i\cap X^j, X^{ijk}:=X^i\cap X^j\cap X^k$.

Fix liftings $\xi^i$ of $\xi\in \K^n$ to $X^i$. Then set \begin{equation}\label{eq:GM_conn1}\begin{split}&\beta^i:=\iota_{\xi^i} d\alpha^i,\\ &\beta^{ij}:=\iota_{\xi^i}(d\alpha^{ij}-\alpha^i+\alpha^j),\\
&\beta^{ijk}:=\iota_{\xi^i}(d\alpha^{ijk}-\alpha^{ij}-\alpha^{jk}-\alpha^{ki})\end{split}\end{equation}

Here $\iota_\bullet$ stands for the contraction with a vector field.
Further, let $\underline{\beta}^i,\underline{\beta}^{ij},\underline{\beta}^{ijk}$ denote the
image of $\beta^i,\beta^{ij},\beta^{ijk}$ in $\bigwedge^\bullet\mathcal{T}^*(X/S)$. From the claim that
the image of $(\alpha^i,\alpha^{ij},\alpha^{ijk})$ in $\bigwedge^\bullet\mathcal{T}^*(X/S)$ is closed it is
easy to deduce that $\underline{\beta}^{ij},\underline{\beta}^{ijk}$ are skew-symmetric.   So
$(\underline{\beta}^i,\underline{\beta}^{ij},\underline{\beta}^{ijk})$ is a cochain in the \v{C}ech-De Rham complex of $X/S$. Similarly, we see that $\underline{\beta}^i,\underline{\beta}^{ij},\underline{\beta}^{ijk}$ do not depend on the choice of the local liftings $\xi^i$.

Now let us check that $(\underline{\beta}^i,\underline{\beta}^{ij},\underline{\beta}^{ijk})$ is a  cocycle.
This, by definition, means that $d\beta^{i}, d\beta^{ij}-\beta^i+\beta^j, d\beta^{ijk}-(\beta^{ij}+\beta^{jk}+\beta^{ki})$ vanish on the vector fields that are tangent to the fibers
of $X\rightarrow S$, while $\beta^{ijk}-\beta^{ijl}+\beta^{ikl}-\beta^{jkl}=0$. The last equality follows easily from the observation
that $\beta^{ijk}(=\underline{\beta}^{ijk})$ does not depend on the choice of the liftings. Let us check
that $d\beta^i$ vanishes on the tangent vector fields. The proofs of the other two claims are similar but more involved
computationally.

Each $X^i$ can be identified with $X_i^0\times S$,
where $X^i_0$ is an open affine subvariety of $X_0$. Fix such an identification.
We may assume that $\xi^i$ is tangent to $S$ in $X^i=X_0^i\times S$. Further, we can write
$\alpha^i$ as $\sum_{p}f_p\otimes\alpha^i_p  +\overline{\alpha}^i$, where $f_p$'s form a topological basis in $\K[S]$,
 $\alpha^i_p$ are some 2-forms on $X^i_0$, and $\overline{\alpha}^i$ is a 2-form vanishing
 on all vector fields tangent to $X^i_0$. Then $\beta^i=\sum_{p}\partial_{\xi^i}f_p \otimes \alpha^i_p+\iota_{\xi^i} d\overline{\alpha}^i=\sum_p\partial_{\xi^i} f_p\alpha^i_p- d\iota_{\xi^i}\overline{\alpha}_i-\mathcal{L}_{\xi^i}\overline{\alpha}_i$, where $\mathcal{L}_{\xi^i}$
 denotes the Lie derivative. We remark that $\mathcal{L}_{\xi^i}\overline{\alpha}_i$ vanishes on the vector fields
 tangent to $X^i_0$. Therefore the image of $d\beta^i$ in $\Omega^3(X^i/S)$ equals $\sum_p \partial_{\xi} f_p\otimes d\alpha^i_p$. The image of $d\alpha^i$ in $\Omega^3(X^i/S)$ equals $\sum_p f_p\otimes d\alpha^i_p$. Therefore $d\alpha^i_p=0$ and so $d\beta^i=0$ in $\Omega^3(X^i/S)$, and we are done.

Now we need to check that the class of $(\underline{\beta}^i,\underline{\beta}^{ij},\underline{\beta}^{ijk})$
does  not depend on the choice of $(\alpha^i,\alpha^{ij},\alpha^{ijk})$. This will follow if we check that
$(\underline{\beta}^i,\underline{\beta}^{ij},\underline{\beta}^{ijk})$ is exact provided
$\alpha^{ijk}=0$ and $\alpha^i,\alpha^{ij}$ vanish on the tangent vectors. This is checked analogously
to the previous paragraph.

So for $\xi.[\alpha]$ we take the class of $(\underline{\beta}^i,\underline{\beta}^{ij},\underline{\beta}^{ijk})$.
Similarly to the above, one can check that $\xi.\eta.[\alpha]-\eta.\xi.[\alpha]=[\xi,\eta].[\alpha]$. So
$[\alpha]\mapsto \xi.[\alpha]$ defines a flat connection on the $\K[S]$-module $H^2_{DR}(X/S)$. This is the Gauss-Manin
connection. It is a standard fact that a flat connection defines an identification $H^2_{DR}(X/S)\cong H^2_{DR}(X_0)\widehat{\otimes}\K[S]$.

\subsection{Generalities  on deformation quantization}\label{SUBSECTION_Basic_def}
Let $S$ be a  scheme of finite type over $\K$ or a completion of such a scheme.

Let $X$ be a smooth scheme of finite type over $S$, $\rho:X\rightarrow S$ be a projection.
We assume that $X$ is symplectic with symplectic form $\Omega\in \Omega^2(X/S)$.
The form $\Omega$ induces a $\rho^{-1}(\Str_{S})$-linear Poisson bracket on $\Str_{X}$.

Let $h$ be a formal variable. Let $\Dcal$ be a sheaf of  associative $\rho^{-1}(\Str_S)[[h]]$-algebras flat over $\K[[h]]$, complete in the $h$-adic topology, and equipped with an isomorphism  $\theta:\Dcal/h \Dcal\xrightarrow{\sim} \Str_X$ (of sheaves of $\rho^{-1}(\Str_S)$-algebras). There is  a Poisson structure on $\Dcal/ h\Dcal$: for local sections $a,b$ of $\Str_X$ we pick their local liftings $\tilde{a},\tilde{b}$ to sections of $\Dcal$ and define the bracket $\{a,b\}$ as the class of $\frac{1}{h}[\tilde{a},\tilde{b}]$. We say that $\Dcal$ (or, more precisely, the  pair $(\Dcal,\theta)$) is a {\it quantization} of $\Str_X$ (or of $X$) if the isomorphism $\theta:\Dcal/ h\Dcal\rightarrow \Str_X$ intertwines the Poisson brackets.

We say that two quantizations $(\Dcal_1,\theta_1),(\Dcal_2,\theta_2)$ of $X$ are isomorphic if there is a $\rho^{-1}(\Str_S)[[h]]$-linear isomorphism $\varphi:\Dcal_1\rightarrow \Dcal_2$ of sheaves of algebras such that the induced isomorphism $\Dcal_1/h\Dcal_1\rightarrow \Dcal_2/ h \Dcal_2$ intertwines $\theta_1,\theta_2$. The set of isomorphism classes of quantizations of $X$ will be denoted by $\Quant(X/S)$.

Below we will need to consider two special types of quantizations: {\it graded} and {\it even}
quantizations.

The former is defined when we have a $\K^\times$-action on $X$ with some special properties.
Namely, suppose that $X$ and $S$ are acted on by $\K^\times$ in such a way that $\rho:X\rightarrow S$
is $\K^\times$-equivariant and $\Omega$ has degree 2 with respect to the $\K^\times$-action:
the form $t.\Omega$ obtained from $\Omega$ by the push-forward with respect to the automorphism induced by
$t\in \K^\times$ equals $t^2\Omega$.
We say that a quantization $\Dcal$ of $X$  is {\it graded} if $\Dcal$ is equipped with a
$\K^\times$-action by algebra automorphisms such that $t.h=t^2h$ and the isomorphism
$\Dcal/ h\Dcal\cong \Str_X$ is $\K^\times$-equivariant.

Moreover, there is a natural action of $\K^\times$ on $\Quant(X/S)$. Namely, pick a quantization
$\Dcal$ of $X/S$. For $t\in \K^\times$ define a sheaf $^t\Dcal$ on $X$ as $t_*(\Dcal)$, where $t_*$
stands for the sheaf-theoretic push-forward with respect to the automorphism of $X$ induced by $t$.
Clearly, $^t\Dcal$ is a sheaf of $\K$-algebras on $X$. Turn it into a sheaf of $\rho^{-1}(\Str_S)[[\hbar]]$-algebras
by setting $sd:= (t.s)d, hd:=(t^2h)d$ (in the right hand side the products are taken
in $\Dcal$) for local sections $s$ of $\Str_S$, $d$ of $^t\Dcal$.
It is straightforward to check that $^t\Dcal$ is again a quantization of $X$. This defines a
$\K^\times$-action on $\Quant(X/S)$.

Clearly, an isomorphism class of a graded quantization is a $\K^\times$-stable point in $\Quant(X/S)$.
Conversely, let $^t\Dcal\cong \Dcal$ for all $t\in \K^\times$. Pick an isomorphism $\varphi_t:\Dcal\rightarrow \,^t\Dcal$. Define a map  $a:\K^\times\times \K^\times\rightarrow \Aut(\Dcal)$ by $\varphi_{t_1t_2}=
t_{1*}(\varphi_{t_2})\varphi_{t_1}a(t_1,t_2)$. Since the group $\Aut(\Dcal)$ is non-commutative,
the map $a$ is, in general, not a 2-cocycle. However, every element of $\Aut(\Dcal)$ has the form $\exp( h d)$,
where $d$ is a $\rho^{-1}(S)[[h]]$-linear derivation of $\Dcal$. Indeed, for $\varphi\in \Aut(\Dcal)$
we can put $d=\frac{1}{h}\ln(\varphi)$.

So, as a group, $\Aut(\Dcal)$ is just
$h\Der(\Dcal)$, where the multiplication is given by the Campbell-Hausdorff series. In particular,
this multiplication is commutative modulo $h$. Therefore modulo $h$ the function $a$ is a 2-cocycle.
By the Hilbert theorem 90, this cocycle is a coboundary. After modifying $\varphi_t$
in an appropriate way  we get $\varphi_{t_1t_2}=t_{1*}(\varphi_{t_2})\varphi_{t_1}$ modulo $h$.
Now repeat the same procedure  modulo $h^2$. So we see that we can choose isomorphisms $\varphi_t:\Dcal\rightarrow \,^t\Dcal$ in such a way that $\varphi_{t_1t_2}=t_{1*}(\varphi_{t_2})\varphi_{t_1}$ for all $t_1,t_2\in \K^\times$.
The existence of such isomorphisms means that $\Dcal$ is graded.

A similar argument implies that two graded quantizations are isomorphic if and only if they are
$\K^\times$-equivariantly isomorphic.

Let us present a very standard example of a quantization.
Let $S$ be a single point, $\underline{X}$ a smooth variety and $X:=T^*\underline{X}$
be the cotangent bundle of $\underline{X}$ equipped with a standard symplectic form.
Consider the sheaf $\Dcal_{h,\underline{X}}$ of ``homogeneous" differential operators.
This is a sheaf of $\K[h]$-algebras on $\underline{X}$  generated by $\Str_{\underline{X}}$ and the
tangent sheaf $T_{\underline{X}}$ modulo the following relations
\begin{equation}\label{eq:diff_relations1}
\begin{split}
&f*g=fg,\\
&f*v=fv,\\
&v*f=fv+hv.f,\\
&u*v-v*u=h[u,v].
\end{split}
\end{equation}
Here $f,g$ are local sections of $\Str_X$, while $u,v$ are local sections of $\mathcal{T}_X$.
In the left hand side $*$ means a product in $\Dcal_{h,\underline{X}}$. We remark that $\Dcal_{h,\underline{X}}/(h-1)$
is the usual sheaf of linear differential operators on $\underline{X}$. Also $\Dcal_{h,\underline{X}}/(h)$ is naturally identified with $p_*(\Str_{X})$, where $p:X\twoheadrightarrow \underline{X}$ is a natural projection.

We remark that $\Dcal_{h,\underline{X}}$ is not a quantization of $X$ in the above sense because
this is a sheaf on $\underline{X}$  and not on $X$ and it is not complete in the $h$-adic topology.
To fix this we can replace $\Dcal_{h,\underline{X}}$
with its $h$-adic completion and localize it to $X$ (compare with \cite{BK1}, Remark 1.6).
Abusing the notation, we still denote this sheaf on $X$ by
$\Dcal_{h,\underline{X}}$. Consider a  $\K^\times$-action on $\Dcal_{h,\underline{X}}$
that is uniquely determined by $t.f=f, t.v=t^2v, t.h=t^2 h$ for any $t\in \K^\times$.
To recover the initial sheaf on $\underline{X}$ one takes $\K^\times$-finite sections
in $p_*(\Dcal_{h,\underline{X}})$.

More generally, we can consider the homogeneous analogs of twisted differential operators.
By definition, a sheaf of homogeneous twisted differential operators is a pair of
\begin{itemize}\item a sheaf $\Dcal$ of $\K[h]$-algebras   on $\underline{X}$ equipped with a $\K^\times$-action by algebra automorphisms and \item a $\K^\times$-equivariant $\K[h]$-algebra embedding $\Str_{\underline{X}}[h]\rightarrow \Dcal$ (where $\K^\times$ acts trivially on $\Str_{\underline{X}}$
and $t.h=th$)\end{itemize} subject to the following condition.  There are
\begin{itemize}
\item
an open  covering $\underline{X}:=\bigcup_i \underline{X}^i$ and
\item $\K^\times$-equivariant isomorphisms $\iota^i:\Dcal|_{\underline{X}^i}\rightarrow \Dcal_{h,\underline{X}^i}$ intertwining the embeddings of $\Str_{X^i_0}[h]$
\end{itemize} that produce a global isomorphism $\Dcal/(h)\xrightarrow{\sim}p_*(\Str_X)$. As with usual twisted differential operators, the sheaves of homogeneous differential operators are classified up to an isomorphism by $H^1(X, \Omega^1_{cl})$, where $\Omega^1_{cl}$ denotes the sheaf of closed  1-forms on $\underline{X}$ (an isomorphism is supposed to intertwine the $\Str_{\underline{X}}[h]$-embeddings).

For example, let $\mathcal{L}$ be a line bundle. Let $\underline{X}'$ denote the complement to
$\underline{X}$ in the total space of $\mathcal{L}$.  This is a principal $\K^\times$-bundle on $\underline{X}$,
let $\tau:\underline{X}'\twoheadrightarrow \underline{X}$
denote the canonical projection.
The $\K^\times$-action on $\underline{X}'$ gives rise to the Euler vector field $\mathbf{eu}$ on $\underline{X}'$
so that the eigensheaf of $\mathbf{eu}$ in $\tau_*(\mathcal{L})$ with eigenvalue $1$ is nothing else
but $\mathcal{L}$. For $\alpha\in \K$ define the sheaf $\Dcal^{\alpha\mathcal{L}}_{h,\underline{X}}$ as the quantum Hamiltonian
reduction $$\tau_*(\Dcal_{h,\underline{X}'})^{\K^\times}/\tau_*(\Dcal_{h,\underline{X}'})^{\K^\times}(\mathbf{eu}-\alpha h).$$
For example, when $\mathcal{L}$ is the canonical bundle $\Omega^{top}_{\underline{X}}$ of $\underline{X}$, the  sheaf $\Dcal^{\alpha\Omega^{top}_{\underline{X}}}_{h,\underline{X}}$ is generated by $\Str_{\underline{X}},\mathcal{T}_{\underline{X}}$ subject to the 1st and 4th relations  (\ref{eq:diff_relations1}) and to
\begin{equation}\label{eq:diff_relations2}
\begin{split}
&f*v=fv-\alpha h v.f,\\
&v*f=fv+(1-\alpha) h v.f.
\end{split}
\end{equation}

Below we write $D^\alpha_{h,\underline{X}}$ instead of $D^{\alpha\Omega^{top}_{\underline{X}}}_{h,\underline{X}}$ and view it
as a sheaf defined by generators and relations as above.


Proceed to the definition of  even quantizations.
We say that a graded quantization  $\Dcal$ is even, if there is an involutory  antiautomorphism $\sigma:\Dcal\rightarrow \Dcal$ (to be referred to as a "parity antiautomorphism") that is trivial modulo $h$ and maps $h$ to $-h$.
If $\Dcal$ was $\K^\times$-equivariant we additionally require that $\sigma$ is $\K^\times$-equivariant.

We can define an action of $\Z/2\Z$ on $\Quant(X/S)$. Namely, let $\sigma$ denote the non-trivial element
in $\Z/2\Z$. For $\Dcal\in \Quant(X/S)$ set $^\sigma \Dcal=\Dcal^{op}$ and equip $\Dcal^{op}$ with
a $\rho^{-1}(\Str_S)[[h]]$-algebra structure by preserving the same $\rho^{-1}(\Str_S)$-algebra structure but
making $h$ act by $-h$. Similarly to the above, a quantization is even if and only if it is a fixed
point for the $\Z/ 2\Z$-action on $\Quant(X/S)$.

Let us present an (again, pretty standard) example of an even quantization. Let $\underline{X},X$ be such as above. Consider the sheaf $\Dcal_h^{1/2}(\underline{X})$. Define its anti-automorphism $\sigma$ by $\sigma(f)=f,
\sigma(v)=v, \sigma(h)=-h$ for local sections $f$ of $\Str_{\underline{X}}$ and $v$ of $\mathcal{T}_{\underline{X}}$. It is clear that
$\Dcal^{1/2}_h(\underline{X})$ is even (with parity antiautomorphism $\sigma$).

We will also need some more straightforward compatibility of star-products with group actions.
Namely, let $G$ be an algebraic group acting on $X$ such that $\rho:X\rightarrow S$ is
$G$-invariant. We say that a quantization $\Dcal$ is $G$-equivariant if $\Dcal$ is equipped with
an action of $G$ by algebra automorphisms such that $h$ is $G$-invariant and
the isomorphism $\Dcal/h\Dcal\cong \Str_X$ is $G$-equivariant.

\subsection{Non-commutative period map}\label{SUBSECTION_non_comm_period}
Let $X/S,\Omega$ be such as in the previous subsection.
In this subsection  we will explain the approach of \cite{BK1} to the problem of
classifying quantizations of $X/S$. Following \cite{BK1} we will produce a certain
natural map $\Per:\Quant(X/S)\rightarrow h^{-1} H^2_{DR}(X/S)[[h]]\subset H^2_{DR}(X/S)[h^{-1},h]]$
(the non-commutative period map).
Under some cohomology vanishing conditions on $X$ this map is a bijection.
   As Bezrukavnikov and Kaledin point out in their paper,
their approach is not essentially new, but the language they use turns out to be very
convenient for our purposes. The only (somewhat) new feature in our exposition is the presence of a $\K^\times\times \Z/2\Z$-action.

Let us recall the main result of \cite{BK1} providing a classification
of quantizations. We say that $X$ is {\it admissible} if the natural homomorphism
$H^i_{DR}(X/S)\rightarrow H^i(X,\Str_X)$ is surjective for $i=1,2$. Let $K^2$ denote the kernel
of $H^2_{DR}(X/S)\rightarrow H^2(X,\Str_X)$.

\begin{Prop}\label{Prop:quant_classif}
For any $\Dcal\in \Quant(X/S)$ we have $\Per(\Dcal)\in h^{-1}[\Omega]+H^2_{DR}(X/S)[[h]]$.
Further, fix a splitting $P:H^2_{DR}(X/S)\twoheadrightarrow K^2$ of the inclusion $K^2\hookrightarrow H^2_{DR}(X/S)$.
Then $\Dcal\mapsto P(\Per(\Dcal)-[\Omega])$ defines a bijection $\Quant(X/S)\xrightarrow{\sim} K^2[[h]]$.
\end{Prop}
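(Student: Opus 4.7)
The argument adapts the Fedosov--\v{C}ech construction from \cite{BK1} to our relative setting over $S$; extending to the parameter $S$ is routine since all local models and cohomology computations are $\rho^{-1}(\Str_S)$-linear. My plan is to construct $\Per(\Dcal)$ as a \v{C}ech-De Rham cocycle, identify its leading term as $h^{-1}[\Omega]$, and then establish the bijection using admissibility.

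To construct $\Per$, I would choose a sufficiently fine affine open cover $X = \bigcup_i X^i$ and on each $X^i$ a model quantization $\mathcal{B}^i$ (built Fedosov-style from $\Omega|_{X^i}$). On an affine patch the obstructions to an isomorphism $\Dcal|_{X^i}\cong \mathcal{B}^i$ vanish, so one can fix such $\iota^i$. The transitions $\alpha^{ij}=\iota^j(\iota^i)^{-1}$ are continuous $\rho^{-1}(\Str_S)[[h]]$-linear automorphisms of $\mathcal{B}^i|_{X^{ij}}$. Since derivations of a quantization of a symplectic variety are all inner, every such automorphism is the exponential of an inner derivation $(1/h)\ad \phi^{ij}$ for a local section $\phi^{ij}$ of $\mathcal{B}^i|_{X^{ij}}$, well-defined up to a central shift in $\rho^{-1}(\Str_S)((h))$. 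The failure of the $\phi^{ij}$ to form a \v{C}ech 1-cocycle modulo the center is a \v{C}ech 2-cocycle $\phi^{ijk}$ with values in the central subsheaf $\rho^{-1}(\Str_S)((h))$. Combined with the local symplectic forms $(\iota^i)_*(\Omega|_{X^i})$ and the 1-forms recording how these transform under $\alpha^{ij}$, this data assembles into a \v{C}ech-De Rham 2-cocycle whose hypercohomology class $\Per(\Dcal)$ lies in $h^{-1}H^2_{DR}(X/S)[[h]]$. The leading $h^{-1}$ coefficient is by construction a representative of $[\Omega]$ (the Poisson bracket being the principal symbol of $(1/h)[\cdot,\cdot]$), which gives the first assertion.

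For the bijection, any isomorphism of quantizations gives a \v{C}ech $0$-cochain trivializing the difference of the two period cocycles, so $\Per$ descends to an isomorphism-invariant map. Conversely, if two quantizations $\Dcal_1,\Dcal_2$ satisfy $P(\Per(\Dcal_1)-h^{-1}[\Omega])=P(\Per(\Dcal_2)-h^{-1}[\Omega])$, the difference of their periods lies in a complement to $K^2[[h]]$, which under the admissibility hypothesis maps isomorphically onto $H^2(X,\Str_X)[[h]]$; such a class is realized by a line-bundle / gerbe-like twist of the local models $\mathcal{B}^i$, yielding an explicit isomorphism between a twist of $\Dcal_1$ and $\Dcal_2$. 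For surjectivity, a Fedosov-type construction realizes any $\alpha\in K^2[[h]]$: build a global quantization from local models with transition automorphisms whose assembled period is $h^{-1}[\Omega]+\alpha$, with consistency obstructions absorbed by the same twist mechanism.

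The main technical obstacle is the non-abelian nature of $\Aut(\mathcal{B}^i)$: cocycle manipulations must be carried out modulo successive powers of $h$, using the Campbell--Hausdorff formula at each step (in the spirit of the argument of Subsection \ref{SUBSECTION_Basic_def}) to reduce the computation to abelian \v{C}ech cohomology, where admissibility enters. This hypothesis is exactly what identifies the De Rham classes that can be absorbed into gerbe-like twists, and so controls the obstructions precisely enough to give the bijection with $K^2[[h]]$.
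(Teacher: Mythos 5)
Your proposal mischaracterizes what \cite{BK1} actually does, and the resulting argument has a genuine gap at the very first step. In the algebraic (Zariski) setting there is no Darboux theorem: a Zariski-open affine $X^i\subset X$ generally has $H^1_{DR}(X^i/S)\neq 0$ and $H^2_{DR}(X^i/S)\neq 0$, so there is no preferred local model $\mathcal{B}^i$ and no reason for a local isomorphism $\iota^i\colon\Dcal|_{X^i}\cong\mathcal{B}^i$ to exist. If you try to take the $\mathcal{B}^i$ to be restrictions of a single global Fedosov quantization, their local periods match those of $\Dcal$ only when the global periods already agree, which begs the question; if instead you vary $\mathcal{B}^i$ from patch to patch to match $\Dcal$, the composites $\iota^j(\iota^i)^{-1}$ are isomorphisms between \emph{different} sheaves rather than automorphisms. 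Likewise, the claim that every $\K[[h]]$-linear automorphism on $X^{ij}$ is $\exp((1/h)\ad\phi)$ requires $H^1_{DR}(X^{ij}/S)=0$, which again fails for Zariski opens. These are exactly the obstructions that make the na\"ive Fedosov--\v{C}ech construction (which works in the $C^\infty$ or analytic category, where Darboux does hold and one may choose contractible charts) inapplicable to algebraic varieties.

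The Harish--Chandra torsor formalism of \cite{BK1} is designed precisely to sidestep this: instead of Zariski-local models and transition automorphisms, one works with the single global pro-scheme $\M_{symp}$ of \emph{formal} symplectic coordinate systems (jets), torsor over $\langle\Symp A,\Ham\rangle$, and encodes a quantization as a lift of $\M_{symp}$ to $\langle\Aut D,\Der D\rangle$; the period is then the obstruction class (Proposition~\ref{Prop:obstruction}) for the further extension by $\langle h^{-1}\K[h],h^{-1}\K[h]\rangle$, pushed into $h^{-1}H^2_{DR}(X/S)[[h]]$ via the flat localization functor $\Loc(\M_{symp},-)$. No Zariski-local triviality is ever invoked. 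Note finally that the paper does not reprove this proposition: it is quoted from \cite{BK1}, with Remark~\ref{Rem_formal} covering the passage to the formal scheme setting over $S$, so the burden you are trying to discharge by hand is entirely carried by that reference.
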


Now let us discuss the compatibility of $\Per$ with group actions considered in the previous subsection.
Let $\K^\times$ act on $X$ and $S$ as above.
We remark that $\Omega$ is exact provided $S$ is a point.
The following proposition establishes a  relation between $\Per$ and the $\K^\times\times \Z/2\Z$-action on $\Quant(X/S)$.

\begin{Prop}\label{Prop:Per_group}
\begin{enumerate}
\item If $\Dcal\in \Quant(X/S)$ is graded, then $\Per(\Dcal)\in H^2_{DR}(X/S)\subset h^{-1}H^2_{DR}(X/S)[[h]]$.
\item If $\Dcal$ is, in addition, even, then $\Per(\Dcal)=0$.
\end{enumerate}
\end{Prop}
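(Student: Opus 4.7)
The plan is to show that the period map $\Per$ is equivariant with respect to the $\K^\times\times\Z/2\Z$-action on $\Quant(X/S)$ introduced in the previous subsection, and then to read off (1) and (2) by fixed-point considerations. I equip $h^{-1}H^2_{DR}(X/S)[[h]]$ with the $\K^\times$-action combining the natural pullback on $H^2_{DR}(X/S)$ with the scaling $h\mapsto t^2 h$, and with the involution $\omega(h)\mapsto-\omega(-h)$. The key equivariance assertions are
\begin{equation*}
\Per({}^t\Dcal)(h)=t^*\bigl(\Per(\Dcal)(t^2 h)\bigr)\quad\text{and}\quad\Per({}^\sigma\Dcal)(h)=-\Per(\Dcal)(-h).
\end{equation*}

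To prove these I would unwind the Fedosov-type construction of $\Per$ from \cite{BK1}: the period is the cohomology class of the curvature of a Fedosov connection that identifies $\Dcal$ locally with a Weyl-type model. Each ingredient of the construction (Darboux charts, formal Weyl algebras, the Fedosov iteration) is natural under automorphisms acting compatibly on the Poisson bracket and on $h$, which yields the first formula after pushforward by $t\in\K^\times$ and the attendant $h$-rescaling. For the second, passing to the opposite algebra flips the sign of the induced bracket, which the twist $h\mapsto -h$ in the definition of ${}^\sigma\Dcal$ compensates; the Fedosov curvature picks up an overall sign, giving the $\Z/2\Z$-equivariance.

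Granted equivariance, part (1) follows immediately. A homogeneous $\Dcal$ is (up to isomorphism) a $\K^\times$-fixed point of $\Quant(X/S)$, so $\Per(\Dcal)$ is $\K^\times$-invariant. Writing $\Per(\Dcal)=h^{-1}[\Omega]+\sum_{k\ge 0}h^k\omega_k$, invariance forces $t^{2k}(t^*\omega_k)=\omega_k$, so $\omega_k$ must sit in the weight-$(-2k)$ subspace of $H^2_{DR}(X/S)$. Because the $\K^\times$-action on $X$ is contracting (so $H^2_{DR}(X/S)$ carries only non-negative weights in the settings of interest), $\omega_k=0$ for all $k\ge 1$, and thus $\Per(\Dcal)-h^{-1}[\Omega]\in H^2_{DR}(X/S)$, which is (1). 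For (2), one additionally has $\sigma$-invariance, so combining the shape of $\Per(\Dcal)$ from (1) with $\Per(\Dcal)(h)=-\Per(\Dcal)(-h)$ yields $\omega_0=-\omega_0$, hence $\omega_0=0$.

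The main obstacle is the equivariance step itself: one must carefully track how every piece of the Fedosov construction transforms under pushforward by $t$, the $h$-rescaling, and the opposite-algebra twist. This is the "compatibility" that the introductory paragraph of the subsection calls \emph{pretty standard but lacking a reference}; the check is conceptually routine but tedious, since the abstract formulation of $\Per$ does not make the required equivariance visible and one has to return to the local model. Once equivariance is in hand, (1) and (2) are a short weight-counting argument and a sign flip respectively.
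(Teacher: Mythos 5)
Your overall strategy --- showing that $\Per$ is $\K^\times\times\Z/2\Z$-equivariant and reading off (1) and (2) from the fixed-point condition --- is exactly the paper's. The paper implements equivariance by defining a $\K^\times\times\Z/2\Z$-action on the Harish-Chandra pair $\langle\G,h^{-1}D\rangle$, checking that the projection to $\langle\Aut D,\Der D\rangle$ is equivariant, computing the induced action on the kernel $h^{-1}\K[h]$, and then invoking the compatibility of $\Loc(\bullet,\bullet)$ with group actions established earlier in the subsection; your Fedosov-flavored account is a reasonable repackaging of the same construction, and your equivariance identities $\Per({}^t\Dcal)(h)=t^*\bigl(\Per(\Dcal)(t^2h)\bigr)$ and $\Per({}^\sigma\Dcal)(h)=-\Per(\Dcal)(-h)$ are the correct ones.

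The gap is in the final weight count, and it is not cosmetic. You conclude only that $\Per(\Dcal)-h^{-1}[\Omega]\in H^2_{DR}(X/S)$, whereas part (1) asserts $\Per(\Dcal)\in H^2_{DR}(X/S)$, i.e.\ that the $h^{-1}$-coefficient $[\Omega]$ also vanishes. (That this is really what the proposition claims is confirmed by Corollary \ref{Cor:quant_classif}: an even quantization is \emph{canonical}, i.e.\ has $\Per(\Dcal)=[\Omega]$, while by (2) it has $\Per(\Dcal)=0$; these are consistent only if $[\Omega]=0$, cf.\ the remark that $\Omega$ is exact when $S$ is a point.) The hypothesis you invoke to close the weight count --- that $H^2_{DR}(X/S)$ carries only non-negative weights because the $\K^\times$-action is ``contracting'' --- is both unsubstantiated in the paper and not the right fact: the relevant fact is that $\K^\times$, being connected, acts \emph{trivially} on $H^2_{DR}(X/S)$ (after the Gauss--Manin identification with $H^2_{DR}(X_0)\widehat\otimes\K[S]$, the action on the $H^2_{DR}(X_0)$-factor is trivial). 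Once $t^*$ is trivial, invariance $t^{2i}t^*\alpha_i=\alpha_i$ forces $\alpha_i=0$ for \emph{every} $i\neq 0$, including $i=-1$, so $[\Omega]=0$ and $\Per(\Dcal)=\alpha_0\in H^2_{DR}(X/S)$, which is (1). Your $\Z/2\Z$-argument then gives $\alpha_0=0$ and hence $\Per(\Dcal)=0$, as opposed to the $h^{-1}[\Omega]$ your version would leave standing.
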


The proof of Proposition \ref{Prop:Per_group} follows fairly easily from the constructions of \cite{BK1}
and will be given after we recall all necessary definitions and constructions below in this subsection.

Following \cite{BK1}, we call a quantization $\Dcal$ of $X/S$ {\it canonical}
if $\Per(\Dcal)=[\Omega]$.

\begin{Cor}\label{Cor:quant_classif}
Suppose $H^1(X,\Str_X)=H^2(X,\Str_X)=0$.
Let $X/S$ be equipped with a $\K^\times$-action as above. Then $\Per$ identifies the set of isomorphism classes of
graded quantizations of $X/S$  with $H^2_{DR}(X/S)\subset h^{-1}H^2_{DR}(X/S)[[h]]$.
Further, an even graded quantization of $X/S$ is canonical.
\end{Cor}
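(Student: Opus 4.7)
My first move will be to observe that the assumption $H^1(X,\Str_X)=H^2(X,\Str_X)=\{0\}$ simultaneously forces $X$ to be admissible and $K^2 = H^2_{DR}(X/S)$, so Proposition \ref{Prop:quant_classif} (with the splitting $P$ taken to be the identity) immediately provides a bijection $\Quant(X/S) \xrightarrow{\sim} H^2_{DR}(X/S)[[h]]$ via $\Dcal \mapsto \Per(\Dcal) - h^{-1}[\Omega]$. Both halves of the corollary will then be obtained by locating the image of the appropriate subset of isomorphism classes inside $H^2_{DR}(X/S)[[h]]$.

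For the homogeneous classification, Proposition \ref{Prop:Per_group}(1) gives for free that the image of the homogeneous classes lies in the constant piece $H^2_{DR}(X/S) \subset H^2_{DR}(X/S)[[h]]$. To establish the reverse inclusion, I will invoke the characterization from Subsection \ref{SUBSECTION_Basic_def}: a quantization is homogeneous exactly when its isomorphism class is a $\K^\times$-fixed point of $\Quant(X/S)$. Under the period identification, the natural $\K^\times$-action on the target $H^2_{DR}(X/S)[[h]]$ rescales the formal variable $h$ by $t^2$ while acting trivially on $H^2_{DR}(X/S)$ itself, because $\K^\times$ is connected and so acts trivially on the discrete invariant given by De Rham cohomology. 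The fixed locus of such an action is precisely the constant power series, so every class with period in $H^2_{DR}(X/S)$ does come from a homogeneous quantization.

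For the second assertion, I will first note that the definition in Subsection \ref{SUBSECTION_Basic_def} builds evenness on top of homogeneity, so by the previous paragraph the period of an even $\Dcal$ already lies in the degree-zero piece $H^2_{DR}(X/S)$. Proposition \ref{Prop:Per_group}(2) then forces this remaining class to vanish, leaving $\Per(\Dcal) = h^{-1}[\Omega]$, which is the defining condition of a canonical quantization. The main obstacle in the whole argument is the verification that the $\K^\times$-action on $\Quant(X/S)$ is intertwined with the advertised action on $H^2_{DR}(X/S)[[h]]$ in all $h$-degrees, and not merely that homogeneity pushes the period into degree zero. One direction of this equivariance is precisely the content of Proposition \ref{Prop:Per_group}(1); the opposite direction can be secured either by unwinding the construction of $\Per$ from \cite{BK1}, or, more geometrically, by producing a homogeneous quantization with prescribed period directly from the graded universal commutative deformation of Proposition \ref{Prop:comm_def_graded}.
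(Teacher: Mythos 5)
Your overall plan is the right one and matches the way the paper intends the corollary to follow: $H^1=H^2=0$ makes $X$ admissible with $K^2=H^2_{DR}(X/S)$, so Proposition \ref{Prop:quant_classif} gives a bijection of $\Quant(X/S)$ with a coset of $H^2_{DR}(X/S)[[h]]$; then Proposition \ref{Prop:Per_group}(1) gives the forward inclusion for homogeneous quantizations, $\K^\times$-equivariance of $\Per$ (established inside the proof of Proposition \ref{Prop:Per_group} via the $\K^\times$-action on the Harish-Chandra pair $\langle\G,h^{-1}D\rangle$ and Lemma \ref{Lem:tors_vs_quant}) gives the converse via the fixed-point characterization of homogeneity, and Proposition \ref{Prop:Per_group}(2) handles evenness.

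However, your treatment of the evenness assertion contains a real slip. Proposition \ref{Prop:Per_group}(2) states $\Per(\Dcal)=0$, while the paper's definition of \emph{canonical} is $\Per(\Dcal)=[\Omega]$. You instead conclude ``$\Per(\Dcal)=h^{-1}[\Omega]$, which is the defining condition of a canonical quantization'' --- that is not the definition, and it is not what Proposition \ref{Prop:Per_group}(2) produces. What actually closes the gap is the observation (which you never make) that the $\K^\times$-action hypothesis forces $[\Omega]=0$: since $t.\Omega=t^2\Omega$ while $\K^\times$, being connected, acts trivially on $H^2_{DR}(X_0)$ and with non-positive weights on $\K[S]$, the class $[\Omega]$ sits in a weight-$2$ eigenspace that is zero; equivalently one may note that $\Omega_0$ is exact in the pointed case. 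With $[\Omega]=0$ in hand, ``$\Per(\Dcal)=0$'' and ``$\Per(\Dcal)=[\Omega]$'' are the same condition, and also the a priori constraint $\Per(\Dcal)\in h^{-1}[\Omega]+H^2_{DR}(X/S)[[h]]$ from Proposition \ref{Prop:quant_classif} becomes compatible with $\Per(\Dcal)\in H^2_{DR}(X/S)$ from Proposition \ref{Prop:Per_group}(1). Without noting $[\Omega]=0$, the two propositions you invoke are, on their face, mutually inconsistent, and your bookkeeping quietly conflates three different classes ($0$, $[\Omega]$ and $h^{-1}[\Omega]$). A second, smaller point: your justification that ``$\K^\times$ acts trivially on $H^2_{DR}(X/S)$'' is only literally correct for the central-fiber cohomology $H^2_{DR}(X_0)$; for $S$ not a point the action on the $\K[S]$-factor under the Gauss--Manin identification $H^2_{DR}(X/S)\cong H^2_{DR}(X_0)\widehat{\otimes}\K[S]$ is by the contracting grading, so the ``discrete invariant'' argument as stated does not immediately give what you need, and the equivariance of $\Per$ from the proof of Proposition \ref{Prop:Per_group} should be cited directly rather than re-derived from a vague triviality claim.
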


An essential notion in the approach of \cite{BK1} is that of a {\it Harish-Chandra torsor}. For reader's convenience, we will sketch basic definitions and results here. For a more detailed
 exposition, the reader is referred to \cite{BK1}.

Let $G$ be a (pro)algebraic group and $\h$ a (pro)finite dimensional Lie algebra equipped
with a $G$-action and with a $G$-equivariant embedding $\g\hookrightarrow \h$ that are compatible in the sense
that the differential of the $G$-action coincides with the adjoint action of $\g$ on $\h$. A pair
$(G,\h)$ is called a Harish-Chandra pair.  It is straightforward
to define the notions  of a module over a Harish-Chandra pair and of a homomorphism of Harish-Chandra pairs.

Examples we need in this paper (and which were considered in \cite{BK1}) include the following.


1) Consider the Poisson bracket on $A:=\K[[x_1,\ldots,x_n,y_1,\ldots,y_n]]$ given by $\{x_i,x_j\}=\{y_i,y_j\}=0, \{x_i,y_j\}=\delta_{ij}$.
Consider the subgroup $\Symp A$ of $\Aut A$ consisting of all Poisson automorphisms  and the subalgebra
$\Ham\subset \Der A$ consisting of all Hamiltonian (=annihilating the Poisson bracket) derivations.
Then $(\Symp A, \Ham)$ is a Harish-Chandra pair.

2) Let $D:=\W_{2n,h}^{\wedge_0}$ be the completed Weyl algebra $\K[[x_1,\ldots,x_n,y_1,\ldots,y_n,h]]$ with the multiplication given by the Moyal-Weyl formula $f*g=m(\exp(\frac{Ph}{2})f\otimes g)$, where $m$ stands for
the standard commutative multiplication map $D\otimes D\rightarrow D$ and $P:D\otimes D\rightarrow D\otimes D$
is given by $$f\otimes g\mapsto \sum_{i=1}^n \frac{\partial f}{\partial x_i}\otimes\frac{\partial g}{\partial y_i}-
\frac{\partial f}{\partial y_i}\otimes\frac{\partial g}{\partial x_i}.$$
A Harish-Chandra pair under consideration is $(\Aut D, \Der D)$, where both automorphisms and derivations
are supposed to be $\K[[h]]$-linear.

The next notion we need is that of a {\it Harish-Chandra torsor}. Let $G$ be a pro-algebraic group.
By a torsor over  $X$ we mean a $G$-scheme  $M$ over $X$ such that the structure morphism $\pi:M\rightarrow X$
is $G$-invariant and the  action map $G\times M\rightarrow M$ together with a projection $G\times M\rightarrow M$ gives rise to an isomorphism $G\times M\xrightarrow{\sim}M\times_X M$. We have the short exact sequence
(the Atiyah extension) of $\Str_X$-modules
$$0\rightarrow \g_M\rightarrow \mathcal{E}_M\rightarrow \mathcal{T}_{X/S}\rightarrow 0,$$
where $\g_M:=\pi_*(\Str_M\widehat{\otimes} \g)$
and $\mathcal{E}_M:=\pi_*(\mathcal{T}_{M/S})$. By an $\h$-valued connection on $M$ one means a $G$-equivariant bundle
map $\theta_M:\mathcal{E}_M\rightarrow \h_M:=\pi_*(\Str_M\widehat{\otimes} \h)$
such that the composition $\g_M\rightarrow \mathcal{E}_M\rightarrow \h_M$ coincides with the embedding
$\g_M\rightarrow \h_M$ induced by the embedding $\g\hookrightarrow \h$. An $\h$-valued connection
$\theta_M$ is said to be {\it flat} if  the $\h$-valued 1-form $\Omega:=\pi^*(\theta_M)$ on $M$
satisfies the Maurer-Cartan equation $2 d\Omega+\Omega\wedge\Omega=0$. The pair $(M,\theta_M)$ of a $G$-torsor
$M$ and an $\h$-valued flat connection $\theta_M$ is said to be a Harish-Chandra $\langle G,\h\rangle$-torsor.
The Harish-Chandra $\langle G,\h\rangle$-torsors on $X$ naturally form a groupoid (=category with invertible morphisms), let $H^1(X,\langle G,\h\rangle)$ denote the set of isomorphism classes of Harish-Chandra torsors.

We will need some functoriality properties for Harish-Chandra torsors. Now let $\langle G_1,\h_1\rangle,$ $\langle G_2,\h_2\rangle$ be two Harish-Chandra torsors and let
$\varphi:\langle G_1,\h_1\rangle\rightarrow \langle G_2,\h_2\rangle$ be a homomorphism.
Pick a Harish-Chandra $\langle G_1,\h_1\rangle$-torsor $M_1$ and define its push-forward $\varphi_*(M)$
as follows. As a torsor $\varphi_*(M_1)$ is the quotient of $G_2\times M_1$ by the diagonal action
of $G_1$ ($g_1\in G_1$ acts on $G_2$ via $g_1.g_2=g_2\varphi(g_1)^{-1}$). The bundle $\mathcal{E}_{\varphi_*(M_1)}$
is the quotient of $\g_{2M_1}\oplus \mathcal{E}_{M_1}$ modulo the image of $\g_{1M_1}$ under $(-\varphi_{M_1},\iota_{M_1})$, where $\varphi_{M_1}:\g_{1M_1}\rightarrow \g_{2M_2}$ is the map induced by $\varphi$
and $\iota_{M_1}:\g_{1M_1}\rightarrow \mathcal{E}_{M_2}$ is the inclusion above. Define a map
$\g_{2M_1}\oplus \mathcal{E}_{M_1}\rightarrow \h_{2M_1}$  as the direct sum of the inclusion $g_{2M_2}\hookrightarrow \h_{2M_2}$ and of $\varphi_{M_1}\circ \theta_{M_1}$. This map vanishes on the image of $\g_{1M_1}$
and hence defines a map $\mathcal{E}_{M_2}\rightarrow \h_{2M_2}$. It is straightforward to check that
this map is a flat connection.

We will need two appearances   of this construction. First of all, let a group
$\Gamma$ act by automorphisms of a Harish-Chandra pair $\langle G,\h\rangle$ and also
by automorphisms on $X$. 
Then the previous construction gives rise to a $\Gamma$-action on $H^1(X,\langle G,\h\rangle)$.

Second, let $\varphi:\langle G_1,\h_1\rangle\rightarrow \langle G,\h\rangle$ be an epimorphism
and $M$ be a Harish-Chandra $\langle G,\h\rangle$-torsor. By  a {\it lifting} of $M$ to $\langle G_1,\h_1\rangle$
we mean a Harish-Chandra $\langle G_1,\h_1\rangle$-torsor $M_1$ equipped with an isomorphism
$\pi_*(M_1)\xrightarrow{\sim} M$. Let $H^1_M(X,\langle G_1,\h_1\rangle)$ denote the set
of isomorphism classes of liftings of $M$ to $\langle G_1,\h_1\rangle$.

The constructions considered in the previous two paragraphs combine together as follows.
Suppose that $\Gamma$ acts by automorphisms on both $\langle G,\h\rangle$
and $\langle G_1,\h_1\rangle$ such that the epimorphism $\varphi$ is $\Gamma$-equivariant.
Further, suppose that $M$ is $\Gamma$-stable. Then we get a natural action of
$\Gamma$ on $H^1_M(X,\langle G_1,\h_1\rangle)$.

Let us proceed to some examples of Harish-Chandra torsors and their liftings we need.
Set $n:=\frac{1}{2}\dim X/S$.

Recall the Harish-Chandra pair $\langle\Symp A, \Ham\rangle$. Then $X$ has a canonical Harish-Chandra
torsor $\M_{symp}$ of symplectic coordinate systems. Following \cite{BK1}, Lemma 3.2,
we define it as the (pro-finite) scheme of all morphisms $\varphi:(\K^{2n})^{\wedge_0}\rightarrow X$
that are compatible with the symplectic structure, i.e., $\varphi^*(\Omega)$ coincides
with the symplectic form $\sum_{i=1}^n dx_i\wedge dy_i$.

Assume now that $\K^\times$ acts on $X$
satisfying the conventions of the previous section. The action of $\K^\times$ on $A$
given by $t.x_i=tx_i, t.y_i=ty_i$ gives rise to a $\K^\times$-action on $\langle \Symp A, \Ham\rangle$
and hence on $H^1(X, \langle \Symp A, \Ham\rangle)$. Also we remark that under the $\K^\times$-actions
both on $X$ and $(\K^{2n})^{\wedge_0}$ an element $t\in \K^\times$ multiplies the symplectic forms by $t^2$.
Thus $\M_{symp}\in H^1(X,\langle \Symp A, \Ham\rangle)$  is $\K^\times$-stable.

Now recall the Harish-Chandra pair $\langle\Aut D, \Der D\rangle$. The group $\K^\times$ acts on
$D$ by automorphisms: $t.x_i=tx_i, t.y_i=ty_i, t.h=t^2h$. The group $\Z/2\Z$ acts on $D$ by antiautomorphisms:
$\sigma.x_i=x_i, \sigma.y_i=y_i, \sigma.h=-h$ (here $\theta$ stands for the non-unit element of
$\Z/2\Z$). This gives rise to an action of $\K^\times\times \Z/2\Z$ on $\langle \Aut D, \Der D\rangle$
and hence to the action of $\K^\times\times \Z/2\Z$ on $H^1_{\M_{symp}}(X, \langle \Aut D, \Der D\rangle)$.

The reason why we are interested in the set $H^1_{\M_{symp}}(X, \langle \Aut D, \Der D\rangle)$
is that it is in a natural bijection with $\Quant(X/S)$, see \cite{BK1}, Lemma 3.3. To define
the bijection we need  the {\it localization} construction.

Given a module $V$ over a Harish-Chandra pair $\langle G,\h\rangle$ and a Harish-Chandra torsor
$M$ over $X$ one can define a flat bundle $\Loc(M,V)$ (the localization of $V$ with respect to
$M$) as follows.  As a bundle, $\Loc(M,V)$ is the associated bundle of the principal bundle $M$ with fiber
$V$. By the construction of $\Loc(M,V)$, both
$\mathcal{E}_M,\h_M$ act on $\Loc(M,V)$. A flat connection $\nabla$ on $\Loc(M,V)$ is defined by (see \cite{BK1}, (2.2)) $$\nabla_\xi (a)= \widetilde{\xi} a -\theta_{M}(\widetilde{\xi} a),$$
where $a$ is a local section of $\Loc(M,V)$, $\xi$ is a local vector field on $X$,
and $\widetilde{\xi}$ is a local section of $\mathcal{E}_M$ lifting $\xi$. We remark that the right hand side
does not depend on the choice of $\widetilde{\xi}$.

\begin{Lem}\label{Lem:tors_vs_quant}
The map $M\mapsto \Loc(M,D)^{\nabla}$, where $\Loc(M,D)^\nabla$ stands for the sheaf
of $\nabla$-flat sections, defines a $\K^\times\times \Z/2\Z$-equivariant bijection
between $H^1_{\M_{symp}}(X, \langle \Aut D, \Der D\rangle)$ and $\Quant(X/S)$.
\end{Lem}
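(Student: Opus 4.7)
The plan is to invoke \cite{BK1}, Lemma 3.3, for the set-theoretic bijection between $H^1_{\M_{symp}}(X,\langle \Aut D,\Der D\rangle)$ and $\Quant(X/S)$ via $M_1\mapsto \Loc(M_1,D)^\nabla$, and then verify $\K^\times\times \Z/2\Z$-equivariance by tracing both actions through the localization functor.

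I would begin by recalling why the underlying bijection works, as this is what the equivariance argument will be decorated around. Given a lifting $M_1$ of $\M_{symp}$ to an $\langle \Aut D,\Der D\rangle$-torsor, the localization $\Loc(M_1,D)$ inherits a fiberwise $\K[[h]]$-algebra structure from the $\langle \Aut D,\Der D\rangle$-equivariant algebra structure on $D$, and passing to $\nabla$-flat sections produces a sheaf of $\K[[h]]$-algebras on $X$. The compatibility $\pi_*(M_1)=\M_{symp}$ forces $\Loc(M_1,D)^\nabla/h\cong \Loc(\M_{symp},A)^\nabla=\Str_X$, and the Poisson bracket induced from $\frac{1}{h}[\cdot,\cdot]$ on $D$ matches the one induced by $\Omega$ because $\M_{symp}$ is precisely the torsor of symplectic coordinate systems. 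The inverse map sends $\Dcal$ to the torsor of $\K[[h]]$-algebra isomorphisms from $D$ to completions of $\Dcal$ at closed points, reducing modulo $h$ to framings in $\M_{symp}$.

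For $\K^\times$-equivariance, fix $t\in \K^\times$ and let $\phi_t$ denote the automorphism of $\langle \Aut D,\Der D\rangle$ induced by $t.x_i=tx_i,\ t.y_i=ty_i,\ t.h=t^2h$. Since $\M_{symp}$ is $\K^\times$-stable, the push-forward of a lifting $M_1$ along $\phi_t$ is again a lifting of $\M_{symp}$, and the naturality of $\Loc(-,D)^\nabla$ with respect to morphisms of Harish-Chandra pairs identifies the associated sheaf with the push-forward of $\Loc(M_1,D)^\nabla$ along the $X$-automorphism given by $t$, equipped with the relabeled scalar action $sd=(t.s)d,\ hd=(t^2 h)d$. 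This is verbatim the definition of $^t\Dcal$ given in Subsection \ref{SUBSECTION_Basic_def}, so $\K^\times$-equivariance follows.

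For $\Z/2\Z$-equivariance, the nontrivial element $\theta$ acts on $D$ as an antiautomorphism fixing $x_i,y_i$ and sending $h\to -h$; composing with the canonical identifications $\Aut D\cong \Aut D^{op}$ and $\Der D\cong \Der D^{op}$ allows one to treat $\theta$ as a genuine homomorphism $\langle \Aut D,\Der D\rangle\to \langle \Aut D^{op},\Der D^{op}\rangle$ to which the functoriality of the push-forward construction applies. Applying $\Loc(-,D^{op})^\nabla$ to the pushed-forward lifting produces $\Dcal^{op}$ with $h$ acting by $-h$, which is exactly the $\Z/2\Z$-action on $\Quant(X/S)$. The main obstacle is purely cosmetic: ensuring that the push-forward machinery, which was set up for homomorphisms of Harish-Chandra pairs, accommodates the antiautomorphism $\theta$. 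Once that reinterpretation is in place, both equivariance assertions reduce to naturality of the localization functor, and the lemma follows.
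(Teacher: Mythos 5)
Your proposal is correct and follows exactly the route taken in the paper: the bijection is credited to \cite{BK1}, Lemma 3.3, and the $\K^\times\times\Z/2\Z$-equivariance is verified by unwinding the definitions of the actions and the naturality of $\Loc(\cdot,\cdot)$. The paper's own proof is even terser, stating only that the equivariance "is verified directly from the definitions"; your elaboration (including the slight detour through $D^{op}$ to reinterpret the antiautomorphism $\theta$ as a Harish-Chandra pair morphism) fills in precisely what that phrase is pointing at.
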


The claim that the map is a bijection is \cite{BK1}, Lemma 3.3. The equivariance part is verified
directly from the definitions of the $\K^\times\times \Z/2\Z$-actions.

 To define the non-commutative period map  we need some more discussion on the localization functor
 and extensions of Harish-Chandra torsors.

The localization functor $V\mapsto \Loc(M,V)$ is exact and hence it gives rise
to a natural map $\Loc(M,\bullet): H^i(\langle G,\h\rangle,V)\rightarrow H^i_{DR}(X/S, \Loc(M,V))$.
Here on the left hand side $H^i(\langle G,\h\rangle,V)$ means $\operatorname{Ext}^i(\K,V)$, where
$\Ext^i$ is taken in the category of $\langle G,\h\rangle$-modules, while in the right hand side
$H^i_{DR}$ means De Rham hyper-cohomology with respect to the flat connection on $\Loc(M,V)$.

Let $\langle G,\h\rangle$ be a Harish-Chandra pair and $V$ be a  $\langle G,\h\rangle$-module.
Let $\langle G_1,\h_1\rangle$ be an extension of $\langle G,\h\rangle$ by the Harish-Chandra
pair $(V,V)$. Further, let $M$ be a Harish-Chandra torsor over $\langle G,\h\rangle$.
We need an obstruction for extending $M$ to  a Harish-Chandra torsor over $\langle G_1,\h_1\rangle$.

Following \cite{BK1} we will state the corresponding result under some restriction on $M$. Namely,
we say that $M$ is {\it transitive} if the connection map $\theta_M: \mathcal{E}_M\rightarrow \h_M$
is an isomorphism. In particular, $\M_{symp}$ is transitive (compare with the discussion in the beginning
of Section 3 in \cite{BK1}).

\begin{Prop}[\cite{BK1}, Proposition 2.7]\label{Prop:obstruction}
In the above notation, suppose $M$ is transitive.
\begin{enumerate}
\item There exists a canonical cohomology class $c\in H^2(\langle G,\h\rangle,V)$ with the following property:
$H^1_M(\langle G_1,\h_1\rangle, V)\neq \varnothing$ if and only if $\Loc(M,c)\in H^2_{DR}(X/S, \Loc(M,V))$
vanishes.
\item If $\Loc(M,c)=0$, then $H^1_M(X,\langle G_1,\h_1\rangle)$ has a natural structure of an affine
space with the underlying vector space $H^1_{DR}(X/S, \Loc(M,V))$.
\end{enumerate}
\end{Prop}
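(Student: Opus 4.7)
The plan is to identify $c$ with the extension class of $\langle G_1,\h_1\rangle$ and then translate the standard \v{C}ech obstruction theory for lifting torsors through the localization functor $\Loc(M,\bullet)$.

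For (1), extensions of the Harish-Chandra pair $\langle G,\h\rangle$ by the abelian pair $(V,V)$ are classified by $\Ext^2_{\langle G,\h\rangle}(\K,V)=H^2(\langle G,\h\rangle,V)$, so I take $c$ to be the class of the given extension $\langle G_1,\h_1\rangle$. To produce the obstruction geometrically, choose an open cover $X=\bigcup U_i$ trivializing $M$ and record the torsor by transition functions $g_{ij}\colon U_i\cap U_j\to G$ together with local connection 1-forms $\omega_i\in \Omega^1(U_i,\h)$ satisfying the \v{C}ech cocycle and Maurer-Cartan conditions. Pick arbitrary lifts $\tilde g_{ij}\colon U_i\cap U_j\to G_1$ and $\tilde\omega_i\in\Omega^1(U_i,\h_1)$. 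The three defects measuring
\begin{itemize}
\item the failure of associativity of $\tilde g_{ij}$ on triple overlaps,
\item the failure of compatibility of $\tilde\omega_i$ with $\tilde g_{ij}$ on double overlaps,
\item the failure of the Maurer-Cartan equation for $\tilde\omega_i$,
\end{itemize}
all take values in the kernel $V$ and, once twisted by the $M$-transition data to turn $V$ into sections of $\Loc(M,V)$, assemble into a \v{C}ech-De Rham 2-cocycle with coefficients in $\Loc(M,V)$. A direct comparison with the standard cocycle recipe for the extension class shows that this 2-cocycle represents $\Loc(M,c)$. A global lift $M_1$ corresponds precisely to choices of $(\tilde g_{ij},\tilde\omega_i)$ killing all three defects simultaneously, which is exactly the condition that the class $\Loc(M,c)$ vanishes.

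For (2), assume $\Loc(M,c)=0$ and fix a reference lift $M_1$. Any other lift $M_1'$ can be compared to $M_1$ on a common trivializing cover: since they both reduce to $M$, their transition functions and connections differ by a $V$-valued \v{C}ech 1-cochain and a $V$-valued 1-form; the cocycle and Maurer-Cartan conditions for $M_1'$ force this pair to be a \v{C}ech-De Rham 1-cocycle in $\Loc(M,V)$, while changes of trivialization produce coboundaries. Conversely, any such 1-cocycle twists $M_1$ into a new lift. This produces a bijection $H^1_M(X,\langle G_1,\h_1\rangle)\xrightarrow{\sim} H^1_{DR}(X/S,\Loc(M,V))$ once a base point is chosen, yielding the claimed affine space structure.

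The main obstacle I anticipate is the bookkeeping that identifies the assembled \v{C}ech-De Rham 2-cocycle with the particular class $\Loc(M,c)$ rather than some other class with the same vanishing locus. This is where transitivity of $M$ enters essentially: the isomorphism $\mathcal{E}_M\cong \h_M$ supplied by $\theta_M$ ensures that the linearization of the Maurer-Cartan equation in the $V$-direction coincides with the de Rham differential on $\Loc(M,V)$-valued forms, so that the defect data genuinely live in the \v{C}ech-De Rham complex that computes $H^\bullet_{DR}(X/S,\Loc(M,V))$. Once this compatibility is in place, both parts of the proposition reduce to an unwinding of definitions.
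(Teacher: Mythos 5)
The paper does not prove this proposition; it is imported verbatim from \cite{BK1}, Proposition 2.7, and cited as such. So there is no in-paper proof to compare your attempt against.

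That said, your \v{C}ech-theoretic sketch is a reasonable concrete rendering of the argument that underlies the Bezrukavnikov--Kaledin result. You correctly identify $c$ as the extension class of $\langle G_1,\h_1\rangle$ over $\langle G,\h\rangle$ (which is exactly how the paper later uses $c$ in Definition~\ref{defi:Per}), and you correctly single out the two structural facts that make the argument go through: (i) the obstruction data splits into a triple of defects --- failure of the cocycle relation on triple overlaps, failure of compatibility of the lifted connections with the lifted transitions, and failure of Maurer--Cartan --- that, after twisting by the $M$-data, assemble into a \v{C}ech--De Rham $2$-cocycle with coefficients in $\Loc(M,V)$; and (ii) transitivity (i.e.\ $\theta_M$ an isomorphism $\h_M\cong\mathcal{E}_M$) is precisely what identifies the linearization of Maurer--Cartan with the flat-connection differential on $\Loc(M,V)$, so that this cocycle lives in the complex computing $H^\bullet_{DR}(X/S,\Loc(M,V))$. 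Part (2) is also the right standard argument: differences of two lifts give closed $1$-cochains, changes of local trivialization give coboundaries, and one reads off a simply transitive action.

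The one place you should be more careful is the step you flag yourself: identifying the assembled $2$-cocycle with $\Loc(M,c)$ rather than merely something with the same vanishing locus. In the BK1 framework this is handled not by a bare ``direct comparison with the standard cocycle recipe'' but by the \emph{functoriality of localization}: $\Loc(M,\bullet)$ is an exact functor of $\langle G,\h\rangle$-modules, so it carries the abstract obstruction theory for extensions of Harish--Chandra pairs functorially to the \v{C}ech--De Rham obstruction theory on $X$, and $\Loc(M,c)$ is manufactured precisely by pushing the universal class $c$ through this functor. What your write-up should make explicit is that the local lifts $(\tilde g_{ij},\tilde\omega_i)$ can all be chosen to come from a fixed set-theoretic splitting of the Harish--Chandra extension, so that the defect $2$-cocycle is literally the image under $\Loc(M,\bullet)$ of the group--Lie cocycle defining $c$; transitivity is used once more here to rewrite everything $\h$-valued as tangent-valued. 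Once that bookkeeping is included, the argument is complete and is essentially the route taken in \cite{BK1}, just carried out at the level of explicit cocycles rather than via the abstract functorial formulation.
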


We will need compatibility of constructions of the previous proposition with group actions.
Namely, let $\Gamma$ be a group that acts:
\begin{itemize}
\item on $X$ and on $S$ by scheme automorphisms such that the morphism $\pi:X\rightarrow S$
is $\Gamma$-equivariant.
\item on $\langle G_1,\h_1\rangle$ and $\langle G,\h\rangle$ by Harish-Chandra pairs automorphisms
such that the projection $\langle G_1,\h_1\rangle\twoheadrightarrow \langle G,\h\rangle$
is $\Gamma$-equivariant.
\end{itemize}

In particular, $\Gamma$ acts on $V$ by automorphisms of a $\langle G,\h\rangle$-module. Hence
$\Gamma$ acts on $H^2(\langle G,\h\rangle,V)$. Being canonical (see the proof of Proposition 2.7
in \cite{BK1}), the class $c$ is $\Gamma$-equivariant. The actions of an element $\gamma\in \Gamma$
on  $H^1(X,\langle G,\h\rangle)$ and on $V$  define
a push-forward isomorphism $\gamma_*: \Loc(M,V)\rightarrow \Loc(^\gamma M, V)$ of sheaves
that is compatible with the automorphism $\gamma_*$ of $\Str_X$ and intertwines the flat connections.
This follows directly
from the definition of $\Loc(\bullet,\bullet)$ given above. So we have the induced
linear map $\gamma_*: H^i(X/S, \Loc(M,V))\rightarrow H^i(X/S, \Loc(^\gamma M, V))$. Again, from the construction
of $\Loc(\bullet,\bullet)$ and the observation that $c$ is $\Gamma$-invariant
one deduces that $\gamma_*(\Loc(M,c))=\Loc(^\gamma M,c)$.

Now suppose that $M$ is $\Gamma$-stable and $\Loc(M,c)=0$. Then $\Gamma$ acts on both
the affine space $H^1_M(X,\langle G_1,\h_1\rangle)$ and the underlying vector space $H^1_{DR}(X/S, \Loc(M,V))$ and these actions are compatible.

Proceed to the definition of a non-commutative period map (see \cite{BK1}, Section 4). For this we will need
one more Harish-Chandra pair related to the Weyl algebra $D$. Consider the subspace
$h^{-1}D\subset D[h^{-1}]$. This subspace is closed with respect to
the Lie bracket on $D[h^{-1}]$. There is a Lie algebra homomorphism $\eta:h^{-1}D\rightarrow \Der D$
given by $a\mapsto [a,\cdot]$. This is a standard fact that this map is surjective,
its kernel coincides with $h^{-1}\K[h]$.

There is also an extension $\G$ of $\Aut D$ by the abelian
group $h^{-1}\K[h]$ constructed as follows. The group $\Aut D$ is the semidirect product of $\Sp_{2d}$
(acting linearly on the span of $x_1,\ldots,x_n,y_1,\ldots,y_n$) and of the normal subgroup
$\Aut_+ D$ that acts by the identity modulo the ideal generated by $h,x_ix_j, y_iy_j, x_iy_j$.
The Lie algebra $\Lie(\Aut D)$ of $\Aut D$ coincides with the algebra $\Der_0 D$ of derivations preserving the
maximal ideal of $D$. The  preimage $(h^{-1}D)_0$ of $\Der_0 D$ in $h^{-1}D$ is spanned by $h^{-1}\K[h]$
and all monomials $h^i x_1^{i_1}\ldots x_m^{i_m}y_1^{j_1}\ldots y_m^{j_m}$ where either $i\geqslant 0$
or $\sum_{k=1}^m i_k+j_k>1$. The Lie algebra $\spa_{2d}$ embeds naturally to $(h^{-1}D)_0$ and
$(h^{-1}D)_0=\spa_{2d}\ltimes \eta^{-1}(D_+)$. The algebra $\eta^{-1}(D_+)$ is pro-nilpotent and
so integrates to a pro-unipotent group $\G_+$. Set $\G:=\Sp_{2n}\rightthreetimes \G_+$. We have
a natural projection $\G\twoheadrightarrow \Aut D$, whose kernel is the abelian group $h^{-1}\K[[h]]$.
The group $\G$ acts on $h^{-1}D$ via the epimorphism $\G\twoheadrightarrow \Aut D$.  So $(\G,h^{-1}D)$ becomes a Harish-Chandra pair  and there is a natural
epimorphism $\langle \G,h^{-1}D\rangle\twoheadrightarrow \langle \Aut D, \Der D\rangle$ with kernel
$(h^{-1}\K[[h]],h^{-1}\K[[h]])$.

\begin{defi}\label{defi:Per}
Let $c\in H^2(\langle \Aut D,\Der D\rangle, h^{-1}\K[[h]])$ denote the canonical class of the extension
$0\rightarrow \langle h^{-1}\K[[h]],h^{-1}\K[[h]]\rangle\rightarrow  \langle \G, h^{-1}D\rangle
\rightarrow \langle\Aut D,\Der D\rangle\rightarrow 0$, see Proposition \ref{Prop:obstruction}
Let $M\in H^1_{\M_{symp}}(X,\langle \Aut D, \Der D\rangle)$. To $M$  assign an element $\Per(M)\in H^2_{DR}(X/S,\Loc(M,h^{-1}\K[[h]]))=
h^{-1}H^2_{DR}(X/S)[[h]]$ by $\Per(M):=\Loc(M,c)$. The map $$\Per: \Quant(X/S)=H^1_{\M_{symp}}(X,\langle \Aut D,\Der D\rangle)\rightarrow h^{-1}H^2_{DR}(X/S)[[h]]$$ is called the {\it non-commutative period map}.
\end{defi}

Recall that by Lemma \ref{Lem:tors_vs_quant}, the set $H^1_{\M_{sympl}}(X, \langle \Aut D, \Der D\rangle)$
is in bijection with $\Quant(X/S)$. For $\Dcal\in \Quant(X/S)$ we write $\Per(\Dcal)$ for the image of
the corresponding torsor under $\Per$.

\begin{proof}[Proof of Proposition \ref{Prop:Per_group}]
Let us introduce a $\K^\times\times \Z/2\Z$-action on the Harish-Chandra pair $(\G,h^{-1}D)$.
Take a $\K^\times$-action on $h^{-1}D$ restricted from $D[h^{-1}]$. An action of $\Z/2\Z$ we need
is defined as follow. A non-trivial element $\sigma$ acts on $h^{-1}D$ as a unique continuous anti-automorphism
 mapping $x_i$ to $x_i$, $y_i$ to $y_i$ and $h$ to $-h$.
Equip $\G$ with a unique $\K^\times\times \Z/2\Z$-action compatible with the action on $h^{-1}D$.
It is checked directly that the epimorphism $\langle \G, h^{-1}D\rangle\twoheadrightarrow \langle \Aut D,\Der D\rangle$
is $\K^\times\times \Z/2\Z$-equivariant. The induced action on $h^{-1}\K[[h]]$ is given by:
$t.h^i= t^i h^i, \sigma.h^i=(-1)^{i+1}h^i$. Both claims of the proposition follow from the definition
of $\Per$ and the above discussion of  the compatibility of $\Loc(\bullet,\bullet)$
with group actions.
\end{proof}

\begin{Rem}\label{Rem_formal}
All results quoted above transfer to the formal scheme setting (e.g., to formal deformations of
symplectic varieties) directly without
any noticeable modifications.
\end{Rem}

\section{Hamiltonian reduction}\label{SECTION_red}
\subsection{Classical reduction}\label{SUBSECTION_class_red}
An important construction of symplectic varieties  is that of a Hamiltonian reduction. In this subsection we will recall it. All results gathered in this subsection are very standard. The proofs are the same as in the
$C^\infty$-setting, see, for example, \cite{GS}.

First of all, let $A$ be a Poisson algebra, $\g$ be a Lie algebra equipped with a Lie algebra homomorphism
$\varphi:\g\rightarrow A$. Extend $\varphi$ to a Poisson algebra
homomorphism $S\g\rightarrow A$. Pick  $\chi\in \z$. The ideal $A\varphi(\g_\chi)$, where
$\g_\chi:=\{\xi-\langle \chi,\xi\rangle, \xi\in\g\}\subset S\g$, is stable with respect to the
action of $\g$ on $A$ given by $\xi\mapsto [\varphi(\xi),\cdot]$.
Define the algebra $A\red_\chi \g:=(A/A \varphi(\g_\chi))^{\g}$. By definition, this is a classical Hamiltonian
reduction of $A$ (with respect to $\varphi:\g\rightarrow A$).

There is one important special case of this construction.
Suppose that a connected algebraic group $G$ with Lie algebra $\g$
acts on $A$ in such a way that the derivation $[\varphi(\xi),\cdot]$
is the image of $\xi\in\g$ under the differential of the action. Then $G$ acts on $A/A\varphi(\g_\chi)$
and the $G$-invariants are the same as the $\g$-invariants. Let $\z$ be $\K\otimes_{\Z}\mathfrak{X}(G)$,
where $\mathfrak{X}(G)=\Hom_{\Z}(G,\K^\times)$ is the group of characters of $G$.
In this case we denote the reduction
by $A\red_\chi G$. Also for a subspace $U\subset (\g/[\g,\g])^*$ we can define the reduction
$A\red_U \g:=(A/A\varphi(U^\perp))^\g$. Here $U^\perp$ stands for the annihilator of $U$ in $\g$.
When $U=\z$ we write $A\red\g$ instead of $A\red_U \g$. We remark that $\varphi$
induces an algebra homomorphism $\K[U]\rightarrow A\red_U \g$, whose image lies in
the Poisson center. Clearly, $A\red_U\g:=\K[U]\otimes_{\K[(\g/[\g,\g])^*]}A\red \g$.

Proceed to the definition of reduction for Poisson varieties.
Let $\X$ be a  Poisson algebraic variety.
Let $G$ be a connected algebraic group acting on $\X$ by Poisson automorphisms. Suppose that the action
admits a  moment map $\mu:\X\rightarrow\g^*$, a $G$-equivariant map such that the following condition holds:
\begin{itemize}
\item $\{\mu^*(\xi),\cdot\}=\xi_\X$, where $\xi_\X$ denotes the derivation of $\Str_\X$
(the velocity vector field) corresponding to $\xi$.
\end{itemize}

We suppose that the $G$-action is free, that the quotient $\X/G$ exists and that the quotient
morphism $\pi_G:\X\rightarrow \X/G$ is affine.

Consider the inverse image $\mu^{-1}(\chi)\subset \X$. This is a smooth complete intersection.
Set $\X\red_\chi G:= \mu^{-1}(\chi)/G\subset \X/G$. Thanks to the discussion above, $\X\red_\chi G$
has a natural structure of a Poisson variety. Similarly, we can define the Poisson varieties
$\X\red_U G, \X\red G$.

Now suppose that $\X$ is a smooth variety and that the Poisson bivector is non-degenerate, so
that $\X$ is symplectic. Let $\omega$ denote the symplectic form on $\X$.
There is a unique 2-form  $\Omega$ on $\X\red G$  such that
$\mu^*(\Omega)$ coincides with the restriction of $\omega$ to $\mu^{-1}(\z)$. The
natural morphism $\pi: \X\red G\rightarrow \z$ induced from $\mu$
is smooth of relative dimension $\dim \X-2\dim G$.
The form $\Omega$ belongs to $\Omega^2(\X\red G, \z)$ and is a symplectic
form on the $\z$-scheme $\X\red G$. The reduction $\X\red_\eta G$ is a
symplectic variety. We remark that
$\X\red_U G$ equals $U\times_{\z}\X\red G$ and so is a symplectic scheme over $U$.

Suppose that $\K^\times$ acts on $\X$ such that $t.\omega=t^2\omega$ and $\mu(t.x)=t^{-2}\mu(x)$ for all
$x\in \X$. Then the $\K^\times$-action descends to $\X\red G$. We have $t.\Omega=t^2\Omega$. Equip $\z$ with a $\K^\times$-action
by $t.\alpha=t^{-2}\alpha$. Then the morphism $\pi: \X\red G\rightarrow \z$ becomes $\K^\times$-equivariant. In particular, we have  natural $\K^\times$-actions on $\X\red_0 G, \X\red_U G$.

Finally, let $V$ be a $G$-module. Consider the $G$-equivariant vector bundle $\X\times V\rightarrow \X$, where
the $G$-action on $\X\times V$ is supposed to be diagonal. Since the action of $G$ on $\X$ is free,
this $G$-equivariant bundle descends to the bundle on $\X/G$ to be denoted by $\mathcal{V}$.
Restricting the latter to $\X\red G$ or to $\X\red_0 G$ we get  bundles on these varieties.


\subsection{The Duistermaat-Heckman theorem}\label{SUBSECTION_DH}
If a symplectic variety is obtained by Hamiltonian reduction, then there is an easy way to produce its formal deformation.
The period map (see Subsection \ref{SUBSECTION_def_classic})
for this deformation can be computed with the help of (an algebraic variant
of) the Duistermaat-Heckman theorem.

Let $\X,\omega,G,\z$ be  as in Subsection \ref{SUBSECTION_class_red}. Suppose that $\K^\times$ acts on $\X$ as explained in the end of that subsection.

Let us produce a graded symplectic formal deformation of $X_0:=\X\red_0 G$. Namely, take the symplectic scheme $X:=\X\red G$
over $\z$ and consider the completion of $X$ along $X_0$ to be denoted by $\X\widehat{\red} G$.
This is a graded formal deformation of interest.
According to Subsection \ref{SUBSECTION_def_classic}, this deformation gives rise to a linear map $p:\z\rightarrow H^2_{DR}(X_0)$. Our goal is to describe this map.


Replacing $G$ with $G/(G,G)$ and $\X$ with $\X\red (G,G)$, we may assume that $G$ is a torus. We have the principal $G$-bundle $\mu^{-1}(0)\rightarrow \X\red_0 G$.
Let $c\in H^2_{DR}(X_0)\otimes \g=\Hom(\g^*, H^2_{DR}(X_0))$ denote its Chern class (we will recall the definition suitable for our purposes below).  

The following statement is an algebro-geometric version of the Duistermaat-Heckman theorem.

\begin{Prop}\label{Prop_DH}
The map $\g^*\rightarrow H^2_{DR}(X_0)$ induced by the deformation $\X\widehat{\red}G$ of $X_0$
coincides with $c$.
\end{Prop}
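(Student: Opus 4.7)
After the paper's reduction to $G$ a torus (so $\z=\g^*$ and the $G$-bundle used to define $c$ is just $\mu^{-1}(0)\to X_0$), the plan is to exhibit an absolute closed $2$-form $\widetilde\Omega$ on $X:=\X\red G$ whose relative projection differs from the canonical relative symplectic form $\Omega$ by an explicit linear-in-$\bar\mu$ term built from the curvature of $\pi_G\colon\X\to X$, and to feed this decomposition into the \v{C}ech--De Rham recipe for Gauss-Manin from Subsection~\ref{SUBSECTION_def_classic}. Because the period map of a graded deformation is linear in $\chi$ by Proposition~\ref{Prop:comm_def_graded}, it suffices to compute it to first order at $\chi=0$.

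\textbf{The key form.} I pick a connection $1$-form $\theta\in\Omega^1(\X)\otimes\g$ on $\pi_G$. For abelian $G$ the curvature $d\theta$ is $G$-invariant and horizontal, so it descends to a closed $F\in\Omega^2(X)\otimes\g$ whose restriction to $X_0$ represents $c$. Set $\widetilde\Omega:=\omega-d\langle\mu,\theta\rangle\in\Omega^2(\X)$; this is manifestly closed, and a short Cartan-calculus calculation---combining the moment-map identity $\iota_{\xi_\X}\omega=-d\langle\mu,\xi\rangle$ with the connection identity $\iota_{\xi_\X}\theta=\xi$ and $G$-invariance---shows $\iota_{\xi_\X}\widetilde\Omega=0$, so that $\widetilde\Omega$ is basic and descends to a closed $2$-form on $X$. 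Restricting to $\mu^{-1}(\chi)$, where $d\mu=0$, and descending yields $\widetilde\Omega|_{X_\chi}=\Omega_\chi-\langle\chi,F|_{X_\chi}\rangle$, equivalently the global equality of relative $2$-forms $\Omega=\widetilde\Omega_{\mathrm{rel}}+\langle\bar\mu,F\rangle_{\mathrm{rel}}$ on $X$.

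\textbf{Gauss-Manin and conclusion.} I apply the \v{C}ech--De Rham formula summand by summand. Since $\widetilde\Omega$ is globally defined and absolutely closed, the trivial one-patch cover immediately gives $\iota_{\tilde\xi}d\widetilde\Omega=0$, whence $\partial_\xi[\widetilde\Omega_{\mathrm{rel}}]=0$. For $\langle\bar\mu,F\rangle$, the closedness of $F$ reduces $d\langle\bar\mu,F\rangle$ to $\langle d\bar\mu,F\rangle$, and contracting with any lift of $\xi\in\z$ and passing to relative forms at the central fiber yields $\langle\xi,F|_{X_0}\rangle=\langle\xi,c\rangle$. Summing, this gives $p(\xi)=\langle\xi,c\rangle$, which identifies $p$ with $c\in\Hom(\z,H^2_{\mathrm{DR}}(X_0))$. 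The technical heart is the horizontality check for $\widetilde\Omega$---it is precisely that identity that forces the Chern class, rather than some other natural correction, to appear; a secondary but minor point is that an algebraic $\theta$ need not exist globally on $\X$, in which case the same argument is carried out on a \v{C}ech cover over which $\pi_G$ trivializes, with no change to the substance of the Gauss-Manin computation.
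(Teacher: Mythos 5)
Your proof is correct and captures exactly the mechanism of the paper's argument: in both cases one corrects $\omega$ by $d$ of the tautological pairing between the moment map and a connection on $\pi_G$ (in the paper, the local form $p_i^*(\gamma_i)$ pulled back from the canonical $1$-form on $(T^*G)^{\wedge_G}$; in yours, $\langle\mu,\theta\rangle$ for an abstract connection form $\theta$), verifies that the corrected $2$-form is basic and absolutely closed so its Gauss--Manin derivative vanishes, and reads off the Chern--Weil curvature cocycle from the remaining term. The only presentational difference is that you phrase the correction globally and fall back to a \v{C}ech cover when no global algebraic $\theta$ exists, whereas the paper works \v{C}ech-locally from the start via the trivializations $\widehat{\X}^i\cong Y^i_0\times(T^*G)^{\wedge_G}$, so its curvature cocycle appears in the $(0,1)$ slot $(\alpha^i-\alpha^j)$ rather than as a global curvature form $F$; these represent the same class, and your observation that Proposition~\ref{Prop:comm_def_graded} reduces the period map to its first-order term at $\chi=0$ is a correct and clarifying addition.
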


Let us recall one of the possible definitions of $c$. In the $C^\infty$-setting the Chern class of a
line bundle can be defined as the cohomology class of the curvature form of a connection on this bundle. A
similar definitions can be given in the algebraic situation.

Namely, let $Y$ be a smooth algebraic variety and $\widetilde{Y}\rightarrow Y$ be a principal $G$-bundle,
where $G$ is still a torus.
Then $\widetilde{Y}\rightarrow Y$ is locally trivial in the Zariski topology. In particular, one can find
an open affine covering $Y=\bigcup_i Y^i$ such that the restriction $\widetilde{Y}^i\rightarrow Y$ of
$\widetilde{Y}\rightarrow Y$ admits a connection. Let $\alpha^i\in \Omega^1(Y^i)\otimes \g$ be the  connection form.
Then both $d\alpha^i$ and $\alpha^i-\alpha^j$ descend to $Y$ and $(d\alpha^i,\alpha^i-\alpha^j)$ form
a \v{C}ech-De Rham 2-cocycle on $Y$. It is straightforward to check that
the cohomology class of this cocycle does not depend on the choices we made. By definition,
this cohomology class is the Chern class of the bundle $\widetilde{Y}\rightarrow Y$.

\begin{proof}[Proof of Proposition \ref{Prop_DH}]
Our proof is a slight modification of the original proof by Duistermaat and Heckman.

%
Let $\underline{\Omega}$ stand for the symplectic form on $X\widehat{\red} G/\g^*$.
We need to check that $\xi. [\underline{\Omega}]=\langle c,\xi\rangle$ for any $\xi\in\g^*$.

Let $\widehat{\X}$ denote the formal neighborhood $\mu^{-1}(0)$ in $\X$. Consider a covering $\widehat{\X}=\bigcup_i \widehat{\X}^i$ by open affine  $G$-stable formal subschemes. Fix an identification
$\widehat{\X}^i\cong Y^i\times (\g^*)^\wedge_0$, where $Y^i$ is an open affine subvariety in
$\mu^{-1}(0)$. Moreover, shrinking $Y^i$'s if necessary, we may assume that
the restriction of the principal bundle $\mu^{-1}(0)\rightarrow X_0$ to $Y^i/G$ is trivial. So
$Y^i:=Y^i_0\times G\times (\g^*)^\wedge_0$. The formal scheme $G\times (\g^*)^\wedge_0$ can be thought
as the completion $(T^*G)^\wedge_G$ of the cotangent bundle $T^*G$ along the base $G$.

Following the definition of the Gauss-Manin connection recalled in Subsection \ref{SUBSECTION_def_classic},
to compute $\xi.[\underline{\Omega}]$ we need to lift $\underline{\Omega}$ to a \v{C}ech-De Rham cochain
on $\X\widehat{\red}G$. Let $\pi$ denote the quotient morphism $\widehat{\X}\rightarrow \X\widehat{\red}G$
and $p_i$ be the projection $\widehat{\X}^i\rightarrow (T^*G)^\wedge_G$ induced by the decomposition
$\widehat{\X}^i= Y^i_0\times (T^*G)^\wedge_G$ introduced in the previous paragraph. Further, let $\gamma_i$
stand for the canonical 1-form on $T^*G$  (so that $d\gamma_i$ is the natural symplectic form on $T^*G$).

Consider the 2-form $\omega- p_i^*(d\gamma_i)$ on $\widehat{\X}^i$. It is easy to see that this form is $G$-invariant and vanishes on the vector fields tangent to the fibers of $\pi$. So there is a unique 2-form $\omega_i$ on
on $Y^i=Y_0^i\times (\g^*)^\wedge_0$ such that $\pi^*\omega_i=\omega-p_i^*(d\gamma_i)$.
Also it is easy to see that the restriction of $\omega_i$ to the vector fields tangent to the fibers
of the projection $Y^i\rightarrow (\g^*)^\wedge_0$ coincides with $\underline{\Omega}$.
So the cochain $(\omega_i,0,0)$ represents $\underline{\Omega}$.

Clearly, $d\omega_i=0$, so the \v{C}ech-De Rham differential of $(\omega_i,0,0)_i$ is nothing else but $(0,\omega_i-\omega_j,0)_{ij}$. Now $\pi^*(\omega_i-\omega_j)=dp_i^*(\gamma_i)-dp_j^*(\gamma_j)$. The form $p_i^*(\gamma_i)$
is $G$-equivariant. Moreover, for $\xi\in \g^*,\eta\in \g$ we have $dp_i^*(\gamma_i)(\xi,\eta_X)=d\gamma_i(\xi,\eta_G)=\langle\xi,\eta\rangle$.
Analogously to the original proof in \cite{DH}, we see that the map $\g^*\rightarrow \Omega^1(Y^i)$
given by $\xi\mapsto \iota_\xi dp_i^*(\gamma_i)$ is a connection form on $Y^i\times (\g^*)^\wedge_0$.
Now the claim of the proposition follows from the definition of the Chern class recalled above.
\end{proof}

\subsection{Quantum reduction: algebra level}\label{SUBSECTION_quant_red_alg}
Let $\A$ be an associative algebra, $\g$ be a Lie algebra equipped with a Lie algebra homomorphism
$\Phi:\g\rightarrow \A$. Similarly to Subsection \ref{SUBSECTION_class_red}, we can define the spaces
$\A\red_\chi \g, \A\red_U \g,\A\red \g$ and $\A\red_\chi G, \A\red_U G, \A\red G$ (for a connected
group $G$ with Lie algebra $\g$). We remark that all these spaces have natural algebra structures.
For example, $\A\red_U \g, \A\red_U G$ are algebras over $\K[U]$. The algebras above are called
{\it quantum Hamiltonian reductions} of $\A$.

We are mostly interested in the following special case. Suppose that the algebra $\A$ comes
equipped with an exhaustive increasing filtration  $\F_i \A, i\in \Z$. Suppose that $[\F_i \A, \F_j \A]\subset
\F_{i+j-2}\A$ for all $i,j$ and that $\operatorname{im}\Phi\subset \F_2\A$.
Then $A:=\gr \A$ is a graded commutative algebra and has a natural Poisson
bracket of degree $-2$. Then the quantum reductions $\A\red_\chi \g$ etc. inherit a filtration from
$\A$. On the other hand, set $\varphi:=\gr\Phi: \g\rightarrow \A$. The algebras $A\red_0 \g, A\red_U\g$
are graded with the Poisson brackets of degree $-2$.

We say that {\it quantization commutes with reduction}  for $\chi$ if the following two conditions hold:
\begin{enumerate}
\item $\gr \A\Phi(\g_\chi)=A\varphi(\g)$.
\item Any $\g$-invariant in $A/A\varphi(\g)$ lifts to a $\g$-invariant in $\A/\A\Phi(\g_\chi)$.
\end{enumerate}
One can give a similar definition for $U$. Clearly, if quantization commutes with reduction,
then $\gr \A\red_\chi \g=A\red_0\g, \gr\A\red_U\g=A\red_U\g$.

Here are some conditions that guarantee that quantization commutes with reduction.

\begin{Lem}\label{Lem:quant_comm_red}
Let $G$ be an algebraic group that acts on $\A$ rationally by filtration
preserving automorphisms. Suppose that $A=\gr \A$ is finitely generated.
Let $U\subset\z$ be a subspace. Suppose that the elements $\varphi(\xi_1),\ldots, \varphi(\xi_k)$
form a regular sequence in $A$, where $\xi_1,\ldots,\xi_k$ is a basis in $U^\perp\subset \g$.
Then (1) holds.
Finally, suppose that one of the following conditions holds:
\begin{itemize}
\item[(A)] $G$ is reductive.
\item[(B)] The $G$-action on $\mu^{-1}(U)$ (where $\mu:\Spec(A)\rightarrow\g^*$
is the moment map) is free, and $A\red_U G$ is finitely generated.
\end{itemize}
Then (2) holds.
\end{Lem}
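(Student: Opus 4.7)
The plan is to prove (1) via a Koszul/Chevalley–Eilenberg argument, and to split (2) into the reductive case (A), treated by a splitting argument, and the free-action case (B), treated by a Rees-algebra deformation. Throughout, set $V:=U^\perp$ and $V_\chi:=\{\xi-\langle\chi,\xi\rangle:\xi\in V\}$.

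For (1), form the filtered Chevalley–Eilenberg-type complex $\cdots\to\A\otimes\wedge^{2}V\to\A\otimes V\to\A$ with leading differential
\[
a\otimes\xi_{i_{1}}\wedge\cdots\wedge\xi_{i_{p}}\mapsto\sum_{j}(-1)^{j-1}a\bigl(\Phi(\xi_{i_{j}})-\langle\chi,\xi_{i_{j}}\rangle\bigr)\otimes\widehat{\xi}_{i_{j}}+\text{Lie-bracket correction},
\]
the correction term landing in $V$ since $U\subseteq\z$ forces $[\g,\g]\subseteq V$. Its $0$-th cohomology is $\A/\A\Phi(V_\chi)$. Place $\A\otimes\wedge^{p}V$ in filtration $\F_{\bullet}\A$ shifted by $2p$; on the associated graded the correction vanishes and we recover the Koszul complex of $\varphi(\xi_{1}),\dots,\varphi(\xi_{k})$ on $A$, which by the regularity hypothesis is exact in positive degrees with $H_{0}=A/A\varphi(V)$. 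A standard filtered spectral sequence (the filtration being bounded below in each homological degree) lifts this exactness, so the natural surjection $A/A\varphi(V)\twoheadrightarrow\gr(\A/\A\Phi(V_\chi))$ is an isomorphism, proving (1).

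For (2) under (A), fix a homogeneous $\bar a\in(A/A\varphi(V))^{G}$ of degree $i$ and any lift $\tilde a\in\F_{i}(\A/\A\Phi(V_\chi))$. Since $G$ acts rationally on $\A$, preserves $\F_{\bullet}$, and stabilizes $\A\Phi(V_\chi)$ (using $G$-invariance of $V$ and of $\chi\in\z$), the finite-dimensional rational $G$-submodule $M\subseteq\F_{i}(\A/\A\Phi(V_\chi))$ generated by $\tilde a$ maps to $\gr_{i}(\A/\A\Phi(V_\chi))=A_{i}/(A\varphi(V))_{i}$ with $\bar a$ a trivial $G$-summand in its image; reductivity of $G$ splits the composite surjection $M\twoheadrightarrow\K\bar a$ and furnishes a $G$-invariant lift of $\bar a$.

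For (2) under (B), reductivity is unavailable and we use a Rees-algebra deformation. Let $R_\A=\bigoplus_{i\geqslant 0}\F_{i}\A$ be the Rees algebra, flat over $\K[t]$ with $R_\A/(t)=A$ and $R_\A/(t-1)=\A$. For $\xi\in V$ set $\widetilde\Phi(\xi):=(\Phi(\xi)-\langle\chi,\xi\rangle)t^{2}\in R_\A$ and let $N:=R_\A/R_\A\widetilde\Phi(V)$. Running the Koszul argument of (1) verbatim over $R_\A$ shows $N$ is flat over $\K[t]$ with $N/(t)N=A/A\varphi(V)$ and $N/(t-1)N=\A/\A\Phi(V_\chi)$. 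Freeness of the $G$-action on $\mu^{-1}(U)=\Spec(A/A\varphi(V))$ together with finite generation of $A\red_U G$ presents $\mu^{-1}(U)\to\Spec(A\red_U G)$ as a principal $G$-bundle; transferring this torsor structure through the flat Rees deformation yields that $N^{G}$ is flat over $\K[t]$ with $N^{G}/(t)N^{G}=A\red_U G$. Any $\bar a\in A\red_U G$ then lifts to some $n\in N^{G}$, and evaluation at $t=1$ delivers the desired $G$-invariant lift in $\A/\A\Phi(V_\chi)$. The main obstacle is precisely this last flatness and identification in (B): without reductivity, exactness of $G$-invariants has to be replaced by faithfully flat descent along the principal $G$-bundle, carried out by trivializing the bundle étale-locally on $\Spec(A\red_U G)$, lifting the trivialization through the flat quantum deformation $N$, and then descending — this is the technically delicate step of the argument.
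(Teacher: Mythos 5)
Your argument for (1) --- the filtered Chevalley--Eilenberg/Koszul complex with $\A\otimes\wedge^p V$ given the filtration shift by $2p$, so that the Lie-bracket correction dies on the associated graded and the convergent spectral sequence reduces matters to the Koszul complex of $\varphi(\xi_1),\ldots,\varphi(\xi_k)$ on $A$ --- is exactly the standard argument the paper defers to (it cites \cite{Wquant}, proof of Lemma 3.6.1). Your argument for (2)(A), lifting into a finite-dimensional rational $G$-submodule and splitting the surjection by reductivity, is also correct and is precisely what ``(A) easily implies (2)'' amounts to. The Rees-algebra reformulation of (2)(B) is a sound framework; the paper's reference, \cite{Miura} Proposition 3.4.1, runs the analogous argument with the $h$-adic deformation in place of the Rees deformation.

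The gap is exactly where you flag it, but your sketch of the delicate step is not yet an argument. Saying one ``transfers the torsor structure through the flat Rees deformation'' or ``lifts the trivialization through the flat quantum deformation $N$'' does not describe a valid operation: the torsor structure lives on the commutative scheme $\mu^{-1}(U)\to\Spec(A\red_U G)$, and the central subalgebra of $N$ one would need in order to base-change $N$ along an \'{e}tale chart of that base is $N^G$ itself, which is what you are trying to analyze --- so the reasoning is circular. What the argument actually needs is the cohomological input $H^1_{\mathrm{rat}}(G,\K[\mu^{-1}(U)])=0$: trivialize the torsor locally, observe that $H^{>0}_{\mathrm{rat}}(G,\K[W]\otimes\K[G])=0$ because $\K[G]$ is a coinduced (hence rationally acyclic) $G$-module, and reassemble by the \v{C}ech/descent spectral sequence using affineness of $\Spec(A\red_U G)$. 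One must then work degree by degree, since $N$ is not $t$-adically complete and a naive inverse-limit lifting does not apply: each graded piece $N_i$ has a finite $t$-adic filtration $N_i\supset tN_{i-1}\supset\cdots\supset t^iN_0\supset 0$ whose subquotients are graded pieces of $\K[\mu^{-1}(U)]$, so $H^1(G,N_{i-1})=0$ by d\'{e}vissage, and the long exact sequence of rational $G$-cohomology for $0\to N_{i-1}\xrightarrow{t}N_i\to(N/tN)_i\to 0$ yields surjectivity of $N_i^G\to(A\red_U G)_i$. Also a small correction: flatness of $N^G$ over $\K[t]$ is automatic ($t$ is injective on $N$, hence on the submodule $N^G$); the entire content of (B) is the identification $N^G/tN^G=(N/tN)^G$, which is what the vanishing is for.
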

\begin{proof}
(1) is pretty
standard, see, for instance, proof of Lemma 3.6.1 in \cite{Wquant}. (A) easily implies (2).
If (B) holds, then (2) also is pretty standard, compare with
\cite{Miura}, proof of Proposition 3.4.1.
\end{proof}

We will need a ramification of the above construction. Namely, let $\A_h$ be a
flat $\K[h]$-algebra such that $A:=\A_h/(h)$ is commutative. Suppose that $\K^\times$ acts on $\A_h$ by automorphisms
such that $t.h=t^2 h$ for all $t\in \K^\times$. Let $\A_h$ be equipped with a
$\K[h]$-linear map $\Phi_h:\g\rightarrow \A_h$ with $t.\Phi_h(\xi)=t^2\Phi_h(\xi)$ for all
$\xi\in\g$. On $\A_h$  we have a Lie bracket $[\cdot,\cdot]_h$ given by
$[a,b]_h=\frac{1}{h}[a,b]$. We suppose, in addition, that $\Phi_h$ is a Lie algebra
homomorphism. For $\chi\in(\g/[\g,\g])^*$ form the shift
$\g_{\chi h}:=\{\xi-\langle \chi,\xi\rangle h\}\subset \g\oplus \K h$ and
set $\A_h\red_{\chi h}\g:=(\A_h/\A_h \Phi_h(\g_{\chi h}))^\g$. The reduction
$\A_h\red_U \g$ is defined in the same way as above.

The quantization commutes with reduction condition is stated analogously to the filtered
situation (taking the associated graded should be replaced with taking the quotient
by $h$).

The relation between the $\K[h]$-algebra setting and the filtered algebra setting is as follows.
Suppose the $\K^\times$-action comes from a grading on $\A_h$. Then $\A:=\A_h/(h-1)$ is a filtered
algebra with $\gr \A=\A_h/(h)$. The homomorphism $\Phi_h:\g\rightarrow \A_h$
induces a homomorphism $\Phi:\g\rightarrow \A$.
It is straightforward to see that $\left[\A_h\red_{\chi h}\g\right]/(h-1)=\A\red_\chi \g$ and
$\A_h\red_{U}\g/(h-1)=\A\red_U\g$.

We remark that there is a way to replace a filtered algebra with a graded algebra over the polynomial
ring: the Rees algebra construction. However, in this construction the independent variable is
of degree 1, not 2 that we need. Of course, $\A_h$ is closely related to the Rees algebra of $\A$
but we will not need this relation in the sequel.

\subsection{Quantum reduction: sheaf level}\label{SUBSECTION_quant_red_sheaf}
Let $\X,G$ be as in Subsection \ref{SUBSECTION_class_red}. We suppose
that $\K^\times$ acts on $\X$ as in the end of that subsection.
Let $\Dcal$ be a quantization of $\X$. For convenience we suppose that $\Dcal$ is graded.

Suppose further that $\Dcal$ is $G$-equivariant, see Subsection \ref{SUBSECTION_Basic_def}.
The  $G$-action defines a Lie algebra homomorphism $\g\rightarrow \Der_{\K[[h]]}(\Dcal),\xi\mapsto\xi_{\Dcal}$.
Finally, we suppose that the $G$-action on $\Dcal$ admits a quantum comoment
map, i.e., a $G$-equivariant linear map $\Phi:\g\rightarrow \Gamma(X,\Dcal)$ such that
\begin{itemize}
\item
$\frac{1}{h}[\Phi(\xi),\cdot]=\xi_{\Dcal}$,
\item the image of $\Phi(\xi)$ in
$\Str_X$ coincides with $\mu^*(\xi)$,
\item $\Phi(\xi)$ has degree 2 with respect to the
$\K^\times$-action.
\end{itemize}

%

We can sheafify the constructions of the previous paragraph. From Lemma \ref{Lem:quant_comm_red}
we see that the sheaf $\Dcal\red G$ on the $\z$-scheme $X\red G$ is a graded quantization.
Similarly, we can define the graded quantizations $\Dcal\red_{\chi h}G,\Dcal\red_U G$.

\begin{Ex}\label{Ex_diff_op}
Let $G$ be an algebraic group, $H$ be its algebraic subgroup. Then $\Dcal_h(G/H)$ is naturally identified
with $\Dcal_h(G)\red_0 H$. This is well known for the usual differential operators. The proof
in the homogenized setting is similar. 
\end{Ex}

Finally, we will need the completion $\Dcal\widehat{\red}G$ of $\Dcal\red G$ along $X\red_0 G$. By definition,
this completion is the inverse limit $\varprojlim \left[\Dcal\red G\right]/ \mathcal{J}^k$, where $\mathcal{J}$ is the kernel
of the composition $\Dcal\red G\twoheadrightarrow \Str_{X\red G}\twoheadrightarrow \Str_{X\red_0 G}$. One can view
$\Dcal\widehat{\red} G$ as a quantization of the  scheme $X\widehat{\red} G/ \z^{\wedge_0}$.

\section{Symplectic resolutions}\label{SECTION_sympl_resol}
\subsection{Generalities}\label{SUBSECTION_sympl_general}
We start by recalling the  definition of a symplectic resolution.

Let $X_0$ be a Poisson  (possibly singular) normal affine variety. Further, suppose that $X_0$ is a scheme over a smooth affine variety $S$ and the subalgebra $\K[S]\subset \K[X_0]$ is central. Suppose that the restriction of the Poisson
bivector to $X^{reg}_0$ is a non-degenerate section of $\bigwedge^2 \mathcal{T}_{X_0^{reg}/S}$, let $\Omega_0$
be the corresponding 2-form in $\Omega^2(X_0^{reg}/S)$. A smooth $S$-scheme $X$ equipped with a morphism
$\pi:X\rightarrow X_0$ over $S$ and with a fiberwise symplectic form $\Omega\in \Omega^2(X/S)$ is said to be
a {\it symplectic resolution} of $X_0$ if
\begin{enumerate}
\item $\pi$ is projective and is birational fiberwise.
\item $\pi^*(\Omega_0)=\Omega$.
\end{enumerate}

Of course, when $S$ is a point, we get the usual definition of a symplectic resolution.

We will need some general properties of symplectic resolutions. The following lemma is very
standard.

\begin{Lem}\label{Lem:K_X}
Let $X,X_0,S$ be as above. Then $H^i(X,\Str_X)=\{0\}$ for $i>0$.
\end{Lem}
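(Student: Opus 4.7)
The plan is to combine Grauert--Riemenschneider vanishing with affineness of $X_0$ via the Leray spectral sequence. The starting point is that the fiberwise symplectic form $\Omega\in \Omega^2(X/S)$ induces a fiberwise isomorphism $\mathcal{T}_{X/S}\xrightarrow{\sim} \Omega^1_{X/S}$, hence trivializes the relative canonical bundle $\omega_{X/S}:=\det \Omega^1_{X/S}$. Because $X/S$ is smooth and $S$ itself is smooth, the exact sequence
\begin{equation*}
0\to f^*\Omega^1_S\to \Omega^1_X\to \Omega^1_{X/S}\to 0
\end{equation*}
(where $f:X\to S$ is the structure morphism) gives $\omega_X\cong f^*\omega_S$. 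Writing $f=g\circ\pi$ with $g:X_0\to S$, this becomes $\omega_X\cong \pi^*\mathcal{L}$ for the line bundle $\mathcal{L}:=g^*\omega_S$ on $X_0$.

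Next I would apply Grauert--Riemenschneider to the proper surjective morphism $\pi:X\to X_0$ from the smooth variety $X$. Fiberwise birationality together with the shared relative dimension of $X/S$ and $X_0/S$ implies that $\pi$ is generically finite globally, so the classical vanishing theorem yields $R^i\pi_*\omega_X=0$ for $i>0$. Combining with the identification $\omega_X\cong\pi^*\mathcal{L}$ and the projection formula,
\begin{equation*}
R^i\pi_*\Str_X\otimes \mathcal{L}\;\cong\;R^i\pi_*\omega_X\;=\;0\qquad(i>0),
\end{equation*}
and invertibility of $\mathcal{L}$ gives $R^i\pi_*\Str_X=0$ for $i>0$.

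Finally, $X_0$ is affine, so higher cohomology of any quasi-coherent sheaf on $X_0$ vanishes, and the Leray spectral sequence $H^p(X_0,R^q\pi_*\Str_X)\Rightarrow H^{p+q}(X,\Str_X)$ collapses to the required statement $H^i(X,\Str_X)\cong H^0(X_0,R^i\pi_*\Str_X)=0$ for $i>0$. The only subtle point is the justification of Grauert--Riemenschneider in this slightly non-standard relative setup, which reduces to the observation that fiberwise birationality upgrades to global generic finiteness — everything else is formal.
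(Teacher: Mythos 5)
Your argument is correct and reaches the conclusion, but it takes a genuinely different route from the paper's. The paper works \emph{fiberwise over $S$}: it applies Grauert--Riemenschneider to each fiber $\pi_s:X_s\to (X_0)_s$ (each $X_s$ being a symplectic resolution of a normal affine variety, so $H^i(X_s,\Str_{X_s})=0$), then invokes base change to conclude $R^i\rho_*\Str_X=0$, and finally uses affineness of $S$. You instead work \emph{globally over $X_0$}: you first trivialize $\omega_{X/S}$ via the top wedge power of $\Omega$, deduce $\omega_X\cong\pi^*g^*\omega_S$, apply Grauert--Riemenschneider once to the single map $\pi:X\to X_0$ (which is proper, generically finite onto its image by fiberwise birationality plus equidimensionality, and has smooth source), then remove the pulled-back canonical twist by the projection formula, and finish with affineness of $X_0$. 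Your route has the advantage of sidestepping the cohomology-and-base-change step, which the paper glosses over; the price is the extra (but easy) inputs of canonical-bundle trivialization and the upgrade from fiberwise birationality to global generic finiteness, which you correctly flag as the one point needing care. Both are valid and of comparable length.
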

\begin{proof}
Let $\rho:X\rightarrow S$ denote the structure morphism and set $X_s:=\rho^{-1}(s), s\in S$.
Each $X_s$ is a symplectic resolution of a normal affine variety. So thanks to the
Grauert-Riemenschneider theorem, $H^i(X_s, \Str_{X_s})=\{0\}$. It follows that the fiber of
$R^i_* \rho(\Str_X)$ at $s$ coincides with   $H^i(X_s, \Str_{X_s})$ and hence is zero.
Since $S$ is affine, we are done.
\end{proof}

The main application of this lemma for us is as follows. Let $\Dcal$ be a quantization of
$X/S$. Then using Lemma \ref{Lem:K_X} it is easy to show that $H^i(X, \Dcal)=0$ for  $i>0$, while $\Gamma(X,\Dcal)/(h)=\K[X]$.

\subsection{Quiver varieties}\label{SUBSECTION_quiver_var}
The basic notation related to quivers was introduced in Subsection \ref{SUBSECTION_conventions}.

To a quiver $Q$ one  assigns its double quiver $\D Q=(\D Q_0,\D Q_1)$ with
$\D Q_0:=Q_0$ and $\D Q_1:=Q_1\sqcup Q_1^{op}$, where $Q_1^{op}$ is identified
with $Q_1$, an element of $Q_1^{op}$ corresponding to $a\in Q_1$ is usually denoted by $a^*$.
The maps $t,h:Q_1\rightarrow Q_0$ in the double quiver are as before, while $t(a^*)=h(a),h(a^*)=t(a), a\in Q_1^{op}$.
In a sentence, $\D Q$ is obtained from $Q$ by adding a reverse arrow for any arrow in $Q$.

Fix non-negative integers $v_i,d_i, i\in Q_0$. Set $\vv:=(v_i)_{i\in Q_0}, \ww:=(d_i)_{i\in Q_0}$. Following Nakajima consider the spaces
\begin{align*}
R(Q,\vv,\ww):=&\bigoplus_{a\in Q_1}\Hom(V_{t(a)},V_{h(a)})\oplus \bigoplus_{i\in Q_0}\Hom(D_i,V_i),\\
R(\D Q,\vv,\ww):=&\bigoplus_{a\in Q_1}(\Hom(V_{t(a)},V_{h(a)})\oplus \Hom(V_{h(a)},V_{t(a)}))\oplus \bigoplus_{i\in Q_0}(\Hom(D_i,V_i)\oplus \Hom(V_i,D_i))\\&=R(Q,\vv,\ww)\oplus R(Q,\vv,\ww)^*,
\end{align*}
where $V_i,D_i, i\in Q_0,$ are vector spaces of dimensions $v_i,d_i$, respectively. Throughout
the paper we skip $\ww$ from the notation if $\ww=0$ and write $R(Q,\vv)$ instead of $R(Q,\vv,0)$, etc.

Being identified with $R(Q,\vv,\ww)\oplus R(Q,\vv,\ww)^*$, the space $R(\D Q,\vv,\ww)$ is symplectic, let $\omega$ stand for the symplectic form. The group $\GL(\vv):=\prod_{i\in Q_0}\GL(v_i)$ acts naturally on
$R(\D Q,\vv,\ww)$. The map $\mu:R(\D Q,\vv,\ww)\rightarrow \gl(\vv)^*\cong \gl(\vv)$ sending
$(A_a,B_a,\Gamma_i,\Delta_i), A_a\in \Hom(V_{t(a)},V_{h(a)}), B_a\in
\Hom(V_{h(a)},V_{t(a)}), \Gamma_i\in \Hom(D_i,V_i),\Delta_i\in \Hom(V_i,D_i)$ to
$$\left(\sum_{a, i=h(a)}A_aB_{a^*}-\sum_{a,t(a)=i}B_{a^*}A_a+ \Gamma_i\Delta_i\right)_{i\in Q_0}$$ is a moment map
for the $\GL(\vv)$-action. Set $\z:=(\gl(\vv)/[\gl(\vv),\gl(\vv)])^*$.
We identify $\z$ with $\K^{Q_0}$ by setting $\langle\chi, (\xi_i)_{i\in Q_0}\rangle\mapsto
\sum_{i\in Q_0}\chi_i\tr(\xi_i)$.

Below for $U\subset \z$ we set $\Lambda_U(\D Q,\vv,\ww):=\mu^{-1}(U)$.
We write $\Lambda(\D Q,\vv,\ww)$ for $\Lambda_\z(\D Q, \vv,\ww)$
and $\Lambda_\chi(\D Q,\vv,\ww)$ for the fiber of $\chi\in \z$ of the natural map
$\Lambda(\D Q,\vv,\ww)\rightarrow \z$.

So one can form the quotient $$\M(\D Q,\vv,\ww):=R(\D Q,\vv,\ww)\red \GL(\vv)=\Lambda(\D Q,\vv,\ww)\quo \GL(\vv).$$ This is a Poisson algebraic variety to be referred to as a (universal) affine quiver variety. Also
we can form the reductions $\M_\chi(\D Q,\vv,\ww),\M_U(\D Q,\vv,\ww)$.

These reductions were studied by Crawley-Boevey, see, in particular, \cite{CB1},\cite{CB2}.
In fact, he worked only with the case when $\ww=0$. However, the general case can be reduced
to this one by using the following construction. Consider the quiver $Q^\ww=(Q^\ww_0,Q^\ww_1)$,
where $Q^{\ww}_0:=Q_0\sqcup\{s\}$, where $s$ is a new vertex, and $Q^{\ww}_1$ is the disjoint union of $Q_1$ and the set
$Q_1':=\{a^{ij}, i\in Q_0, j=1,\ldots, d_i\}$ with $t(a^{ij})=i, h(a^{ij})=s$. Then
$R(Q, \vv,\ww)=R(Q^{\ww},\vv^\ww)$, where $\vv^\ww:=(\vv,\epsilon_s)$,
$\epsilon_s$ being the coordinate vector at the vertex $s$. The group $\GL(\vv)$ is included
naturally into $\GL(\vv^\ww)$. Moreover, $\GL(\vv^\ww)=\K^\times\times \GL(\vv)$, where
$\K^\times=\{(x\cdot 1, x\in \K^\times\}$ with $1$ standing  for the unit in $\GL(\vv^\ww)$.
The subgroup $\K^\times$ acts trivially on $R(\D Q^{\ww},\vv^\ww)$ and so the action of $\GL(\vv^\ww)$
reduces to that of $\GL(\vv)$. It follows that $\Lambda_U(\D Q^{\ww},\vv^\ww)=\Lambda_U(\D Q,\vv,\ww), \M_U(\D Q^\ww,\vv^\ww)=\M_U(\D Q,\vv,\ww)$ etc.

Following Crawley-Boevey, let us describe the algebro-geometric properties of the varieties
$\M_\chi(\D Q,\vv), \M_{U}(\D Q,\vv)$.
Define a quadratic function $p:\K^{Q_0}\rightarrow \K, \alpha=\sum_{i\in Q_0}\alpha_i \epsilon_i\mapsto 1-\sum_{i\in Q_0}\alpha_i^2+ \sum_{a\in Q_1}\alpha_{t(a)}\alpha_{h(a)}$.

\begin{Prop}\label{Prop:CB}
\begin{enumerate}
\item Suppose that the following condition holds: for any decomposition of $\vv$ into the sum
$\vv=\vv^1+\ldots+\vv^k,k>1,$ of positive roots $\vv^1,\ldots,\vv^k$ of $Q$
the inequality $p(\vv)\geqslant \sum_{i=1}^k p(\vv^i)$ holds. Then the moment map $\mu$
is flat, the schemes $\Lambda_\chi(\D Q,\vv),\Lambda_U(\D Q,\vv)$ are non-empty  complete intersections
provided $\chi\cdot \vv=0$. Moreover, if $p(\vv)>\sum_{i=1}^k p(\vv^i)$ for any decomposition as above, then the schemes $\Lambda_\chi(\D Q,\vv),\Lambda_U(\D Q,\vv)$ are reduced and irreducible, and each scheme $\Lambda_\chi(\D Q,\vv)$ (with $\chi\cdot \vv=0$) contains a closed
$\GL(\vv)$-orbit with the stabilizer $\K^\times=\{x\cdot 1, x\in \K^\times\}$.
\item The varieties $\M_\chi(\D Q,\vv), \M_U(\D Q,\vv)$ are normal.
\end{enumerate}
\end{Prop}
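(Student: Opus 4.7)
The plan is to follow Crawley-Boevey's strategy \cite{CB1}, \cite{CB2}. First, I would reduce to the case $\ww=0$ via the extended quiver construction $Q^\ww$ already introduced in the text: the identification $\Lambda_U(\D Q,\vv,\ww)=\Lambda_U(\D Q^\ww,\vv^\ww)$ as $\GL(\vv)$-schemes, together with $\M_U(\D Q^\ww,\vv^\ww)=\M_U(\D Q,\vv,\ww)$, lets me import all assertions from the framed case to the unframed case. So throughout I may assume $\ww=0$.

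For (1), the starting observation is that for $\chi$ with $\chi\cdot\vv=0$ the value $\chi$ lies in the image of $\mu$, and the generic stabilizer in $\GL(\vv)$ of a point of $R(\D Q,\vv)$ contains the diagonal $\K^\times$, so any non-empty fiber $\Lambda_\chi(\D Q,\vv)$ has dimension at least $\dim R(\D Q,\vv)-\dim\gl(\vv)+1$. The heart of the matter is the reverse bound: every irreducible component has dimension at most this number, which is where the inequality $p(\vv)\geqslant \sum_i p(\vv^i)$ enters. I would stratify $\Lambda_\chi$ by the isomorphism type of the underlying representation of the deformed preprojective algebra $\Pi^\chi(Q)$ — each stratum corresponds to a multiplicity vector expressing $\vv$ as a sum of positive roots $\vv^i$ with $\chi\cdot\vv^i=0$, and a standard deformation-theoretic computation bounds its dimension in terms of $\sum_i p(\vv^i)$. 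The hypothesis then forces every stratum to have dimension at most the expected one, giving equidimensionality and the complete intersection property; non-emptiness is simultaneously obtained from the stratum corresponding to the trivial decomposition. Flatness of $\mu|_{\mu^{-1}(U)}$ follows from miracle flatness (the source is Cohen--Macaulay being a complete intersection, the target $U$ is regular, fibers equidimensional).

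For the strict inequality, the stratum of simple $\Pi^\chi(Q)$-representations — those whose endomorphism algebra reduces to $\K$ — has dimension strictly larger than every other stratum, which yields irreducibility; reducedness then comes from the complete intersection property combined with generic smoothness on the simple stratum. A simple representation also gives a closed $\GL(\vv)$-orbit with stabilizer exactly the diagonal $\K^\times$. For part (2), I would apply Serre's criterion $R_1+S_2$ to $\M_U$: the property $S_2$ is inherited from Cohen--Macaulayness of $\Lambda_U$ via invariant-theoretic quotient (invariants of a reductive group acting on a Cohen--Macaulay ring are Cohen--Macaulay, hence $S_2$), while $R_1$ reduces to showing that the singular locus of $\M_U$ has codimension at least two — this is again an étale-local dimension estimate on the strata above, since the smooth locus of $\M_\chi$ contains at least the image of the open subset of simple representations. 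The principal obstacle throughout is the stratum-by-stratum dimension estimate: making the reflection-functor and root-combinatorics input rigorous is precisely the technical core of \cite{CB1}, \cite{CB2}, which I would cite rather than redo.
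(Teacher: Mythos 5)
Your opening reduction to $\ww=0$ is unnecessary --- Proposition \ref{Prop:CB} is already stated for $\Lambda_\chi(\D Q,\vv)$, $\M_\chi(\D Q,\vv)$ with $\ww=0$ --- but that is harmless. The more substantive issue is that most of your argument operates fiberwise, while a good part of what the proposition claims concerns the family schemes $\Lambda_U$ and $\M_U$, and these two are not the same thing. Crawley-Boevey's theorems cited here (\cite{CB1}, Theorems 1.1, 1.2; \cite{CB2}) are about an individual $\Lambda_\chi$, $\M_\chi$. Your irreducibility argument (``the stratum of simple $\Pi^\chi(Q)$-representations has strictly larger dimension'') and reducedness argument are likewise statements about $\Lambda_\chi$ for each fixed $\chi$; a flat family of reduced and irreducible varieties over an irreducible base need not itself be reduced and irreducible, so this does not close the case of $\Lambda_U$. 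The paper supplies the missing step: since the $\K^\times$-action from Subsection \ref{SUBSECTION_class_red} is contracting on $\Lambda_U$ (it degenerates $\Lambda_U$ onto the central fiber $\Lambda_0$, equivalently makes $\K[\Lambda_U]$ a positively graded ring whose quotient by the positively-graded regular sequence $U^*$ is $\K[\Lambda_0]$), irreducibility and reducedness of $\Lambda_0$ propagate to $\Lambda_U$, and an analogous degeneration gives normality of $\M_U$ from that of the fibers. You need some version of this argument; your proposal never supplies a mechanism to pass from the fibers to the family.

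There is also an outright false step in part (2): you assert that ``invariants of a reductive group acting on a Cohen--Macaulay ring are Cohen--Macaulay.'' This is not true --- the Hochster--Roberts theorem requires the ambient ring to be regular (or one works with rings of rational singularities via Boutot), and there are well-known counterexamples for Cohen--Macaulay ambients. So your route to $S_2$ for $\M_U$ via ``$\Lambda_U$ is C--M $\Rightarrow$ $\K[\Lambda_U]^{\GL(\vv)}$ is C--M'' does not go through. Crawley-Boevey's normality proof in \cite{CB2} for $\M_\chi$ is more delicate than this, which is precisely why the paper simply cites that paper for the fiber case and then passes to $\M_U$ by the same contracting-action degeneration; you should do the same rather than try to rebuild the normality argument from a general invariant-theoretic principle that does not hold.
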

\begin{proof}
The statements for $\Lambda_\chi,\M_\chi$ were proved by Crawley-Boevey: (1) in \cite{CB1},
Theorems 1.1, 1.2, and (2) in \cite{CB2}. The claim in (1) that $\Lambda_U$ is reduced
follows from the reducedness of the $\Lambda_\chi$'s. The irreducibility for $\Lambda_U$
follows from the irreducibilty and the reducedness for $\Lambda_0$, thanks to the contracting action
of $\K^\times$ on $\Lambda_U$. A similar argument proves the normality.
\end{proof}

Now we turn to non-affine quiver varieties.
Namely, consider a character $\theta$ of $\GL(\vv)$. Recall that a point
$x\in R(\D Q,\vv,\ww)$ is said to be {\it $\theta$-semistable} if there is
a homogeneous polynomial $f\in \K[R(\D Q,\vv,\ww)]$ with $f(x)\neq 0$
that is $\GL(\vv)$-semiinvariant
of weight that is a positive multiple of $\theta$.
Geometric Invariant Theory implies that there is a categorical quotient $\M^\theta(\D Q,\vv,\ww)$ for the $\GL(\vv)$-action on $\Lambda(\D Q,\vv,\ww)^{\theta,ss}$ such that the quotient
morphism $\Lambda(\D Q,\vv,\ww)^{\theta,ss}\rightarrow \M^\theta(\D Q,\vv,\ww)$
is affine. The latter implies that $\M^\theta(\D Q,\vv,\ww)$ has a natural Poisson structure.
This structure is symplectic (over $\z$) whenever the action of $\GL(\vv)$ on $\Lambda(\D Q,\vv,\ww)^{\theta,ss}$
is free.   Introduce the Poisson schemes $\M^\theta_\chi(\D Q,\vv,\ww),\M^\theta_U( \D Q,\vv,\ww)$
in a similar way.

It is a standard fact from Geometric Invariant Theory that there are natural projective morphisms
\begin{equation}\label{eq:morphisms}
\M^\theta_\chi(\D Q,\vv,\ww)\rightarrow \M_\chi(\D Q,\vv,\ww),
\M^\theta_U(\D Q,\vv,\ww) \rightarrow \M_U(\D Q,\vv,\ww).\end{equation}

Nakajima in \cite{Nakajima} found some conditions on $\theta$ guaranteeing  that the $\GL(\vv)$-actions on the varieties  $\Lambda_\chi(\D Q,\vv,\ww)^{\theta,ss}$ are free and the projective morphisms
(\ref{eq:morphisms}) are birational (and so are symplectic resolutions). His results are summarized below.

To state the proposition we will need to recall some definitions. Let $R_+$ denote the system of positive
roots of the quiver $Q$. Recall that $\z$ is identified with $\K^{Q_0}$.  We say that $\theta\in \K^{Q_0}$
is {\it generic} if $\theta\cdot \alpha\neq 0$ for any $\alpha\in R_+$.

The following statement follows from  \cite{Nakajima}, Theorems 2.8,3.2,4.1.

\begin{Prop}\label{Prop:Nakajima1}
Suppose $\theta$ is generic. Then  the $\GL(\vv)$-action on $\Lambda(\D Q,\vv,\ww)^{\theta,ss}$ is free,
and the morphisms (\ref{eq:morphisms})   are birational provided $\Lambda_{0}(\D Q,\vv,\ww)^{\theta,ss}\neq \varnothing$.
\end{Prop}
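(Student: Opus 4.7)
The plan is to deduce both assertions from Nakajima's Theorems 2.8, 3.2, and 4.1 after putting the framed situation in his conventions. First I would pass through the $Q^\ww$-trick recalled earlier to identify $\Lambda(\D Q, \vv, \ww)$ with $\Lambda(\D Q^\ww, \vv^\ww)$, extending the character $\theta$ of $\GL(\vv)$ to a character $\theta^\ww$ of $\GL(\vv^\ww)$ by choosing an appropriate weight at the framing vertex $s$ so that King's $\theta^\ww$-semistability for $\D Q^\ww$-representations of dimension $\vv^\ww$ matches the semistability defining $\Lambda(\D Q, \vv, \ww)^{\theta, ss}$. A direct check shows that the genericity of $\theta$ on positive roots of $Q$ implies the analogous non-vanishing of $\theta^\ww$ on the positive roots of $Q^\ww$ that can appear as dimension vectors of destabilizing subrepresentations.

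With this translation, King's criterion gives that under the generic hypothesis, $\theta^\ww$-semistability coincides with $\theta^\ww$-stability, so the stabilizer in $\GL(\vv^\ww)$ of any such semistable point is the diagonal scalar $\K^\times$. Under the framed identification this scalar is exactly the kernel of $\GL(\vv^\ww) \twoheadrightarrow \GL(\vv)$, which acts trivially on $R(\D Q, \vv, \ww)$; hence the induced $\GL(\vv)$-action on $\Lambda(\D Q, \vv, \ww)^{\theta, ss}$ is free. For the birationality of (\ref{eq:morphisms}), the stable locus is open in $\Lambda(\D Q, \vv, \ww)$ and the GIT quotient restricts there to a geometric quotient by a free action, so each map $\M^\theta_\chi \to \M_\chi$ (respectively $\M^\theta_U \to \M_U$) is an isomorphism over the image of the stable locus. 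Non-emptiness of $\Lambda_0^{\theta, ss}$ combined with the contracting $\K^\times$-action on $\Lambda$ propagates non-emptiness and hence density of the stable locus in every fiber of the reduction map, yielding the desired birationality.

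The main obstacle I anticipate is the careful combinatorial bookkeeping that translates the genericity condition from $Q$ to $Q^\ww$ and rules out all destabilizing subrepresentations of the framed picture; once this translation is secured, the assertion reduces to invoking the cited theorems of \cite{Nakajima}.
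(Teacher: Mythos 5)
The paper gives no proof of this proposition; it simply records that the statement follows from Theorems 2.8, 3.2, and 4.1 of \cite{Nakajima}. Your proposal attempts to reconstruct the relevant parts of that argument via the framing trick and King's criterion, which is the right framework, but two of your steps hide the places where the cited theorems actually do work.

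First, the ``direct check'' that genericity of $\theta$ on positive roots of $Q$ rules out destabilizing subrepresentations is exactly the nontrivial point: a subrepresentation of a point of $\Lambda_\chi(\D Q,\vv,\ww)$ can have dimension vector which is not a root, and non-vanishing of $\theta$ on roots does not, by itself, forbid $\theta$-degree zero on a general subdimension vector. What makes the argument go is that Jordan--H\"older constituents of representations of the deformed preprojective algebra have dimension vectors that are positive roots (Crawley-Boevey), so that a semistable-but-not-stable point would produce a simple quotient of $\theta$-degree zero with root dimension vector, contradicting genericity. This should be named rather than waved through, as the non-root case is precisely where a naive version of the check fails. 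Second, for birationality you assert the map $\M^\theta_\chi\to\M_\chi$ is ``an isomorphism over the image of the stable locus.'' That is not quite right: the image of a stable point whose $\GL(\vv)$-orbit is not closed in $\Lambda_\chi$ is its semisimplification, and nothing stops several $\theta$-stable orbits from having the same semisimplification, nor forces that semisimplification to be $\theta$-semistable. What one actually needs, and what Nakajima (or the existence of closed free orbits supplied by Proposition \ref{Prop:CB}) provides, is a nonempty open locus of \emph{closed, free, $\theta$-semistable} orbits in $\Lambda_\chi$; over that locus the projective morphism is a bijection by uniqueness of the closed orbit in a $\GL(\vv)$-orbit closure, and density requires irreducibility of $\Lambda_\chi$, which is an additional (Crawley-Boevey) input. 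Non-emptiness of $\Lambda_0^{\theta,ss}$ alone does not yield any of this; you have smuggled in the conclusion of the theorems being cited.
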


For example, suppose that
\begin{equation}\label{eq:character}\theta((X_i)_{i\in Q_0})=\prod_{i\in Q_0}\det(X_i)^{-1}\end{equation} The character $\theta$ is generic. An element
$(A_a,B_{a^*}, \Gamma_i,\Delta_i)$ is semistable if there are no nonzero subspaces $V_i'\subset \ker \Delta_i$
such that the collection $(V_i')_{i\in Q_0}$ is stable under all $A_a,B_{a^*}$.
It is clear that the $\GL(\vv)$-action on  $\Lambda(\D Q,\vv,\ww)^{\theta,ss}$
is free.

%

In fact, Nakajima's results also allow to determine the number of irreducible
 components of $\Lambda_{0}(\D Q,\vv,\ww)^{\theta,ss}$.
Namely, assume that the quiver $Q$ is either of finite type or affine. Let $\g(Q)$ be the corresponding
(finite dimensional semisimple or affine) Kac-Moody algebra. Then we can view $\ww$ as an element of the weight
lattice of $\g(Q)$, the corresponding weight is $\mathrm{d}:=\sum_{i\in Q_0}d_i \omega_i$, where $\omega_i,i\in Q_0,$
are fundamental weights. Also to $\vv$ we assign an element $\mathrm{v}:=\sum_{i\in Q_0}v_i \epsilon_i$
in the root lattice.

The next proposition follows from Theorem 10.16 in \cite{Nakajima}.

\begin{Prop}\label{Prop:quiver_non_empty}
Let $\theta$ be generic.
Consider the irreducible highest weight module $L(\mathrm{d})$ of $\g(Q)$. The number of irreducible components in $\Lambda_{0}(\D Q,\vv,\ww)^{\theta,ss}$ coincides with the dimension
of the weight space of weight $\mathrm{d}-\mathrm{v}$ in $L(\mathrm{d})$.
\end{Prop}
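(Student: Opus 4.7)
The plan is to deduce the statement from Nakajima's geometric construction of the irreducible integrable highest-weight module $L(\mathrm{d})$ on the top Borel--Moore homology of the quiver varieties, combined with a standard descent argument for counting irreducible components along a principal bundle.

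The first step is to pass from $\Lambda_0(\D Q,\vv,\ww)^{\theta,ss}$ to the GIT quotient $\M^\theta_0(\D Q,\vv,\ww)$. Since $\theta$ is generic, Proposition \ref{Prop:Nakajima1} guarantees that the $\GL(\vv)$-action on the semistable locus is free, so the quotient morphism $\Lambda_0(\D Q,\vv,\ww)^{\theta,ss}\to \M^\theta_0(\D Q,\vv,\ww)$ is a principal $\GL(\vv)$-bundle. Since $\GL(\vv)$ is connected (in fact rational, hence irreducible as a variety), the fibers of this bundle are irreducible; consequently the number of irreducible components of $\Lambda_0(\D Q,\vv,\ww)^{\theta,ss}$ equals that of $\M^\theta_0(\D Q,\vv,\ww)$.

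The second step is to invoke Theorem 10.16 of \cite{Nakajima}. For $Q$ of finite or affine Dynkin type and $\theta$ generic, that theorem equips $\bigoplus_\vv H_{\mathrm{top}}(\M^\theta_0(\D Q,\vv,\ww))$ with an action of $\g(Q)$ via Hecke correspondences between the quiver varieties for adjacent dimension vectors and identifies the resulting $\g(Q)$-module with $L(\mathrm{d})$; under this identification $H_{\mathrm{top}}(\M^\theta_0(\D Q,\vv,\ww))$ is the weight space $L(\mathrm{d})_{\mathrm{d}-\mathrm{v}}$. Since the top-degree Borel--Moore homology of a pure-dimensional complex algebraic variety has a basis indexed by its irreducible components, this immediately identifies the number of irreducible components of $\M^\theta_0(\D Q,\vv,\ww)$ with $\dim L(\mathrm{d})_{\mathrm{d}-\mathrm{v}}$, completing the proof.

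The main (mild) technical obstacle is the bookkeeping required to match the conventions of Subsection \ref{SUBSECTION_quiver_var} (the sign in the moment map, the identification $\z\cong \K^{Q_0}$, and the normalization of the generic character $\theta$) with those of \cite{Nakajima}; no new geometric input is needed beyond Nakajima's theorem.
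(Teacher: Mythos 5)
Your Step 1 is correct: for generic $\theta$ the quotient morphism $\Lambda_0(\D Q,\vv,\ww)^{\theta,ss}\to\M^\theta_0(\D Q,\vv,\ww)$ is a locally trivial $\GL(\vv)$-bundle with irreducible fibers, so the two spaces have the same number of irreducible components, and the identification of the top Borel--Moore homology of a pure-dimensional variety with the span of its fundamental classes of components is also fine. The gap is in Step 2, where you misplace the weight space. In Nakajima's Theorem 10.16 the module $L(\mathrm{d})$ is realized on $\bigoplus_{\vv}H_{\mathrm{top}}\bigl(\pi^{-1}(0)\bigr)$, where $\pi\colon\M^\theta_0(\D Q,\vv,\ww)\to\M_0(\D Q,\vv,\ww)$ is the projective morphism from (\ref{eq:morphisms}) and $\pi^{-1}(0)$ is the half-dimensional \emph{central fiber} (the Lagrangian core) over the origin. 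Its top Borel--Moore homology sits in the \emph{middle} degree of $\M^\theta_0(\D Q,\vv,\ww)$, not the top; the number of irreducible components of $\M^\theta_0(\D Q,\vv,\ww)$ is forced to be $1$ whenever it is non-empty and $\M_0(\D Q,\vv,\ww)$ is normal, while $\dim L(\mathrm{d})_{\mathrm{d}-\mathrm{v}}$ can be larger.

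To see the discrepancy concretely, take $Q$ of type $A_2$ and $\ww=\vv=(1,1)$. By Maffei's isomorphism (Proposition~\ref{Prop:Maffei_main}), $\M^\theta_0(\D Q,\vv,\ww)\cong\Sl(e,B)$ for $e$ subregular in $\sl_3$, i.e.\ the connected minimal resolution of $\K^2/(\Z/3\Z)$, a single component; yet $\mathrm{d}-\mathrm{v}=0$ and the zero weight space of the adjoint $\sl_3$-module $L(\omega_1+\omega_2)$ has dimension $2$. So the equality you assert in Step 2 is not what Nakajima proves, and Step 3 does not close the argument. What Theorem 10.16 does give after your Step 1 is that the number of irreducible components of the saturated subset $\{x\in\Lambda_0(\D Q,\vv,\ww)^{\theta,ss}:0\in\overline{\GL(\vv)x}\}$ equals $\dim L(\mathrm{d})_{\mathrm{d}-\mathrm{v}}$; in particular $\Lambda_0(\D Q,\vv,\ww)^{\theta,ss}$ is non-empty precisely when the weight space is nonzero, and that non-emptiness criterion is what the rest of the paper actually invokes. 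The proposition should be read, and proved, as a statement about the central fiber rather than about the full semistable locus, and your Step 2 needs to be replaced by that corrected reference.
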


Finally, let us discuss the quantum analogs of quiver varieties.


Let $\W_h(=\W_{h}(V^*))$ denote the homogeneous Weyl algebra of $V^*$, i.e.,
the quotient of the tensor algebra $T(V^*)[h]$
by the relations $u\otimes v-v\otimes u-h\omega(u,v), u,v\in V^*$, where $\omega$ denotes the symplectic form on
$V^*$. This algebra inherits a grading from $T(V^*)[h]$ with
$\W_h/h\W_h=\K[V]$.

Set $V:=R(\D Q,\vv,\ww)$. Clearly, the group $G:=\GL(\vv)$ acts on $\W_h$ by graded algebra automorphisms.
Further, the comoment map $\mu^*: \g\rightarrow \K[V]$ naturally lifts
to a quantum comoment map $\Phi: \g\rightarrow \W_h$. Namely, we have a unique (up to a scalar factor) $\operatorname{Sp}(V)$-equivariant
homomorphism $\mathfrak{sp}(V)\rightarrow \W_h$. The map $\Phi$ is obtained by restricting an
this  homomorphism to $\g\subset \mathfrak{sp}(V)$. So we can define the quantum Hamiltonian
reductions $\W_{\chi h}(\D Q, \vv,\ww)_h:=\W_h\red_{\chi h}G, \W_U(\D Q,\vv,\ww)_h:=\W_h\red_U G$
as in Subsection \ref{SUBSECTION_quant_red_alg}.

We can also consider the sheaf version of this construction.
Namely, consider the deformation quantization $\W_{V^*,h}$ of
$V$ obtained by localizing (the $h$-adic completion of) the algebra $\W_h$.
This quantization is $G$-equivariant and graded.
Now let us compare the algebra $\W_U(\D Q,\vv,\ww)_h$ with the algebra of global
sections of $\W_{V^*,h}\red_U^\theta G:= \W_{V^*,h}|_{V^{\theta,ss}}\red_U G$.

\begin{Lem}\label{Lem:global_sections}
Suppose that
\begin{itemize}
\item[(i)] quantization commutes with reduction for $\W_U(\D Q,\vv,\ww)_h$,
\item[(ii)] the action of $G$ on $\Lambda_U(\D Q,\vv,\ww)^{\theta,ss}$ is free,
\item[(iii)] and the morphism $\M^\theta_U(\D Q,\vv,\ww)\rightarrow \M_U(\D Q,\vv,\ww)$
is a symplectic resolution of singularities.\end{itemize}
Then the natural morphism
\begin{equation}\label{eq:natural_hom11}\W_h^G\rightarrow
\Gamma(\M_U^\theta(\D Q,\vv,\ww),\W_{V^*,h}\red_U^\theta G)\end{equation}
gives rise to an isomorphism between $\W_U(\D Q,\vv,\ww)_h$ and the subalgebra
of $\K^\times$-finite elements in $\Gamma(\M_U^\theta(\D Q,\vv,\ww),\W_{V^*,h}\red_U^\theta G)$.
\end{Lem}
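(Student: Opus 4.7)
The plan is to deduce the isomorphism from the classical statement $\K[\M_U(\D Q,\vv,\ww)]=\Gamma(\M_U^\theta(\D Q,\vv,\ww),\Str_{\M_U^\theta})$ using the $h$-adic filtration. First, the homomorphism (\ref{eq:natural_hom11}) factors through $\W_U(\D Q,\vv,\ww)_h$: the elements of $\W_h^G$ that vanish in $\W_U(\D Q,\vv,\ww)_h$ already vanish in $\Gamma(\M_U^\theta,\W_{V^*,h}\red_U^\theta G)$ because, after restriction to the semistable locus, they lie in the left ideal used to define $\W_{V^*,h}\red_U^\theta G$; reductivity of $\GL(\vv)$ lets one commute $G$-invariants past this quotient. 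Since $\W_U(\D Q,\vv,\ww)_h$ is graded, the resulting $\K[h]$-algebra homomorphism takes values in the $\K^\times$-finite part:
$$\iota:\W_U(\D Q,\vv,\ww)_h\longrightarrow \Gamma\bigl(\M_U^\theta(\D Q,\vv,\ww),\W_{V^*,h}\red_U^\theta G\bigr)^{\K^\times\text{-fin}}.$$

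Both source and target are graded with $\deg h=2$, and I claim both are $h$-torsion free with quotient by $(h)$ equal to $\K[\M_U(\D Q,\vv,\ww)]$. On the left this is the hypothesis that quantization commutes with reduction, together with the flatness of $\W_h$ over $\K[h]$. On the right, $\W_{V^*,h}\red_U^\theta G$ is a homogeneous quantization of $\M_U^\theta(\D Q,\vv,\ww)$ over $U$, so $(\W_{V^*,h}\red_U^\theta G)/(h)=\Str_{\M_U^\theta}$; Lemma \ref{Lem:K_X} applied to the symplectic resolution $\M_U^\theta\to \M_U$ gives $H^1(\M_U^\theta,\Str_{\M_U^\theta})=0$. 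Taking cohomology of
$$0\to \W_{V^*,h}\red_U^\theta G\xrightarrow{\cdot h}\W_{V^*,h}\red_U^\theta G\to \Str_{\M_U^\theta}\to 0$$
and passing to $\K^\times$-finite parts identifies $\Gamma(\M_U^\theta,\W_{V^*,h}\red_U^\theta G)^{\K^\times\text{-fin}}/(h)$ with $\Gamma(\M_U^\theta,\Str_{\M_U^\theta})$. Normality of $\M_U(\D Q,\vv,\ww)$ (Proposition \ref{Prop:CB}) combined with projective birationality of $\M_U^\theta\to \M_U$ yields $\Gamma(\M_U^\theta,\Str_{\M_U^\theta})=\K[\M_U(\D Q,\vv,\ww)]$, and the reduction $\iota\bmod h$ is then seen to be the identity.

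Finally, induct on the $\K^\times$-weight $d$. Each weight-$d$ piece of either side is finite dimensional, and multiplication by $h$ fits the weight-$(d-2)$ piece into the weight-$d$ piece as a subspace whose quotient, on both sides, is the weight-$d$ part of $\K[\M_U(\D Q,\vv,\ww)]$; the $5$-lemma shows that $\iota_d$ is an isomorphism whenever $\iota_{d-2}$ is, which, starting from the obvious isomorphism in the lowest weight, implies that $\iota$ is an isomorphism throughout. The main technical point I anticipate is the interchange of $\K^\times$-finite parts with the long exact cohomology sequence in the second paragraph; this reduces to the algebraicity of the $\K^\times$-action on $\M_U^\theta$ and the $\K^\times$-equivariance of the homogeneous quantization $\W_{V^*,h}\red_U^\theta G$, which are built into the constructions of Subsections \ref{SUBSECTION_Basic_def} and \ref{SUBSECTION_quant_red_sheaf}.
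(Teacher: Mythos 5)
Your proof is correct, and the core inputs are the same as the paper's: factor the map through the reduction using reductivity of $\GL(\vv)$, reduce to the classical statement $\Gamma(\M_U^\theta,\Str_{\M_U^\theta})=\K[\M_U(\D Q,\vv,\ww)]$ (normality plus projective birational), and lift the isomorphism back along $h$. But the lifting step is packaged differently. The paper passes to the $h$-adic completion $\W_U(\D Q,\vv,\ww)_h^{\wedge_h}$, observes that both sides are complete/separated flat $\K[[h]]$-algebras, and concludes via the standard fact that a map of such algebras is an isomorphism once it is one mod $h$; the $\K^\times$-finite part is extracted only at the very end. You instead stay inside the $\K^\times$-finite parts throughout, run a weight-by-weight induction, and close the argument with the 5-lemma on each finite-dimensional weight space. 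Your version requires the extra cohomological input $H^1(\M_U^\theta,\Str_{\M_U^\theta})=0$ from Lemma \ref{Lem:K_X} (to propagate $H^1=0$ to the sheaf $\W_{V^*,h}\red_U^\theta G$ via the $h$-power filtration and thereby identify $\Gamma(\W_{V^*,h}\red_U^\theta G)/(h)$ with $\Gamma(\Str_{\M_U^\theta})$), whereas the paper's completion argument deduces surjectivity and injectivity directly from flatness and the bottom horizontal isomorphism in the commuting square, without invoking $H^1$. Both routes are sound; yours is a little more explicit about the cohomological machinery, at the cost of the slightly fiddly weightwise bookkeeping.
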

\begin{proof}
Let us construct a homomorphism $\W_U(\D Q, \vv,\ww)_h^{\wedge_h}\rightarrow \Gamma(\M_U^\theta(\D Q,\vv,\ww),\W_{V^*,h}\red_U^\theta G)$, where the superscript $^{\wedge_h}$ means the $h$-adic completion.
Since $G$ is reductive, and its action on $\W_h(V^*)^{\wedge_h}$ is pro-finite, we see that
the natural homomorphism $(\W_h(V^*)^{\wedge_h})^G\rightarrow \W_h(V^*)^{\wedge_h}\red_U G=\W_U(\D Q,\vv,\ww)_h^{\wedge_h}$ is surjective and its kernel coincides with $(\W_h(V)^{\wedge_h})^G\cap \W_h(V)^{\wedge_h}\Phi_h(U^\perp)=
[\W_h(V)^{\wedge_h}\Phi_h(U^\perp)]^G$. However, it is easy to see that this kernel
is contained in the kernel of  (\ref{eq:natural_hom11}). So we have an obviously $\K^\times$-equivariant
and $\K[U][h]$-linear homomorphism $\W_U(\D Q,\vv,\ww)_h^{\wedge_h}\rightarrow \Gamma(V\red_U^\theta G,\W_{V^*,h}\red_U^\theta G)$. We are going to show that this homomorphism is an isomorphism.

We have the following commutative diagram, where the vertical arrows are quotients by $h$.

\begin{picture}(90,30)
\put(4,2){$\K[\M(\D Q,\vv,\ww)]$}
\put(2,22){$\W_U(\D Q, \vv,\ww)_h^{\wedge_h}$}
\put(56,2){$\K[\M^\theta(\D Q,\vv,\ww)]$}
\put(50,22){$\Gamma(V\red_U^\theta G, \W_{V^*,h}\red_U^\theta G)$}
\put(12,20){\vector(0,-1){12}}
\put(60,20){\vector(0,-1){12}}
\put(34,4){\vector(1,0){21}}
\put(30,23){\vector(1,0){19}}
\end{picture}

The bottom horizontal arrow is an isomorphism  since the morphism $\M^\theta_U(\D Q,\vv,\ww)\rightarrow \M_U(\D Q,\vv,\ww)$
because of (iii). The left vertical arrow is surjective because
of (i). Thanks to (ii), $\W_{V^*,h}\red_U^\theta G$ is a quantization of $V\red_U^\theta G$.
The right vertical arrow is the quotient by $h$ because of (iii) and the remarks in the end of
Subsection \ref{SUBSECTION_sympl_general}.  So the top horizontal arrow is surjective modulo $h$ and hence is genuinely surjective.
Since the sheaf $\W_{V^*,h}\red_U^\theta G$ is $\K[[h]]$-flat, we see that  $\Gamma(\M^\theta(\D Q,\vv,\ww),
\W_{V^*,h}\red_U^\theta G)$ is a flat $\K[[h]]$-algebra. Using this and the observation that the bottom arrow
is an isomorphism we see that the top horizontal arrow is injective.

To complete the proof we notice that, since the grading on $\W_U(\D Q,\vv,\ww)_h$ is positive,
the algebra $\W_U(\D Q,\vv,\ww)_h$ coincides with the subalgebra of $\K^\times$-finite elements
in $\W_U(\D Q,\vv,\ww)_h^{\wedge_h}$.
\end{proof}

We note that instead of $\M^\theta_U(\D Q,\vv,\ww)$ in the previous lemma we could consider
the formal neighborhood $V\widehat{\red}^\theta_U G$ of $\M^\theta_0(\D Q,\vv,\ww)$
in $\M^\theta(\D Q,\vv,\ww)$. The claim is still that $\W(\D Q,\vv,\ww)_h$ is the algebra
of $\K^\times$-finite elements in the algebra of global sections of $\W_{h,V^*}\widehat{\red}^\theta G$.

\subsection{Slodowy varieties}\label{SUBSECTION_Slodowy}
In this subsection we will define {\it Slodowy varieties} (tracing back to \cite{Slodowy}) that are certain smooth symplectic varieties that are related to  {\it Slodowy slices} in reductive Lie algebras.

First of all, let us recall Slodowy slices. Let $G$ be a reductive algebraic group, $\g$ its
Lie algebra, $e\in\g$ a nilpotent element, and $\Orb:=Ge$.
The Slodowy slice $\Sl(=\Sl(e)=\Sl(\Orb))$ associated to $(\g,e)$ is a transverse slice to the adjoint orbit $\Orb$ of $e\in \g$. It is constructed as follows. Pick a semisimple element $h\in\g$ and a nilpotent element
$f\in \g$ forming an $\sl_2$-triple with $e$. Then set $\Sl:=e+\ker\ad(f)$.
In the sequel we will identify $\g$ with $\g^*$ using a symmetric non-degenerate invariant
form $(\cdot,\cdot)$ and will consider
$\Orb,\Sl$ as subvarieties in $\g^*$.  Also set $\chi:=(e,\cdot)$.

The algebra $\K[\Sl]$ has some nice grading (often called the {\it Kazhdan grading}). Namely, let $\gamma:\K^\times\rightarrow G$ be the one-parameter group associated to $h$ (so that $\gamma(t).\xi=t^i\xi$
for $\xi\in\g$ with $[h,\xi]=i\xi$). Consider the $\K^\times$-action on $\g^*$ given by $t.\alpha=t^{-2}\gamma(t)\alpha, t\in \K^\times, \alpha\in \g^*$. It is easy to see that $\lim_{t\rightarrow \infty} t.s=\chi$ for all $s\in \Sl$.
In other words, the grading on $\K[\Sl]$ is positive.

Also thanks to the Kazhdan action, we see that $\Sl$ intersects an orbit $\Orb'\subset\g^*$ if and only if
$\Orb\subset \overline{\K^\times \Orb'}$.

In \cite{slice}, \cite{Wquant} we considered a certain symplectic $G$-variety $\XS$, called
the {\it equivariant Slodowy slice}, whose quotient is naturally identified with $\Sl$. As a variety,  $\XS:=G\times \Sl$. The group $G$ acts on $\XS$ by the left translations: $g.(g_1,s)=(gg_1,s)$. A symplectic form on $X$ is defined as follows. Identify $T^*G$ with $G\times\g^*$ via the trivialization by means of the left-invariant 1-forms.
Then $\XS=G\times \Sl$ is included into $G\times \g^*=T^*G$. It turns out that the restriction of the canonical
symplectic form from $T^*G$ to $\XS$ is non-degenerate. Denote this restriction by $\omega$.
Define the action of  $\K^\times$  on $T^*G$  by $t.(g,\alpha)=(g\gamma(t)^{-1}, t^{-2}\gamma(t)\alpha)$.
The subvariety $\XS\subset T^*G$ is $\K^\times$-stable.

Pick a parabolic subgroup $P\subset G$. By the {\it Slodowy variety} $\Sl(e,P)$ corresponding to $e$ and $P$
we mean the reduction  $\XS\red_0 P$. We will also need the formal deformation $\widetilde{\Sl}(e,P):=\XS\widehat{\red}P$.

It is clear that $\Sl(e,P)$ can be naturally embedded into $T^*G\red_0 P=T^*(G/P)$ (here we consider
the quotient with respect to the action of $P$ from the left). From here it is easy to see $\Sl(e,B)$ is the resolution of singularities of the intersection $\Sl\cap\Nil$ of $S$ with the nil-cone $\Nil\subset \g^*$, compare with \cite{Ginzburg_HC}. For a general parabolic subgroup $P\subset G$, there are  projective morphisms $\Sl(e,P)\rightarrow \Sl\cap \Nil, \XS\red P\rightarrow \Sl$ restricted from $T^*(G/P)\rightarrow \Nil, G*_{P_0}\p^\perp\rightarrow \g^*$, where $P_0$ is the solvable radical of $P$.

Below we will be mostly interested in two cases.

{\bf Case 1.} $\g$ is simple of types $A,D,E$, and  $e$ is a subprincipal nilpotent element in $\g$. The latter means that the orbit
$\Orb$ is of codimension 2 in $\Nil$.

{\bf Case 2.} $G=\SL_n$. In this case the moment map $T^*(G/P)\rightarrow \g^*$ is generically injective and its
image coincides with $\overline{G\p^\perp}$. The latter subvariety is the closure of an appropriate nilpotent
orbit in $\g^*$ (the Richardson orbit of $\p$). So we see that $T^*(G/P)$ is  a symplectic resolution of $\overline{G\p^\perp}$,
while $\Sl(e,P)$ is a symplectic resolution of $\Sl\cap \overline{G\p^\perp}$.

In fact, there is an alternative construction of $\Sl,\XS, \Sl(e,P)$ in terms of a Hamiltonian
reduction.

Consider the grading $\g:=\bigoplus_{i\in \Z}\g(i)$ given by the eigenvalues of $\ad(h)$.
Recall an element $\chi\in \g^*$.
The restriction of the skew-symmetric form $(\xi,\eta)\mapsto \langle\chi,[\xi,\eta]\rangle$
to $\g(-1)$ is non-degenerate. Following  \cite{Kawanaka},\cite{Moeglin},\cite{Premet1},\cite{GG}, pick a lagrangian subspace $l\subset \g(-1)$ and set
$\m:=l\oplus \bigoplus_{i\leqslant -2}\g(i)$.

As Gan and Ginzburg checked in \cite{GG}, $\Sl$ is naturally identified with the reduction
$\g^*\red_\chi M$, where $M$ is the unipotent subgroup of $G$ corresponding to $\m$.
More precisely, let $\rho:\g^*\twoheadrightarrow \m^*$ be the natural projection.
Then the natural map $M\times S\rightarrow \g^*, (m,s)\mapsto ms,$ is an isomorphism
of $M\times S$ and $\rho^{-1}(\chi|_{\m})$. From here it also follows that
$\XS$ is naturally (in particular, $G$- and $\K^\times$-equivariantly
and symplectomorphically) identified with $T^*G\red_\chi M$. Hence $\Sl(e,P)=(T^*G\red_0 P)\red_\chi M=
T^*(G/P)\red_\chi M$.

\subsection{Resolutions of quotient singularities}\label{SUBSECTION_resolution}
Set $L:=\K^2$ and consider a non-zero $\SL_2(\K)$-invariant
form $\omega_0$ on $L$. Equip $L^n$ with the form $\omega=\omega_0^{\oplus n}$ and set $\Gamma_n:=\Gamma^n \rtimes S_n$.
The group $\Gamma_n$ acts naturally on $L^n$ (the symmetric group just permutes the copies
of $L$) and this action preserves $\omega$.

Set $X_0:=L^n/\Gamma_n$. There is a $\K^\times$-action on $X_0$ induced by the action on $\K^{2n}$
given by $(t,v)\mapsto t^{-1}v$.

Consider the set $N_0,\ldots,N_r$ of irreducible $\Gamma$-modules, where $N_0$ is the trivial module. Consider the {\it McKay  quiver}, whose vertices are $0,\ldots,r$ and the number of arrows from $i$ to $j$ equals the dimension of $\Hom_\Gamma(\K^2\otimes N_i,N_j)$. This quiver is known to be the double of an affine Dynkin quiver $Q$.
Moreover, $0$ can be taken for the extending vertex of $Q$.

Let $\delta$ denote the indecomposable positive imaginary
root. Set $\vv:=n\delta, \ww=\epsilon_0$.  Take a generic character $\theta$ of $\GL(\vv)$,
see the discussion before Proposition \ref{Prop:Nakajima1}. Then  there is a $\K^\times$-equivariant isomorphism
$X_0\cong \M_0(\D Q,\vv,\ww)$ of schemes (one uses  \cite{GG_quiver}
to show that the right scheme is reduced and then argues as in the proof of Theorem 11.16 from \cite{EG}).

Let us show that this isomorphism can be made Poisson maybe after rescaling. This stems
from the following proposition proved in \cite{EG}, Lemma 2.23.

\begin{Prop}\label{Prop:symplect_quotient_sing}
There is unique (up to rescaling) Poisson bracket of degree $-2$ on $\K[X_0]$.
Furthermore, there are no (not necessarily Poisson) brackets of degree $i$
with $i<-2$ on $\K[X_0]$.
\end{Prop}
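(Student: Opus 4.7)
The plan is to identify brackets on $\K[X_0]$ with $\Gamma_n$-invariant bivectors on $L^n$ of the appropriate polynomial degree, and then to reduce the count of such bivectors to elementary representation theory of the wreath product $\Gamma_n = \Gamma^n \rtimes S_n$. Write $V := L^n$, so $\K[X_0] = (SV^*)^{\Gamma_n}$.

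First I would establish a degree-preserving bijection between biderivations of $\K[X_0]$ and $\Gamma_n$-invariant biderivations of $SV^*$. Let $U \subset L^n$ be the open locus where $\Gamma_n$ acts freely. Since every nontrivial element of the finite group $\Gamma \subset \SL_2(\K)$ is diagonalizable with two reciprocal eigenvalues distinct from $1$, each nontrivial element of $\Gamma_n$ acts on $L^n$ with fixed locus of codimension $\geqslant 2$; hence $L^n\setminus U$ has codimension at least two, and $U/\Gamma_n$ is the smooth locus of $X_0$. A biderivation on $\K[X_0]$ restricts to the smooth locus, pulls back to a $\Gamma_n$-invariant section of $\Lambda^2 TL^n$ over $U$, and then extends uniquely to a global section by the standard codimension-two extension property for sections of vector bundles on smooth varieties. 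Conversely, any $\Gamma_n$-invariant biderivation on $SV^*$ preserves invariants and descends. Under this bijection, together with the identification $\Lambda^2 TL^n = \Lambda^2 V \otimes SV^*$, a bracket of degree $d$ corresponds to an element of $(\Lambda^2 V \otimes S^{d+2}V^*)^{\Gamma_n}$, because the vector fields $\partial/\partial x_i$ have degree $-1$. For $d<-2$ the factor $S^{d+2}V^*$ vanishes, which settles the second claim immediately.

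It remains to show that $(\Lambda^2 V)^{\Gamma_n}$ is one-dimensional; then it must be spanned by the bivector inverse to the natural symplectic form on $V = L^n$, giving uniqueness up to scaling. Decompose $V = \bigoplus_{k=1}^n L_k$ as a $\Gamma^n$-module, so that
\[
\Lambda^2 V = \bigoplus_{k=1}^n \Lambda^2 L_k \;\oplus\; \bigoplus_{1\leqslant k<l\leqslant n} L_k \otimes L_l.
\]
Each $(\Lambda^2 L_k)^\Gamma$ is one-dimensional, spanned by the unique $\SL_2(\K)$-invariant element $\pi_k \in \Lambda^2 L_k$, while the mixed summands satisfy $(L_k \otimes L_l)^{\Gamma\times\Gamma} = L^\Gamma \otimes L^\Gamma = 0$ by the eigenvalue observation above. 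Hence $(\Lambda^2 V)^{\Gamma^n} = \bigoplus_k \K\pi_k$, and since $S_n$ permutes this basis transitively, $(\Lambda^2 V)^{\Gamma_n}$ is one-dimensional, spanned by $\sum_k \pi_k$. The main technical hurdle is really the codimension-two extension argument in the first step; the representation-theoretic computation itself is immediate.
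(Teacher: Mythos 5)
Your argument is correct, and it is the natural route. Note that the paper does not actually prove this proposition; it simply cites Lemma~2.23 of Etingof--Ginzburg. Your proof is in substance the same as theirs: use normality of $X_0$, the \'etale cover $U\to X_0^{\mathrm{reg}}$, and the codimension-two extension property to identify biderivations on $\K[X_0]$ with $\Gamma_n$-invariant polynomial bivectors on $L^n$; then everything reduces to computing $(\Lambda^2 V\otimes S^{d+2}V^*)^{\Gamma_n}$, which vanishes for $d<-2$ and which you show is one-dimensional for $d=-2$.

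One point you should make explicit: the computation of the mixed summands uses $L^\Gamma=0$, hence $\Gamma\neq\{1\}$. This is not a defect in your argument so much as a hypothesis the paper leaves implicit. Indeed for $\Gamma=\{1\}$ and $n\geqslant 2$ the first assertion of the Proposition fails: writing $V=E\otimes L$ with $E=\K^n$ gives $\Lambda^2V\cong\bigl(S^2E\otimes\Lambda^2L\bigr)\oplus\bigl(\Lambda^2E\otimes S^2L\bigr)$, and since $(S^2E)^{S_n}$ is two-dimensional while $(\Lambda^2E)^{S_n}=0$, the space $(\Lambda^2V)^{S_n}$ is two-dimensional; the second, linearly independent, constant (hence automatically Poisson) bivector of degree $-2$ is the ``centre-of-mass'' one $\bigl(\sum_i\partial_{x_i}\bigr)\wedge\bigl(\sum_j\partial_{y_j}\bigr)$. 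This is consistent with the fact that $\Gamma_n$ acts symplectically irreducibly on $L^n$ exactly when $\Gamma\neq\{1\}$ or $n=1$, which is the hypothesis carried by the cited EG lemma and under which the paper applies the Proposition. So state $\Gamma\neq\{1\}$ (or $n=1$) at the outset; with that, your proof is complete.
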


We fix a $\K^\times$-equivariant Poisson isomorphism $X_0\cong \M_0(\D Q,\vv,\ww)$.

According to \cite{GG_quiver}, Theorem 1.4.1, the variety $\Lambda_0(\D Q,\vv,\ww)$ (and hence any $\Lambda_\chi(\D Q,\vv,\ww),$ $\Lambda_U(\D Q,\vv,\ww)$) is a non-empty reduced complete intersection
and the action of $\GL(\vv)$ on each component is generically free.
By Proposition \ref{Prop:Nakajima1}, we have a $\K^\times$-equivariant symplectic resolution
$X:=M^\theta_0(\D Q,\vv,\ww)\rightarrow X_0$.

Consider the graded symplectic deformation $\widehat{X}:=\M^\theta(\D Q,\vv,\ww)$ of $X$.

In the sequel we will need a certain vector bundle on $X$ to be referred to as a {\it weakly Procesi}
bundle whose existence was proved by  Bezrukavnikov and Kaledin in \cite{BK2}. Namely, there is a $\K^\times$-equivariant vector bundle $\Pro$ on $X$ with the following properties:
\begin{itemize}
\item[($P$1)] There is a graded $\K[X]=\K[L^n]^{\Gamma_n}$-algebra isomorphism $\End_{\Str_X}(\Pro)\cong \K[L^n]\#\Gamma_n$.
\item[($P$2)] $\Ext^i_{\Str_X}(\Pro,\Pro)=0$ for $i>0$.
\end{itemize}
In particular, ($P$1) implies that $\Gamma_n$ acts on $\Pro$ fiberwise and each fiber is isomorphic
to $\K\Gamma_n$ as a $\Gamma_n$-module.

Thanks to ($P$2) we can uniquely extend $\Pro$ to a $\K^\times$-equivariant vector bundle $\widehat{\Pro}$
on $\widehat{X}$. This vector bundle  automatically satisfies the following three conditions.
\begin{itemize}
\item[($\widehat{P}$0)] $\End_{\Str_{\widehat{X}}}(\widehat{\Pro})$ is flat over $\K[\z]$.
\item[($\widehat{P}$1)] $\End_{\Str_{\widehat{X}}}(\widehat{\Pro})/(\z)=\K[L^n]\#\Gamma_n$.
\item[($\widehat{P}$2)] $\Ext^i_{\Str_{\widehat{X}}}(\widehat{\Pro},\widehat{\Pro})=0$.
\end{itemize}
The group $\Gamma_n$ still acts on $\widehat{\Pro}$ fiberwise and each fiber is isomorphic to
$\K\Gamma_n$ as a  $\Gamma_n$-module.

\subsection{Kleinian case}\label{SUBSECTION_Klein_classical}
An important special case of the quotient singularity considered in the previous
section is that of the Kleinian singularities, i.e., $n=1$ and $\Gamma_n=\Gamma$.
The reader is referred to \cite{Nakajima_book} for generalities on the Kleinian
singularities and their resolutions.

Set $X_0=\K^2/\Gamma$ and let $\pi:X\rightarrow X_0$ be the minimal resolution of $X_0$. Then $X$ is a symplectic variety with symplectic form, say, $\Omega$.

Let $D_1,\ldots,D_r$ be the irreducible components of the exceptional fiber $\pi^{-1}(0)$.
It is well-known, see, for example, \cite{GSV}, that $D_1,\ldots,D_r$ can be identified with simple roots $\alpha_1,\ldots,\alpha_r$ of a  simple root system of type $A,D,E$. Moreover, the intersection pairing between $D_i,D_j$ equals $-a_{ij}$,
where $a_{ij}=\langle\alpha_i^\vee,\alpha_j\rangle$ is the corresponding entry of the Cartan matrix.

As we have seen in the previous subsection, one can construct $X_0$ and $X$ as  quiver varieties.
The quiver $Q$ is the affine quiver of the Dynkin diagram of $\alpha_1,\ldots,\alpha_r$.

Alternatively, $X$ can be realized as a Slodowy variety.
Namely, let $\g$ be the simple Lie algebra with system $\alpha_1,\ldots,\alpha_r$ of simple roots, $G$
the corresponding simply connected group. Let $e\in \g$ be a subprincipal nilpotent element and construct
the Slodowy slice $\Sl$ and the equivariant Slodowy slice $\XS$ from $e$. Let $B$ denote the Borel
subgroup of $G$ corresponding to the choice of $\alpha_1,\ldots,\alpha_r$.

According to Brieskorn, the intersection
$S\cap \Nil$ is isomorphic to $\K^2/\Gamma$. The isomorphism can be made $\K^\times$-equivariant. This follows,
for instance, from the construction explained in \cite{Slodowy}. By Proposition \ref{Prop:symplect_quotient_sing},
we may assume, in addition, that an isomorphism is Poisson.

Consider the resolution $\pi:\Sl(e,B)\rightarrow \Sl\cap\Nil=X_0$.
Its exceptional fiber again consists of $r$ divisors,
say $D_i',i=1,\ldots,r,$ that are in one-to-one correspondence with the set of simple roots of $\g$.
More precisely, consider the line bundle $\mathcal{L}_i$ on $G/B$ corresponding to the fundamental weight
$\omega_i$ (given by $\langle\omega_i,\alpha_j^\vee\rangle=\delta_{ij}$).
Lift the bundles $\mathcal{L}_i$ to $T^*(G/B)$ and then restrict them $\Sl(e,B)\subset T^*(G/B)$.
It is known that the restriction is the line bundle corresponding a unique component $D_i'$ of the exceptional
divisor. Moreover, the intersection pairing between $D_i',D_j'$ is again $-a_{ij}$.

Now both $\Sl(e,B), \M^\theta_0(\D Q,\vv,\ww)$ are symplectic (=minimal) resolutions of $X_0=\Sl\cap\Nil=\M_0(\D Q,\vv,\ww)$. There is only one minimal resolution of $X_0$.
So there is a isomorphism $\varphi:\Sl(e,B)\rightarrow \M_0^\theta(\D Q,\vv)$ of schemes over $X_0$.
This isomorphism is automatically a $\K^\times$-equivariant symplectomorphism.
We may assume, in addition, that $\varphi(D_i')=D_i$. If not, we can enumerate $D_i$'s
differently.

We will need to understand the behavior of some natural line bundles on the varieties $\Sl(e,B),
\M^\theta_0(\D Q,\vv,\ww)$ under the isomorphism $\varphi$.
On the Slodowy side we have line bundles $\mathcal{L}_i$ corresponding to the fundamental weights.
On the quiver variety side we also have $r+1$ line bundles constructed as follows.
Let $\mathcal{L}_i', i=0,\ldots,r,$ be the line bundle on $\M^\theta(\D Q,\vv,\ww)$
induced by the 1-dimensional $\GL(\vv)$-module $\bigwedge^{\delta_i} N_i$, compare with the last paragraph of Subsection \ref{SUBSECTION_class_red}.

\begin{Prop}\label{Prop:line_bundle_transform}
Let $C=(a_{ij})_{i,j=1}^r$ be the Cartan matrix of $Q$. Then $\mathcal{L}'_i=\prod_{j=1}^r \varphi_*(\mathcal{L}_i)^{a_{ij}}$ for $i=1,\ldots,r$.
\end{Prop}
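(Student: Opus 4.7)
The plan is to identify both families of line bundles inside $\Pic(X)$ by computing their intersection numbers with the exceptional curves $D_1,\ldots,D_r$, and to match them using the non-degeneracy of the intersection pairing on the exceptional fibre.

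First, I would recall that $X_0$ is an affine rational double point, hence has trivial Picard group, so that $\Pic(X)\otimes\mathbb{Q}\cong H^2(X,\mathbb{Q})$ is freely generated by classes whose pairings with $[D_1],\ldots,[D_r]$ can be prescribed arbitrarily. In particular, a line bundle on $X$ is determined up to torsion by the $r$ integers $\mathcal{L}\cdot D_j$, and the intersection form $(D_i\cdot D_j)=-a_{ij}$ on the exceptional curves is non-degenerate (as $-C$ is negative definite on a Dynkin diagram). So to establish the claimed identity in $\Pic(X)$ it suffices to verify that the degrees of both sides on each $D_j$ agree.

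Second, I would compute $\varphi_*(\mathcal{L}_i)\cdot D_j$ on the Slodowy side. From the identification recalled in this subsection, the restriction of $\mathcal{L}_i$ from $T^*(G/B)$ to $\Sl(e,B)$ represents the fundamental weight $\omega_i$, while the exceptional components $D_i'$ realise the simple coroots via the intersection form. Combined with the equality $\varphi(D_i')=D_i$ built into the choice of the isomorphism $\varphi$, this yields $\varphi_*(\mathcal{L}_i)\cdot D_j=\langle\omega_i,\alpha_j^{\vee}\rangle=\delta_{ij}$. Thus the system $\{\varphi_*(\mathcal{L}_1),\ldots,\varphi_*(\mathcal{L}_r)\}$ is precisely the $\mathbb{Z}$-basis of $\Pic(X)$ dual to $\{[D_1],\ldots,[D_r]\}$ under the intersection pairing.

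Third, I would compute $\mathcal{L}_i'\cdot D_j$ on the quiver side. The bundle $\mathcal{L}_i'$ is descended from the character of $\GL(\vv)$ acting via $\det$ (to a suitable power) on the $i$-th factor $\GL(v_i)$, so $c_1(\mathcal{L}_i')$ is the first Chern class of the determinant of the tautological bundle $\mathcal{V}_i$. Using Nakajima's explicit description of the exceptional curves in the quiver-variety realisation of the minimal resolution (equivalently, the Gonzalez--Sprinberg--Verdier form of the McKay correspondence), each $D_j$ is a $\mathbb{P}^1$ on which $\mathcal{V}_k$ decomposes as a sum of a trivial summand and copies of $\Str(-1)$ in a pattern dictated by the incidence of the McKay quiver, and taking determinants gives $\mathcal{L}_i'\cdot D_j=a_{ij}$.

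Combining the last two steps, writing $\mathcal{L}_i'=\sum_{k}m_{ik}\varphi_*(\mathcal{L}_k)$ in $\Pic(X)\otimes\mathbb{Q}$ and pairing with $[D_j]$ gives $a_{ij}=\sum_k m_{ik}\delta_{kj}=m_{ij}$, so $\mathcal{L}_i'=\sum_j a_{ij}\varphi_*(\mathcal{L}_j)$; exponentiating to multiplicative notation yields the formula in the proposition. The main obstacle is the explicit computation $\mathcal{L}_i'\cdot D_j=a_{ij}$ of step three: although this is essentially classical, it requires an honest parametrisation of each exceptional curve $D_j$ inside $\M_0^\theta(\D Q,\vv,\ww)$ and a careful tracking of the tautological fibre dimensions against the entries of the Cartan matrix, with the sign and normalisation conventions matched to those used to define the two families of line bundles.
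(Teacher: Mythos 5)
Your proposal is correct and follows essentially the same route as the paper: the paper simply cites the Gonzalez--Sprinberg--Verdier computation of the Chern classes of the tautological sheaves $\mathcal{N}_i$ together with the identification $H^2_{DR}(X)\cong\K\otimes_{\Z}\Pic(X)$, and observes the statement is a direct consequence. Your intersection-pairing argument is just an unpacking of why that GSV computation yields $\mathcal{L}_i'\cdot D_j=a_{ij}$ and why, together with $\varphi_*(\mathcal{L}_i)\cdot D_j=\delta_{ij}$ and non-degeneracy of the intersection form on the exceptional fibre, this forces the identity.
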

The proposition simply means that if we identify the free group spanned by $\mathcal{L}_i$ (in fact,
this group coincides with $\operatorname{Pic}(X)$) with the weight lattice of $Q$ by sending
$\mathcal{L}_i$ to $\omega_i$, then the bundles $\mathcal{L}_i'$ get identified with the simple roots.
\begin{proof}
Consider the bundle $\mathcal{N}_i$ on $X\cong R(\D Q,\vv,\ww)^{\chi,ss}\red_0 \GL(\vv)$
associated to $N_i$. Gonzales-Sprinberg and Verdier in \cite{GSV} computed the first Chern classes $c_1(\mathcal{N}_i)$
of $\mathcal{N}_i$ (of course, $c_1(\mathcal{N}_i)=c_1(\mathcal{L}_i')$). The required
result is the direct corollary of their computation (and the well-known fact that $H^2_{DR}(X)=\K\otimes_Z \operatorname{Pic}(X)$).
\end{proof}

Finally, we will need an explicit construction of the bundle $\widehat{\Pro}$ on $\widehat{X}$.
Namely, consider the $\GL(\vv)$-module $P:=\bigoplus_{i=0}^l N_i^{\oplus\delta_i}$. Let
$\widehat{\Pro}$ denote the corresponding bundle on $\widetilde{X}$.

Consider the restriction $\Pro$ of $\widehat{\Pro}$ to $X$. It is known, see, for example,
Section 1.5 in \cite{KaVa},
that $\Pro$ satisfies ($P$1),($P$2). Being a $\K^\times$-equivariant extension
of $\Pro$, the bundle $\widehat{\Pro}$ satisfies ($\widehat{P}$0),($\widehat{P}$1),($\widehat{P}$2).

\subsection{Quiver varieties in type $A$ vs Slodowy varieties}\label{SUBSECTION_A_quiver_vs_Slod}
In \cite{Maffei} Maffei  established isomorphisms between quiver varieties and Slodowy varities in type $A$.
In this subsection we are going to recall his construction and deduce some of its easy corollaries. We remark
that results closely related to Maffei's were also obtained in \cite{MV}.

First, let us fix some notation. Let $N$ be a positive integer and $\g=\sl_N$. Fix $n>0$ and
 $r_1,\ldots,r_n\in \Z_{\geqslant 0}$ with $\sum_{i=1}^n r_i=N$. The numbers $r_1,r_2,\ldots,r_{n}$ define
a parabolic subgroup $P$ in $G:=\SL_N$, namely, for $P$ we take the stabilizer of a partial flag $\mathcal{F}=(0\subset F_1\subset F_2\subset\ldots\subset F_n=\K^N)$
with  $\dim F_j=\sum_{i=1}^j r_i$.

Also pick  $\ww=(d_1,\ldots,d_{n-1})$ with $\sum_{i=1}^{n-1}id_i=N$
and let $e\in \g$ be a nilpotent element whose Jordan type is $(1^{d_1},2^{d_2},\ldots,(n-1)^{d_{n-1}})$.
From these data we can construct the Slodowy
slice $\Sl\subset\g$, 
and the
Slodowy variety $\Sl(e,P)$.

Define $\vv:=(v_1,\ldots,v_{n-1})$ by   \begin{equation}\label{eq:vv_typeA}
v_i:=\sum_{j=i+1}^n r_i-\sum_{j=i+1}^{n-1}(j-i)d_i.\end{equation}
Below we assume that all $v_i$'s are positive. Finally, let $Q$ be the quiver
of type $A_{n-1}$ and a character $\theta$ of $\GL(\vv)$ be as in (\ref{eq:character}).

 Maffei proved in \cite{Maffei}, Theorem 8, that the algebraic varieties $\M_0^\theta(\D Q,\vv,\ww)$
and $\Sl(e,P)$ are isomorphic. His construction is pretty technical but we will need to recall it to establish some additional properties of his isomorphism, for example, that it is a symplectomorphism.

Let us proceed to recalling the construction of a required isomorphism.
First of all, there is a special case when an isomorphism is easy: when $e=0$
or, equivalently, $d_1=N, d_2=\ldots=d_{n-1}=0$, see
\cite{Nakajima}, Theorem 7.2, or \cite{Maffei}, Lemma 15.
In this case, $\Sl(e,P)$ is nothing else but the cotangent bundle $T^*(G/P)$.
A point in $T^*(G/P)$ can be thought as a pair $(x,\mathcal{F})$, where $\mathcal{F}$ is a partial flag  as above  and
$x\in \g$ is such that $x(F_i)\subset F_{i-1}$.
An isomorphism $\widetilde{\varphi}:\M^\theta_0(\D Q,\vv,\ww)\rightarrow T^*(G/P)$
is given by $$\GL(\vv).[(A_i),(B_i),\Gamma_1,\Delta_1]\mapsto (\Delta_1\Gamma_1, 0\subset \ker \Gamma_1\subset
\ker A_1\Gamma_1\subset\ldots\subset \ker A_{n-2}\ldots A_1\Gamma_1\subset \K^N).$$

Now proceed to the general case. Following \cite{Maffei}, we will first introduce some "transversal subvariety"
in $T^*(G/P)$.

For this we need some notation.
Set
\begin{align*}&\widetilde{d}_1:=N, \widetilde{d}_i:=0, i>0,\\
&\widetilde{v}_i:=v_i+\sum_{j=i+1}^{n-1}d_j, i=1,\ldots,n-1, \\ &\widetilde{\ww}:=(\widetilde{d}_1,\ldots,\widetilde{d}_{n-1}),
\widetilde{\vv}:=(\widetilde{v}_1,\ldots,\widetilde{v}_{n-1}).\end{align*}
Further, set \begin{equation}\label{eq:direct_sum}\widetilde{D}_1:=\sum_{1\leqslant k\leqslant j<n}D_i^{(k)},\, D_i':=\bigoplus_{1\leqslant k\leqslant j-i\leqslant n-i-1}D_j^{(k)},\,
\widetilde{V}_i:=V_i\oplus D_i',\end{equation} where $D_i^{(k)}$ means a copy of $D_i$.
For the notational convenience we write $\widetilde{V}_0=D_0':=\widetilde{D}_1$.

Let $(\widetilde{A}_i,\widetilde{B}_i,\widetilde{\Gamma}_1,\widetilde{\Delta}_1)_{i=1,\ldots,n-2}$ be an element
of $R(\D Q,\widetilde{\vv},\widetilde{\ww})$. Put $\widetilde{A}_0:=\widetilde{\Gamma}_1,\widetilde{B}_0:=\widetilde{\Delta}_1$. As in \cite{Maffei}
we will write the elements $\widetilde{A}_i,\widetilde{B}_i$ in the block form as follows:
\begin{equation}\label{eq:block_decomp}
\begin{split}
&\pi_{D_j^{(h)}}\widetilde{A}_i|_{D_{j'}^{(h')}}=\Tt_{i,j,h}^{j',h'},\quad \pi_{D_j^{(h)}}\widetilde{B}_i|_{D_{j'}^{(h')}}=\Ss_{i,j,h}^{j',h'},\\
&\pi_{D_j^{(h)}}\widetilde{A}|_{V_i}=\Tt_{i,j,h}^V,\quad \pi_{D_j^{(h)}}\widetilde{B}|_{V_i}=\Ss_{i,j,h}^V,\\
&\pi_{V_{i+1}}\widetilde{A}|_{D_{j'}^{(h')}}=\Tt_{i,V}^{j',h'},\quad \pi_{V_i}\widetilde{B}_i|_{D_{j'}^{(h')}}=\Ss_{i,V}^{j',h'},\\
&\pi_{V_{i+1}}\widetilde{A}_i|_{V_i}=\Aa_i,\quad \pi_{V_i}\widetilde{B}_i|_{V_{i+1}}=\Bb_i,
\end{split}
\end{equation}
where $\pi_\bullet$ stand for the projections to the summands in (\ref{eq:direct_sum}).

Embed $\GL(\vv)$ into $\GL(\widetilde{\vv})$ by making $\GL(\vv)$ act trivially on $D_i'$
and as before on $V_i$. Let $\widetilde{\theta}$ denote the character of $\GL(\widetilde{\vv})$
defined analogously to $\theta$. Clearly, $\theta$ coincides with the restriction of $\widetilde{\theta}$
to $\GL(\vv)$.

Further, choose $\sl_2$-triples $(e_i,[e_i,f_i],f_i)$ in $\gl(D_i'), i=0,\ldots,n-1$ as follows:
\begin{equation}\label{eq:sl2}
\begin{split}
&e_i|_{D_j^{(1)}}:=0, \quad e_i|_{D_j^{(h)}}:=\id_{D_j}:D_j^{(h)}\rightarrow D_j^{(h-1)},\\
&f_i|_{D_j^{j-i}}=0, \quad f_i|_{D_j^{(h)}}:=h(j-i-h)\id_{D_j}:D_j^{(h)}\rightarrow D_j^{(h+1)}.
\end{split}
\end{equation}

In particular, the nilpotent element $e_0\in \g$ corresponds to the partition $\ww$. Under the isomorphism
$T^*(G/P)\cong M(\D Q,\widetilde{\vv},\widetilde{\ww})$ described above the variety $\Sl(e,P)$ is identified with
$$\{(\widetilde{A}_i,\widetilde{B}_i)_{i=0}^{n-2}\in \Lambda(\D Q,\widetilde{\vv},\widetilde{\ww})^{ss,\widetilde{\theta}}: [\widetilde{B}_0\widetilde{A}_0-e_0,f_0]=0\}/\GL(\widetilde{\vv}).$$

Now we are ready to define {\it transversal} elements. For this we need to assign degrees (denoted by $\grad$) to the blocks $\Tt^{\bullet}_\bullet, \Ss^\bullet_\bullet$ as follows.
\begin{equation}\label{def_gr_deg}
\begin{split}
&\grad(\Tt_{i,j,h}^{j',h'}):=\min(h-h'+1, h-h'+1+j'-j),\\
&\grad(\Ss_{i,j,h}^{j',h'}):=\min(h-h', h-h'+j'-j),\\
\end{split}
\end{equation}

An element $((\widetilde{A}_i),(\widetilde{B}_i))_{i=0}^{n-2}\in \Lambda_0(\D Q,\widetilde{\vv},\widetilde{\ww})$ is said to be {\it transversal} if it satisfies the following relations for $i=0,1,\ldots,n-2$:
\begin{equation}\label{eq:transv_vanish}
\begin{array}{ll}
\Tt_{i,j,h}^{j',h'}=0 & \text{ if } \grad(\Tt_{i,j,h}^{j',h'})<0,\\
&\text{ or if }\grad(\Tt_{i,j,h}^{j',h'})=0\text{ and } (j',h')\neq (j,h+1),\\
\Tt_{i,j,h}^{j',h'}=\id_{D_j}&\text{ if }\grad(\Tt_{i,j,h}^{j',h'})=0\text{ and } (j',h')=(j,h+1),\\
\Tt_{i,j,h}^V=0,&\\
\Tt_{i,V}^{j',h'}=0&\text{ if }h'\neq 1,\\
\Ss_{i,j,h}^{j',h'}=0 &\text{ if }\grad(\Ss_{i,j,h}^{j',h'})<0,\\
&\text{ or if }\grad(\Ss_{i,j,h}^{j',h'})=0\text{ and } (j',h')\neq (j,h),\\
\Ss_{i,j,h}^{j',h'}=\id_{D_j}&\text{ if }\grad(\Tt_{i,j,h}^{j',h'})=0\text{ and } (j',h')=(j,h),\\
\Ss_{i,j,h}^V=0&\text{ if }h\neq j-i,\\
\Ss_{i,V}^{j',h'}=0.&\\
\end{array}
\end{equation}
\begin{equation}\label{eq:transv_rel}
[\pi_{D_i'}\widetilde{B}_i\widetilde{A}_i|_{D_i'}-e_i,f_i]=0.
\end{equation}

The subvariety of $\Lambda(\widetilde{D},\widetilde{V})$ consisting of the transversal elements
will be denoted by $\mathfrak{T}$.

To establish an isomorphism $\M_0^\theta(\D Q,\vv,\ww)\xrightarrow{\sim} \Sl(e,P)$
Maffei constructs a morphism $\Lambda_0(\D Q,\vv,\ww)\rightarrow \mathfrak{T}$.


The main technical step in Maffei's construction is the following lemma that is a union
of Lemmas 17-19 from \cite{Maffei}.

\begin{Lem}\label{Lem:Maffei_main}
Let $x=((A_i),(B_i),(\Gamma_j),(\Delta_j))\in \Lambda_0(\D Q,\vv,\ww)$. Then
the following claims hold:
\begin{enumerate}
\item
there is a unique
element $\widetilde{x}=((\widetilde{A}_i),(\widetilde{B}_i))\in \mathfrak{T}$ satisfying the following equalities:
\begin{equation}\label{eq:Maffei_eq1}
\begin{split}
&\mathbb{A}_i=A_i,\\
&\mathbb{B}_i=B_i,\\
&\Tt^{i+1,1}_{i,V}=\Gamma_{i+1},\\
&\Ss^V_{i,i+1,1}=\Delta_{i+1}.
\end{split}
\end{equation}
for all $i=0,\ldots,n-2$ (where we set $A_0:=\Gamma_1,B_0:=\Delta_1$).
\item The map $\Phi:\Lambda_0(\D Q,\vv,\ww)\rightarrow \mathfrak{T}, x\mapsto \widetilde{x},$ is a $\GL(\vv)$-equivariant isomorphism.
\item $\Phi(\Lambda_0(\D Q,\vv,\ww)^{\theta,ss})=\mathfrak{T}\cap R(\D\widetilde{Q},\widetilde{\vv},\widetilde{\ww})^{\widetilde{\theta},ss}$.
\end{enumerate}
\end{Lem}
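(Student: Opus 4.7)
The plan is to reduce all three assertions to a block-by-block analysis organized by the grading $\grad$ defined in (\ref{def_gr_deg}). Observe first that the transversality conditions (\ref{eq:transv_vanish}) already prescribe every block of non-positive $\grad$: each such block is either zero or $\id_{D_j}$. So for part (1) only the strictly positive-degree blocks of $\widetilde{A}_i,\widetilde{B}_i$ remain to be determined, and the data available to pin them down are the moment-map equations $\widetilde{A}_{i-1}\widetilde{B}_{i-1}-\widetilde{B}_i\widetilde{A}_i=0$ in $\widetilde{Q}$ (using the convention $\widetilde{A}_0=\widetilde{\Gamma}_1,\widetilde{B}_0=\widetilde{\Delta}_1$), the $\sl_2$-relation (\ref{eq:transv_rel}), and the prescribed starting values $\mathbb{A}_i=A_i$, $\mathbb{B}_i=B_i$, $\Tt^{i+1,1}_{i,V}=\Gamma_{i+1}$, $\Ss^V_{i,i+1,1}=\Delta_{i+1}$.

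Existence and uniqueness would then proceed by induction on $\grad$. Expanding each moment-map relation according to the decompositions $\widetilde{V}_i=V_i\oplus D_i'$ and (\ref{eq:direct_sum}), one obtains, at each new $\grad$-level, a linear formula for every still-unknown block in terms of the already-determined lower-degree blocks and the inputs $A_i,B_i,\Gamma_{i+1},\Delta_{i+1}$. Uniqueness is immediate from the recursive nature of this procedure, since any two transversal extensions of the same $x$ must agree on every $\grad$-level by induction. Existence, however, is where the main obstacle lies: one must verify that the overlapping block equations coming from the moment maps at neighboring vertices, combined with the $\sl_2$-constraints on the $D_i'$-summands coming from (\ref{eq:transv_rel}), form a consistent system whose solution satisfies all of (\ref{eq:transv_vanish}). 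This is exactly the heart of Maffei's computation in \cite{Maffei}; granting the combinatorial checks, one obtains the unique transversal lift $\widetilde{x}\in\mathfrak{T}$.

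Once (1) is in hand, part (2) becomes essentially formal. The $\GL(\vv)$-equivariance of $\Phi$ is forced by the $\GL(\vv)$-equivariance of every defining relation, since the $D_i'$-summands are acted on trivially. The inverse map is the forgetful map $\widetilde{x}\mapsto(\mathbb{A}_i,\mathbb{B}_i,\Tt^{i+1,1}_{i,V},\Ss^V_{i,i+1,1})$; its image lies in $\Lambda_0(\D Q,\vv,\ww)$ because the moment-map relation at vertex $i$ of $Q$ is obtained by projecting the corresponding relation at vertex $i$ of $\widetilde{Q}$ to the $V_i$-component, while the $D_i'$-components of that relation are accounted for by (\ref{eq:transv_rel}) and so impose no further constraint on the $V_i$-part. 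Uniqueness from (1) makes this a two-sided inverse.

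For part (3), I would match the two stability conditions directly via the block structure. Given a $\theta$-destabilizing collection $(V'_i)$ for $x$ with each $V'_{i+1}\subset\ker\Delta_{i+1}$ and $(V'_i)$ stable under $A_i,B_i$, I would propagate it to a $\widetilde{\theta}$-destabilizing collection $(\widetilde{V}'_i)$ for $\widetilde{x}$ by enlarging each $V'_i$ with the $D_i'$-summands it reaches via the identity blocks of $\widetilde{A}_\bullet,\widetilde{B}_\bullet$; transversality guarantees that this enlargement stays disjoint from the framing subspace $\widetilde{D}_1$, so it destabilizes. Conversely, intersecting a $\widetilde{\theta}$-destabilizing collection with the $V_i$-summands---and using the identity blocks to force the destabilizer to respect the splitting $\widetilde{V}_i=V_i\oplus D_i'$---produces a $\theta$-destabilizing collection for $x$. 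The dominant technical burden throughout remains the explicit block recursion in part (1); the other items are essentially bookkeeping once that is settled.
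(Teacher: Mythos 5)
The paper does not actually prove this lemma; it is quoted as ``a union of Lemmas 17--19 from \cite{Maffei}'' and no argument is reproduced. So your proposal is not in competition with an in-paper proof: both you and the author defer to Maffei.

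With that caveat, your outline accurately tracks the structure of Maffei's argument. For part (1), the recursion on $\grad$ is indeed the mechanism: the conditions (\ref{eq:transv_vanish}) fully prescribe all blocks with $\grad\leqslant 0$ (zero or $\id_{D_j}$), and the moment-map equations of $\widetilde{Q}$ together with (\ref{eq:transv_rel}) determine the remaining blocks level by level from the inputs in (\ref{eq:Maffei_eq1}); the non-trivial content is precisely the consistency of the overlapping relations, which you flag but do not carry out. Your observation for part (2) that the $V_i$-projection of the moment map at vertex $i$ of $\widetilde{Q}$ collapses to $A_{i-1}B_{i-1}-B_iA_i+\Gamma_i\Delta_i$ is correct: the only surviving cross term is $\Gamma_i\Delta_i=\Tt_{i-1,V}^{i,1}\Ss_{i-1,i,1}^V$ because $\Tt_{i,j,h}^V=0$, $\Ss_{i,V}^{j',h'}=0$, and $\Tt_{i,V}^{j',h'}=0$ unless $h'=1$, while $\Ss_{i,j,h}^V=0$ unless $h=j-i$. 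Part (3) via propagation of destabilizing subspaces across the splitting $\widetilde{V}_i=V_i\oplus D_i'$ is likewise the right idea and is the content of Maffei's Lemma 19. In short: your intuitions are sound, but since you explicitly put the block-consistency check out of scope, what you have is an honest high-level summary citing Maffei, which is exactly the stance the paper itself takes; it is not a self-contained proof.
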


So we can define the morphism $\varphi: \M^\theta_0(\D Q, \vv,\ww)\rightarrow \M^{\widetilde{\theta}}_0(\D Q, \widetilde{\vv},\widetilde{\ww})=T^*(G/P)$ sending an
orbit $\GL(\vv)x$ to $\GL(\widetilde{\vv})\Phi(x)$.

The following proposition is the main result of \cite{Maffei}.

\begin{Prop}\label{Prop:Maffei_main}
The morphism $\varphi$ is an isomorphism of $\M^\theta_0(\D Q, \vv,\ww)$
onto $\Sl(e,P)$.
\end{Prop}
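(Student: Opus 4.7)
The plan is to descend the $\GL(\vv)$-equivariant bijection $\Phi$ of Lemma \ref{Lem:Maffei_main} to the level of GIT quotients and identify its image with $\Sl(e,P)$. Under the identification $T^*(G/P)=\M^{\widetilde{\theta}}_0(\D Q,\widetilde{\vv},\widetilde{\ww})$ recalled above Lemma \ref{Lem:Maffei_main}, the subvariety $\Sl(e,P)$ corresponds to the $\GL(\widetilde{\vv})$-quotient of the locus $\{[\widetilde{B}_0\widetilde{A}_0-e_0,f_0]=0\}\cap \Lambda(\D Q,\widetilde{\vv},\widetilde{\ww})^{\widetilde{\theta},ss}$. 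Since both $\theta$ and $\widetilde{\theta}$ are generic, Proposition \ref{Prop:Nakajima1} ensures the $\GL(\vv)$- and $\GL(\widetilde{\vv})$-actions on the relevant semistable loci are free, so orbit and GIT quotients coincide and one may pass freely between them.

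First I would verify that $\varphi$ actually lands in $\Sl(e,P)$. The $i=0$ instance of \eqref{eq:transv_rel} reads $[\pi_{D_0'}\widetilde{B}_0\widetilde{A}_0|_{D_0'}-e_0,f_0]=0$. By the convention $\widetilde{V}_0=D_0'$, the image of $\widetilde{B}_0\widetilde{A}_0$ lies in $D_0'$ and $\pi_{D_0'}$ is the identity there, so this collapses to the Slodowy equation $[\widetilde{B}_0\widetilde{A}_0-e_0,f_0]=0$ cutting out $\Sl(e,P)$ inside $T^*(G/P)$. Composing $\Phi$ with the inclusion $\mathfrak{T}\hookrightarrow \Lambda(\D Q,\widetilde{\vv},\widetilde{\ww})$ and passing to the $\GL(\widetilde{\vv})$-quotient therefore defines a morphism $\varphi:\M^\theta_0(\D Q,\vv,\ww)\to \Sl(e,P)$.

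The substantive step is to prove $\varphi$ is an isomorphism. By Lemma \ref{Lem:Maffei_main}(2),(3) this reduces to a normal form assertion: the inclusion $\mathfrak{T}\cap \Lambda(\D Q,\widetilde{\vv},\widetilde{\ww})^{\widetilde{\theta},ss}\hookrightarrow \{[\widetilde{B}_0\widetilde{A}_0-e_0,f_0]=0\}^{\widetilde{\theta},ss}$ must descend to a bijection after quotienting the left side by $\GL(\vv)$ and the right side by $\GL(\widetilde{\vv})$. Equivalently, every $\GL(\widetilde{\vv})$-orbit of a Slodowy representation meets $\mathfrak{T}$ in a single $\GL(\vv)$-orbit. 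I would establish this inductively on $i=0,1,\ldots,n-2$, using the complementary $\GL(\widetilde{\vv})/\GL(\vv)$-freedom on the auxiliary summands $D_i'=\bigoplus_{k,j}D_j^{(k)}$ to normalise the blocks $\Tt_{i,j,h}^{j',h'}$ and $\Ss_{i,j,h}^{j',h'}$ to the shape prescribed in \eqref{eq:transv_vanish}. The $\sl_2$-triple structure of \eqref{eq:sl2} rigidifies the degree-$0$ blocks, while the relations \eqref{eq:transv_rel} propagate the Slodowy constraint from $i=0$ to higher $i$.

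The hard part will be executing the induction at each stage: one must consistently diagonalise the $D_j^{(k)}$-blocks against the $\sl_2$-action of $(e_i,h_i,f_i)$, checking solvability of the resulting inhomogeneous linear problem at each step, and then verify that the residual stabiliser of a transversal representation is exactly $\GL(\vv)$ rather than a larger subgroup of $\GL(\widetilde{\vv})$. A dimension count, matching $\dim\M^\theta_0(\D Q,\vv,\ww)$ with $\dim \Sl(e,P)$ via the formula \eqref{eq:vv_typeA}, provides a useful sanity check that the transversality conditions impose precisely the right number of equations. Once this normal form is secured, Lemma \ref{Lem:Maffei_main} together with the freeness of the group actions yields the desired isomorphism $\varphi:\M^\theta_0(\D Q,\vv,\ww)\xrightarrow{\sim}\Sl(e,P)$.
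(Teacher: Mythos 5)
The paper gives no internal proof of this proposition: it is Maffei's Theorem~8, which is cited verbatim immediately before the construction is recalled, and Proposition~\ref{Prop:Maffei_main} is simply its restatement in the present notation. Your outline correctly identifies what a from-scratch proof would require beyond Lemma~\ref{Lem:Maffei_main}: (i) that $\mathfrak{T}$ lands in the semistable Slodowy locus, and (ii) that every $\GL(\widetilde{\vv})$-orbit of a semistable Slodowy representation meets $\mathfrak{T}$ in exactly one $\GL(\vv)$-orbit. Your verification of (i) via the $i=0$ case of \eqref{eq:transv_rel} and the convention $\widetilde V_0=D_0'$ is correct and is a useful check not spelled out in the paper.

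However, (ii) --- which you yourself flag as ``the hard part'' --- is left entirely unexecuted, and this is where the real content lies. Lemma~\ref{Lem:Maffei_main} only furnishes the $\GL(\vv)$-equivariant isomorphism $\Phi:\Lambda_0(\D Q,\vv,\ww)\to\mathfrak{T}$; it says nothing about sliding a general semistable Slodowy representation into $\mathfrak{T}$ by means of the complementary $\GL(\widetilde{\vv})$-freedom, nor about controlling the stabiliser of a transversal element inside $\GL(\widetilde{\vv})$. That is the substance of several further lemmas of Maffei (beyond his Lemmas~17--19 summarised here), involving block-by-block normalisation against the $\sl_2$-triples of \eqref{eq:sl2}. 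Your proposed induction on $i$ is a plausible scaffold for such an argument, but ``consistently diagonalise the blocks, checking solvability at each step, and verify the residual stabiliser is $\GL(\vv)$'' is a plan rather than a proof, and a dimension count cannot substitute for it. For the purpose of this paper the correct move is the one the paper makes: cite \cite{Maffei}, Theorem~8. If you intend actually to carry out the normalisation, you would essentially be reproducing Maffei's argument, and the missing lemmas should be stated and proved, not gestured at.
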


Now we will use the Maffei construction to establish some properties of the morphism $\varphi$: namely,
that this morphism is $\K^\times$-equivariant, is a symplectomorphism and is compatible, in an appropriate
sense, with natural line bundles. This is done in the next three lemmas.

\begin{Lem}\label{Lem:Iso_A_equivariance}
The morphism $\varphi$ is $\K^\times$-equivariant.
\end{Lem}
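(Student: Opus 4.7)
The plan is to compare the two $\K^\times$-actions after lifting both to the larger space $R(\D Q,\widetilde{\vv},\widetilde{\ww})$ where Maffei's map $\Phi$ is defined. On the source side, the $\K^\times$-action on $\M^\theta_0(\D Q,\vv,\ww)$ comes from the standard scaling $t.x=tx$ on $R(\D Q,\vv,\ww)$ (weight one on each arrow). On the target side, the Kazhdan action on $\Sl(e,P)\subset T^*(G/P)$ combines the usual cotangent fiber scaling on $T^*(G/P)$ with right translation by $\gamma(t)^{-1}$, where $\gamma:\K^\times\to G$ is the cocharacter associated to the semisimple element of the $\sl_2$-triple $(e,h,f)$.

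First I would equip $R(\D Q,\widetilde{\vv},\widetilde{\ww})$ with a twisted $\K^\times$-action obtained by composing the standard scaling with a cocharacter $\tilde{\gamma}:\K^\times\to \GL(\widetilde{\vv})$ built from the $\sl_2$-triples $(e_i,h_i,f_i)$ of \eqref{eq:sl2}. Explicitly, the weight of $\tilde{\gamma}$ on the summand $D_j^{(k)}\subset D_i'$ should be chosen so that the induced total weight of each block $\Tt^{j',h'}_{i,j,h}$, $\Ss^{j',h'}_{i,j,h}$ matches Maffei's integer $\grad$ from \eqref{def_gr_deg}, while the four distinguished blocks $\Aa_i,\Bb_i,\Tt^{i+1,1}_{i,V},\Ss^V_{i,i+1,1}$ all end up with weight one. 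A consistent such assignment exists because the grading \eqref{def_gr_deg} is itself the grading coming from the $h_i$-action on $D_i'$ together with placing $V_i$ in a uniform weight.

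With this twisted action, the vanishing conditions \eqref{eq:transv_vanish} become homogeneous (they kill the blocks of negative weight and force the weight-zero blocks to equal the identity), and the transversality relation \eqref{eq:transv_rel} is preserved because $\gamma_i(t).e_i=t^2 e_i$ exactly absorbs the $t^2$ coming from the product $\widetilde{B}_i\widetilde{A}_i$. Hence $\mathfrak{T}$ is $\K^\times$-stable, and because $\tilde{\gamma}$ takes values in $\GL(\widetilde{\vv})$ the twisted action descends to the \emph{standard} scaling on the GIT quotient $\M^{\widetilde{\theta}}_0(\D Q,\widetilde{\vv},\widetilde{\ww})\cong T^*(G/P)$, which matches the Kazhdan action via the Nakajima--Maffei identification (a direct check on how the pair $(x,\mathcal{F})\in T^*(G/P)$ is reconstructed from the arrows $(\widetilde{A}_i,\widetilde{B}_i)$).

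Finally, equivariance of $\varphi$ reduces to equivariance of the map $\Phi$ of Lemma \ref{Lem:Maffei_main}, which in turn follows from the uniqueness clause of part (1) of that lemma: applying the twisted $\K^\times$-action to $\widetilde{x}=\Phi(x)$, the new element still satisfies \eqref{eq:Maffei_eq1} with $x$ replaced by $t.x$ since each of the four blocks in \eqref{eq:Maffei_eq1} has weight one on both sides by construction. The main obstacle I expect is the combinatorial bookkeeping in step two: one must verify that a single cocharacter $\tilde{\gamma}$ simultaneously realizes all three compatibilities---matching $\grad$ on every block in \eqref{def_gr_deg}, fixing weight one on the four distinguished blocks of \eqref{eq:Maffei_eq1}, and preserving \eqref{eq:transv_rel}. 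This is an unwinding of Maffei's grading conventions, and is the only non-trivial step in the argument.
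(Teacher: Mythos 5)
Your proposal follows essentially the same route as the paper: the paper also lifts to $R(\D Q,\widetilde{\vv},\widetilde{\ww})$, twists the scaling by the cocharacter $\gamma$ associated to $\mathbf{h}=([e_0,f_0],\ldots,[e_{n-2},f_{n-2}])$ (with $[e_i,f_i]$ acting by the explicit weight $j-i+1-2h$ on $D_j^{(h)}$), and then verifies exactly your three compatibilities: stability of $\mathfrak{T}$, that the induced action on $T^*(G/P)$ is the Kazhdan action, and that the four distinguished blocks of \eqref{eq:Maffei_eq1} have uniform weight, whence the uniqueness clause of Lemma \ref{Lem:Maffei_main}(1) gives equivariance of $\Phi$. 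One small caution in your phrasing: the twisted action does \emph{not} simply descend to pure fiber scaling on the quotient, because the component $[e_0,f_0]$ of $\mathbf{h}$ acts on the frame space $\widetilde{D}_1$, which is not gauged away; this is precisely what produces the right-translation part of the Kazhdan action on $T^*(G/P)$, so the match is to the full Kazhdan action and not to plain cotangent scaling.
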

\begin{proof}
Let us define a certain $\K^\times$-action on $R(\D Q, \widetilde{\vv},\widetilde{\ww})$.
For this consider the element $\mathbf{h}=([e_0,f_0],[e_1,f_1],\ldots, [e_{n-2},f_{n-2}])$.
The element $[e_i,f_i]$ acts by $j-i+1-2h$ on $D_j^{(h)}\subset D'_i$. Let $\gamma: \K^\times\rightarrow
\GL(\widetilde{\vv})$ denote the one-parameter subgroup corresponding to $\mathbf{h}$.
Consider the action of $\K^\times$ on $R(\D Q, \widetilde{\vv},\widetilde{\ww})$ given
by $t. \widetilde{x}=t^{-1} \gamma(t) x$. The following claims are checked directly:
\begin{itemize}
\item[(i)] The $\K^\times$-action preserves the affine subspace given by (\ref{eq:transv_vanish})
and also the subvariety of solutions of (\ref{eq:transv_rel}). So $\mathfrak{T}$ is
$\K^\times$-stable.
\item[(ii)] The induced $\K^\times$-action on $T^*(G/P)=\M^{\widetilde{\theta}}_0(\D Q,\widetilde{\vv},\widetilde{\ww})$ is the Kazhdan action.
\item[(iii)] The blocks $\mathbb{A}_i,\mathbb{B}_i, \mathbb{T}_{i,V}^{i+1,1},\mathbb{S}_{i,i+1,1}^V$
are multiplied by $t^{-1}$.
\end{itemize}
(i),(iii) and assertion (1) of Lemma \ref{Lem:Maffei_main} imply that the morphism
$\Phi: \Lambda_0(\D Q,\vv,\ww)\rightarrow \mathfrak{T}$ is $\K^\times$-invariant.
Now (ii) and the construction of $\varphi$ complete the proof of the present lemma.
\end{proof}

\begin{Lem}\label{Lem:Iso_A_symplect}
The isomorphism $\varphi$ is a symplectomorphism.
\end{Lem}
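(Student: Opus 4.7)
The plan is to reduce the general case to the already-known $e=0$ case, where $\widetilde\varphi:\M_0^{\widetilde\theta}(\D Q,\widetilde\vv,\widetilde\ww)\to T^*(G/P)$ is an isomorphism, and then exploit the Gan--Ginzburg presentation $\Sl(e,P)=T^*(G/P)\red_\chi M$ recalled at the end of Subsection~\ref{SUBSECTION_Slodowy}. First I would check that $\widetilde\varphi$ itself is a symplectomorphism, which is essentially a direct calculation: the inverse is explicit on the open locus where $\Gamma_1$ is injective and the iterated kernels $\ker(A_{j-1}\cdots A_1\Gamma_1)$ form a flag of the required dimensions, and on this locus the canonical symplectic form on $T^*(G/P)$ inherited from $T^*G$ matches the reduced symplectic form on the quiver variety coordinate-wise.

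The crucial step is then to rephrase the definition of $\mathfrak{T}$ in Slodowy terms. Under $\widetilde\varphi$, the transversality conditions (\ref{eq:transv_vanish})--(\ref{eq:transv_rel}) should translate exactly into the statement that the image in $\g^*$ of a lift to $T^*G$ lies in the affine slice $\chi+\ker\ad(f)$: the block combinatorics $D_j^{(h)}$ and the degree function $\grad$ of (\ref{def_gr_deg}) are engineered to match the $h$-weight decomposition of $\g$ that governs this slice, while (\ref{eq:sl2}) assembles the local $\sl_2$-triples on the $D_i'$ into the global $\sl_2$-triple associated to $e$ in $\g$. Consequently the quotient $\mathfrak{T}\cap \Lambda_0(\D Q,\widetilde\vv,\widetilde\ww)^{\widetilde\theta,ss}/\GL(\widetilde\vv)$ corresponds under $\widetilde\varphi$ to $\mu_M^{-1}(\chi)/M=\Sl(e,P)$, and the Maffei embedding $\Phi$ of Lemma~\ref{Lem:Maffei_main} is identified with the Gan--Ginzburg section $\Sl(e,P)\hookrightarrow \mu_M^{-1}(\chi)\cong M\times\Sl(e,P)$. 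Thus $\varphi$ is simply the Hamiltonian reduction of $\widetilde\varphi$ by $M$ at $\chi$, which is automatically a symplectomorphism.

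The main obstacle will be this block-by-block identification of the transversality relations with $x\in\chi+\ker\ad(f)$: it is a somewhat intricate combinatorial match coupling the $\sl_2$-weight grading on $\ker\ad(f)$ with the labeling $(j,h)$ of $D_j^{(h)}$. To keep the computation manageable, I would carry it out on a Zariski-open subset, using that both $\mathfrak{T}$ and the Slodowy slice are closed affine subspaces of their ambient spaces cut out by relations of matching count and (by assertion~(2) of Lemma~\ref{Lem:Maffei_main}) the same dimension, so agreement on an open piece is enough. As a sanity check on the final scalar, one may compare the two symplectic forms at a $\K^\times$-fixed point using Lemma~\ref{Lem:Iso_A_equivariance}, which forces any possible scaling factor to be $1$.
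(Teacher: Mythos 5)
The plan does not work, and the obstruction sits exactly at the step you single out as "crucial." You want to realize $\varphi$ as the $M$-reduction at $\chi$ of the easy isomorphism $\widetilde\varphi\colon\M^{\widetilde\theta}_0(\D Q,\widetilde\vv,\widetilde\ww)\to T^*(G/P)$. For that to make sense you need an $M$-action with a moment map on \emph{both} sides of $\widetilde\varphi$, intertwined by $\widetilde\varphi$, whose reduction at $\chi$ gives $\M^\theta_0(\D Q,\vv,\ww)$ on the quiver side. But the unipotent group $M$ lives inside $\SL_N$, and it simply does not act on $\M^{\widetilde\theta}_0(\D Q,\widetilde\vv,\widetilde\ww)$ in any way that would exhibit $\M^\theta_0(\D Q,\vv,\ww)$ as $\M^{\widetilde\theta}_0(\D Q,\widetilde\vv,\widetilde\ww)\red_\chi M$. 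The passage from dimension vector $\widetilde\vv$ to $\vv$ is made by Maffei's transversal slice $\mathfrak{T}$, which is cut out by the ad hoc block equations (\ref{eq:transv_vanish})--(\ref{eq:transv_rel}) precisely because no such reduction structure exists on the quiver side. Concretely, $\mathfrak{T}$ is not a moment-map level set for $M$; it is a $\GL(\vv)$-stable (not $\GL(\widetilde\vv)$-stable) slice whose $\GL(\vv)$-quotient is $\Sl(e,P)$, and the Gan--Ginzburg isomorphism $\Sl(e,P)\cong T^*(G/P)\red_\chi M$ identifies the quotient, not the slice, with an $M$-reduction. So "$\varphi$ is the Hamiltonian reduction of $\widetilde\varphi$ by $M$" is not a statement that can be given a literal meaning here, and the argument that $\varphi$ is automatically a symplectomorphism does not go through.

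The actual proof in the paper is direct and does not pass through any reduction heuristic: write $\omega=\beta-\beta^T$ and $\widetilde\omega=\widetilde\beta-\widetilde\beta^T$ with $\beta,\widetilde\beta$ the evident trace-pairing forms on $R(\D Q,\vv,\ww)$ and $R(\D Q,\widetilde\vv,\widetilde\ww)$, use the vanishing of most blocks of $d_x\Phi$ (forced by (\ref{eq:transv_vanish})) to show $\Phi^*(\widetilde\beta|_{\mathfrak T})=\beta|_{\Lambda_0}$, deduce $\Phi^*(\widetilde\omega|_{\mathfrak T})=\omega|_{\Lambda_0}$, and then note that on $\mathfrak T^{ss}$ the kernel of $\widetilde\omega|_{\mathfrak T}$ is exactly the $\GL(\vv)$-orbit directions, so the induced form downstairs is the reduced symplectic form on $\Sl(e,P)$. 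If you want to keep a conceptual picture, the right one is that $\mathfrak T$ is a (noncanonical) slice identifying $\Lambda_0(\D Q,\vv,\ww)$ with a transversal to $\GL(\widetilde\vv)$-orbits inside the preimage of $\Sl(e,P)$ in $R(\D Q,\widetilde\vv,\widetilde\ww)$, and the check that must be done is that this slice is compatible with the symplectic structures; there is no way to bypass that check by invoking reduction by $M$.
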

\begin{proof}
Let $\omega,\widetilde{\omega}$ denote the symplectic forms on the spaces $R(\D Q, \vv,\ww)$
and $R(\D Q, \widetilde{\vv},\widetilde{\ww})$. Explicitly, for $v^\alpha=((A_i^\alpha),(B_i^\alpha),(\Gamma_j^\alpha),(\Delta_j^\alpha))\in R(\D Q, \vv,\ww), \alpha=1,2$ we have  $\omega(v^1,v^2)=\beta(v^1,v^2)-\beta(v^2,v^1)$, where
$\beta(v^1,v^2)=\sum_{i=1}^{n-2}\tr(B_i^{1}A_i^{2})+\sum_{i=1}^{n-1}\tr(\Delta_i^{1}\Gamma_i^2)$. Analogously, $\widetilde{\omega}(\widetilde{v}^1,\widetilde{v}^2)=\widetilde{\beta}(\widetilde{v}^1,\widetilde{v}^2)-
\widetilde{\beta}(\widetilde{v}^2,\widetilde{v}^1)$, where $\widetilde{\beta}$ is defined similarly to
$\beta$.

Let us show that \begin{equation}\label{eq:form_pullback}\Phi^*(\widetilde{\beta}|_{\mathfrak{T}})=\beta|_{\Lambda_0(\D Q, \vv,\ww)}.\end{equation} First of all, pick $x\in \Lambda_0(\D Q, \vv,\ww)$
and  $v\in T_x \Lambda_0(\D Q,\vv,\ww)$. Write the element $d_x\Phi(v)$ in the block form $(\Tt_{\bullet}^\bullet, \Ss_\bullet^\bullet, \Aa_\bullet,\Bb_\bullet)$ as above.
Then $\Tt_{i,j,h}^{j',h'}=0$ if $h<h'$ and $\Ss_{i,j,h}^{j',h'}=0$ if $h\leqslant h'$. Moreover,
$\Tt_{i,j,h}^V=0, S_{i,V}^{j',h'}=0$ for all $i,j,h,j',h'$, and $\Tt_{i,V}^{j',h'}=0$ if $h'\neq 1$,
$\Ss_{i,j,h}^V=0$ if $h\neq j-i$. It follows that
for $v^1,v^2\in T_x\Lambda_0(\D Q,\vv,\ww)$ we have
$$\widetilde{\beta}(d_x\Phi(v^1), d_x \Phi(v^2))=\sum_{i=1}^{n-2} \tr(\Bb^1_i \Aa^2_i)+\sum_{i=0}^{n-2}\tr((\Ss^{V}_{i,i+1,1})^1(\Tt_{i,V}^{i+1,1})^2).$$
From assertion (1) of Lemma \ref{Lem:Maffei_main} we deduce that $\widetilde{\beta}(d_x\Phi(v^1),d_x\Phi(v^2))=\beta(v^1,v^2)$. This is
equivalent to (\ref{eq:form_pullback}).

It follows that
\begin{equation}\label{eq:form_pullback2}\Phi^*(\widetilde{\omega}|_{\mathfrak{T}})=\omega|_{\Lambda_0(\D Q, \vv,\ww)}\end{equation}
In particular, we see that for $x\in \mathfrak{T}\cap R(\D Q, \widetilde{\vv},\widetilde{\ww})^{\widetilde{\theta},ss}$
the kernel of the restriction of $\widetilde{\omega}$ to $T_x \mathfrak{T}$ coincides with $T_x \GL(\vv)x$.
So the pull-back of the symplectic form from $\Sl(e,P)= (\mathfrak{T}\cap R(\D Q, \widetilde{\vv},\widetilde{\ww})^{\widetilde{\theta},ss})/\GL(\vv)$ to $\mathfrak{T}\cap R(\D Q, \widetilde{\vv},\widetilde{\ww})^{\widetilde{\theta},ss}$ coincides with the restriction of
$\widetilde{\omega}$. Using the definition of the symplectic form on a reduction, we see
that $\varphi$ is a symplectomorphism.
\end{proof}

Below we will need to understand the behavior of some natural line bundles under the isomorphism $\varphi$.
Let $L_i, i=1,\ldots,n-1,$ be the 1-dimensional $\GL(\vv)$-module, where $\GL(\vv)$ acts by
$(X_1,\ldots, X_{n-1})\mapsto \det(X_i)$. Let $\mathcal{L}_i$ denote the corresponding line bundle on
$\M^\theta_0(\D Q, \vv,\ww)$.

Now let us define certain line bundles on $\Sl(e,P)$. Let $\mathcal{F}=(0=F_0\subset F_1\subset F_2\subset \ldots \subset F_{n-1}\subset F_n= \K^N)$ be the flag stabilized by $P$. Consider the $P$-modules $\widetilde{L}_i:= \bigwedge^{top}F_{i}$. Let $\widetilde{\mathcal{L}}_i$ denote the corresponding bundles on $T^*(G/P),\Sl(e,P)$.

\begin{Lem}\label{Lem:A_line_bundles}
$\varphi^*(\widetilde{\mathcal{L}}_i)\cong \mathcal{L}_i$.
\end{Lem}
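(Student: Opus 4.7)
The plan is to factor $\varphi$ through Nakajima's $e=0$ isomorphism and then translate between $\GL(\widetilde{\vv})$-characters and line bundles. By Lemma \ref{Lem:Maffei_main} and Proposition \ref{Prop:Maffei_main}, $\varphi$ is the composition
\begin{equation*}
\M^\theta_0(\D Q, \vv, \ww) \xrightarrow{\Phi/\GL(\vv)} \M^{\widetilde{\theta}}_0(\D Q, \widetilde{\vv}, \widetilde{\ww}) \xrightarrow{\widetilde{\varphi}} T^*(G/P),
\end{equation*}
landing in $\Sl(e, P)$. So it suffices to compute $\widetilde{\varphi}^*(\widetilde{\mathcal{L}}_i)$ on the ambient quiver variety and then pull the result back via $\Phi/\GL(\vv)$.

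By the explicit formula for $\widetilde{\varphi}$, the $i$-th subspace $F_i$ of the tautological flag is the kernel of the map $\widetilde{A}_{i-1}\cdots\widetilde{A}_1\widetilde{\Gamma}_1 : \widetilde{D}_1 \to \widetilde{V}_i$, and by counting flag dimensions (implicit in the isomorphism $\widetilde{\varphi}$) this map is surjective at every $\widetilde{\theta}$-semistable point. We therefore obtain a $\GL(\widetilde{\vv})$-equivariant short exact sequence of vector bundles on $\M^{\widetilde{\theta}}_0(\D Q, \widetilde{\vv}, \widetilde{\ww})$,
\begin{equation*}
0 \longrightarrow F_i \longrightarrow \widetilde{D}_1 \otimes \Str \longrightarrow \widetilde{\mathcal{V}}_i \longrightarrow 0,
\end{equation*}
where $\widetilde{\mathcal{V}}_i$ denotes the tautological bundle associated to the standard $\GL(\widetilde{\vv})$-module $\widetilde{V}_i$. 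Taking top exterior powers and using that $\bigwedge^{\mathrm{top}}\widetilde{D}_1$ is canonically trivial, we identify $\widetilde{\varphi}^*(\widetilde{\mathcal{L}}_i)$ with the line bundle $\widetilde{\mathcal{M}}_i$ on $\M^{\widetilde{\theta}}_0(\D Q, \widetilde{\vv}, \widetilde{\ww})$ associated (under the sign convention for the associated-bundle construction that makes this hold directly rather than up to a dual) to the character $\det(X_i)$ of $\GL(\widetilde{\vv})$.

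Next, decompose $\widetilde{V}_i = V_i \oplus D_i'$ as in (\ref{eq:direct_sum}): the embedding $\GL(\vv) \hookrightarrow \GL(\widetilde{\vv})$ acts trivially on the auxiliary summand $D_i'$, so the restriction of $\det(X_i^{\widetilde{\vv}})$ to $\GL(\vv)$ is exactly the character $\det(X_i^{\vv})$ defining $L_i$. Since $\Phi$ is $\GL(\vv)$-equivariant and maps semistable loci to semistable loci by Lemma \ref{Lem:Maffei_main}(2)--(3), the pullback of $\widetilde{\mathcal{M}}_i$ along $\Phi/\GL(\vv)$ differs from $\mathcal{L}_i$ only by the constant line $\det(D_i')$, hence is canonically isomorphic to $\mathcal{L}_i$. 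Composing with the first step yields $\varphi^*(\widetilde{\mathcal{L}}_i) \cong \mathcal{L}_i$. The only subtlety worth mentioning is the consistent choice of sign convention for the passage from a character of $\GL(\widetilde{\vv})$ to its associated line bundle; the rest of the argument is direct bookkeeping.
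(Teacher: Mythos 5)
Your argument is correct and matches the paper's approach: both identify $\widetilde{\mathcal{L}}_i$ with the line bundle on the quiver variety $\M^{\widetilde{\theta}}_0(\D Q,\widetilde{\vv},\widetilde{\ww})$ induced by the determinant character of the $i$-th factor of $\GL(\widetilde{\vv})$, and then note that the restriction of this character along $\GL(\vv)\hookrightarrow\GL(\widetilde{\vv})$ is the character defining $\mathcal{L}_i$. The only difference is that you justify the first identification via the tautological short exact sequence coming from Nakajima's $\widetilde{\varphi}$, whereas the paper simply asserts it; both proofs are otherwise the same.
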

\begin{proof}
From the construction of the isomorphism $\M^{\widetilde{\theta}}_0(\D Q, \widetilde{\vv},\widetilde{\ww})\cong T^*(G/P)$ produced above, we see that one can interpret the line bundles $\widetilde{\mathcal{L}}_i$ in a different
way. Namely, $\widetilde{\mathcal{L}}_i$ coincides with the line bundle on $R(\D Q, \widetilde{\vv},\widetilde{\ww})\red^{\widetilde{\chi}}_0 \GL(\widetilde{\vv})$ induced by the
1-dimensional $\GL(\widetilde{\vv})$-module $\widetilde{L}_i$ defined analogously to $L_i$.

Now the isomorphism of the lemma  follows from the fact that the restriction of the character $(\widetilde{X}_1,\ldots, \widetilde{X}_{n-1})\mapsto \det(\widetilde{X}_i)$ of $\GL(\widetilde{\vv})$ to $\GL(\vv)$ coincides
with the character $(X_1,\ldots,X_{n-1})\mapsto \det(X_i)$.
\end{proof}

\section{W-algebras}\label{SECTION_Walg}
\subsection{Definitions}\label{SUBSECTION_Walg_def}
Let $G$ be a reductive algebraic group, $\g$ be the Lie algebra of $G$. Pick a nilpotent element
$e\in \g$ and choose $f,[e,f]$ forming an $\sl_2$-triple with $e$. Recall
the Slodowy slice $\Sl$ and the equivariant Slodowy slice $\XS=G\times \Sl$.

A (finite) W-algebra is a quantization of the graded Poisson algebra $\K[S]$.
In full generality, it was first defined by Premet in \cite{Premet1}.
In this subsection we will recall the definitions of a W-algebra following  \cite{Wquant}
and \cite{GG}. For details the reader is referred to the review \cite{ICM}.
We remark, however, that here we will need homogenized versions of W-algebras,
i.e., our algebras will be graded algebras over $\K[h]$.

The variety $\XS$ is affine and hence admissible in the sense of Subsection \ref{SUBSECTION_non_comm_period}.
So we can consider the canonical quantization  $\widetilde{\Walg}_h$ of $\XS$. Consider the
algebra $\Gamma(X,\widetilde{\Walg}_h)^G$. This algebra is complete in the $h$-adic topology,
and $\Gamma(X,\widetilde{\Walg}_h)^G/h \Gamma(X,\widetilde{\Walg}_h)^G=\K[\XS]$. Let
$\Walg_h$ denote the subalgebra of $\K^\times$-finite vectors in $\Gamma(X,\widetilde{\Walg}_h)^G$.
Since the $\K^\times$-action on $\Sl$ is contracting, we see that $\Walg_h/h \Walg_h= \K[S]$.

We remark that the quantization $\widetilde{\Walg}_h$ of $\XS$ admits a quantum comoment map
$\g\rightarrow \Gamma(\XS,\widetilde{\Walg}_h)$, see \cite{HC}. This  gives rise to a $G\times \K^\times$-equivariant algebra homomorphism $U_h(\g)\rightarrow \Gamma(\XS,\widetilde{\Walg}_h)$. Restricting the latter to the $G$-invariants  we get a monomorphism $U_h(\g)^G\hookrightarrow \Walg_h$. Since the $G$-action on $X$
is free, it is easy to see that $U_h(\g)^G$ coincides with the center of $\Walg_h$. An alternative proof is given in
\cite{HC}, Subsection 2.2.

In the sequel we will need an extension of $\Walg_h$. Namely, pick a Cartan subalgebra $\h\subset\g$ and let $W$
denote the Weyl group of $(\g,\h)$ and $\Delta\subset \h^*$ be the root system. Pick a Borel subgroup $B\subset G$. This choice defines a system $\Pi$ of simple roots in $\Delta$. Let, as usual, $\rho$ stand for  half the sum of all positive roots. Then one defines the $\cdot$-action of $W$ on $\h^*$ by $w\cdot \lambda= w(\lambda+\rho)-\rho$. Consider the induced action of $W$ on $S\h=\K[\h^*]$. Recall the Harish-Chandra isomorphism $U(\g)^G\cong S\h^W$.
We will use its homogenized version: we identify $U_h(\g)^G$ with $\K[\h^*,h]^W$, where the action of
$W$ on the latter algebra  is given by $w.f(\lambda)=f(w^{-1}(\lambda+\rho h)-\rho h)$.

Below we will need to consider the algebra $\Walg_{h,\h}:=\Walg_h\otimes_{U_h(\g)^G}\K[\h^*,h]$.
For $\lambda\in \h^*$ let $\Walg_\lambda$ denote the quotient of $\Walg_\h$ by the ideal in $\K[\h^*]$ corresponding to $\lambda$ and $h=1$.
It is easy to see that the natural homomorphism $\Walg_h\rightarrow \Walg_\lambda$ is an epimorphism.

Now let us explain the approach to W-algebras of Premet, \cite{Premet1}, in the version of Gan and Ginzburg,
\cite{GG}. Recall  the grading $\g=\bigoplus_{i\in \Z}\g(i)$, the subalgebra $\m\subset \g$,
the element $\chi\in \g^*$ from Subsection \ref{SUBSECTION_Slodowy}.

Consider
the quantum Hamiltonian reduction $U_h(\g)\red_\chi M$ equipped with the so
called {\it Kazhdan grading}. The latter is defined as follows: for $\xi\in \g(i)$
the Kazhdan degree of $\xi$ is, by definition, $i+2$ (and the degree of $h$ is 2, as usual).
The algebra
$U_h(\g)\red_\chi M$ inherits the grading from $U_h(\g)$.

In \cite{Wquant}, the author checked that the filtered algebra $\Walg_h/(h-1)\Walg_h$ and
$U(\g)\red_\chi M$ are isomorphic. This does not automatically imply that the graded algebras
$\Walg_h$ and $U_h(\g)\red_\chi M$ are isomorphic. However, the existence of the latter isomorphism
can be easily deduced from \cite{Wquant}, Remark 3.1.4.

%
%
%

\subsection{Parabolic W-algebras and quantizations of Slodowy varieties}\label{W_parab}
Let us define the parabolic analogs of $\Walg_{h,\h}$.
Namely, let $P$ be a parabolic subgroup of $G$ and let $\a$ stand for the quotient
of $\p$ by its solvable radical. Consider a $\K[\a^*,h]$-algebra $\A_h $ that is, by definition, the algebra
of $\K^\times$-finite elements in $\Gamma(G/P,\Dcal_h(G/P_0)^{P/P_0})$, where
the action of $P/P_0$ on $\Dcal_h(G/P_0)$ is induced from the action on
$G/P_0$ by right translations. This algebra comes equipped with a
homomorphism $U_h(\g)\rightarrow \A_h$ induced by the quantum comoment
map. So we can consider the quantum Hamiltonian reduction $\Walg^P_{h,\a}:=(\A_h/\A_h\m_\chi)^{M}$.
This is also a graded algebra over $\K[\a^*,h]$.

We will modify the $\K[\a^*,h]$-algebra structure on $\Walg^P_{\a,h}$ as follows. Assume that
$B\subset P$ and let $L$ stand for the Levi  subalgebra of $P$ containing the maximal
torus corresponding to $\h$. We may and will identify $\a$ with the center $\z(\mathfrak{l})$
of $\mathfrak{l}=\operatorname{Lie}(L)$. Consider the map
$\iota:\a\rightarrow \A_h$ defined by $\iota(\xi)=\Phi(\xi)-h \langle\rho,\xi\rangle$,
where $\Phi$ is the initial map $\a\rightarrow \A_h$.
The new $\K[\a^*,h]$-algebra structure on $\Walg^P_{h,\a}$ we need is induced by
$\iota$. The reason why we need this shift will become clear later.


We remark that the identification of $\a$ with the center of $\mathfrak{l}$ gives
rise to the direct sum decomposition $\h=\a\oplus (\h\cap [\mathfrak{l},\mathfrak{l}])$
and hence to the projection $\h\twoheadrightarrow \a$. So we can set
$\Walg_{\a,h}:=\K[\a^*,h]\otimes_{\K[\h^*,h]}\Walg_{h,\h}$.

\begin{Lem}\label{Lem:W_parab_qout}
\begin{enumerate}
\item $\Walg_{h,\a}^P/ h\Walg_{h,\a}^P=\K[\widetilde{\Sl}(e,P)]$.
\item $\Walg_{h,\a}^P$ coincides with the subalgebra of $\K^\times$-finite elements
in $\Gamma(\widetilde{\Sl}(e,P), \widetilde{\Walg}_h\widehat{\red} P)$.
\item There is a natural graded $\K[h]$-algebra homomorphism  $\Walg_{\a,h}\rightarrow \Walg^P_{h,\a}$.
\item This homomorphism is bijective when $P=B$.
\item This homomorphism  is surjective provided $\g$ is of type $A$.
\end{enumerate}
\end{Lem}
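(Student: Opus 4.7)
The plan is: deduce (1) and (2) from quantization-commutes-with-reduction applied to the $M \times P$-action; construct the map in (3) from the inclusion $\widetilde{\Walg}_h(\XS)^G \hookrightarrow \widetilde{\Walg}_h(\XS)^P$ composed with the $P$-reduction quotient; settle (4) by identifying $\XS \red B$ with $\Sl \times_{\h^*/W} \h^*$ via Grothendieck--Springer; and handle (5) by using the type $A$ symplectic resolution of $\overline{G\p^\perp}$. Part (5) is the main obstacle.

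For (1), I would apply the sheaf version of Lemma \ref{Lem:quant_comm_red}. By Gan--Ginzburg, $M$ acts freely on $\rho^{-1}(\chi|_\m)$, and the right $P$-action on $G$ is free, so condition (B) holds for both reductions. The classical limit of $\A_h$ identifies with $\K[G *_{P_0} \p^\perp]^{P/P_0}$, and further $\chi$-reduction by $M$ yields $\K[\XS \red P]$, using the Gan--Ginzburg identification $\XS \red P = (G *_{P_0} \p^\perp) \red_\chi M$. Part (2) is the analogue of Lemma \ref{Lem:global_sections}: at $h = 0$ both sides equal $\K[\XS \red P]$ by (1) together with the cohomology vanishing $H^i(\widetilde{\Sl}(e, P), \Str) = 0$ for $i > 0$ from Lemma \ref{Lem:K_X}, and $h$-adic flatness plus the positive grading promote the identification to all of $\Walg^P_{h, \a}$.

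For (3), the inclusion $\Walg_h = \widetilde{\Walg}_h(\XS)^G \hookrightarrow \widetilde{\Walg}_h(\XS)^P$ composed with the quotient $\widetilde{\Walg}_h(\XS)^P \twoheadrightarrow \Walg^P_{h, \a}$ (well-defined by (2)) gives a homomorphism $\Walg_h \to \Walg^P_{h, \a}$, which after base-changing along $\K[\h^*, h]^W \hookrightarrow \K[\h^*, h] \twoheadrightarrow \K[\a^*, h]$ yields $\Walg_{\a, h} \to \Walg^P_{h, \a}$. The compatibility of the two resulting $\K[\a^*, h]$-structures on the target is precisely what the shift $\iota(\xi) = \Phi(\xi) - h \langle \rho, \xi \rangle$ is designed to ensure: the Harish-Chandra isomorphism $U_h(\g)^G \cong \K[\h^*, h]^W$ uses the $\rho$-shifted $W$-action, and the $-\rho h$ correction absorbs this shift.

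For (4), a direct Hamiltonian reduction computation, via the Grothendieck--Springer identification $T^*G \red B = \widetilde{\g}$ over $\h^*$ and the standard equality $\widetilde{\g} \times_\g \Sl = \Sl \times_{\h^*/W} \h^*$, gives $\XS \red B = (T^*G \red B) \red_\chi M = \Sl \times_{\h^*/W} \h^*$. Modulo $h$ the map in (3) becomes $\K[\Sl] \otimes_{\K[\h^*]^W} \K[\h^*] \xrightarrow{\sim} \K[\Sl \times_{\h^*/W} \h^*]$; with $\deg h = 2$ and $\K[h]$-flatness of both sides, graded Nakayama promotes this to a bijection, proving (4). The hard part is (5): for a general parabolic in type $A$, $\XS \red P \neq \Sl \times_{\h^*/W} \a^*$, so classical surjectivity is not immediate. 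My plan is to exploit the existence of a symplectic resolution $T^*(G/P) \to \overline{G\p^\perp}$ (special to type $A$): this makes $\widetilde{\Sl}(e, P)$ a symplectic resolution of $\XS \red P$, allowing a Lemma \ref{Lem:global_sections}-style identification of $\Walg^P_{h, \a}$ with global sections of a canonical quantization on the resolution; classical surjectivity of the map $\K[\Sl] \otimes_{\K[\h^*]^W} \K[\a^*] \to \K[\XS \red P]$ --- provable in type $A$ from the Maffei identification of $\widetilde{\Sl}(e, P)$ with a quiver variety (Subsection \ref{SUBSECTION_A_quiver_vs_Slod}) together with the quiver-side generating set --- then lifts to quantum surjectivity by the period-map classification of Section \ref{SECTION_deformation}.
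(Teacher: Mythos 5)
Your parts (1)--(4) track the paper's proof essentially step by step: (1) via condition (B) of Lemma \ref{Lem:quant_comm_red} and Gan--Ginzburg's freeness of the $M$-action; (2) as an analogue of Lemma \ref{Lem:global_sections}; (3) by constructing $\K[\a^*,h]\otimes_{U_h(\g)^G}U_h(\g)\to\A_h$ before reducing by $M$, where the paper gives the concrete faithfulness argument (the action of $\A$ on $\K[G/P_0]$ and the Harish--Chandra theory) that you gesture at with the $-\rho h$ shift; and (4) via the Grothendieck--Springer identification $\A_h/(h)=\K[T^*G\red B]=\K[\h^*\times_{\g^*\quo G}\g^*]$ plus $\K[h]$-flatness.

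Your plan for (5) takes a genuinely different and heavier route, and the crucial fact is left unidentified. After reducing modulo $h$ and then modulo the ideal of $\a^*$ --- both legitimate by flatness and positivity of the grading, which you already have --- one is left with showing that $\K[\Sl\cap\Nil]\to\K[\Sl(e,P)]$ is surjective. The paper does this by observing that the image of $\Sl(e,P)$ is $\Sl\cap G\p^\perp$, that this is \emph{normal} (normality of $G\p^\perp$ is Kraft--Procesi \cite{KP}, and the slice is transversal and contracted by the Kazhdan action), and that $\Sl(e,P)\to\Sl\cap G\p^\perp$ is birational, so that $\K[\Sl(e,P)]=\K[\Sl\cap G\p^\perp]$, a quotient of $\K[\Sl\cap\Nil]$. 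You route instead through Maffei's quiver--variety identification and a ``quiver-side generating set''; that could be made to work (Crawley-Boevey's normality of $\M_0(\D Q,\vv,\ww)$, Proposition \ref{Prop:CB}, would then substitute for Kraft--Procesi), but the generating-set language obscures that normality is the real input, and as written the step is not actually argued. Also, your final sentence mis-attributes the lift from classical to quantum surjectivity to the period-map classification of Section \ref{SECTION_deformation}: that classification says nothing about surjectivity of a given homomorphism. What one actually uses is that both algebras are non-negatively graded with $\deg h=2>0$ and $\K[h]$-flat, so surjectivity modulo $h$ promotes to surjectivity by graded Nakayama --- no period map required.
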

\begin{proof}
The $M$-action on $\mu^{-1}(\chi|_{\m})$ is free, \cite{Ginzburg_HC}, Corollary 1.3.8 (here $\mu$ is the moment
map $\Spec(\A_h/(h))\rightarrow \m^*$).
So the algebra $\Walg_{h,\a}^P$ satisfies quantization commutes with reduction condition by Lemma \ref{Lem:quant_comm_red}. But the algebra of global functions on $\XS\red P=(T^*G\red P)\red_\chi M$, by definition, is just $(\A_h/(h))\red_\chi M$. This implies (1). The proof of (2) is now analogous to that of Lemma
\ref{Lem:global_sections}.

Let us prove (3). We need to establish a homomorphism $\K[\a^*,h]\otimes_{U_h(\g)^G}U_h(\g)\twoheadrightarrow \A_h$, then we will apply the reduction by $M$.

We have  the quantum comoment map homomorphism $U_h(\g)\rightarrow \A_h$ together with a homomorphism
$\K[\a^*,h]\rightarrow \A_h$ specified above. Let us show that these two homomorphisms agree on $U_h(\g)^G$
(the latter maps to $\K[\a^*,h]$ via the composition $U_h(\g)^G\cong \K[\h^*,h]^W\hookrightarrow \K[\h^*,h]\twoheadrightarrow
\K[\a^*,h]$). This is a pretty standard fact but we will provide its proof for readers convenience.
First of all, since both homomorphisms $U_h(\g)^G\rightarrow \A_h$ are graded it is enough to prove
that they coincide modulo $h-1$. But the algebra $\A:=\D(G/P_0)^{P/P_0}$ acts on $\K[G/P_0]$
and the subalgebra $S\a\subset \A$ acts faithfully. Now the claim that the natural action of
$U(\g)^G$ on $\K[G/P_0]$ factors through the homomorphism $U(\g)^G\rightarrow S\a$ is just
part of the construction of the Harish-Chandra isomorphism $U(\g)^G\cong S\h^W$.
This completes the proof of (3).

So we have constructed a homomorphism $\K[\a^*,h]\otimes_{U_h(\g)^G}U_h(\g)\rightarrow \A_h$. For
$P=B$ the natural homomorphism $\K[\h^*,h]\otimes_{U_h(\g)^G}U_h(\g)\rightarrow \A_h$ is a bijection.
Indeed, since the right hand side is $\K[h]$-flat, it is enough to show that this homomorphism
is an isomorphism modulo $h$. This  follows from the fact that $\A_h/(h)=\K[T^*G\red B]=\K[\h^*\times_{\g^*\quo G}\g^*]=\K[\h^*,h]\otimes_{U_h(\g)^G}U_h(\g)/(h)$.

Proceed to assertion (5). Again, it is again to prove that the homomorphism is surjective modulo $h$.
The homomorphism becomes $\K[\a^*\times_{\g^*\quo G}\Sl]\rightarrow \K[\XS\red P]$. Again, the homomorphism
is the identity on $\K[\a^*]$ and both algebras are flat (=graded free) over $\K[\a^*]$. So its enough to prove that
the homomorphism is surjective modulo $(\a)$. But modulo $(\a)$ the left algebra is just $\K[\Sl\cap\mathcal{N}]$,
where $\mathcal{N}$ denotes the nilpotent cone in $\g^*$. The right algebra is $\K[\Sl(e,P)]$. The image of
$\Sl(e,P)$ in $S\cap \mathcal{N}$ coincides with $S\cap G\p^{\perp}$.

Let us show that the latter is a normal
Poisson variety. First of all,  $G\p^\perp$ is the closure of a nilpotent orbit and hence is normal, thanks to the results of Kraft and Procesi, \cite{KP}.
The intersection $\Sl\cap G\p^\perp$ is transversal at $\chi$ so $\Sl\cap G\p^\perp$
is normal at $\chi$. To prove the normality at the other points  we notice that
the Kazhdan action contracts $\Sl\cap G\p^\perp$ to $\chi$.
Also the morphism $\Sl(e,P)\rightarrow \Sl\cap G\p^\perp$ is birational because the natural morphism
$T^*(G/P)\rightarrow G\p^\perp$ is birational and $G$-equivariant. So we see that the morphism
$\Sl(e,P)\rightarrow \Sl\cap \mathcal{N}$ gives rise to an isomorphism $\K[\Sl(e,P)]\cong \K[\Sl\cap G\p^\perp]$.
So the morphism $\K[\Sl\cap \Nil]\rightarrow \K[\Sl(e,P)]$ is surjective, as required.
\end{proof}
\begin{Rem}
In general, the homomorphism $\Walg_{h,\a}\rightarrow \Walg_{\a,h}^P$ is not surjective but is surjective
modulo $(h-1)$. Let us sketch a proof. First, we need to show that the homomorphism $S\a\otimes_{U(\g)^G}U(\g)\rightarrow \D(G)\red P$
is surjective. This homomorphism is the identity on $S\a$ so it is enough to show that the induced homomorphism
of the fibers at $\lambda\in \a^*$ is surjective for any $\lambda\in \a^*$. But this follows from results of Borho
and Brylinski, \cite{BB}, Theorem 3.8 and Remark 3.9.
\end{Rem}

\subsection{Main theorems}\label{SUBSECTION_W_main}
First of all let us state a result on the isomorphism of deformations of Kleinian singularities.

Let  $\Gamma, Q, \vv,\ww,\g,\Orb,\delta$ be as in Subsection \ref{SUBSECTION_Klein_classical}.
Pick a Cartan subalgebra $\h\subset \g$, a system $\alpha_1,\ldots,\alpha_r$ of simple roots in $\h$.
Recall that the space $\z=\gl(\vv)^{*\GL(\vv)}$ is identified with $\K^{Q_0}$ by
$\sum_{i\in Q_0}\chi_i\epsilon_i\mapsto [\xi\mapsto\sum_{i\in Q_0}\chi_i \tr(\xi_i)]$.
Here $\epsilon_i, i\in Q_0,$ is the tautological basis in $\K^{Q_0}$.
Consider the subspace $\z_0\subset \z$ of all vectors orthogonal to $\delta$.
Identify $\h^*$ with $\z_0$ by $\lambda\mapsto \sum_{i=0}^r  \lambda_i\epsilon_i$,
where $\lambda_1,\ldots,\lambda_r$ are defined from $\lambda=\sum_{i=1}^r \lambda_i\alpha_i$,
and $\lambda_0=-\sum_{i=1}^r \delta_i \epsilon_i$.

Recall that we have the $\K[\h^*][h]$-algebra $\Walg_{h,\h}$ (with the modified structure map $\K[\h^*][h]\rightarrow
\Walg_{\h,h}$, see Subsection \ref{W_parab}) and also a $\K[\z_0][h]$-algebra
$\W_{\z_0}(\D Q,\vv,\ww)_h$.

\begin{Thm}\label{Thm:Klein_W}
There is a $\K[\h^*][h]$-linear isomorphism $\W_{\z_0}(\D Q,\vv,\ww)_h\xrightarrow{\sim} \Walg_{h,\h}$ of graded associative algebras.
\end{Thm}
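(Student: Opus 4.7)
The plan is to realize both sides as $\K^\times$-finite global sections of canonical (even) quantizations of the same symplectic formal deformation of the minimal resolution $X\to X_0=\K^2/\Gamma$, and to invoke the classification of Proposition~\ref{Prop:quant_classif} together with Corollary~\ref{Cor:quant_classif}. Let $X$ denote the minimal resolution of $X_0$; by Subsection~\ref{SUBSECTION_Klein_classical} we have a canonical $\K^\times$-equivariant symplectomorphism $\varphi:\Sl(e,B)\xrightarrow{\sim}\M^\theta_0(\D Q,\vv,\ww)$. The two sides of the theorem come naturally parameterized: on the W-algebra side by $\h^*$, on the quiver side by $\z_0$ (viewed as a subspace of $\z$ via $\delta^\perp$).

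First I would compare the two commutative formal deformations of $X$:
\begin{itemize}
\item $\widetilde{\Sl}(e,B)=\XS\widehat{\red}B$, a graded symplectic formal deformation over $(\h^*)^{\wedge_0}$;
\item $\widehat{X}^{quiv}:=\M^\theta(\D Q,\vv,\ww)^{\wedge_{X_0}}$, pulled back to $\z_0^{\wedge_0}$.
\end{itemize}
By Proposition~\ref{Prop:comm_def_graded}, each is classified by a linear period map into $H^2_{DR}(X)$. Since $X$ is a minimal resolution of a Kleinian singularity, $H^2_{DR}(X)$ is spanned by the Chern classes of the exceptional line bundles, and by Proposition~\ref{Prop_DH} the Duistermaat-Heckman theorem computes each period map as the Chern class of the corresponding principal torus bundle. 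On the quiver side this is the Chern class of the $\GL(\vv)/[\GL(\vv),\GL(\vv)]$-bundle, whose components are the $\mathcal L'_i$; on the Slodowy side it is the Chern class of the $B/[B,B]$-bundle $G\times_{[B,B]}(-)\to \Sl(e,B)$, whose components are the $\mathcal L_i$. Proposition~\ref{Prop:line_bundle_transform} then shows that, under the identification of $\h^*$ with $\z_0$ built into the theorem (so that the simple roots $\alpha_i\in\h^*$ go to the coordinates $\mathcal L'_i$ while the fundamental weights correspond to $\mathcal L_i$), the two period maps agree. Hence Proposition~\ref{Prop:comm_def_graded} provides a graded $\K^\times$-equivariant symplectic isomorphism
\[
\widetilde{\Sl}(e,B)\xrightarrow{\sim}\widehat{X}^{quiv}
\]
of formal deformations over $(\h^*)^{\wedge_0}$, extending $\varphi$.

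Next I would quantize. On the Slodowy side we have the canonical quantization $\widetilde{\Walg}_h\widehat{\red}B$ of $\widetilde{\Sl}(e,B)$; on the quiver side we have $\W_{V^*,h}\widehat{\red}_{\z_0}^\theta\GL(\vv)$. By Lemma~\ref{Lem:K_X} we have $H^i(X,\Str_X)=0$ for $i>0$, so by Corollary~\ref{Cor:quant_classif} every homogeneous quantization is classified by its period in $H^2_{DR}$ and \emph{even} homogeneous quantizations are canonical. Both of the above quantizations are visibly homogeneous (the $\K^\times$-actions and gradings match). To finish I would invoke Theorem~\ref{Thm:even_quant} of Subsection~\ref{SUBSECTION_even_red} to conclude that each is even: the W-algebra quantization is even because it is obtained by Hamiltonian reduction from the canonical (hence even) quantization of $T^*G$, and the quiver quantization is even because $\W_{V^*,h}$ is an even $\GL(\vv)$-equivariant quantization of $V^*$ and the comoment map is quadratic. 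Thus both are canonical, and by Proposition~\ref{Prop:quant_classif} they are isomorphic as $\K^\times\times\K[h]$-algebras over the formal disc in the parameter.

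Finally, take $\K^\times$-finite global sections. Lemma~\ref{Lem:W_parab_qout}(2) identifies the global sections of $\widetilde{\Walg}_h\widehat{\red}B$ with $\Walg_{h,\h}^B$, and part~(4) of the same lemma gives $\Walg_{h,\h}^B=\Walg_{h,\h}$. The (formal) version of Lemma~\ref{Lem:global_sections} identifies the global sections of $\W_{V^*,h}\widehat{\red}_{\z_0}^\theta\GL(\vv)$ with $\W_{\z_0}(\D Q,\vv,\ww)_h$; the hypotheses of that lemma hold by Proposition~\ref{Prop:CB}, Proposition~\ref{Prop:Nakajima1} and Lemma~\ref{Lem:quant_comm_red} together with the freeness of the stable locus. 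The resulting algebra isomorphism is automatically $\K[\h^*][h]$-linear by construction. The hardest step, in my view, will be verifying evenness of the W-algebra and quiver-theoretic reductions via Theorem~\ref{Thm:even_quant}, and keeping track of the $\rho$-shift built into the parabolic W-algebra $\Walg_{h,\h}$ (see Subsection~\ref{W_parab}): this shift is precisely what is needed for the moment-map parameter on the Slodowy side to match the quiver parameter $\chi\in\z_0$ under the identification $\h^*\simeq\z_0$ used in the statement.
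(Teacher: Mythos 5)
Your proposal follows the paper's own proof essentially step by step: compare the two graded commutative formal deformations via Proposition~\ref{Prop:comm_def_graded}, Proposition~\ref{Prop_DH} and the line-bundle comparison of Proposition~\ref{Prop:line_bundle_transform}; show both reduced quantizations are even using Theorem~\ref{Thm:even_quant} (with the $\rho$-shift from Subsection~\ref{W_parab} on the parabolic side and the symmetrized comoment map on the Weyl-algebra side); conclude from Corollary~\ref{Cor:quant_classif} that they are the canonical quantization; and finally pass to $\K^\times$-finite global sections via Lemma~\ref{Lem:W_parab_qout}(2),(4) and Lemma~\ref{Lem:global_sections}. This is exactly the argument in Subsection~\ref{SUBSECTION_W_proofs}.
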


\begin{Rem}\label{Rem:Klein}
Usually one considers the $\K[\z_0][h]$-algebra $\W_{\z_0}(\D Q,\vv)_h$ instead of $\W_{\z_0}(\D Q,\vv,\ww)_h$.
However, it is pretty straightforward to see that these two algebras are naturally isomorphic.
\end{Rem}

Now let $G=\SL_N$, $n,r_1,\ldots,r_n,P, \vv,\ww,Q,\Orb$ have the same meaning as in Subsection \ref{SUBSECTION_A_quiver_vs_Slod}. Let $\a$ be constructed from $P$ as in Subsection \ref{W_parab}.
Let us relate the algebras $\Walg_{\a,h}^P$ and $\W(\D Q,\vv,\ww)_h$. First, we need to  identify $\a$
with $\z^*$. We can view $\a$ as the space $\{\diag(x_1,\ldots,x_1,\ldots,x_{n},\ldots,x_{n})\}$
of matrices, where $x_i$ appears $r_i$ times with $\sum_{i=1}^n r_i x_i=0$. Map $\diag(x_1,\ldots, x_n)\in \a$
to $\sum_{i=1}^{n-1}(\sum_{j=1}^i r_j x_j)\epsilon_i$.

We assume that $d_i\geqslant 2 v_i-v_{i+1}-v_{i-1}$ for all $i$ (we set $v_0=v_n=0$).
This is equivalent to $r_i\geqslant r_{i+1}$ for all $i$. Also recall that in Subsection \ref{SUBSECTION_quiver_var}
we assigned elements $\mathrm{d}$ to $\ww$ and $\mathrm{v}$ to $\vv$. Our condition
is equivalent to saying that $\mathrm{d}-\mathrm{v}$ is dominant.

The reason for this assumption
is that it guarantees that quantization commutes with reduction holds for $\W(\D Q,\vv,\ww)_h$.
If we remove this assumption then one can still show that we have an epimorphism
$\W(\D Q,\vv,\ww)_h\twoheadrightarrow \Walg_{h,\a}^P$ that is an isomorphism if and only
if the scheme $\M_0(\D Q,\vv,\ww)$ is reduced.

\begin{Thm}\label{Thm:type_A}
There is a $\K[\a^*][h]$-linear isomorphism $\W(\D Q,\vv,\ww)_h\xrightarrow{\sim}
\Walg_{h,\a}^P$ of graded associative algebras.
\end{Thm}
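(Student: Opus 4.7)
The plan is to present both sides as the $\K^\times$-finite global sections of sheaf quantizations on a common symplectic resolution of $X_0 := \Sl \cap \overline{G\p^\perp} = \M_0(\D Q, \vv, \ww)$, and then invoke the period-map classification of quantizations from Subsection~\ref{SUBSECTION_non_comm_period}. By Proposition~\ref{Prop:Maffei_main} together with Lemmas~\ref{Lem:Iso_A_equivariance} and \ref{Lem:Iso_A_symplect}, Maffei's map $\varphi$ identifies $X := \M^\theta_0(\D Q,\vv,\ww)$ with $\Sl(e,P)$ as a $\K^\times$-equivariant symplectic resolution of $X_0$. Thus there are two graded formal symplectic deformations of $X$ to compare, namely $\widetilde{\Sl}(e,P) = \XS \widehat{\red} P$ over $\a^*$, and the base change to $\a^*$ (along the identification $\a^* \cong \z$ stated just before the theorem) of the completion $V \widehat{\red}^\theta \GL(\vv)$ of $\M^\theta(\D Q,\vv,\ww)$ along $X$, where $V := R(\D Q,\vv,\ww)$.

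By Proposition~\ref{Prop:comm_def_graded}, each such graded deformation is classified by its period map $\a^* \to H^2_{DR}(X)$, and Proposition~\ref{Prop_DH} computes this map as the Chern class of the principal torus bundle underlying the Hamiltonian reduction. On the Slodowy side the relevant bundles pulled back from $T^*(G/P)$ are the $\widetilde{\mathcal{L}}_i$'s, while on the quiver side they are the determinant bundles $\mathcal{L}_i$ associated to the $\GL(v_i)$-factors. Lemma~\ref{Lem:A_line_bundles} gives $\varphi^*\widetilde{\mathcal{L}}_i \cong \mathcal{L}_i$, so the two period maps coincide and the two formal deformations are isomorphic as graded symplectic formal deformations of $X$ over $\a^*$.

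Next the plan is to promote this to an isomorphism of quantizations. Lemma~\ref{Lem:K_X} gives $H^i(X,\Str_X)=0$ for $i>0$, so by Corollary~\ref{Cor:quant_classif} any two even homogeneous quantizations of the formal deformation above are isomorphic. The quantization $\widetilde{\Walg}_h$ of $\XS$ is canonical, hence even; the shift $\iota(\xi) = \Phi(\xi) - h\langle \rho, \xi\rangle$ built into the $\K[\a^*][h]$-algebra structure on $\Walg^P_{h,\a}$ is precisely the half-canonical correction that keeps $\widetilde{\Walg}_h \widehat{\red} P$ even (it compensates the $\det(\mathcal{T}_{G/P})^{1/2}$ factor that would otherwise obstruct the parity antiautomorphism on the reduction). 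On the quiver side, $\W_{V^*,h}$ is even via the standard involution $h \mapsto -h$, and because $\GL(\vv)$ acts on $V$ through a self-dual symplectic representation, Theorem~\ref{Thm:even_quant} from Subsection~\ref{SUBSECTION_even_red} shows that $\W_{V^*,h} \widehat{\red}^\theta \GL(\vv)$ is even with the tautological, unshifted comoment map.

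Finally the plan is to take $\K^\times$-finite global sections of the resulting isomorphism of sheaf quantizations. On the W-algebra side this recovers $\Walg^P_{h,\a}$ by Lemma~\ref{Lem:W_parab_qout}(2); on the quiver side it recovers $\W(\D Q,\vv,\ww)_h$ by Lemma~\ref{Lem:global_sections}, whose hypotheses (quantization commutes with reduction, freeness of the $\GL(\vv)$-action on semistable points, and the symplectic-resolution property of $\M^\theta_0 \to \M_0$) are all guaranteed by the dominance hypothesis $d_i \geq 2v_i - v_{i-1} - v_{i+1}$ together with Lemma~\ref{Lem:quant_comm_red}, Proposition~\ref{Prop:CB}, and Proposition~\ref{Prop:Nakajima1}. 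The main obstacle will be the bookkeeping showing that, under the parameter identification stated just before the theorem, the $\rho$-shift on the W-algebra side exactly matches the Duistermaat-Heckman period computation on the quiver side, so that both quantizations are in fact canonical and may be compared directly; the dominance condition on $\vv,\ww$ enters precisely at the point where one needs the quiver-side reduction to be not merely an even quantization of the scheme $\M^\theta_0(\D Q,\vv,\ww)$ but of its reduced symplectic-resolution underlying variety, matching the already reduced $\Sl(e,P)$.
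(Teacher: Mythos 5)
Your proposal is correct and follows the same route as the paper: identify the two resolutions via Maffei's isomorphism, match the commutative formal deformations via the period map and the line-bundle comparison (Lemma \ref{Lem:A_line_bundles} substituting for Proposition \ref{Prop:line_bundle_transform} in the Kleinian case), then use the evenness criterion (Theorem \ref{Thm:even_quant}) together with Corollary \ref{Cor:quant_classif} to identify the sheaf quantizations, and finally pass to $\K^\times$-finite global sections via Lemmas \ref{Lem:W_parab_qout}(2) and \ref{Lem:global_sections}. The one point the paper treats as genuinely new for this theorem --- the explicit arithmetic check that the dominance condition $d_i\geq 2v_i-v_{i-1}-v_{i+1}$ (equivalently, $\mathrm{d}-\mathrm{v}$ dominant) forces the root-theoretic inequality $p(\vv)>p(\vv')+\sum p(\vv^i)$ needed for Proposition \ref{Prop:CB} --- you assert as a citation rather than verify, so you should supply that short computation ($p(\vv)-p(\vv')=(\mathrm{u},\mathrm{d}-\mathrm{v}+\tfrac{1}{2}\mathrm{u})\geq\tfrac{1}{2}(\mathrm{u},\mathrm{u})>0$) to make the proof complete.
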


\begin{Rem}\label{Rem:Thm_A} The previous theorem provides the third realization of
the W-algebra in type $A$. The first one was that of Premet, while the second one was
the Yangian type presentation of $\Walg$ by generators and relations due to  Brundan-Kleshchev,\cite{BrK}.
At the moment, an analogous presentation of the general parabolic W-algebras in type $A$
is not known, but should not be difficult to obtain. We also remark that Mirkovic and
Vybornov related parabolic Slodowy slice and Slodowy varieties to affine Grassmanians, \cite{MV}.
One could speculate that their results is a classical analog of the Brundan-Kleshchev
presentation.
\end{Rem}

\subsection{Reduction of even quantizations}\label{SUBSECTION_even_red}
Let $\X,G$ be as in Subsection \ref{SUBSECTION_class_red} and let $\Dcal$ be an even graded $G$-equivariant quantization of $\X$ with parity antiautomorphism $\sigma$.
Also we suppose that $\K^\times$ acts on $\X$ as in Subsection \ref{SUBSECTION_class_red}.
Let $\Phi:\g\rightarrow \Gamma(\X,\Dcal)$ be the quantum comoment map, we assume that $\Phi(t^2\xi)=t.\Phi(\xi)$.
The goal of this subsection is  to obtain a criterium for $\Dcal\red G$ to be even. Of course,
this depends on the choice of $\Phi$: for two quantum comoment maps $\Phi,\Phi'$ we have $\Phi(\xi)-\Phi'(\xi)=
\langle\alpha,\xi\rangle h$ for some $\alpha\in \g^{*G}$.

Let $\alpha$ be the character of the $G$-action on $\bigwedge^{top}\g$. The main result
of this subsection is the following theorem.

\begin{Thm}\label{Thm:even_quant}
Suppose that $\sigma(\Phi(\xi))-\Phi(\xi)=\langle\alpha,\xi\rangle h$. Then $\Dcal\red G$ is an even
quantization.
\end{Thm}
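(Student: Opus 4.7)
The plan is to construct a parity antiautomorphism on $\Dcal\red G$ by descending $\sigma$. My first observation is that $\alpha:G\to\K^\times$ is a character, so it factors through the abelianization $G/(G,G)$ and its differential vanishes on $[\g,\g]$. The hypothesis therefore specializes to $\sigma(\Phi(\xi))=\Phi(\xi)$ for every $\xi\in[\g,\g]$, so $\sigma$ fixes the subspace $\Phi([\g,\g])\subset\Dcal$ pointwise. Moreover, since the set of parity antiautomorphisms of $\Dcal$ is a torsor over the pro-unipotent group $\exp(h\Der_{\rho^{-1}(\Str_S)[[h]]}\Dcal)$ on which the reductive group $G$ acts by conjugation, a standard averaging/Hilbert~90 argument lets me assume $\sigma$ is $G$-equivariant, so that in particular it restricts to an antiautomorphism of $\Dcal^G$.

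The main step is to descend $\sigma$ through the reduction. Since $\sigma$ is an antiautomorphism fixing $\Phi([\g,\g])$, it sends the left ideal $I_L:=\Dcal\Phi([\g,\g])$ onto the right ideal $I_R:=\Phi([\g,\g])\Dcal$. Passing to $G$-invariants, both $(I_L)^G$ and $(I_R)^G$ are two-sided ideals of $\Dcal^G$; the key point is that $\Phi(\xi)$ commutes with any $d\in\Dcal^G$, because $[\Phi(\xi),d]=h\xi_\Dcal(d)=0$. These two ideals coincide modulo $h$, since in the classical limit $A=\gr\Dcal$ is commutative and both reduce to $A\varphi([\g,\g])$. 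I expect the equality $(I_L)^G=(I_R)^G$ to follow by induction on the $h$-adic filtration, exploiting the commutator identity $a\Phi(\xi)-\Phi(\xi)a=-h\xi_\Dcal(a)$, the Reynolds operator for $G$, and $h$-adic completeness to trade left-multiplication against right-multiplication to arbitrarily high order in $h$. This commutative-algebra identification of the two $G$-invariant ideals is the step I anticipate as the main obstacle, requiring some careful bookkeeping to make the induction work.

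Once the equality of ideals is established, $\sigma$ descends to an involutive antiautomorphism $\tilde\sigma$ of the quotient $\Dcal\red G=\Dcal^G/(I_L)^G$, inheriting $\tilde\sigma(h)=-h$, triviality modulo $h$, and $\K^\times$-equivariance directly from $\sigma$. The image of $\K[\z]$ in $\Dcal\red G$ via the descended comoment is shifted by $\tilde\sigma$ as $\bar\xi\mapsto\bar\xi+\langle\alpha,\xi\rangle h$, which is an $O(h)$ perturbation of the identity and hence compatible with $\tilde\sigma$ being trivial modulo $h$; if one prefers an honest $\K[\z]$-semilinear antiautomorphism, the shift is absorbed by replacing $\Phi$ with the $\sigma$-fixed comoment $\Phi'(\xi):=\Phi(\xi)+\tfrac12\langle\alpha,\xi\rangle h$ (the value $c=\tfrac12$ being the unique fixed point of the affine action $c\mapsto 1-c$ induced by $\sigma$, using $\sigma(h)=-h$). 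Either way, $\tilde\sigma$ is the sought parity antiautomorphism and $\Dcal\red G$ is an even quantization.
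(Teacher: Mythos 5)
Your outline has the right starting point: $\alpha$ vanishes on $[\g,\g]$, so $\sigma$ fixes $\Phi([\g,\g])$ elementwise, hence $\sigma$ exchanges $I_L:=\Dcal\Phi([\g,\g])$ and $I_R:=\Phi([\g,\g])\Dcal$, and everything boils down to showing $(I_L)^G=(I_R)^G$ (equivalently, that the $G$-invariant ideal by which one reduces is $\sigma$-stable). This matches the overall structure of the paper's argument. You also correctly flag this equality as the main obstacle.

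The problem is that the mechanism you propose for that step — an ``induction on the $h$-adic filtration, exploiting $a\Phi(\xi)-\Phi(\xi)a=-h\xi_\Dcal(a)$'' — is circular and does not close. Modulo $h$ both ideals are the same commutative ideal, so the base case is vacuous; at the next order you must show that an element $hz\in(I_L)^G+(I_R)^G$ with $z\in\Dcal^G$ actually has $z$ in the ideal, which is equivalent to $\Dcal^G/\bigl((I_L)^G+(I_R)^G\bigr)$ having no $h$-torsion — and that is precisely what needs to be proved, not an input. The commutator identity replaces $a\Phi(\xi)$ by $\Phi(\xi)a-h\xi_\Dcal(a)$, but $\xi_\Dcal(a)$ for $a\notin\Dcal^G$ lands nowhere useful, and the Reynolds operator applied to it just returns $\frac{1}{h}\bigl(R(\Phi(\xi)a)-R(a\Phi(\xi))\bigr)$, i.e.\ the very difference you are trying to control. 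So the bookkeeping you hope will work is exactly the content of the theorem; there is a genuine gap here, not a routine verification.

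The paper proves $\sigma$-stability of $\J=(I_L)^G$ by a completely different route. For reductive $G$ (Proposition~\ref{Prop:even_reduction}) it is first reduced, via the $h$-adic completion functor and a support argument, to showing $\sigma(\J^{\wedge_y})=\J^{\wedge_y}$ for completions at points $y\in\mu^{-1}(0)/G$. Each such completion is then identified, by a quantum slice theorem (Theorem~3.3.4 of \cite{Miura}, together with the classical slice theorem of \cite{slice}), with an explicit model $\Dcal_h^{1/2}(G)\widehat\otimes_{\K[[h]]}\W_{h,2m}^{\wedge_h}$, and a further argument (Lemma~\ref{Lem:equivalence}) shows that the parity antiautomorphism can be assumed standard in this model. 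In the model the ideal is $U_h(\g)^{\wedge_h}\g$, the augmentation ideal, which is manifestly two-sided and $\sigma$-stable. None of this is a filtration trade. Separately, the non-reductive directions are stripped off one dimension at a time via one-dimensional normal unipotent subgroups (Proposition~\ref{Prop:red_unipotent}), where the argument instead uses flatness over $\K[\Phi(\xi_0)]$; your proposal handles reductive averaging but does not account for the non-reductive case, where even the identification $\Dcal\red G=\Dcal^G/(I_L)^G$ requires the étale-local triviality argument of Lemma~\ref{Lem:surj_proj}, not reductive invariants.

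Finally, two smaller remarks. Your averaging/Hilbert~90 step to make $\sigma$ $G$-equivariant is reasonable and implicitly used in the paper as well. And your closing discussion of how $\tilde\sigma$ acts on the image of $\K[\z]$ is consistent with Proposition~\ref{Prop:red_unipotent}(2), which computes that shift in the unipotent case; but notice the paper needs that computation precisely to run the induction through the solvable radical, a step your proposal does not anticipate.
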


For example, if the group $G$ has no characters, e.g. is semisimple or unipotent, then the reduction of
an even quantization is always even. Also if $G$ is reductive and the quantum comoment map is {\it symmetrized},
i.e., $\sigma(\Phi(\xi))=\Phi(\xi)$, then $\Dcal\red G$ is an even quantization of $X\red G$, and $\Dcal\red_0 G$
is an even quantization of $\X\red_0 G$. Now if we replace a reductive group $G$ with its parabolic
subgroup $P$, then the quantization $\X\red_{\rho h}P$ is even. This motivates the shift
we made in Subsection \ref{W_parab} (see also the proofs of Theorems \ref{Thm:Klein_W},\ref{Thm:type_A}
in the next subsection).

The proof of Theorem \ref{Thm:even_quant} is organized as follows. First, we prove two auxiliary lemmas.
Then we prove Theorem \ref{Thm:even_quant} in the case when $G$ is reductive, which is the most technical
part of the proof. Next, we deal with the cases of the 1-dimensional unipotent group. And then we complete
the proof of Theorem \ref{Thm:even_quant}.

First of all, let us investigate a relationship between $\Phi$ and the antiautomorphism $\sigma$ participating
in the definition of an even quantization.

\begin{Lem}\label{Lem:even_ham}
$\sigma(\Phi(\xi))-\Phi(\xi)\in \K h$ for all $\xi\in\g$. Moreover, if $\xi\in\g$ vanishes
on $\g^{*G}$, then $\sigma(\Phi(\xi))=\Phi(\xi)$.
\end{Lem}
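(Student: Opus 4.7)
The strategy is to recognise $\xi\mapsto\sigma(\Phi(\xi))$ as a second quantum comoment map for the $G$-action on $\Dcal$, and then to use that any two quantum comoment maps differ by a $G$-invariant element of $\K h$.

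Applying $\sigma$ to the defining identity $[\Phi(\xi),a]=h\,\xi_\Dcal(a)$, and using that $\sigma$ is an antiautomorphism with $\sigma(h)=-h$ together with the $G$-equivariance of $\sigma$ (so that $\sigma\circ\xi_\Dcal=\xi_\Dcal\circ\sigma$), one deduces $[\sigma(\Phi(\xi)),b]=h\,\xi_\Dcal(b)$ for every local section $b$ of $\Dcal$. Since $\sigma$ is trivial modulo $h$, the map $\sigma\circ\Phi$ also lifts the classical comoment $\mu^*$. Hence $D(\xi):=\sigma(\Phi(\xi))-\Phi(\xi)$ is central in $\Gamma(\X,\Dcal)$, is $\K^\times$-homogeneous of degree $2$, and lies in $h\,\Gamma(\X,\Dcal)$.

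I claim such a $D(\xi)$ must be a scalar multiple of $h$. Writing $D(\xi)=h\,c(\xi)$ with $c(\xi)$ central of degree $0$, the reduction $c(\xi)\bmod h$ is Poisson-central in $\K[\X]$; because $\X$ is symplectic and, in the homogeneous setting of the paper, connected with a conical $\K^\times$-action, this Poisson centre has degree-$0$ part equal to $\K$. A standard $h$-adic bootstrap, using $h$-adic completeness of $\Dcal$ together with the vanishing of negative-degree elements of the Poisson centre, upgrades this to $c(\xi)\in\K$, so that $D(\xi)=\langle\beta,\xi\rangle h$ for some $\beta\in\g^*$. This is the first assertion, and it is also the only genuinely non-formal step: the main obstacle is precisely this identification of the $\K^\times$-degree-$2$ part of the centre of $\Gamma(\X,\Dcal)$ with $\K h$.

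For the second assertion, $G$-equivariance of $\Phi$ and of $\sigma$ makes $D\colon\g\to\K h$ a $G$-equivariant map, and since $G$ acts trivially on $\K h$ this forces $\beta\in\g^{*G}$. But $\g^{*G}=(\g/[\g,\g])^*$, so $\beta$ annihilates $[\g,\g]$; conversely a vector $\xi\in\g$ vanishes on $\g^{*G}$ if and only if $\xi\in[\g,\g]$. For such $\xi$ we obtain $\langle\beta,\xi\rangle=0$, that is, $\sigma(\Phi(\xi))=\Phi(\xi)$.
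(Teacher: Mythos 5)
Your proof is correct and follows essentially the same route as the paper's: apply $\sigma$ to the defining relation $[\Phi(\xi),\cdot]=h\,\xi_\Dcal$ to see that $\sigma\circ\Phi$ is again a comoment map, conclude that the difference is central, use the symplecticity of $\X$ and the $\K^\times$-degree to pin it down to $\K h$, and then invoke $G$-equivariance of the resulting functional to obtain the second claim. The only stylistic difference is that you spell out the "center is $\K[[h]]$" step via the Poisson centre of $\K[\X]$ and an $h$-adic bootstrap, whereas the paper states it directly.
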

\begin{proof}
For any local section $f$ or $\Dcal$ we have $$[\sigma(\Phi(\xi)),\sigma(f)]=
-\sigma([\Phi(\xi),f])=-\sigma(h\xi_{\Dcal}f)=h\xi_{\Dcal} \sigma(f)=[\Phi(\xi),\sigma(f)],$$
where $\xi_\Dcal$ stands for the derivation of $\Dcal$ induced by $\xi$.
So $\Phi(\xi)-\sigma(\Phi(\xi))$ lies in the center of $\Dcal$. Since $\X$ is symplectic,
the center coincides with $\K[[h]]$. Since both $\Phi(\xi), \sigma(\Phi(\xi))$ have degree 2 with respect to the $\K^\times$-action,
we see that $\sigma(\Phi(\xi))-\Phi(\xi)\in \K h$.

 The $G$-action on $\K[[h]]$ is trivial. So
$\xi\mapsto \Phi(\xi)-\sigma(\Phi(\xi))$ is both $G$-invariant and $G$-equivariant map.
Hence  for any $\xi$ in the annihilator of $\g^{*G}$ we have $\sigma(\Phi(\xi))=\Phi(\xi)$.
\end{proof}

Recall that $\pi:\X\rightarrow \X/G$ stands for the quotient morphism. By our assumptions,
this morphism is locally trivial in etale topology.

\begin{Lem}\label{Lem:surj_proj}
\begin{enumerate}
\item
The natural morphism $\pi_*(\Dcal)^G\rightarrow \Dcal\red G$ induces an isomorphism
$\pi_*(\Dcal)^G/\pi_*(\Dcal\Phi(\g_0))^G\xrightarrow{\sim} \Dcal\red G$ (were $\g_0$ is the annihilator
of $\g^{*G}$ in $\g$).
\item $\pi_*(\Dcal)$ is a flat (left or right) module over $\pi_*(\Dcal)^G$.
\end{enumerate}
\end{Lem}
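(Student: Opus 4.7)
The plan is to reduce both assertions to standard descent-theoretic facts for the principal $G$-bundle $\pi:\X\to\X/G$. Under our hypotheses (free $G$-action and affine quotient morphism), $\pi$ is \'etale-locally trivial and faithfully flat. Two consequences from descent will be used. First, the functor $\mathcal{F}\mapsto\pi_*(\mathcal{F})^G$ is an exact equivalence from $G$-equivariant quasi-coherent sheaves on $\X$ to quasi-coherent sheaves on $\X/G$, with quasi-inverse $\pi^*$. Second, the projection formula $\pi_*(\pi^*\mathcal{G})\cong\mathcal{G}\otimes_{\Str_{\X/G}}\pi_*(\Str_\X)$ holds, and $\pi_*(\Str_\X)$ is faithfully flat over $\Str_{\X/G}$. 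Both facts are applied to $\Dcal$ viewed as a $G$-equivariant sheaf of $\Str_\X[[h]]$-modules; since $h$ is $G$-invariant and $\Dcal$ is $h$-adically complete, the statements hold modulo each $h^n$ and pass to the inverse limit.

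For (1), note first that $\g_0\subset\g$ is $G$-stable (as the annihilator of the $G$-invariant subspace $\g^{*G}\subset\g^*$) and $\Phi$ is $G$-equivariant, so $\Dcal\Phi(\g_0)$ is a $G$-equivariant subsheaf of $\Dcal$. The short exact sequence
\begin{equation*}
0\to\Dcal\Phi(\g_0)\to\Dcal\to\Dcal/\Dcal\Phi(\g_0)\to 0
\end{equation*}
of $G$-equivariant quasi-coherent sheaves on $\X$ remains exact after applying $\pi_*$ (by affineness of $\pi$) and then taking $G$-invariants (by the first fact above), yielding
\begin{equation*}
0\to\pi_*(\Dcal\Phi(\g_0))^G\to\pi_*(\Dcal)^G\to\pi_*(\Dcal/\Dcal\Phi(\g_0))^G\to 0.
\end{equation*}
The rightmost term is $\Dcal\red G$ by definition, and the natural morphism appearing in the statement is identified with the surjection in this sequence, establishing the claim.

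For (2), the descent equivalence gives $\Dcal\cong\pi^*(\pi_*(\Dcal)^G)$ as $G$-equivariant sheaves. Pushing this forward and invoking the projection formula yields
\begin{equation*}
\pi_*(\Dcal)\cong\pi_*(\Dcal)^G\otimes_{\Str_{\X/G}}\pi_*(\Str_\X)
\end{equation*}
as left $\pi_*(\Dcal)^G$-modules, with the action on the first tensor factor and $\Str_{\X/G}$ sitting centrally. Flatness of $\pi_*(\Str_\X)$ over $\Str_{\X/G}$ then implies flatness of $\pi_*(\Dcal)$ as a left $\pi_*(\Dcal)^G$-module; the right-module statement follows by passing to the opposite algebra structure.

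The principal technical nuisance will be tracking the compatibility of the above sheaf-theoretic manipulations with the $h$-adic completion of $\Dcal$: the descent equivalence and projection formula apply at each finite level $\Dcal/h^n\Dcal$, and the relevant tensor products are compatible with inverse limits in the $h$-adic topology, so this amounts to routine bookkeeping rather than a substantive obstacle.
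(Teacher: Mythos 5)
Your strategy—reduce along the $h$-adic filtration to the commutative level $\Str_\X$, where quasi-coherent descent for the \'etale-locally trivial principal $G$-bundle $\pi$ applies—is the same as the paper's, but the justification you give is not sound as stated. The descent equivalence (and hence the exactness of $\mathcal{F}\mapsto\pi_*(\mathcal{F})^G$ and the projection formula) is an equivalence between $G$-equivariant quasi-coherent $\Str_\X$-modules and quasi-coherent $\Str_{\X/G}$-modules. The sheaf $\Dcal$ is \emph{not} a quasi-coherent $\Str_\X$-module ($\Str_X$ does not act on it), and neither is $\Dcal/h^n\Dcal$ for $n\geqslant 2$; only the graded pieces $h^k\Dcal/h^{k+1}\Dcal\cong\Str_\X$ live in the descent category. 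So "viewing $\Dcal$ as a $G$-equivariant sheaf of $\Str_\X[[h]]$-modules," and your proposed fallback of applying descent at each finite level and passing to the limit, do not make sense. The actual substance is the lemma that $\pi_*(\Dcal)^G/h\cong\pi_*(\Str_\X)^G$, i.e. that taking $G$-invariants of $\pi_*$ commutes with reduction modulo $h$; this is precisely what the paper invokes (via an analogue of Miura, Proposition 3.4.1) before running the descent argument at $h=0$. It is not routine bookkeeping: it requires controlling higher $G$-cohomology of $\pi_*(h\Dcal)$, which one gets from the vanishing on each $\gr_h$-piece together with an inverse-limit argument using completeness.

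For part (1), note also that you are working harder than necessary on injectivity: once surjectivity of $\pi_*(\Dcal)^G\to\pi_*(\Dcal/\Dcal\Phi(\g_0))^G$ is known, the identification of the kernel with $\pi_*(\Dcal\Phi(\g_0))^G$ is immediate from exactness of $\pi_*$ (affine morphism) and left-exactness of invariants; no equivalence of categories is needed. For part (2), $\Dcal\cong\pi^*(\pi_*(\Dcal)^G)$ is likewise not available from descent, for the same reason. The correct argument is the one the paper's remark encodes tersely: $\pi_*(\Dcal)/h=\pi_*(\Str_\X)$ is flat over $\pi_*(\Dcal)^G/h=\Str_{\X/G}$ because $\pi$ is flat, and one then lifts flatness through the $h$-adic filtration using $\K[[h]]$-flatness and completeness of both algebras. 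If you rewrite your proof to make the key step $\pi_*(\Dcal)^G/h\cong\pi_*(\Str_X)^G$ explicit, confine the descent argument to the $h=0$ sheaves, and carry out the $h$-adic lifting carefully, the approach goes through.
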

\begin{proof}
Let us prove the first assertion.
It is enough to prove that the homomorphism $\pi_*(\Dcal)^G\rightarrow \Dcal\red G$ is surjective.
Similarly to the proof of Proposition 3.4.1 in \cite{Miura}, we can show that $\pi_*(\Dcal)^G/(h)=\pi_*(\Str_X)^G$.
So to prove that  $\pi_*(\Dcal)^G\rightarrow \Dcal\red G$ is surjective it is enough
to verify that the natural morphism $\pi_*(\Str_\X)^G\rightarrow \Str_{\X\red G}$ is surjective.
The surjectivity property is preserved by an etale base change. So we may assume that
$\X\rightarrow \X/G$ is a free principal $G$-bundle, so that $\X=(\X/G)\times G$.
It follows that any $G$-stable ideal in $\Str_{\X}$ is the pullback of an ideal
in $\Str_{\X/G}$. The surjectivity claim follows.

The second assertion follows from the fact that the morphism $\X\rightarrow \X/G$ is flat.
\end{proof}

Let us proceed to proving Theorem \ref{Thm:even_quant} in the case when $G$ is reductive.

\begin{Prop}\label{Prop:even_reduction}
If $G$ is reductive and $\sigma(\Phi(\xi))=\Phi(\xi)$, then $\Dcal\red G$ is even.
\end{Prop}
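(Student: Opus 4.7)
My plan is to descend $\sigma$ to a parity antiautomorphism on $\Dcal\red G$, using the identification $\Dcal\red G \cong \pi_*(\Dcal)^G/\pi_*(\Dcal\Phi(\g_0))^G$ from Lemma \ref{Lem:surj_proj}(1); here $\g_0 := \z^\perp = [\g,\g]$ is the annihilator of $\z$ in $\g$. Write $I_L := \Dcal\Phi(\g_0)$ and $I_R := \Phi(\g_0)\Dcal$ for the left and right subsheaves generated by $\Phi(\g_0)$. Because $\sigma$ is an antiautomorphism fixing $\Phi(\xi)$ pointwise, one has $\sigma(I_L) = I_R$. The proof thus reduces to two claims: that $\sigma$ is $G$-equivariant (so that it preserves $\pi_*(\Dcal)^G$), and that $I_L^G = I_R^G$ as subsheaves of $\pi_*(\Dcal)^G$.

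The first claim will be verified by a direct computation. Using $\sigma\Phi(\xi) = \Phi(\xi)$, $\sigma(h) = -h$, and the fact that $\sigma$ is an antiautomorphism, one finds $\sigma\circ \xi_\Dcal\circ\sigma^{-1} = \xi_\Dcal$ for the derivation $\xi_\Dcal = h^{-1}[\Phi(\xi),\cdot\,]$ generating the $G$-action. I may assume $G$ is connected (the disconnected case reduces to this by averaging $\sigma$ over the component group, since any $G$-conjugate of $\sigma$ still satisfies the hypotheses), so integrating this infinitesimal commutation yields $G$-equivariance of $\sigma$.

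The main technical point is $I_L^G = I_R^G$. Reductiveness of $G$ makes $(\,\cdot\,)^G$ exact, so the surjection $\pi_*(\Dcal)\otimes\g_0\twoheadrightarrow I_L$, $a\otimes\xi\mapsto a\Phi(\xi)$, restricts to a surjection $(\pi_*(\Dcal)\otimes\g_0)^G\twoheadrightarrow I_L^G$. Choosing a nondegenerate $G$-invariant symmetric bilinear form on the semisimple Lie algebra $\g_0$ with dual bases $\{\xi_k\},\{\xi^k\}$, any element of $(\pi_*(\Dcal)\otimes\g_0)^G$ has the form $\sum_k\phi(\xi^k)\otimes\xi_k$ for some $G$-equivariant sheaf map $\phi\colon\g_0\to\pi_*(\Dcal)$, with associated element $a_\phi := \sum_k\phi(\xi^k)\Phi(\xi_k)\in I_L^G$. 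Using $G$-equivariance of $\phi$ in the form $(\xi_k)_\Dcal\phi(\eta) = \phi([\xi_k,\eta])$, I will then compute
\begin{equation*}
\sum_k\Phi(\xi_k)\phi(\xi^k) \;-\; a_\phi \;=\; h\sum_k (\xi_k)_\Dcal\phi(\xi^k) \;=\; h\,\phi\!\Big(\sum_k[\xi_k,\xi^k]\Big) \;=\; 0,
\end{equation*}
where the final zero holds because the ``Casimir'' tensor $\sum_k\xi_k\otimes\xi^k$ is symmetric in its two arguments while the bracket is antisymmetric. Therefore $a_\phi\in I_R^G$, which gives $I_L^G\subset I_R^G$; the reverse inclusion is symmetric.

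Combining both steps, $\sigma$ restricts to $\pi_*(\Dcal)^G$ and carries $I_L^G$ onto $I_R^G = I_L^G$, so it descends to an involutory antiautomorphism of $\Dcal\red G$; the properties of being trivial modulo $h$, sending $h$ to $-h$, and $\K^\times$-equivariance all pass to the quotient automatically, so $\Dcal\red G$ is even. The main obstacle is the identification $I_L^G = I_R^G$, which depends essentially on reductiveness of $G$ both for exactness of $(\,\cdot\,)^G$ and for existence of the invariant form on $\g_0$ underlying the Casimir calculation.
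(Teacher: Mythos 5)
Your proof is correct and takes a genuinely different, and considerably shorter, route than the paper's. The paper establishes $\sigma$-stability of $\J=\pi_*(\I)^G$ by reducing to $G$ semisimple, then passing to completions of $\Dcal^G$ at points of $\mu^{-1}(0)/G$, invoking a Gan--Ginzburg-type local model theorem to identify each completion with (a completion of) $\Dcal_h^{1/2}(G)\widehat{\otimes}\W_{h,2m}^{\wedge_h}$, and verifying the claim by hand in that model; this occupies Steps 2--5 and leans on substantial outside machinery. You instead observe that the $G$-invariants of the left ideal $I_L=\Dcal\Phi(\g_0)$ and the right ideal $I_R=\Phi(\g_0)\Dcal$ coincide, by the Casimir trick: on $\g_0=[\g,\g]$, which is semisimple precisely because $G$ is reductive, one has an invariant form and dual bases $\{\xi_k\},\{\xi^k\}$; for a $G$-equivariant $\phi\colon\g_0\to\pi_*(\Dcal)$ the commutator $\sum_k[\Phi(\xi_k),\phi(\xi^k)]=h\,\phi\bigl(\sum_k[\xi_k,\xi^k]\bigr)$ vanishes because the Casimir tensor is symmetric while the bracket is alternating. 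Combined with exactness of $(\,\cdot\,)^G$ for the reductive group (here applied to the pro-rational action on the $h$-adically complete sheaf, which is still fine via the Reynolds operator on each $\Dcal/h^n$ and passage to the limit), this yields $I_L^G=I_R^G$, and $\sigma(I_L)=I_R$ is immediate from $\sigma(\Phi(\xi))=\Phi(\xi)$. The upshot is that you avoid completions and local models entirely; the only ingredients are exactness of invariants, the invariant form on $[\g,\g]$, and $G$-equivariance of the comoment map. This is a real simplification.

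Two minor remarks. First, your Step~1 is unnecessary as an additional hypothesis-dependent step: by Lemma \ref{Lem:even_ham} one always has $\sigma(\Phi(\xi))-\Phi(\xi)\in\K h$, which is central, so $\sigma$ always intertwines $\xi_\Dcal=\frac{1}{h}\ad(\Phi(\xi))$ with itself, whether or not $\sigma(\Phi(\xi))=\Phi(\xi)$. Second, the paragraph about disconnected $G$ is both unneeded (the paper's standing hypothesis in Subsection \ref{SUBSECTION_class_red} is that $G$ is connected) and, as phrased, not correct: averaging an antiautomorphism over the component group gives a linear map that is not in general an antiautomorphism, so the proposed fix would not work. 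Dropping that remark and simply invoking connectedness is cleaner.
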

\begin{proof}
The proof is divided into several steps.

{\it Step 1.}
Set $\I:=\Dcal \Phi(\g_0)$. The antiautomorphism $\sigma$ descends to $\pi_*(\Dcal)^G$.
 Assume for a moment that the following claim holds
\begin{itemize}
\item[(*)]
$\pi_{*}(\I)^G$
is $\sigma$ stable.
\end{itemize}

Then $\sigma$ induces an antiautomorphism of $\Dcal\red G=\pi_{*}(\Dcal)^G/\pi_{*}(\I)^G$
and this antiautomorphism is a parity antiautomorphism.

We remark that (*) is local. So we may assume that $\X/G$ is affine. It follows that $\X$ is affine.
Then $\Gamma(\X,\I)=\Gamma(\X,\Dcal)\Span(\Phi(\xi): \xi\in\g_0)$. Abusing the notation,
we will write $\Dcal,\I$ for $\Gamma(\X,\Dcal),\Gamma(\X,\I)$. Set  $\J:=\I^G$. Then (*) can be rewritten as
\begin{itemize}
\item[(*)] $\sigma(\J)=\J$.
\end{itemize}
In the subsequent steps we will prove  (*) in this form.

{\it Step 2.} On this step we will reduce the proof to the case when $G$ is connected and semisimple.
Set $G':=(G^\circ,G^\circ)$. Assume that (*) holds for $G'$ instead of $G$. Then we have a parity antiautomorphism
$\sigma'$ of $\Dcal\red G'$. Replacing $G$ with $G/G'$ and $\Dcal$ with $\Dcal\red G'$
we may assume that $G^\circ$ is a torus.

We remark that any $\Phi(\xi)$ is $G^\circ$-invariant. Since $G^\circ$ is reductive,
$\I^{G^\circ}$ coincides with the left (=right) ideal in $\Dcal^{G^\circ}$
generated by $\Phi(\xi)$. For $f\in \Dcal^{G^\circ}$ and $\xi\in \g_0$ we have $\sigma(f\Phi(\xi))=
\Phi(\xi)\sigma(f)=\sigma(f)\Phi(\xi)$. So $\Dcal^{G^\circ}\Phi(\g_0)$ is  $\sigma$-stable. Now it is
easy to see that $\I^{G}$ is $\sigma$-stable.

{\it Step 3.} So we assume that $G$ is semisimple and connected. On this step we are going to reduce the proof
of (*) for $\Dcal$ to the proof of an analog of (*) for certain completions of $\Dcal$.

Pick a point $y\in \mu^{-1}(0)/G\subset \X/G$. Let $\m_y$ denote the ideal of $\pi_{G}^{-1}(y)$ in $\Str_\X$.
Denote by $\widetilde{\m}_y$ the inverse image of $\m_y$ in $\Dcal$. Consider the completion
$\Dcal^{\wedge_y}:=\varprojlim_{n\rightarrow \infty} \Dcal/\widetilde{\m}_y^n$. This is a complete and separated topological $\K[[h]]$-algebra equipped with a natural $G$-action. The quotient of $\Dcal^{\wedge_y}$ modulo
$h$ is naturally identified with the completion $\K[X]^{\wedge_y}:=\K[X^{\wedge_{\pi^{-1}(y)}}]$. Also we can define
the completion $(\Dcal^G)^{\wedge_y}:=\varprojlim_{n\rightarrow \infty} \Dcal^G/\m_y^n$. We remark that
$\sigma$ induces an antiautomorphism of $\Dcal^{\wedge_y}$ to be denoted by the same letter.

Consider the natural map $\Dcal\widehat{\otimes}_{\Dcal^G}(\Dcal^G)^{\wedge_y}\rightarrow \Dcal^{\wedge_y}$.
Here in the left hand side $\Dcal$ is equipped with the $h$-adic topology, while $(\Dcal^G)^{\wedge_y}$ has
the topology of a completion.
We claim that this map is bijective. Indeed, analogously to Lemma A2 in \cite{ES_appendix}, we see that
both $\Dcal^{\wedge_y}, (\Dcal^G)^{\wedge_y}$ are flat over $\K[[h]]$ and complete in the $h$-adic
topology. By assertion 2 of Lemma \ref{Lem:surj_proj}, $\Dcal\widehat{\otimes}_{\Dcal^G}(\Dcal^G)^{\wedge_y}$
is also flat over $\K[[h]]$. So it is enough to check that our map is an isomorphism modulo $h$, i.e.,
that a natural map $\K[X]\widehat{\otimes}_{\K[X]^G}(\K[X]^G)^{\wedge_y}\rightarrow \K[X]^{\wedge_y}$ is an isomorphism.
But this is straightforward from the construction.

It follows that $(\Dcal^{\wedge_y})^G=(\Dcal^G)^{\wedge_y}$.
Consider the closure $\J^{\wedge_y}$ of $\J$ in $(\Dcal^{\wedge_y})^G$. The isomorphism from the previous paragraph
implies that
$\J^\wedge_y= [\Dcal^{\wedge_y}\Span_\K(\Phi(\xi))]^G$.  Assume for a moment that
$\sigma(\J^{\wedge_y})=\J^{\wedge_y}$ for all $y$. Let us deduce from this that $\sigma(\J)=\J$.

Consider the functor of $\widetilde{\m}_y^G$-adic completion on the category of finitely generated
left $\Dcal^G$-modules. As in \cite{ES_appendix}, Lemma A2, one can show that this functor is exact.
So $\J^{\wedge_y}=\J^{\wedge_y}+\sigma(\J^{\wedge_y})$ coincides with the completion of $\J+\sigma(\J)$. Set
$\mathcal{N}:=(\sigma(\J)+\J)/\J$. The completion of this module at $y$ vanishes for all
$y\in \mu^{-1}(0)/G$. On the other hand, $\mathcal{N}\subset \Dcal^G/\J$ is supported on
$\mu^{-1}(0)/G$. It follows that $\mathcal{N}=\{0\}$.

{\it Step 4.} Let us consider a special case of $\X$ and $\Dcal$.

Consider the quantization $\Dcal:=\Dcal^{1/2}_{h}(G)$ of $\X:=T^*G$. Equip the algebra $\Dcal$
with a $G$-action induced from the action on $G$ by left translations. Then $\Phi(\xi):=\xi_G$
defines a quantum comoment map.

The algebra $\Dcal^G$ is generated (as a $\K[[h]]$-algebra) by the left invariant vector
fields and is naturally isomorphic to the $h$-adic completion $U_h(\g)^{\wedge_h}$ of $U_h(\g)$.
The quotient $U_h(\g)^{\wedge_h}/\J$ is flat over $\K[h]$ and $(U_h(\g)^{\wedge_h}/\J)/ h (U_h(\g)^{\wedge_h}/\J)=\K[T^*G\red_0 G]=\K$.
So $U_h(\g)^{\wedge_h}/\J=\K[[h]]$. From here it is easy to deduce that $\J=U_h(\g)^{\wedge_h}\g$. In particular, $\J$ is $\sigma$-stable.

We will need a trivial generalization of this result. Namely, the ideal $\J$ remains $\sigma$-stable
if we replace $\Dcal^{1/2}_h(G)$ with the tensor product $\Dcal^{1/2}_h(G)\widehat{\otimes}_{K[[h]]}\W_{h,2m}^{\wedge_h}$,
where $\W_{h,2m}^{\wedge_h}$ is  equipped with a trivial $G$-action.

{\it Step 5.} On this step we are dealing with general $\X,\Dcal$. Our goal is  to describe the structure of the the triple
$\Dcal^{\wedge_y}, \Phi:\g\rightarrow \Dcal^{\wedge_y}, \sigma:\Dcal^{\wedge_y}\rightarrow \Dcal^{\wedge_y}$.
Then we will deduce the equality $\sigma(\J^{\wedge_y})=\J^{\wedge_y}$ from this description.

Set $m:=\frac{1}{2}\dim \X-\dim G$. Consider the quantum algebra $\Dcal_h^{1/2}(G)\otimes_{\K[[h]]}\W_{h,2m}^{\wedge_h}$. Let $\Dcal'$ denote its completion with respect to the ideal
of the base $G\subset T^*G\hookrightarrow T^*G\times \K^{2m}$. This is an algebra equipped
with
\begin{itemize}
\item the product $G$-action, where the action on the Weyl algebra is supposed to be trivial,
\item a quantum comoment map $\Phi':\g\rightarrow \Dcal'$ induced from the quantum comoment
map $\g\rightarrow \Dcal^{1/2}_h(G)$,
\item A parity antiautomorphism $\sigma'$ that preserves the tensor product decomposition,
and coincides with the antiautomorphisms on the factors that were introduced above.
\end{itemize}

\begin{Lem}\label{Lem:equivalence}
There is a $G$-equivariant isomorphism $\iota:\Dcal^\wedge_y\rightarrow \Dcal'$ of $\K[[h]]$-algebras
intertwining the quantum comoment maps and the parity anti-automorphisms.
\end{Lem}
\begin{proof}[Proof of Lemma \ref{Lem:equivalence}]
Applying (a slight modification of) Theorem 3.3.4 from \cite{Miura} (without $\K^\times$-actions) we see that there is a $G$-equivariant isomorphism
$\iota_0:\Dcal^\wedge_y\rightarrow \Dcal'$ intertwining the quantum comoment maps (since $G$ is supposed
to be semisimple the compatibility with quantum comoment maps follows from the $G$-equivariance).
So we only need to prove the following claim:
\begin{itemize}
\item[(**)] Let $\sigma_1,\sigma_2$ be two $G$-equivariant parity anti-automorphisms of $\Dcal'$.
Then there is $f\in \Dcal'^G$ such that $\sigma_1=\exp(\ad f)\sigma_2\exp(-\ad f)$.
\end{itemize}

First of all, we remark that there is $f'\in \Dcal'^G$ such that $\sigma_1=\exp(\ad(f'))\circ \sigma_2$.
Indeed, $\sigma_1\circ \sigma_2^{-1}$ is a $G$-equivariant $\K[[h]]$-linear automorphism of $\Dcal'$
that is the identity modulo $h$. So $\sigma_1\circ\sigma_2^{-1}=\exp( h d)$, where $d$ is a $\K[[h]]$-linear
derivation of $\Dcal'$. But the completion of $T^*G\times \K^{2m}$ along any $G$-orbit has the trivial
first De Rham cohomology because $G$ is semisimple. This easily implies that any $\K[[h]]$-linear
derivation of $\Dcal'$ has the form $\frac{1}{h}\ad(f')$ for some $f'\in \Dcal'$. Since
$d$ is $G$-equivariant, we see that there is a $G$-invariant element $f'\in \Dcal'$ with
$d=\frac{1}{h}\ad(f')$.

So $\sigma_1=\exp(\ad(f'))\circ \sigma_2$. The equality $\sigma_1^2=\operatorname{id}$ implies
$\exp(\ad(f'))\exp(-\ad(\sigma_2(f')))=\operatorname{id}$. So $f'-\sigma_2(f')\in \K[[h]]$. Replacing
$f'$ with $f'- P$ for an appropriate series $P\in \K[[h]]$, we may assume that $f'=\sigma_2(f')$.

For $f_1,f_2\in \Dcal'$ let $f_1\circ f_2$ denote the Campbell-Hausdorff
series of $f_1,f_2$ (the series converges because $[\Dcal',\Dcal']\subset h \Dcal'$).
We have $$\exp(\ad(f))\sigma_2\exp(-\ad(f))=\exp(\ad(f))\exp(\ad(\sigma_2(f)))\sigma_2.$$
So it is enough to show that there exists a $G$-invariant element $f$ with
\begin{equation}\label{eq:cs_equation}f\circ \sigma_2(f)=f'.\end{equation} From the form of the Campbell-Hausdorff series and the inclusion
$[\Dcal',\Dcal']\subset h\Dcal'$  it is easy to deduce that (\ref{eq:cs_equation}) has a unique
solution that is automatically $G$-invariant.
\end{proof}
Now to prove that $\J^\wedge_y$ is $\sigma$-stable, we use the results of Step 4.
\end{proof}



Now let us proceed to the case when we reduce by the one-dimensional unipotent group.
\begin{Prop}\label{Prop:red_unipotent}
Let $G_0=\K$ be the one-dimensional unipotent group, $T:=\K^\times$, and
$G=T\ltimes G_0$, where $t\in T$ acts on $G_0$ by $(t,g_0)\mapsto t^a g_0$ for some $a\in \Z$.
Suppose that $\sigma(\Phi(\xi_0))=\Phi(\xi_0)$ for the unit element $\xi_0\in \g_0$ (this is automatically
true if $a\neq 0$), and $\sigma(\Phi(\eta))-\Phi(\eta)=a_1h$ for the unit element $\eta\in \mathfrak{t}=\K$.
Then
\begin{enumerate}
\item The quantization $\Dcal\red G_0$ is even.
\item Let $\Phi_0:\mathfrak{t}\rightarrow \Gamma(\X\red G_0, \Dcal\red G_0)$ be the quantum comoment
map induced by $\Phi$. Then $\sigma(\Phi_0(\eta))=\Phi_0(\eta)+(a_1-a)h$.
\end{enumerate}
\end{Prop}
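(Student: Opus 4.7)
The plan is to verify (1) by descending $\sigma$ to an antiautomorphism on $\Dcal\red G_0=\Dcal^{G_0}/\Dcal^{G_0}\Phi(\xi_0)$ (using quantization commutes with reduction, Lemma \ref{Lem:quant_comm_red}), and (2) by computing this descent on a specific $G_0$-invariant lift of $\Phi_0(\eta)$.

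For (1), since $\sigma(\Phi(\xi_0))=\Phi(\xi_0)$ and $\sigma(h)=-h$, applying $\sigma$ to $[\Phi(\xi_0),d]=0$ for $d\in\Dcal^{G_0}$ yields $[\Phi(\xi_0),\sigma(d)]=0$, so $\sigma(\Dcal^{G_0})\subseteq\Dcal^{G_0}$. Because $\Phi(\xi_0)$ is central in $\Dcal^{G_0}$ (as $[\Phi(\xi_0),d']=h\,\xi_0.d'=0$ there), we get $\sigma(d\Phi(\xi_0))=\Phi(\xi_0)\sigma(d)=\sigma(d)\Phi(\xi_0)\in\Dcal^{G_0}\Phi(\xi_0)$, so $\sigma$ also preserves the ideal. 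The induced $\bar\sigma$ on $\Dcal\red G_0$ is then involutive, $\K^\times$-equivariant, trivial modulo $h$ and sends $h\mapsto -h$, i.e.\ a parity antiautomorphism; so $\Dcal\red G_0$ is even.

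For (2), I realize $\Phi_0(\eta)$ as the class of a $G_0$-invariant lift $\hat\Phi(\eta)=\Phi(\eta)+w\in\Dcal^{G_0}$, where $w\in\Dcal\Phi(\xi_0)$ is chosen to absorb the non-$G_0$-invariance of $\Phi(\eta)$. Because $[\eta,\xi_0]=a\xi_0$ in $\g$, we have $[\Phi(\xi_0),\Phi(\eta)]=-ah\Phi(\xi_0)$, so the condition $[\Phi(\xi_0),\hat\Phi(\eta)]=0$ forces
\begin{equation*}
[\Phi(\xi_0),w]=ah\Phi(\xi_0).
\end{equation*}
Applying $\sigma$ gives $\sigma(\hat\Phi(\eta))=\Phi(\eta)+a_1h+\sigma(w)$, and comparing with the target $\hat\Phi(\eta)+(a_1-a)h=\Phi(\eta)+w+(a_1-a)h$ in $\Dcal^{G_0}/\Dcal^{G_0}\Phi(\xi_0)$ reduces the claim to the identity
\begin{equation*}
\sigma(w)\equiv w-ah\pmod{\Dcal^{G_0}\Phi(\xi_0)}.
\end{equation*}

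The main obstacle is verifying this last identity. Applying $\sigma$ directly to the defining relation $[\Phi(\xi_0),w]=ah\Phi(\xi_0)$ (and using $\sigma(\Phi(\xi_0))=\Phi(\xi_0)$, $\sigma(h)=-h$) shows only that $[\Phi(\xi_0),\sigma(w)]=ah\Phi(\xi_0)$, so $\sigma(w)-w$ commutes with $\Phi(\xi_0)$ and hence lies in $\Dcal^{G_0}$; pinning down its residue mod $\Dcal^{G_0}\Phi(\xi_0)$ is the content of the identity. I would pass to a formal Darboux-type chart near a point of $\mu_{G_0}^{-1}(0)$ in which $\Phi(\xi_0)$ becomes (up to higher-order corrections) a partial derivative $\partial_x$ in a local coordinate $x$; then a distinguished admissible choice is $w=ax\Phi(\xi_0)$, which is a quantized Euler-type operator whose commutator with $\Phi(\xi_0)$ is exactly $ah\Phi(\xi_0)$, and the identity becomes a one-line Weyl-algebra computation using $[\Phi(\xi_0),x]=h$ and $\sigma(h)=-h$. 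The delicate point is sign bookkeeping: tracking how the anti-automorphism property of $\sigma$ interacts with the side on which $\Phi(\xi_0)$ sits in the ideal $\Dcal\Phi(\xi_0)$, so that the shift emerges with the correct coefficient $-a$.
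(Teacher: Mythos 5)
Your approach for both parts is essentially the paper's: for (1), you show $\sigma$ preserves $\Dcal^{G_0}$ and the ideal $\Dcal^{G_0}\Phi(\xi_0)$; for (2), you produce a $G_0$-invariant lift $\Phi(\eta)+w$ of $\Phi_0(\eta)$ with $w\in\Dcal\Phi(\xi_0)$ and compare $\sigma(w)$ with $w$. The paper does exactly this, trivializing $\pi:\X\to\X/G_0$ over an open affine $Y^0$ (which is possible since $G_0$ is unipotent), writing $\X=\X/G_0\times G_0$ with coordinate $x$ on $G_0\cong\K$, and taking $f=\Phi(\eta)-a\tilde{x}\Phi(\xi_0)$ for a lift $\tilde{x}$ of $x$; note the sign of $w$ here is the opposite of yours (you should double-check your commutator $[\Phi(\xi_0),\Phi(\eta)]$, but since your final claimed shift $-ah$ matches the statement, this is likely a correctable bookkeeping slip rather than a conceptual error). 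For (1) you should also note that to identify $\Dcal\red G_0$ with $\pi_*(\Dcal)^{G_0}/\pi_*(\Dcal)^{G_0}\Phi(\xi_0)$ the relevant input is Lemma~\ref{Lem:surj_proj} together with flatness of $\pi_*(\Dcal)^{G_0}$ over $\K[\Phi(\xi_0)]$, not Lemma~\ref{Lem:quant_comm_red}.

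The genuine gap is at the very end of (2), where you wave at a ``one-line Weyl-algebra computation'' and worry about sign bookkeeping. Two things are missing. First, the lift $\tilde{x}$ is only a lift of $x$, so $\xi_0.\sigma(\tilde{x})=\frac{1}{h}[\Phi(\xi_0),\sigma(\tilde{x})]$ equals $1$ only \emph{modulo} $h$, not exactly; the same goes for terms like $(\tilde{x}-\sigma(\tilde{x}))\Phi(\xi_0)$, since $\sigma$ is only known to be the identity modulo $h$. Consequently the explicit calculation can yield $\sigma(f)\equiv f+(a_1-a)h$ only modulo $\Dcal^G\Phi(\xi_0)+h^2\Dcal^G$, \emph{not} an exact congruence in $\Dcal^{G_0}/\Dcal^{G_0}\Phi(\xi_0)$ as you set up. Second, to upgrade this congruence mod $h^2$ to an equality one must invoke Lemma~\ref{Lem:even_ham}, which says a priori that $\sigma(\Phi_0(\eta))-\Phi_0(\eta)\in\K h$ exactly; then the mod-$h^2$ computation pins down the scalar, and you are done. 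Without this final appeal the argument is incomplete: you cannot control the $O(h^2)$ corrections by manipulating the lift $\tilde{x}$ alone (there is no canonical lift making $\xi_0.\sigma(\tilde{x})$ exactly $1$ or exactly killing $\sigma(\tilde{x})-\tilde{x}$).
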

\begin{proof}
Let us prove assertion 1. Lemma \ref{Lem:surj_proj} implies that $\Dcal\red G_0=\pi_*(\Dcal)^{G_0}/
\pi_*(\Dcal\Phi(\xi_0))^{G_0}$. But $\Phi(\xi_0)$ is $G_0$-invariant, and $\pi_*(\Dcal)^{G_0}$ is flat over
$\K[\Phi(\xi_0)]$. So we have the equality $\pi_*(\Dcal\Phi(\xi_0))^{G_0}=\pi_*(\Dcal)^{G_0}\Phi(\xi_0)$ and the
right hand side is $\sigma$-stable, compare with Step 3 of the proof of Proposition \ref{Prop:even_reduction}.
This implies assertion 1.

Let us prove assertion 2. Since the group $G_0$ is unipotent, there is an open affine
subset $Y^0\subset \X/G_0$ such that the restriction the quotient morphism $\pi:\X\rightarrow \X/G_0$ is trivial
over $Y^0$.  Replacing $\X$ with $\pi^{-1}(Y^0)$ we may assume that $\X/G_0$ is affine
and $\X=\X/G_0\times G_0$. Again we write $\Dcal$ instead of $\Gamma(\X,\Dcal)$.
Let $x$ denote the coordinate function on $G_0=\K$ so that $\xi_0.x=1$. We remark that the
operator $\xi_0:\K[X]\rightarrow \K[X]$ is surjective. It follows that $\xi_0:\Dcal\rightarrow \Dcal$
is surjective. So we can find a lifting $\tilde{x}$ of $x$ to $\Dcal$ such that $\xi_0.\tilde{x}=1$.


Consider the element $f=\Phi(\eta)-a\tilde{x}\Phi(\xi_0)\in \Dcal$. Then
$[f,\Phi(\xi_0)]=0$ and so $f\in \Dcal^G$. Moreover, the image of $f$ in $\Dcal\red G_0$
coincides with $\Phi_0(\eta)$. Let us compute $\sigma(f)$. We have
\begin{align*}&\sigma(f)=\sigma(\Phi(\eta))-a\sigma(\Phi(\xi_0))\sigma(\tilde{x})=\Phi(\eta)+ a_1h-a \Phi(\xi_0)\sigma(\tilde{x})=\\& =f+a_1h
-a[\Phi(\xi_0),\sigma(\tilde{x})]+a(\tilde{x}-\sigma(\tilde{x}))\Phi(\xi_0)=f+a_1h-ah\xi_0.\sigma(\tilde{x})
+a(\tilde{x}-\sigma(\tilde{x}))\Phi(\xi_0).\end{align*}
But $\xi_0.\sigma(\tilde{x})=\sigma(\xi_0.x)=1$ because $\sigma$ is $G_0$-equivariant. So we see that
$\sigma(f)$ coincides with $f+(a_1-a)h$ modulo $\Dcal^G\Phi(\xi_0)+
h^2\Dcal^G$. It follows that $\sigma(\Phi_0(\eta))$ is congruent $\Phi_0(\eta)+(a_1-a)h$  modulo
$h^2\Dcal^G$.
This implies assertion 2.
\end{proof}

\begin{proof}[Proof of Theorem \ref{Thm:even_quant}]
Similarly to Step 2 of the proof of Proposition \ref{Prop:even_reduction}, we may assume that
$G$ is connected. The character $\alpha$ does not change if we replace $G$ with its solvable
radical. So Proposition \ref{Prop:even_reduction} reduces the proof to the case when $G$ is solvable.

Now let $G_0$ be a one-dimensional normal unipotent subgroup in $G$. Let $\alpha_0$ be the character
of the action of $G$ on $\g_0$ and let $\Phi_0:\g/\g_0\rightarrow \Gamma(X\red G_0, \Dcal\red G_0)$
be the induced moment map. Proposition \ref{Prop:red_unipotent} implies that $\sigma(\Phi_0(\xi))-\Phi_0(\xi)=
\langle\alpha-\alpha_0,\xi\rangle h$ for any $\xi\in\g/\g_0$. But $\alpha-\alpha_0$ is nothing else
but the character of $G/G_0$ on $\bigwedge^{top}(\g/\g_0)$. So by induction we reduce the proof
to the case when $G$ is a torus. Here our claim follows from Proposition \ref{Prop:even_reduction}.
\end{proof}

\subsection{Proofs of the main theorems}\label{SUBSECTION_W_proofs}
\begin{proof}[Proof of Theorem \ref{Thm:Klein_W}]
Consider  the symplectic formal scheme $\widetilde{X}:=R(\D Q,\vv,\ww)^{\theta,ss}\widehat{\red}_{\z_0} \GL(\vv)$
over the formal neighborhood $\z_0^{\wedge_0}$ of 0 in $\z_0$, where $\theta$ is as in (\ref{eq:character}).
We claim that the symplectic schemes $\widetilde{\Sl}(e,B)$ and $\widetilde{X}$ are $\K^\times$-equivariantly symplectomorphic in such a way that the symplectomorphism lifts the isomorphism $\h^*\rightarrow \z_0$ constructed before Theorem \ref{Thm:Klein_W}.

We have  the line bundles $\mathcal{L}_i', i=0,\ldots,r$ on $X=\M^\theta_0(\D Q,\vv,\ww)$.
We claim that $\bigotimes \mathcal{L}_i^{\otimes\delta_i}\cong \Str_X$. This follows, for example, from a
well-known fact that if we identify $\Pic(X)$ with the weight lattice of $\g$, then $\mathcal{L}'_0$ gets identified
with $-\delta$, where $\delta$ is the maximal root. By Proposition \ref{Prop:comm_def_graded}, $\widetilde{X},\widetilde{\Sl}(e,B)$ are  obtained by pulling back the universal deformation
of $X$ by means of certain maps $\z_0\rightarrow H^2_{DR}(X),\h^*\rightarrow H^2_{DR}(X)$. Now Proposition \ref{Prop:line_bundle_transform} together with Proposition \ref{Prop_DH} imply that the maps  $\z_0\rightarrow H^2_{DR}(X), \h^*\rightarrow H^2_{DR}(X)$ are intertwined by the isomorphism $\h^*\rightarrow U$. This shows the claim of the previous paragraph.

Identify $\widetilde{X}$ with $\widetilde{\Sl}(e,B)$. Consider the canonical quantization
$\Dcal$ of this symplectic scheme over $\z_0^{\wedge_0}$. By Corollary \ref{Cor:quant_classif}, any even graded quantization of $\widetilde{X}$ is isomorphic to $\Dcal$. So Theorem \ref{Thm:even_quant} specifies the condition
on a comoment quantum map for the reduced quantization to be canonical. Thanks to Lemmas \ref{Lem:global_sections},
\ref{Lem:W_parab_qout}, the algebras $\W_{\z_0}(\D Q,\vv,\ww)_h, \Walg_{h,\h}$ are the subalgebras
of $\K^\times$-finite global sections of appropriate quantizations of $\widetilde{X}=\widetilde{\Sl}(e,B)$.
Since these quantizations are graded and even by Theorem \ref{Thm:even_quant}, they are isomorphic,
and so we see that the algebras $\W_{\z_0}(\D Q,\vv,\ww)_h$ and $\Walg_{h,\h}$ are isomorphic as $\K[\h^*,h]$-algebras.
\end{proof}

\begin{proof}[Proof of Theorem \ref{Thm:type_A}]
It is the same as the proof of Theorem \ref{Thm:Klein_W}, but one has to replace results from Subsection \ref{SUBSECTION_Klein_classical} with their counterparts from Subsection \ref{SUBSECTION_A_quiver_vs_Slod}.
Perhaps, the only new part is that we need to check that quantization commutes with reduction
for $\W(\D Q,\vv,\ww)_h$. Thanks to Lemma \ref{Lem:quant_comm_red}, we only need to check
that Proposition \ref{Prop:CB} applies in the present situation.

Consider the quiver $Q^{\ww}$ and the dimension vector $\vv^{\ww}$.
Suppose $\vv^{\ww}$ is decomposed into a sum $\vv'+\vv^1+\ldots+\vv^k$, with $\vv^1,\ldots,\vv^k$ being roots
for the Dynkin quiver $Q$. We need to show that $p(\vv)>\p(\vv')+\sum_{i=1}^k p(\vv^i)$.

We have $p(\vv^i)=0$ for $i=1,\ldots,k$ so we only need to check that
 $p(\vv)> p(\vv')$ for any $\vv'\leqslant \vv, \vv'\neq \vv$ and $v'_s=1$.
Let $\mathrm{v}'$ be the element of the root lattice associated to $\vv'$.
We have $p(\vv)=(\mathrm{d},\mathrm{v})-\frac{1}{2}(\mathrm{v},\mathrm{v}), p(\vv')=(\mathrm{d},\mathrm{v}')-
\frac{1}{2}(\mathrm{v}',\mathrm{v}')$. Here $(\cdot,\cdot)$ is the normalized invariant
scalar product, i.e.   $(\mathbf{x},\mathbf{y}):=
\sum_{i=1}^k x_i(2y_i-y_{i-1}-y_{i+1})$. Set $\mathbf{u}:=\vv-\vv'$, and $\mathrm{u}$ be the corresponding vector
in the root lattice.  Then we have $p(\vv)-p(\vv')=(\mathrm{u},\mathrm{d}-\frac{1}{2}(\mathrm{v}+\mathrm{v}'))=
(\mathrm{u}, \mathrm{d}-\mathrm{v}+\frac{1}{2}\mathrm{u})\geqslant \frac{1}{2}(\mathrm{u},\mathrm{u})$.
The last inequality holds because $\mathrm{d}-\mathrm{v}$ is dominant.
%
%
\end{proof}

\section{Symplectic reflection algebras}\label{SECTION_SRA}
\subsection{Definitions}\label{SUBSECTION_SRA_def}
Let $\mathcal{V}$ be a symplectic vector space with symplectic form $\omega$ and $\Gr\subset\Sp(\mathcal{V})$ be a finite group. Let $S$ denote the set of symplectic reflections in $\Gr$, that is
the set of all $g\in \Gr$ such that $\rk(g-\id)=2$. Decompose $S$ into the union
$\bigsqcup_{i=0}^r S_i$ of $\Gr$-conjugacy classes.
Pick  independent variables $c_0,c_1,\ldots,c_r$, one for each conjugacy class in $S$ and
also an independent variable $h$. Let $\param$ denote the vector space
dual to the span of $h,c_0,\ldots,c_r$, with dual basis $\check{h},\check{c}_0,\ldots,\check{c}_r$. By the (universal) symplectic reflection algebra (shortly, SRA) we mean the quotient $\Halg$ of $\K[\param]\otimes T\mathcal{V}\#\Gr$ by the relations
\begin{equation}\label{eq:SRA_relation}
[u,v]=h \omega(u,v)+\sum_{i=0}^r c_i\sum_{s\in S_i}\omega_s(u,v)s,
\end{equation}
with $\omega_s=\pi^*\omega$, where $\pi$ stands for the projection $V\twoheadrightarrow \operatorname{im}(s-\id)$
along $\ker (s-\id)$.

For $\beta\in \param$ let $H_\beta$ denote the specialization of $\Halg$ at $\beta$.

Let us list some properties of the algebra $\Halg$:
\begin{itemize}
\item[(A)] $\Halg/(h,c_0,\ldots,c_r) \Halg= S\mathcal{V}\#\Gr$.
\item[(B)] $\Halg$ is graded: $\K\Gr\subset \Halg$ has degree 0, $V\subset \Halg$ -- degree 1,
while $\param^*$ is of degree 2.
\item[(C)] $\Halg$ is flat over $\K[\param]$. This is a reformulation of results of Etingof
and Ginzburg, \cite{EG}, Theorem 1.3.
\item[(D)] Finally, and, in a sense, most importantly, $\Halg$ is universal with these three properties
under some mild restrictions on $\Gr$. Namely, assume that $\Gr$ is symplectically irreducible,
that is, there is no proper symplectic $\Gr$-submodule of $\mathcal{V}$. Then let $\param'$ be a vector space
and $\Halg'$ be a graded $\K[\param']$-algebra satisfying the analogs of (A),(B),(C). Then there is a
unique linear map $\param'\rightarrow \param$ such that $\Halg'\cong \K[\param']\otimes_{\K[\param]}\Halg$.
\end{itemize}
To prove (D) one argues as follows. The degree $-2$ component of the Hochshield cohomology
group $\operatorname{HH}^2(S\mathcal{V}\#\Gr)$ is identified with $\param^*$. All graded deformations
are unobstructed because the degree $-4$ component of $\operatorname{HH}^3(S\mathcal{V}\#\Gr)$ vanish.
To compute the Hochshield cohomology of $SV\#\Gr$ one argues similarly to
the proof of Theorem 9.1 in \cite{Etingof}.

In fact, below we will be interested mostly in the so called spherical subalgebra of $\Halg$.
Namely, consider the trivial idempotent $e=\frac{1}{|\Gr|}\sum_{\gamma\in \Gr}\gamma\in \K\Gr$
and note that $\K\Gr\subset \Halg$.  Form the {\it spherical  subalgebra} $e\Halg e$ with unit $e$.
This is a flat graded deformation of $(S\mathcal{V})^\Gr$.

We are mostly interested in the special case when $\Gr$ is the wreath-product $\Gamma_n$ of
a Kleinian group $\Gamma\subset \SL_2(\K)$ and of the symmetric group $S_n$, where $n>1$,
and $\mathcal{V}=L^{\oplus n}$, see Subsection \ref{SUBSECTION_resolution}.

Let $\Gamma\setminus\{1\}=\coprod_{i=1}^l S^0_i$ be the decomposition into $\Gamma$-conjugacy
classes. We have $S=S_{sym}\sqcup\coprod_{i=1}^l S_i$, where
\begin{align*}
&S_{sym}:=\{s_{ij}\gamma_{(i)}\gamma_{(j)}^{-1}, 1\leqslant i<j\leqslant n, \gamma\in \Gamma\},\\
&S_i:=\{\gamma_{(j)}, 1\leqslant j\leqslant n, \gamma\in S^0_i\}, i=1,\ldots,l.
\end{align*}
where $\gamma_{(j)}$ means the element $(1,\ldots,1,\gamma,1,\ldots,1)\in \Gamma^n$
with $\gamma$ on the $j$-th place, and $s_{ij}$ stands for the transposition of the $i$-th and
$j$-th elements in $S_n$.

We remark that $\Gamma$ is symplectically irreducible provided $\Gamma\neq \{1\}$.
We can make $S_n$ to act  symplectically irreducibly
if we replace $L^n$ with the double of the reflection representation of $S_n$.
Below we still write $c_1,\ldots,c_l$ for the independent variables corresponding to $S_i, i=1,\ldots,l$,
and we write $k$ for the variable corresponding to $S_{sym}$. Then (\ref{eq:SRA_relation})
becomes the same system of relations as (1.2.2),(1.2.3) in \cite{EGGO}. Of course, for $n=1$
we just do not have the class $S_{sym}$. However, it will be convenient for us to consider the space
$\widetilde{\param}:=\param$ for $n>1$ and $\widetilde{\param}:=\param\oplus \K \check{k}$ for $n=1$
and set $\widetilde{\Halg}:=\K[\widetilde{\param}]\otimes_{\K[\param]}\Halg$. So $\widetilde{\Halg}=\Halg$
for $n>1$ and $\K[k]\otimes \Halg$ for $n=1$.

\subsection{Main result}\label{SUBSECTION_SRA_main}
Our ultimate goal is to reprove results relating $e\Halg e$ to  certain quantum Hamiltonian reductions.
The latter is as follows.

Let $N_i, Q,\delta, \vv,\ww$ be  as in Subsection \ref{SUBSECTION_resolution}.
Set $V:=R(\D Q, \vv, \ww)$.
Further, set $G:=\GL(n\delta),\z:=\g^{*G}$.
Let $\W_h$ be the homogenized Weyl algebra of $V$.
Consider the reduction $\W(\D Q,\vv,\ww)_h:=\W_h\red G$.
This is a graded algebra over $\K[\z][h]$.

Let us state our main result. Recall that we have fixed a $\K^\times$-equivariant Poisson isomorphism $\K^{2n}/\Gamma_n\cong \M_0(\D Q,\vv,\ww)$.
Set $\mathbf{c}:=h+\sum_{i=1}^r c_i \sum_{\gamma\in S_i^0}\gamma\in \param^*\otimes Z(\K\Gamma)$. Further, recall the identification $\z\cong \K^{Q_0}$. It will be convenient
for us to change our usual notation and write $\epsilon_0,\ldots,\epsilon_r$ for the tautological basis of $\z^*$ (and not of $\z$).
Also set $\widehat{\z}^*:=\z^*\oplus \K h$ and let $\check{\epsilon}_0,\ldots, \check{\epsilon}_r,\check{h}$
be the dual basis in $\widehat{\z}$.

\begin{Thm}\label{Thm_SRA_iso}
Suppose $\Gamma\neq \{1\}$.
There is a graded algebra isomorphism
$e\widetilde{\Halg} e\rightarrow \W(\D Q,\vv,\ww)_h$,
that maps $\widetilde{\param}^*\subset e\widetilde{\Halg}e$ to $\widehat{\z}^*\subset \W(\D Q,\vv,\ww)_h$
and induces the fixed isomorphism $\K^{2n}/\Gamma_n\cong \M_0(\D Q,\vv,\ww)$.
The corresponding map $\upsilon:\widetilde{\param}^*\rightarrow \widehat{\z}^*$ is the inverse of the following map
\begin{equation}\label{eq:SRA_map}
\begin{split}
&h\mapsto h,\\
&\epsilon_0\mapsto \tr_{N_0}\mathbf{c}/|\Gamma|-(k+h)/2.\\
&\epsilon_i\mapsto \tr_{N_i}\mathbf{c}/|\Gamma|, i=1,\ldots,r.
\end{split}
\end{equation}
\end{Thm}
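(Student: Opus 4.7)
The plan is to follow the strategy of Theorems \ref{Thm:Klein_W} and \ref{Thm:type_A}, using the weakly Procesi bundle $\widehat{\Pro}$ on $\widehat{X}:=\M^\theta(\D Q,\vv,\ww)$ as a bridge between the quiver variety and the SRA sides. Both sides will be realized in terms of a single family of quantizations of $\widehat{X}$ obtained by quantum Hamiltonian reduction, with the Procesi bundle producing the $\Gamma_n$-equivariant structure needed to land in the SRA.

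Step 1 (quiver side). I form the quantization $\Dcal_{\widehat{\z}^*}:=\W_{V^*,h}\widehat{\red}^\theta_{\z^*}G$ of $\widehat{X}$. Flatness of the moment map (Proposition \ref{Prop:CB}(1) applied to $Q^{\ww}$) together with Nakajima's freeness on the semistable locus (Proposition \ref{Prop:Nakajima1}) imply, via Lemma \ref{Lem:quant_comm_red}(B), that quantization commutes with reduction, so by the argument of Lemma \ref{Lem:global_sections} the algebra $\W(\D Q,\vv,\ww)_h$ is the subalgebra of $\K^\times$-finite global sections of $\Dcal_{\widehat{\z}^*}$.

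Step 2 (SRA side via the Procesi bundle). By $(\widehat{P}2)$, $\widehat{\Pro}$ lifts uniquely to a $\Dcal_{\widehat{\z}^*}$-module $\widetilde{\Pro}$. By $(\widehat{P}1)$, $\Halg^{\mathrm{geom}}:=\End_{\Dcal_{\widehat{\z}^*}}(\widetilde{\Pro})(\widehat{X})^{\K^\times\text{-fin}}$ is a graded flat $\K[\widehat{\z}^*]$-deformation of $\K[V]\#\Gamma_n$ carrying a fiberwise $\Gamma_n$-action. Because $\Gamma\neq\{1\}$ makes $V=L^n$ symplectically irreducible as a $\Gamma_n$-module, universality (property (D) of SRAs) supplies a unique linear map $\phi:\widehat{\z}^*\to\widetilde{\param}^*$ with $\Halg^{\mathrm{geom}}\cong\K[\widehat{\z}^*]\otimes_{\K[\widetilde{\param}^*]}\widetilde{\Halg}$ as graded algebras. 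Since the $\Gamma_n$-trivial isotypic component of $\widehat{\Pro}$ equals $\Str_{\widehat{X}}$, taking spherical subalgebras yields $e\Halg^{\mathrm{geom}}e=\W(\D Q,\vv,\ww)_h$. The theorem thus reduces to showing that $\phi$ coincides with the map \eqref{eq:SRA_map}.

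Step 3 (identification of $\phi$, and main obstacle). The classical part of $\phi$ (its reduction modulo $h$) is the induced map on commutative deformations of $X_0=L^n/\Gamma_n$. By the Duistermaat--Heckman theorem (Proposition \ref{Prop_DH}) the quiver-side deformation is encoded by the Chern classes of the tautological bundles on $\widehat{X}$. Decomposing $\widehat{\Pro}$ into $\Gamma_n$-isotypic summands and matching them with these tautological bundles via the McKay correspondence produces $\phi(\epsilon_i)\equiv\tr_{N_i}\mathbf{c}\pmod{h}$, while the $|\Gamma|k$ contribution in $\phi(\epsilon_0)$ arises from the sign character of $S_n$ in the wreath product. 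The $h$-shift is forced by Theorem \ref{Thm:even_quant}: to make $\Dcal_{\widehat{\z}^*}$ canonical (equivalently, even) one must shift the quantum comoment map by $-\frac{1}{2}\langle\alpha_G,\cdot\rangle h$, where $\alpha_G$ is the character of $G=\GL(\vv)$ on $\bigwedge^{top}\g$; evaluating $\alpha_G$ at $\check{\epsilon}_0$ produces the $-\frac{1}{2}|\Gamma|h$ term in $\phi(\epsilon_0)$. The main technical obstacle is the Chern-class computation — a wreath-product generalization of Proposition \ref{Prop:line_bundle_transform} — in which one must identify the $\Gamma_n$-isotypic summands of $\widehat{\Pro}$ with explicit elements of $\Pic(\widehat{X})$ and pair them against the deformation parameter. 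Once this is in hand, the universality and evenness arguments above make the rest of the identification formal.
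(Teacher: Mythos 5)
Steps 1 and 2 of your proposal match the paper's Subsection \ref{SRA_Procesi}: the quiver side via Lemma \ref{Lem:global_sections}, the lift of $\widehat{\Pro}$ to a right $\Dcal$-module using $(\widehat{P}2)$, the universality property (D) forcing $\Halg^{\mathrm{geom}}$ to be a pullback of $\widetilde{\Halg}$, and $e\Halg^{\mathrm{geom}}e\cong\W(\D Q,\vv,\ww)_h$. The separate argument that $\Upsilon(h)=h$ (by comparing Poisson brackets modulo $h$) is also in the paper and you should add it, since $h$ is not pinned down by universality alone.

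Step 3, however, is where your proposal has a genuine gap and departs from what can actually be carried out. You propose to identify $\phi$ via a "wreath-product generalization of Proposition \ref{Prop:line_bundle_transform}", i.e.\ to decompose $\widehat{\Pro}$ into $\Gamma_n$-isotypic summands, identify them with explicit tautological classes in $\Pic(\widehat{X})$, and pair against the deformation parameter. The difficulty is that for $n>1$ the weakly Procesi bundle is only known to \emph{exist} (via \cite{BK2}); there is no explicit description of its isotypic decomposition, and its nontrivial isotypic summands are not line bundles. Proposition \ref{Prop:line_bundle_transform} and the McKay-correspondence computation in Subsection \ref{SUBSECTION_Klein_classical} rely on the $n=1$ construction of $\widehat{\Pro}$ as the descent of the tautological $\GL(\vv)$-bundle $\bigoplus N_i^{\oplus\delta_i}$ — something that has no analogue in the wreath-product setting. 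The paper flags exactly this obstruction ("this is not the case in general, so the most difficult part of the proof is to show that the map $\param^*\rightarrow\widehat{\z}^*$ is as needed") and deliberately avoids the direct Chern-class route. Your claim that "$\phi(\epsilon_0)$'s $|\Gamma|k$ contribution arises from the sign character of $S_n$" and that the $h$-shift comes from evaluating a determinant character are plausible heuristics, but they are not derivable from the facts you have assembled.

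The paper's actual resolution of this difficulty is fundamentally different: it identifies $\Upsilon|_{\param^*}$ only up to an automorphism group $\Afr$ (studied in Subsection \ref{SUBSECTION_SRA_aut} using Maffei's Weyl group action and the rigidity Lemma \ref{Lem:AutKleinian}), then pins it down by passing to completions of $e\Halg e$ and of $\W(\D Q,\vv,\ww)_h$ at points $b^1,b^2$ on the two codimension-$2$ symplectic leaves of $\K^{2n}/\Gamma_n$ (Subsection \ref{SUBSECTION:SRA_completions}, using \cite{sraco}, \cite{BE}, and the slice Lemma \ref{Lem:Ham_red_completion}). These completions decompose as $\W_{2n-2,h}^{\wedge_0}\widehat\otimes(\text{Kleinian object})$, reducing the identification to the already-proved $n=1$ case, which is further constrained by Proposition \ref{Prop:AutKleinian_main}. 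The final step shows $\Afr=W_{fin}\times\Z/2\Z$ and that any element of this group is induced by a genuine algebra automorphism, so $\Upsilon$ can be corrected to have the stated restriction to $\param^*$. You would need to replace your Step 3 with this reduction-to-completions argument (or an equivalent), as the direct Chern-class computation is not available for $n>1$.
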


This theorem is  similar to the principal result of \cite{EGGO} but there are several differences.
First, (\ref{eq:SRA_map}) looks different from the analogous formula in \cite{EGGO}. This is because
their quantum comoment map differed from ours by a character (our quantum comoment map is, in a sense,
symmetrized but theirs is not). Second, our parameters are independent variables, while
\cite{EGGO} considers numerical values. Finally, the proof in \cite{EGGO} works only when the
quiver $Q$ is bi-partive, which is true for $\Gamma$ of types $D,E$ and $A_{l}$ for even $l$. The cases
$A_l$ for all $l>0$ are covered by the work of Oblomkov, \cite{Oblomkov} and Gordon, \cite{Gordon}.
The case of  $n=1$ follows basically from Holland's paper \cite{Holland}. Holland's results
may be interpreted as follows. The case $\Gamma=\{1\}$ was obtained in \cite{GG}. The proof
in this case can be obtained by a slight modification of our argument, but we are not going to
provide it.

For $n=1$ Theorem \ref{Thm_SRA_iso} together with Remark \ref{Rem:Klein} imply the following result,
which essentially was first proved by Holland. Set $\widehat{\z}_0:=\z_0\oplus \K h$.

\begin{Thm}\label{Thm:iso_Kleinian2}
Let $n=1$. There is a graded algebra isomorphism $e\Halg e\rightarrow\W_{\z_0}(\D Q,\vv)_h$
mapping $\param^*$ to  $\widehat{\z}_0^*$, where
the induced map $\upsilon_0:\param^*\rightarrow \widehat{\z}_0^*$ is the inverse of the following map
\begin{equation}\label{eq:SRA_map2}
\begin{split}
&h\mapsto h,\\
&\epsilon_0\mapsto \tr_{N_0}\mathbf{c}/|\Gamma|-h.\\
&\epsilon_i\mapsto \tr_{N_i}\mathbf{c}/|\Gamma|, i=1,\ldots,r.
\end{split}
\end{equation}
\end{Thm}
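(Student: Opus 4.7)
The plan is to derive the statement from Theorem \ref{Thm_SRA_iso} specialized to $n=1$, together with Remark \ref{Rem:Klein}, by matching compatible central ideals on both sides of the isomorphism there.

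First, I would invoke Theorem \ref{Thm_SRA_iso} with $n=1$ to obtain a graded algebra isomorphism $e\widetilde{\Halg} e \xrightarrow{\sim} \W(\D Q, \vv, \ww)_h$ whose parameter map $\upsilon^{-1}\colon \widehat{\z}^* \to \widetilde{\param}^*$ is given by (\ref{eq:SRA_map}). Since $\widetilde{\Halg} = \K[k] \otimes \Halg$ in the case $n=1$, quotienting the left hand side by the central element $k + \tfrac{1}{2}h$ (homogeneous of Kazhdan degree $2$) recovers $e\Halg e$ as a graded $\K[\param]$-algebra, with $k$ substituted by $-\tfrac{1}{2}h$.

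Second, I would identify the central ideal on the quiver side corresponding to $(k + \tfrac{1}{2}h)$ under the isomorphism. Since $\z_0 \subset \z$ is cut out by $\sum_{i \in Q_0} \delta_i \epsilon_i = 0$, and $\W_{\z_0}(\D Q, \vv, \ww)_h = \K[\z_0] \otimes_{\K[\z]} \W(\D Q, \vv, \ww)_h$, the natural candidate is the ideal generated by $\sum_i \delta_i \epsilon_i$. A direct calculation using (\ref{eq:SRA_map}), the fact that $\delta_0 = 1$, and the identity $\sum_j \delta_j \tr_{N_j} = \chi_{\mathrm{reg}}$ (the regular character of $\Gamma$, which extracts only the coefficient of $1 \in \K\Gamma$ in $\mathbf{c}$, namely $h$) yields
\[ \upsilon^{-1}\!\left(\sum_{i \in Q_0} \delta_i \epsilon_i\right) = |\Gamma|\,h + |\Gamma|\,k - \tfrac{1}{2}|\Gamma|\,h = |\Gamma|\bigl(k + \tfrac{1}{2}h\bigr), \]
so the two central ideals correspond under the isomorphism.

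Third, taking quotients of both sides of the Theorem \ref{Thm_SRA_iso} isomorphism by these matched ideals produces a graded isomorphism $e\Halg e \cong \W_{\z_0}(\D Q, \vv, \ww)_h$. Applying Remark \ref{Rem:Klein} to identify $\W_{\z_0}(\D Q, \vv, \ww)_h$ with $\W_{\z_0}(\D Q, \vv)_h$, and substituting $k = -\tfrac{1}{2}h$ into (\ref{eq:SRA_map}) to recover (\ref{eq:SRA_map2}), completes the argument. This is essentially bookkeeping: no serious obstacle arises here, as all the substantive content (the parameter identification, the even quantization criterion, and the comparison with Holland's results) is already packaged into Theorem \ref{Thm_SRA_iso} and Remark \ref{Rem:Klein}.
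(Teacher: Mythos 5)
Your proposal is correct and is precisely the bookkeeping the paper has in mind: the paper's ``proof'' of this theorem is the single sentence preceding it that says it follows from Theorem \ref{Thm_SRA_iso} and Remark \ref{Rem:Klein}, and you have correctly identified the matching central ideals (via the regular-character computation $\sum_i\delta_i\tr_{N_i}\mathbf{c}=|\Gamma|h$) and verified that substituting $k=-\tfrac12 h$ into (\ref{eq:SRA_map}) recovers (\ref{eq:SRA_map2}).
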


Theorem \ref{Thm_SRA_iso} for $n=1$ will be proved in Subsection \ref{SRA_Procesi}.
The case $n>1$ is much
more complicated. Its proof  will be completed in Subsection \ref{SUBSECTION_red_to_Klein}.

Let us explain the scheme of the proof.  To prove the existence of a graded endomorphism $\Upsilon: e\widetilde{\Halg}e\rightarrow \W(\D Q,\vv,\ww)_h$
mapping $\param^*=\widetilde{\param}^*$ to $\widehat{\z}^*$ is relatively easy. There are two ingredients for this
proof: the universality property of SRA's, see (D) in the the previous subsection, and the existence of a bundle
$\Pro$ on a resolution of $\K^{2n}/\Gamma_n$. This bundle allows to construct a deformation of
$S(\K^{2n})\#\Gamma_n$ whose spherical subalgebra is precisely $\W(\D Q,\vv,\ww)_h$. In the case $n=1$
we have enough information about the bundle $\Pro$ to recover the corresponding map $\param^*\rightarrow \widehat{\z}^*$ pretty easily. But this is not the case in general, so the most difficult part of the proof is to show that
the map $\param^*\rightarrow \widehat{\z}^*$ is as needed.

Of course, it is enough to show that the restriction  $\Upsilon|_{\param^*}$ differs from $\upsilon$
by a map induced by an automorphism of $\W(\D Q,\vv,\ww)_h$. We study certain group of automorphisms
of the algebras $\W(\D Q,\vv,\ww)_h$ in Subsection \ref{SUBSECTION_SRA_aut}. First we use Maffei's
construction of isomorphisms of quiver varieties, \cite{Maffei_Weyl}, to show that the group
we are interested in includes the Weyl group $W_{fin}$ of the Dynkin part of $Q$.
Then we check that for $n=1$ the group
under consideration basically coincides with $W_{fin}$. Finally, we produce a certain automorphism
of $\W(\D Q,\vv,\ww)_h$ for $n>1$ that (as we will see later) does not belong to $W_{fin}$.

Then our strategy to prove Theorem \ref{Thm_SRA_iso} for $n>1$ is to reduce it to the case $n=1$,
where the result is already known. This is achieved by considering certain completions
of $e\Halg e, \W(\D Q,\vv,\ww)_h$ and their isomorphism induced by $\Upsilon$. We will consider two
different completions. This will be done in Subsection \ref{SUBSECTION:SRA_completions}.

In Subsection \ref{SUBSECTION_red_to_Klein} we complete the proof of Theorem \ref{Thm_SRA_iso}.
We will show that the isomorphism of completions introduced in Subsection \ref{SUBSECTION:SRA_completions}
give rise to certain endomorphisms of some SRA with $n=1$. From here, thanks to results of
Subsection \ref{SUBSECTION_SRA_aut}, we will deduce
that $\Upsilon|_{\param^*}$ coincides with $\upsilon$ up to an element of $W_{fin}\times \Z/2\Z$
acting on $\widehat{\z}$. Then we will see that any element of this group is actually induced
by an automorphism of $\W(\D Q,\vv,\ww)_h$.

The last subsection of the section has nothing to do with the main theorem. There we use
our techniques to establish a result to be used in a subsequent paper \cite{GL}.

\subsection{An isomorphism via a (weakly) Procesi bundle}\label{SRA_Procesi}
Here we are going to prove that there is a $\K[h]$-linear graded algebra homomorphism $\Upsilon:
e\widetilde{\Halg} e\rightarrow\W(\D Q,\vv,\ww)_h$ mapping   $\widetilde{\param}^*$ to $\widehat{\z}^*$.
Then we will prove Theorem \ref{Thm_SRA_iso} for $n=1$.

Thanks to the universality
property of $\Halg$, the existence of $\Upsilon$ will follow if we produce
a graded flat $\K[\widehat{\z}^*]$-algebra $\widetilde{\Halg}'$ that deforms $\K[\K^{2n}]\#\Gamma$ and such
that $e\widetilde{\Halg}' e\cong \W(\D Q,\vv,\ww)_h$. For the algebra $\widetilde{\Halg}'$ we basically take the endomorphism algebra of a quantization of a  bundle  $\widehat{\Pro}$ on the symplectic variety $\widehat{X}$, see Subsection \ref{SUBSECTION_resolution}.

So let $\Dcal$ stand for the canonical quantization of $\widehat{X}$. By Proposition \ref{Prop:even_reduction},
$\Dcal\cong \W_{h,V^*}\red^\theta G$.  Recall, Lemma \ref{Lem:global_sections}, that the subalgebra
of $\K^\times$-finite elements in $\Dcal$ is naturally identified with $\W(\D Q, \vv,\ww)_h$.

Since $\Ext^i_{\Str_{\widehat{X}}}(\widehat{\Pro},\widehat{\Pro})=0$, one can lift $\widehat{\Pro}$ to a unique projective
$\K^\times\times \Gamma_n$-equivariant right $\Dcal$-module $\widehat{\Pro}_h$. Automatically,  $\End_{\Dcal^{opp}}(\widehat{\Pro}_h)/(h)=\End_{\Str_{\widehat{X}}}(\widehat{\Pro})$ and so
$\End_{\Dcal^{opp}}(\widehat{\Pro}_h)$ is flat over $\K[\widehat{\z}]$.
So we see that $\End_{\Dcal^{opp}}(\widehat{\Pro}_h)/(\widehat{\z})=\End_{\Str_X}(\Pro)=\K[\K^{2n}]\#\Gamma_n$.
Let $\widetilde{\Halg}'$ be the subalgebra of $\K^\times$-finite elements in $\End_{\Dcal^{opp}}(\widehat{\Pro}_h)$.
Since $\widehat{\Pro}^{\Gamma_n}=\Str_{\widehat{X}}$, we see that $\widehat{\Pro}_h^{\Gamma_n}=\Dcal$.
So $\Dcal\cong e [{\mathcal End}_{\Dcal^{opp}}(\widehat{\Pro}_h)]e$, and $\W(\D Q,\vv,\ww)_h\cong e \widetilde{\Halg}' e$.

The graded flat deformation $\widetilde{\Halg}'$ of $\K[\K^{2n}]\#\Gamma_n$ gives rise to a (unique for $n>1$) linear map $\widehat{\z}\rightarrow \widetilde{\param}$ with $\widetilde{\Halg}'\cong \K[\widehat{\z}]\otimes_{\K[\widetilde{\param}]}\Halg$.
Taking the spherical subalgebras we get a homomorphism $\Upsilon: e\widetilde{\Halg} e\rightarrow
\W(\D Q,\vv,\ww)_h$. Our problem now becomes to determine the restriction $\Upsilon|_{\param^*}$.

First of all, let us show that $\Upsilon(h)=h$.
The algebras $e\widetilde{\Halg}'e/(h),e\widetilde{\Halg}e/(h)$
are commutative (for the first one this is evident, and for the second one follows from \cite{EG}).
On the other hand, for $\beta'\in \widehat{\z}$ with $\langle\beta',h\rangle\neq 0$ the specialization
$\W_{\beta'}(\D Q,\vv,\ww)_h$ is not commutative (because the Poisson bracket on $\K[\K^{2n}]^\Gamma$
induced by the bracket on the filtered algebra $\W_{\beta'}(\D Q,\vv,\ww)_h$ is nonzero). Similarly,
for $\beta\in \param$ with $\langle\beta,h\rangle\neq 0$ the algebra $e H_\beta e$ is non-commutative
for the same reason. This implies that $\Upsilon(h)$ is a non-zero multiple of $h$.
Now consider the Poisson brackets on $\K[\M_0(\D Q,\vv,\ww)]$ and on $\K[\K^{2n}]^{\Gamma_n}$.
Let $\rho:\M(\D Q,\vv,\ww)_h\twoheadrightarrow \K[\M_0(\D Q,\vv,\ww)]$ be the canonical projection.
The bracket on $\K[\M_0(\D Q,\vv,\ww)]$ is given by
$\{a,b\}=\rho(\frac{1}{h}[\iota(a),\iota(b)])$, where  $\iota: \K[\M_0(\D Q,\vv,\ww)]\rightarrow
\W(\D Q,\vv,\ww)_h$ is any section of $\rho$. The bracket on $\K[\K^{2n}]^{\Gamma}$ can be defined
using $e\Halg e$ in a similar way. Since our identification of $\K[\K^{2n}]^{\Gamma_n}$
and $\K[\M_0(\D Q,\vv,\ww)]$ was Poisson, we see that $\Upsilon(h)=h$.

In fact, the case $n=1$ is particularly easy because in this case we know much more about
$\widehat{\Pro}$ than in general, see Subsection \ref{SUBSECTION_Klein_classical}. We use the description
of $\widehat{\Pro}$ given there to prove Theorem \ref{Thm_SRA_iso} for $n=1$.

In Subsection \ref{SUBSECTION_Klein_classical} we constructed $\widehat{\Pro}$ explicitly for any generic $\theta$.
Now suppose that $\theta$ is chosen as explained after Proposition \ref{Prop:Nakajima1}.
In fact, one can interpret $\widehat{X},\widehat{\Pro}$ in a slightly  different way. Namely, consider the variety
$Y$ of all $\Gamma$-linear algebra homomorphisms $\widetilde{\Halg}/(h)\rightarrow \operatorname{End}(\K\Gamma)$. This variety comes equipped with a $\GL(\vv)=\GL(\K\Gamma)$-action and with a canonical $\GL(\vv)$-equivariant bundle $\mathcal{C}$ of rank $|\Gamma|$. Now $Y^{\theta,ss}$ consists of all homomorphisms having a "cocyclic covector", i.e., an element $\alpha\in (\K\Gamma)^*$ such that $a^*(\alpha), a\in \widetilde{\Halg}/(h)$ span the whole space $(\K\Gamma)^*$.
Consider the variety $\widehat{X}':=Y^{\theta,ss}/\GL(\K\Gamma)$ and  the bundle  $\widehat{\Pro}'$ on $\widehat{X}'$
obtained from  $\mathcal{C}$ by descend.
It is a standard fact on the McKay correspondence that there is an $\K^\times$-equivariant isomorphism
$\eta:\widehat{X}'\rightarrow \widehat{X}$ such that
\begin{itemize}
\item $\widehat{\Pro}'\cong \eta^*(\widehat{\Pro})$.
\item $\Psi$ lifts the map $\param^*/\K h\rightarrow \z^*_0/ \K h$ induced by (\ref{eq:SRA_map2}).
\end{itemize}

By the definition of $\widehat{X}',\widehat{\Pro}'$, we have a natural homomorphism $\widetilde{\Halg}/(h)\rightarrow\End_{\Str_{\widehat{X}'}}(\widehat{\Pro}')$. This homomorphism is
the identity modulo $\z_0$ and both algebras are flat over $\K[\z_0]$. It follows that this homomorphism
is an isomorphism. So we see that there is an isomorphism $e\Halg e\cong \K[\param]\otimes_{\K[\widehat{\z}_0]}\W_{\z_0}(\D Q,\vv,\ww)_h$ with the isomorphism $\upsilon'_0: \param^*\rightarrow
\widehat{\z}_0^*$ that maps $h$ to $h$ and is congruent to $\upsilon_0$ modulo $\K h$.
We need to show that this isomorphism actually coincides with $\upsilon_0$.

To show this let us consider the set $\param_{sing}$ of all elements $\beta$ in $\param_1:=\{\beta\in \param| \langle \beta,h\rangle=1\}$ such that the algebra $ e H_{\beta} e$ has infinite homological dimension. According
to \cite{CBH}, this set is a finite union of hyperplanes, whose only common intersection
point is the only point  $\beta^0$ such that the corresponding element $\mathbf{c}^0:=1+\sum_{i=1}^r \langle \beta^0,c_i\rangle \sum_{\gamma\in S_i^0}\gamma$ satisfies $\tr_{N_i}(\mathbf{c}^0)=0$.
So to prove $\upsilon_0=\upsilon_0'$
it is enough to show that $\langle \upsilon'^{-1*}(\beta^0), \epsilon_i\rangle=0$ for $i=1,\ldots,r$.

Recall the isomorphism $\W_{\z_0}(\D Q,\vv,\ww)_h\cong \Walg_{h,\h}$,
Theorem \ref{Thm:Klein_W}.
It is known, thanks to the localization theorems from
\cite{Ginzburg_HC} or \cite{DK}, that the algebra $\Walg_{\lambda}$
has finite homological dimension provided $\langle\lambda,\alpha\rangle\neq 0$ for any root
$\alpha$. The number of hyperplanes in $\param_{sing}$ coincides with the number of positive
roots. So $\upsilon'^{*-1}(\beta^0)$ is the only intersection point of the hyperplanes
$\ker\alpha$ and hence vanishes on $\epsilon_i, i=1,\ldots,r$.

We remark that the claim of the previous sentence can be obtained also without using Theorem
\ref{Thm:Klein_W} and the localization. For this one needs to use the results on automorphisms
of $\W_{\widehat{\z}_0}(\D Q,\vv,\ww)$ obtained in the next subsection. We will see that
a unique fixed point of certain group of automorphisms of $\W_{\widehat{\z}_0}(\D Q,\vv,\ww)_h$
vanishes on $\epsilon_i,i=1,\ldots,r$. On the other hand, this group obviously preserves $\upsilon_0'^{*-1}(\param_{sing})$.

\subsection{Automorphisms}\label{SUBSECTION_SRA_aut}
In this subsection we will study the graded automorphisms of the algebras $\W(\D Q,\vv,\ww)_h$ that preserve $\widehat{\z}^*$  and are the identity modulo $\widehat{\z}^*$. For this we will need to recall
a construction appearing in different forms in the work of Nakajima, Lusztig and Maffei. We will follow
\cite{Maffei_Weyl}.

Consider, for a moment, an arbitrary quiver $\widetilde{Q}=(\widetilde{Q}_0,\widetilde{Q}_1)$ and a dimension vector $\widetilde{\vv}$. Let $W$ be the Weyl group
of the quiver $\widetilde{Q}$.  The group $W$ acts on $\widetilde{\z}:=\K^{\widetilde{Q}_0}$.

\begin{Prop}\label{Prop:Weyl_action}
Pick a character $\theta$ of $\GL(\widetilde{\vv})$ such that $\theta \cdot \widetilde{\vv}=0$.
Let $i$ be a loop free vertex of $\widetilde{Q}$, and $s=s_i, i\in \widetilde{Q}_0,$ be
the simple reflection corresponding to $i$. Suppose that $\theta_i\neq 0, s\widetilde{\vv}\geqslant 0$. Then there exists
a $\K^\times$-equivariant isomorphism $S: \M^\theta(\D \widetilde{Q}, \widetilde{\vv})\xrightarrow{\sim} \M^{s\theta}(\D \widetilde{Q}, s\widetilde{\vv})$
lifting $s:\K^{\widetilde{Q}_0}\rightarrow \K^{\widetilde{Q}_0}$.
\end{Prop}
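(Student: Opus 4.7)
The plan is to construct $S$ via the quiver-variety reflection functor (Nakajima/Lusztig/Maffei), localized at the vertex $i$, and then verify that it is algebraic, equivariant, and invertible. Fix a point $X = ((A_a, B_{a^*}))$ of $\Lambda(\D\widetilde{Q}, \widetilde{\vv})^{\theta,ss}$. Since $i$ has no loops, the arrows incident to $i$ organize the local data at vertex $i$ into two maps
\[
\beta_X : V_i \to W, \qquad \alpha_X : W \to V_i,
\]
where $W := \bigoplus_{a: h(a) = i} V_{t(a)} \oplus \bigoplus_{a: t(a) = i} V_{h(a)}$, and the signs are chosen so that the $i$-th component of the moment map reads $\mu_i(X) = \alpha_X \beta_X \in \End(V_i)$. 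Note $\dim W = \sum_{j\sim i} \widetilde v_j$, so $(s\widetilde{\vv})_i = \dim W - \widetilde v_i$, and the hypothesis $s\widetilde{\vv} \geq 0$ just says $\dim W \geq \widetilde v_i$.

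First I would use the hypothesis $\theta_i \neq 0$ together with $\theta$-semistability to extract a structural property of the pair $(\alpha_X, \beta_X)$. Depending on the sign of $\theta_i$, either $\alpha_X$ is surjective or $\beta_X$ is injective for every $\theta$-semistable $X$: otherwise the cokernel of $\alpha_X$ (respectively, the kernel of $\beta_X$), enlarged by the zero subspaces at all $j \neq i$, produces a sub- or quotient representation whose $\theta$-slope violates semistability. Assuming the surjective case for definiteness, set $V'_i := \ker \alpha_X$ and note $\dim V'_i = \dim W - \widetilde v_i = (s\widetilde{\vv})_i$. Define
\[
\beta'_X := \text{the inclusion } V'_i \hookrightarrow W, \qquad \alpha'_X := \beta_X \alpha_X - \mu_i(X),
\]
observing that the moment-map identity $\alpha_X\beta_X\alpha_X = \mu_i(X)\alpha_X$ forces $\alpha'_X$ to land in $V'_i = \ker \alpha_X$. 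A direct calculation then gives $\alpha'_X\beta'_X = -\mu_i(X)$, which is exactly the moment-map value $(s\chi)_i$ demanded by the reflected dimension vector $s\widetilde{\vv}$. Keeping the other components $(A_a, B_{a^*})$ unchanged produces a representation of $\D\widetilde{Q}$ with dimension vector $s\widetilde{\vv}$ and moment-map value $s\chi$.

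This assignment is clearly $\GL(\widetilde{\vv}/i)$-equivariant (i.e., equivariant for the product of the $\GL(\widetilde v_j)$ with $j\neq i$), and manifestly intertwines the Kazhdan-type $\K^\times$-action because $\alpha'_X, \beta'_X$ depend linearly on the input. To turn the set-theoretic assignment $X \mapsto X'$ into an honest morphism of schemes I would trivialise the family $\{V'_i\}$ on affine charts of the base (using that $V'_i$ is the kernel of a surjection of vector bundles of locally constant rank on $\Lambda^{\theta,ss}$) and glue; this builds a morphism on prequotients which descends to the GIT quotients. Next I would verify that $X$ is $\theta$-semistable if and only if $X'$ is $s\theta$-semistable, a standard check comparing sub-representations of $X$ and $X'$ through the reflection.

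Finally, applying the same construction to $s\widetilde{\vv}, s\theta$ at vertex $i$ and using $s^2 = 1$ yields a morphism the opposite way, which one checks composes to the identity by tracking the iterated kernel/moment-map formulas; this establishes that $S$ is an isomorphism and simultaneously supplies its lift of $s$ on parameters. The main obstacle is the case $\chi_i = 0$: the 3-term sequence $V_i \xrightarrow{\beta_X} W \xrightarrow{\alpha_X} V_i$ becomes a genuine complex, and one must ensure the construction of $V'_i$ as a bundle of kernels extends flatly across the hyperplane $\chi_i = 0$ in the universal family over $\z$. This is where the surjectivity of $\alpha_X$ (uniformly in $X$, guaranteed by $\theta_i\neq 0$ and $s\widetilde{\vv}\geq 0$) is essential, since it forces $\ker\alpha_X$ to have constant rank $(s\widetilde{\vv})_i$ and hence to glue into a subbundle of the trivial bundle $W$ over $\Lambda(\D\widetilde{Q},\widetilde{\vv})^{\theta,ss}$.
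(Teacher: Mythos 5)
Your construction is the same reflection functor that underlies the paper's argument, but the packaging is genuinely different. The paper, following Maffei's \cite{Maffei_Weyl}, introduces an auxiliary correspondence variety $Z\subset\widetilde V$ carrying two $\widetilde G$-equivariant projections $\rho,\rho'$ onto the two semistable loci, cites Lemma 30 of \cite{Maffei_Weyl} to identify $\rho^{-1}(\Lambda^{\theta,ss})=\rho'^{-1}(\Lambda^{\theta',ss})=:Z^{ss}$, and cites Lemma 33 there to conclude both restrictions are principal bundles; the isomorphism of GIT quotients then falls out without ever choosing trivializations. You instead build the reflected datum pointwise (take $V'_i:=\ker\alpha_X$, include it into $W$, and define $\alpha'_X$) and propose to globalize by trivializing the kernel bundle on affine charts. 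This is morally identical but shifts the work: you now have to (a) argue $\ker\alpha_X$ is an honest subbundle (which you do, via uniform surjectivity of $\alpha_X$ from $\theta_i\ne0$), (b) verify that the chart-by-chart definition descends to the GIT quotient compatibly (possible, but you should spell out that the ambiguity is precisely conjugation by $\GL((s\widetilde{\vv})_i)$), and (c) prove the semistability equivalence ``$X$ is $\theta$-ss iff $X'$ is $s\theta$-ss'' --- which you flag as ``a standard check'' but which is precisely the non-trivial content of Maffei's Lemma 30 and should not be waved away. The correspondence approach buys you all of (a)--(c) at once once the principal-bundle lemmas are in hand, at the price of introducing $Z$. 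Two smaller points: your formula $\alpha'_X:=\beta_X\alpha_X-\mu_i(X)$ does not type-check since $\beta_X\alpha_X\in\End(W)$ while $\mu_i(X)\in\End(V_i)$; what is meant is $\alpha'_X:=\beta_X\alpha_X-\chi_i\,\mathrm{id}_W$ where $\chi_i$ is the scalar value of $\mu_i(X)$, and analogously $\alpha'_X\beta'_X=-\chi_i\,\mathrm{id}_{V'_i}$. Also, the $\K^\times$-equivariance deserves one more word: $V'_i=\ker\alpha_X$ is scaled under the action, but the inclusion $\beta'_X$ and the map $\alpha'_X$ pick up the same weight as $\beta_X,\alpha_X$, so this is fine but worth noting explicitly.
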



The construction of the isomorphism is due to Maffei, \cite{Maffei_Weyl}. We will recall his construction
in the setting we need.

Without loss of generality we may assume that $i$ is a source in $\widetilde{Q}$ (i.e., $h(a)\neq i$
for all $a\in \widetilde{Q}_1$).
Set $\widetilde{\vv}':=s\widetilde{\vv},\theta':=s\theta$.
Also we may assume  that $\theta_i>0$ (because $\theta_i, \theta'_i$ have different signs).

Pick the spaces $V_i$ of dimension $\widetilde{\vv}_i, i\in \widetilde{Q}_0,$ and set $T:=\bigoplus_{a, t(a)=i}V_i$.
Also for any $i$ pick a vector space $V_i'$ of dimension $\widetilde{\vv}'_i$.
We remark that $\dim T=\widetilde{\vv}_i+\widetilde{\vv}'_i$. Consider the space \begin{align*}\widetilde{V}&:= \bigoplus_{a, t(a)\neq i}\left(\Hom(V_{t(a)},V_{h(a)})\oplus \Hom(V_{h(a)},V_{t(a)})\right)\oplus\\&\oplus \Hom(V_i,T)\oplus \Hom(T,V_i)\oplus \Hom(T,V_i')\oplus \Hom(V_i',T).\end{align*}

We write an element of $\widetilde{V}$ as $((A_a),(B_a), A,B, A',B')$ with $ A_a\in \Hom(V_{t(a)},V_{h(a)}),
B_a\in \Hom(V_{h(a)},V_{t(a)}), A\in \Hom(V_i,T), B\in \Hom(T,V_i), A'\in \Hom(V_i',T), B'\in \Hom(T,V_i')$.

The space $\widetilde{V}$ comes with a natural  action of  a group $\widetilde{G}:=G\times \GL(\widetilde{\vv}_i)\times \GL(\widetilde{\vv}'_i), G:=\prod_{j\neq i}\GL(\widetilde{\vv}_j)$. Moreover, $\widetilde{V}^{\GL(\widetilde{\vv}'_i)}=R(\D \widetilde{Q}, \widetilde{\vv}), \widetilde{V}^{\GL(\widetilde{\vv}_i)}=R(\D \widetilde{Q}, \widetilde{\vv}')$. Let $\pi,\pi'$ denote the natural projections $\widetilde{V}\twoheadrightarrow R(\D \widetilde{Q},\widetilde{\vv}), R(\D \widetilde{Q}, \widetilde{\vv}')$.

Consider the  locally closed subvariety $Z\subset \widetilde{V}$ consisting of all vectors
$x=((A_a),(B_a),$ $A,B,A',B')$ such that
\begin{enumerate}
\item The sequence $0\rightarrow V_i'\xrightarrow{A'} T\xrightarrow{B} V_i\rightarrow 0$ is exact.
\item $\pi(x)\in \Lambda_{\chi}(\D \widetilde{Q},\widetilde{\vv}), \pi'(x)\in \Lambda_{s\chi}(\D \widetilde{Q}, \widetilde{\vv}')$ for some $\chi\in \K^{\widetilde{Q}_0}$.
\item $A'B'=AB-\chi_i \operatorname{id}_{T}$.
\end{enumerate}

In particular, we see that $\pi,\pi'$ induce  $\widetilde{G}$-equivariant projections
$Z\rightarrow \Lambda(\D \widetilde{Q}, \widetilde{\vv}),\Lambda(\D \widetilde{Q}, \widetilde{\vv}')$  denoted by $\rho,\rho'$. Thanks to \cite{Maffei_Weyl}, Lemma 30,
$\rho^{-1}(\Lambda(\D \widetilde{Q},\widetilde{\vv})^{\theta,ss})=\rho'^{-1}(\Lambda(\D \widetilde{Q}, \widetilde{\vv})^{\theta',ss})$ (for applying this lemma  we need to assume that $\theta_i>0$).
Denote these equal subvarieties of $Z$ by $Z^{ss}$. Now thanks to Lemma 33 in \cite{Maffei_Weyl}, the restrictions
\begin{equation}
\begin{split}
&\rho: Z^{ss}\rightarrow \Lambda(\D \widetilde{Q},\widetilde{\vv})^{\theta,ss},\\
&\rho': Z^{ss}\rightarrow \Lambda(\D \widetilde{Q},\widetilde{\vv}')^{\theta',ss}
\end{split}
\end{equation}
are principal $\GL(\widetilde{\mathbf{v}}_i')$- and $\GL(\widetilde{\mathbf{v}}_i)$-bundles,
respectively. This gives  isomorphisms $\M^\theta(\D \widetilde{Q},\widetilde{\vv})\cong Z^{ss}/\widetilde{G}\cong \M^{\theta'}(\D \widetilde{Q},\widetilde{\vv}')$. For $S_i$ we take the resulting isomorphism
$\M^\theta(\D \widetilde{Q},\widetilde{\vv})\xrightarrow{\sim} \M^{\theta'}(\D \widetilde{Q}, \widetilde{\vv}')$.
This isomorphism is $\K^\times$-equivariant by the construction. The claim that it lifts
$s_i:\K^{\widetilde{Q}_0}\rightarrow \K^{\widetilde{Q}_0}$ follows from the condition (2) in the definition of $Z$.
Most likely, $S$ is always a Poisson isomorphism but we will not need this fact in the whole generality.

Now consider the special case when $s\widetilde{\vv}=\widetilde{\vv}$. Suppose that the natural morphisms $\M_0^{\theta}(\D \widetilde{Q},\widetilde{\vv})
\rightarrow \M_0(\D \widetilde{Q},\widetilde{\vv}), M_0^{s\theta}(\D \widetilde{Q},\widetilde{\vv})\rightarrow \M_0(\D \widetilde{Q},\widetilde{\vv})$ are resolutions of singularities.
\begin{Lem}\label{Lem:Maffei_commutativ} The following diagram is commutative.

\begin{picture}(60,30)
\put(2,22){$\M_0^\theta(\D \widetilde{Q},\widetilde{\vv})$}
\put(36,22){$\M_0^{s\theta}(\D \widetilde{Q},\widetilde{\vv})$}
\put(16,2){$\M_0(\D \widetilde{Q},\widetilde{\vv})$}
\put(23,24){\vector(1,0){13}}\put(26,25){\tiny $S$}
\put(9,20){\vector(1,-1){13}}\put(44,20){\vector(-1,-1){13}}
\end{picture}
\end{Lem}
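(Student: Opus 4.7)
The plan is to reduce the commutativity to an identity of two morphisms out of the correspondence $Z^{ss}$ used to construct $S$, and then verify that identity by means of invariant theory. First I would observe that both composite morphisms $\M_0^\theta(\D\widetilde{Q},\widetilde{\vv}) \to \M_0(\D\widetilde{Q},\widetilde{\vv})$ under consideration are projective (each factors through projective GIT quotient maps) and birational (both $\M_0^\theta \to \M_0$ and $\M_0^{s\theta} \to \M_0$ are symplectic resolutions by hypothesis). Since $\M_0(\D\widetilde{Q},\widetilde{\vv})$ is normal by Proposition \ref{Prop:CB} and $\M_0^\theta(\D\widetilde{Q},\widetilde{\vv})$ is reduced and irreducible, any two such morphisms agreeing on a dense open are equal. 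So it suffices to verify the diagram on an arbitrary dense open subset of $\M_0^\theta$.

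Because $s\widetilde{\vv} = \widetilde{\vv}$, I may fix identifications $V_i \cong V_i'$ yielding identifications $R(\D\widetilde{Q},\widetilde{\vv}) = R(\D\widetilde{Q},\widetilde{\vv}')$ and of the corresponding affine quotients. Under these identifications, the two projections $\pi,\pi':Z\rightarrow R(\D\widetilde{Q},\widetilde{\vv})$ compose with the affine quotient morphism to give $\widetilde{G}$-invariant maps $\bar\pi,\bar\pi':Z\rightarrow \M(\D\widetilde{Q},\widetilde{\vv})$. Since $\rho: Z^{ss}\rightarrow \Lambda(\D\widetilde{Q},\widetilde{\vv})^{\theta,ss}$ is a principal $\GL(\widetilde{v}_i')$-bundle, in particular surjective, commutativity of the diagram in the lemma is equivalent to the scheme-theoretic identity $\bar\pi=\bar\pi'$ on $Z^{ss}\cap \rho^{-1}(\Lambda_0(\D\widetilde{Q},\widetilde{\vv})^{\theta,ss})$.

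This last identity is algebraic and can be checked on global sections. By the Le Bruyn--Procesi generation theorem for quiver invariants, $\K[\M(\D\widetilde{Q},\widetilde{\vv})]$ is generated by traces $T_\gamma=\tr(M_{\gamma_1}\cdots M_{\gamma_m})$ along closed oriented paths $\gamma$ in $\D\widetilde{Q}$. For a path avoiding vertex $i$, the equality $\pi^*T_\gamma=\pi'^*T_\gamma$ is immediate. For a path transiting $i$, it reduces to an identity between traces of blocks of $BA$ (via $V_i$) and of $B'A'$ (via $V_i'$); this identity follows from the defining relations of $Z$, namely the exact sequence $0\rightarrow V_i'\xrightarrow{A'}T\xrightarrow{B}V_i\rightarrow 0$ of condition (1) combined with $A'B'=AB-\chi_i\id_T$ of condition (3), specialized to $\chi=0$ on the $\mu^{-1}(0)$-locus, after expanding $A,B$ and $A',B'$ in block form dictated by the arrows of $\widetilde{Q}$ incident to $i$.

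The main obstacle is precisely this trace identity for $T_\gamma$: the combinatorial manipulation linking the $V_i$- and $V_i'$-descriptions of the cyclic trace at vertex $i$. This is the explicit content of the statement that the Lusztig/Nakajima reflection functor is compatible with the affine quotient; the rest of the argument being purely formal, this last step is where the real work lies.
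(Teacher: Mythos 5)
Your approach matches the paper's: reduce to the pointwise statement on the correspondence $Z$, then verify equality of the two pullbacks of every invariant function via the Le Bruyn--Procesi trace generators, using condition (3) specialized to $\chi=0$ to get $AB=A'B'$. What you flag as ``the main obstacle'' --- the trace identity at vertex $i$ --- is in fact the shortest step in the paper: since $\tr$ is invariant under cyclic permutation of factors, one may choose the cyclic path $p$ to start and end at a vertex $\neq i$; then every passage of $p$ through $i$ contributes a factor of the form (block of $A$)$\circ$(block of $B$), i.e.\ a block of $AB$ regarded as an endomorphism of $T$, and replacing $\pi$ by $\pi'$ replaces it by the corresponding block of $A'B'$. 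Since $AB=A'B'$ on the $\chi=0$ locus, the two products of arrow-matrices along $p$ agree and hence so do their traces. There is no further combinatorics hiding here: the factor is a block of $AB$, not of $BA$, precisely because you have already rotated the base point off $i$. Also, your opening reduction via projectivity, birationality and normality to ``agreement on a dense open suffices'' is correct but superfluous: the paper proves the identity of invariant functions on all of $Z$ with $\chi=0$, which gives the diagram's commutativity directly at the level of the affine GIT quotient, without needing to pass through a density argument.
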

\begin{proof}
The lemma will follow if we check the following claim:
\begin{itemize}
\item Suppose that $x=((A_a),(B_a), A,B,A',B')$ is an element of $Z$ such that $\chi=0$ (see (1)-(3)) above.
Then $f(\pi(x))=f(\pi'(x))$ for any $f\in \K[R(\D \widetilde{Q},\widetilde{\vv})]^{\GL(\widetilde{\vv})}$.
\end{itemize}
Le Bruyn and Procesi found a set of generators for the algebra $\K[R(\D \widetilde{Q},\widetilde{\vv})]^{\GL(\widetilde{\vv})}$ in \cite{LeBruynProcesi}. Let $p:=(a_0,\ldots,a_k)$ be a cyclic (i.e., $t(a_0)=h(a_k)$) path in $\D \widetilde{Q}$. To this path we can assign a polynomial $f_p$ mapping $x\in R(\D \widetilde{Q},\widetilde{\vv})$ to $\tr(x_{a_k}\ldots x_{a_0})$, where $x_{a_k}$ is the component of $x$ corresponding to $a_k$. The polynomials $f_p$ generate $\K[R(\D \widetilde{Q},\widetilde{\vv})]^{\GL(\widetilde{\vv})}$. So it remains to check that $f_p(\pi(x))=f_p(\pi'(x))$ for any path $p$ and any $x\in Z$ with $\chi=0$, equivalently,  with $AB=A'B'$. Since the trace of a product is stable under a cyclic permutation of the factors, we may assume
that $h(a_k)=t(a_0)\neq i$. In this case the products for $\pi(x)$ and $\pi'(x)$ are the same, thanks to
the equality $AB=A'B'$.
\end{proof}

We will apply the construction above to the special case explained in Subsection \ref{SUBSECTION_SRA_main}.
Let $Q,n,\widetilde{\vv},\ww$ be as in the beginning of that subsection. We are interested in the quiver
$\widetilde{Q}:=Q^{\ww}$ and the dimension vector $\widetilde{\vv}:=\vv^{\ww}$.

\begin{Lem}\label{Lem:Poisson}
Preserve the notation of Proposition \ref{Prop:Weyl_action}. Let $i=1,2,\ldots,r$. Then
the morphism $S: \M^\theta(\D Q^{\ww},\vv^{\ww})\rightarrow \M^{s\theta}(\D Q^{\ww},\vv^{\ww})$
is Poisson.
\end{Lem}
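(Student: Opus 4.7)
The plan is to exhibit both $\M^\theta(\D Q^\ww,\vv^\ww)$ and $\M^{s\theta}(\D Q^\ww,\vv^\ww)$ as Hamiltonian reductions of a common symplectic $\widetilde G$-variety, and to observe that $S$ corresponds to the identity on that common reduction.

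First, I equip $\widetilde V$ with its natural symplectic form: each summand of the shape $\Hom(U,W)\oplus\Hom(W,U)$ occurring in the decomposition of $\widetilde V$ is canonically symplectic via the pairing $(f,g)\mapsto\tr(fg)$, so $\widetilde V$ becomes a symplectic representation of $\widetilde G=G\times\GL(\widetilde\vv_i)\times\GL(\widetilde\vv'_i)$ with explicit moment map $\widetilde\mu:\widetilde V\to\widetilde\g^*$. Next, I would reinterpret conditions (2) and (3) in the definition of $Z$ as the level-set equation $\widetilde\mu(z)=\widetilde\chi$ for a suitable $\widetilde\chi\in\widetilde\g^*$ determined by $\chi$. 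At vertices $j\neq i$ condition (2) reads the same in both presentations $(\D Q^\ww,\vv^\ww)$ and $(\D Q^\ww,s\vv^\ww)$ (the arrow data not involving vertex $i$ is common), so it prescribes the components of $\widetilde\mu$ there. At vertex $i$, condition (3), namely $A'B'=AB-\chi_i\id_T$, combined with the subspace structure $V_i'\hookrightarrow T\twoheadrightarrow V_i$ from condition (1), splits into the vertex-$i$ moment-map components for both presentations via the projections $\pi,\pi'$.

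Consequently, $Z^{ss}$ is, up to the open condition (1), equal to $\widetilde\mu^{-1}(\widetilde\chi)^{\widetilde\theta,ss}$ for an appropriate lift $\widetilde\theta$ of the two stabilities, and the symplectic quotient $\widetilde V\red^{\widetilde\theta}_{\widetilde\chi}\widetilde G$ carries a canonical symplectic structure. By Maffei's principal-bundle result (Lemma 33 in \cite{Maffei_Weyl}), this quotient is identified with $\M^\theta_\chi(\D Q^\ww,\vv^\ww)$ by quotienting $\rho$ first by $\GL(V'_i)$ and then by $G\times\GL(V_i)$, and with $\M^{s\theta}_{s\chi}(\D Q^\ww,\vv^\ww)$ by quotienting $\rho'$ in the reverse order. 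Both identifications endow the reduction with the same symplectic form inherited from $\widetilde V$, and $S$ is the resulting transition map between them. Hence $S$ is a symplectomorphism, in particular Poisson.

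The main obstacle is the matching in the previous paragraph: one must check that the scalar shift $\chi_i\id_T$ in condition (3) decomposes correctly under the vector-space splitting $T\cong V_i\oplus V_i'$ into the shifts appearing in the vertex-$i$ moment-map equations for the $\vv^\ww$- and $s\vv^\ww$-presentations respectively, consistently with $s_i\chi=\chi-\langle\alpha_i^\vee,\chi\rangle\alpha_i$. One also has to verify that the stability parameter $\widetilde\theta$ indeed restricts to $\theta$ and $s\theta$ under the two reductions; this is a formal check since $\theta$ and $s\theta$ differ only in the $i$-th component, which can be absorbed by the $\GL(V_i)$ and $\GL(V_i')$ factors of $\widetilde\theta$. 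Once these identifications are established, the Poisson-ness of $S$ is automatic from the general formalism of symplectic reduction.
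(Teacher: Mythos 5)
Your proposal takes a genuinely different route from the paper's, and unfortunately its central step has a gap.

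You want to realize both $\M^\theta_\chi(\D Q^\ww,\vv^\ww)$ and $\M^{s\theta}_{s\chi}(\D Q^\ww,\vv^\ww)$ as a Hamiltonian reduction of the same symplectic $\widetilde G$-variety $\widetilde V$ at a common level, with $S$ the transition map, so that $S$ is manifestly a symplectomorphism. The pivotal claim --- ``reinterpret conditions (2) and (3) in the definition of $Z$ as the level-set equation $\widetilde\mu(z)=\widetilde\chi$'' --- is false, and this is where the argument breaks down. The moment map $\widetilde\mu$ for $\widetilde G = G\times \GL(\widetilde{\vv}_i)\times\GL(\widetilde{\vv}_i')$ on $\widetilde V$ has, at a vertex $j\neq i$ adjacent to $i$, a \emph{single} $\gl(V_j)^*$-valued component, and it sums the contribution of the $(A,B)$-block and the $(A',B')$-block at the $V_j$-part of $T$. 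Condition (2) of $Z$, by contrast, imposes the moment-map equations at $j$ separately for $\pi(x)$ and $\pi'(x)$ --- two independent constraints, not one. Worse, condition (3), $A'B'=AB-\chi_i\id_T$, is an $\End(T)$-valued equation whose off-diagonal blocks (between distinct summands of $T$, i.e.\ between distinct neighbours of $i$) are not components of $\widetilde\mu$ for any factor of $\widetilde G$. Hence $Z$ is a proper positive-codimension subvariety of every fibre of $\widetilde\mu$, and $Z^{ss}/\widetilde G$ is not a symplectic reduction of $\widetilde V$ by $\widetilde G$. A symplectic interpretation of Maffei's $Z$-correspondence is available in the hyperkahler setting via the \emph{real} moment map, but that is not an algebraic device; this is precisely why the paper remarks, just before the lemma, that ``Most likely, $S$ is always a Poisson isomorphism but we will not need this fact in the whole generality.''

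The paper's own proof circumvents the difficulty with an argument specific to the McKay-quiver case. Since $\M^\theta(\D Q^\ww,\vv^\ww)\to\M(\D Q^\ww,\vv^\ww)$ is Poisson and birational onto a normal affine variety, one reduces to showing the induced automorphism of the affine quotient $\M(\D Q^\ww,\vv^\ww)$ is Poisson. Fixing $\chi$, the algebra $\K[\M_\chi]$ is filtered with associated graded $\K[\M_0]=\K[\K^{2n}/\Gamma_n]$; both $\{\cdot,\cdot\}_\chi$ and $S^*\{\cdot,\cdot\}_{s\chi}$ decrease degree by $2$ and have the same symbol because $S$ covers the identity on $\M_0$ (Lemma \ref{Lem:Maffei_commutativ}), so their difference decreases degree by at least $3$. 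Proposition \ref{Prop:symplect_quotient_sing} says $\K[\K^{2n}/\Gamma_n]$ has no nonzero skew biderivations of degree $<-2$, which forces the difference to vanish. Note this argument uses nothing geometric about the reflection functor beyond Lemma \ref{Lem:Maffei_commutativ}, but it does crucially use the rigidity of brackets on the symplectic quotient singularity --- exactly the hypothesis your more ambitious approach was trying to avoid.
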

\begin{proof}
Since the morphisms $\M^\theta(\D Q^{\ww},\vv^\ww),M^{s\theta}(\D Q^{\ww},\vv^{\ww})\rightarrow \M(\D Q^{\ww},\vv^{\ww})$ are Poisson and birational,
it is enough to prove that the morphism $\M(\D Q^{\ww},\vv^{\ww})\rightarrow \M(\D Q^{\ww},\vv^{\ww})$ induced by
$S$ (and also denoted by $S$) is Poisson. Let $\{\cdot,\cdot\}_\chi$ denote the Poisson bracket on
$\M_\chi(\D Q^{\ww},\vv^{\ww})$.  We need to show that $\{\cdot,\cdot\}_\chi=S^*\{\cdot,\cdot\}_{s\chi}$.

The Poisson algebra $\K[\M_\chi(\D Q^{\ww},\vv^{\ww})]$ is filtered and the associated graded algebra is $\K[\M_0(\D Q^{\ww},\vv^{\ww})]$. Moreover, $\{\cdot,\cdot\}_\chi$ decreases the degree by 2,
and the induced bracket on $\K[\M_0(\D Q^{\ww},\vv^{\ww})]$ coincides with $\{\cdot,\cdot\}_0$.
The automorphism of $\M_0(\D Q^{\ww},\vv^{\ww})$ induced by $S$ is the identity by
Lemma \ref{Lem:Maffei_commutativ}. It follows that $\{\cdot,\cdot\}_\chi- S^*\{\cdot,\cdot\}_{s\chi}$
decreases degrees at least by 3. But there are no brackets on $\K[\M_0(\D Q^{\ww},\vv^{\ww})]$
of degree less than $-2$, see Proposition \ref{Prop:symplect_quotient_sing}. So $\{\cdot,\cdot\}_\chi= S^*\{\cdot,\cdot\}_{s\chi}$.
\end{proof}

Now let us proceed to the quantum situation. Let $\Dcal^\theta$ denote the reduction
$\W_h\red^\theta \GL(\vv)$. Consider the sheaves $\Dcal^\theta, S^*(\Dcal^{s\theta})$ on $\M^\theta(\D Q,\vv,\ww)$. Thanks to Proposition \ref{Prop:even_reduction}, both are canonical quantizations of $\M^\theta(\D Q,\vv,\ww)/\z$ but with respect to different morphisms $M^\theta(\D Q,\vv,\ww)\rightarrow \z$: $\Dcal^\theta$ -- for the original map $\M^\theta(\D Q,\vv)\rightarrow \z$, while $S^*(\Dcal^{s\theta})$ -- for its composition with $s:\z\rightarrow \z$. It follows that there is a $\K^\times$-equivariant isomorphism $\iota:\Dcal^\theta\xrightarrow{\sim} S^*(\Dcal^{s\theta})$
that induces $s$ on $\Str_\z$. Thanks to Proposition \ref{Lem:global_sections}, we get a $\K[h]$-linear
automorphism $S:\W(\D Q,\vv,\ww)_h\rightarrow \W(\D Q,\vv,\ww)_h$ that acts as $s$ on $\z^*$.
Moreover, $S$ is the identity modulo $\widehat{\z}^*$.

Let $W_{fin}$ denote the Weyl group of the Dynkin part of $Q$.
Let $\Afr$ denote the group of automorphisms of $\W(\D Q,\vv,\ww)_h$ that are $\K^\times$-equivariant,
$\K[h]$-linear, preserve $\widehat{\z}^*\subset \W(\D Q,\vv,\ww)_h$ and induce the identity modulo
$\widehat{\z}^*$.  We have a natural homomorphism $\Afr\rightarrow \GL(\z)$.
The assignment $s\mapsto S$ extends to a homomorphism $W_{fin}\rightarrow \Afr$ whose composition with
$\Afr\rightarrow \GL(\z)$ is the identity.   To see this one either applies results of Maffei,
\cite{Maffei_Weyl}, or uses the following lemma.

\begin{Lem}\label{Lem:AutKleinian}
The restriction of the $\Afr$-action to $\z$ defines an embedding $\Afr\hookrightarrow \GL(\z)$.
\end{Lem}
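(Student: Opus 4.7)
The goal is to show that the restriction map $\Afr \to \GL(\z)$ is injective. Suppose $\varphi \in \Afr$ acts as the identity on $\z$. Combining $\K[h]$-linearity of $\varphi$ with its preservation of $\widehat{\z}^*$, this triviality translates to $\varphi(\epsilon_i) = \epsilon_i + b_i h$ and $\varphi(h) = h$ for some scalars $b_i \in \K$. My plan is to show $\varphi = \mathrm{id}$ via a two-level filtration argument, which in particular forces every $b_i$ to vanish.

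The first step is to show that the classical limit $\bar\varphi := \varphi \bmod h$ is the identity on $\W(\D Q,\vv,\ww)_h/(h) = \K[\M(\D Q,\vv,\ww)]$. Note that $\bar\varphi$ is a graded $\K[\z]$-linear Poisson automorphism (the $\K[\z]$-linearity using $\bar\varphi(\bar\epsilon_i) = \bar\epsilon_i + b_i \bar h = \bar\epsilon_i$) that induces the identity on the zero fiber $\K[\M_0(\D Q,\vv,\ww)] = \K[\K^{2n}/\Gamma_n]$. I would proceed by induction on the $(\z^*)$-adic order: at order $N$, the residue defines a graded Poisson derivation $\K[\K^{2n}/\Gamma_n] \to S^N(\z^*) \otimes_\K \K[\K^{2n}/\Gamma_n]$, since the error term $(\bar\varphi - \mathrm{id})(a)(\bar\varphi - \mathrm{id})(b) \in (\z^*)^{2N} \subset (\z^*)^{N+1}$ for $N \geq 1$. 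Choosing a homogeneous basis of $S^N(\z^*)$, which lives in degrees $\geq 2N$, decomposes this into graded Poisson derivations of $\K[\K^{2n}/\Gamma_n]$ of degree $\leq -2N \leq -2$. Every such derivation extends uniquely to a $\Gamma_n$-invariant derivation of $\K[\K^{2n}]$ (using that $\K[\K^{2n}]$ is the integral closure of $\K[\K^{2n}]^{\Gamma_n}$ in its fraction field and characteristic-zero separability), and on the symplectic affine space $\K^{2n}$ every Poisson derivation is Hamiltonian; but a Hamiltonian of degree $\leq 0$ is a constant, giving the zero vector field. Hence $\bar\varphi = \mathrm{id}$.

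In the second step, $\bar\varphi = \mathrm{id}$ lets me write $\varphi - \mathrm{id} = h\delta_1$ using $h$-flatness of $\W$, and the same technique applies iteratively: the residues $\bar\delta_N := (\varphi - \mathrm{id})/h^N \bmod h$ are graded Poisson derivations of $\K[\M(\D Q,\vv,\ww)]$ of degree $-2N$. For $N \geq 2$, positivity of the grading on $\K[\M(\D Q,\vv,\ww)]$ together with the fiberwise Hamiltonian argument of stage one kills both the fiberwise Hamiltonian part (Hamiltonians of degree $\leq -2$ vanish) and the base part $\sum g_i \partial/\partial \chi_i$ (whose coefficients would require negative degree). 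For $N = 1$, only the base contribution $\sum b_i \partial/\partial \chi_i$ survives, matching the shift $\bar\delta_1(\epsilon_i) = b_i$. The main obstacle of the proof is the final step: ruling out $b_i \neq 0$. The expected argument is that the first-order shift $\sum b_i \partial/\partial \chi_i$ cannot be lifted to a genuine $\K[h]$-linear algebra automorphism of the noncommutative $\W$, because the second-order compatibility condition on $\delta_2$ produces a nontrivial Hochschild obstruction reflecting the rigidity of the family $\{\W_\chi\}_{\chi \in \z}$ of fibers under parameter shifts; equivalently, $\W_\chi$ and $\W_{\chi + b}$ are not isomorphic as $\K$-algebras in a compatible family, so no genuine $\varphi \in \Afr$ can realize the shift. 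Granting this, iteration yields $\varphi - \mathrm{id} \in \bigcap_N h^N \W$, and positivity of the grading with $\deg h = 2$ forces this intersection to vanish in each fixed degree, so $\varphi = \mathrm{id}$.
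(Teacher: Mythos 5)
Your overall strategy — peel off the leading nonzero order of $\varphi-\operatorname{id}$ as a graded Poisson derivation and kill it using the fact that every Poisson derivation of $(S\K^{2n})^{\Gamma_n}$ extends through the codimension-$1$-\'etale cover to a Hamiltonian vector field on $\K^{2n}$, whose generating function must then have too low a degree — is exactly the engine the paper uses. The paper packages it more efficiently: it specializes at a point $\beta\in\widehat{\z}^*$, observes that $a_\beta$ is a filtered automorphism of $\W_\beta(\D Q,\vv,\ww)$ with $\gr a_\beta=\operatorname{id}$, and takes $d=\ln a_\beta$ directly; the top-order symbol $d_0$ of $d$ is then the single Poisson derivation to analyze. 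You instead run two nested adic inductions (first $(\z^*)$-adic on the classical limit, then $h$-adic on $\W$ itself). That is more work, but it reaches the same rigidity lemma.

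The concrete gap is the one you flag yourself: you allow the kernel element to satisfy $\varphi(\epsilon_i)=\epsilon_i+b_ih$ with $b_i$ possibly nonzero, and your argument then reduces to killing the ``base shift'' $\sum b_i\,\partial/\partial\chi_i$ in $\bar\delta_1$, which you cannot do — ruling this out would require an actual rigidity statement about the family $\{\W_\chi\}_\chi$ under translations of $\chi$, and the appeal to a ``Hochschild obstruction'' is a hope, not a proof. Note, moreover, that at precisely that order your derivation $\bar\delta_1$ no longer preserves the ideal $(\z^*)\K[\M]$, so it does not descend to the central fiber and the Hamiltonian-rigidity argument on $\K^{2n}/\Gamma_n$ simply does not apply: the step genuinely fails rather than merely being unjustified. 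The paper sidesteps this entirely: from ``$a$ in the kernel'' it passes immediately to ``$a$ acts by the identity on $\widehat{\z}^*$'' (i.e.\ $b_i=0$), and this is consistent with how the lemma is used later (the image is taken in $\GL(\widehat{\z}_0^*)$, not merely $\GL(\z_0)$). So the intended statement is injectivity into $\GL(\widehat{\z}^*)$; under that reading the $b_i$ case does not arise, the paper's proof is complete, and the rest of your argument (steps for $b_i=0$) is sound modulo routine bookkeeping. As written, however, your proof does not establish the lemma: the final and decisive step is conceded rather than proved.
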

\begin{proof}
Let $a\in \Afr$ be an element in the kernel. Then $a$ acts by the identity on both  $\widehat{\z}^*$ and on $\W(\D Q,\vv,\ww)_h/(\widehat{\z}^*)$. So for any choice of $\beta\in \widehat{\z}^*$ the element $a$ induces a filtration preserving automorphism $a_\beta$ of $\W_\beta(\D Q,\vv,\ww)$ (the specialization of $\W(\D Q,\vv,\ww)_h$ at $\beta$) such that $\gr a_\beta$ is the identity. If all $a_\beta$ are the identity, then so is $a$. Indeed, for any different elements $x,y\in \W(\D Q,\vv,\ww)_h$ there is $\beta\in \widehat{\z}^*$ such that the images of $x,y$ in $\W_\beta(\D Q,\vv,\ww)$ are different. So let us prove that $a_\beta$ is the identity for any $\beta$.

Let $\operatorname{F}_i \W_\beta(\D Q,\vv,\ww)$ denote the natural filtration on $\W_\beta(\D Q,\vv,\ww)$.
Since the associated graded of $a_\beta$ is the identity, we see that the restriction of $a_\beta$ to  $\operatorname{F}_i \W_\beta(\D Q,\vv,\ww)$ is unipotent for any $i$. So $d=\ln(a_\beta)$ is defined and is a derivation of $\W_\beta(\D Q,\vv,\ww)$ that reduces the degrees. Let $m$ be the maximal integer such $d\operatorname{F}_i \W_\beta(\D Q,\vv,\ww)\subset \operatorname{F}_{i-m} \W_\beta(\D Q,\vv,\ww)$ for all $i$. Then $d$ induces a derivation $d_0$ of $\gr \W_\beta(\D Q,\vv,\ww)=(S\K^{2n})^{\Gamma_n}$ of degree $-m$. The derivation $d_0$ is Poisson by the construction. On the other hand, $d_0$ can be extended to a (automatically Poisson) derivation of $S(\K^{2n})$. This is because the morphism $\K^{2n}\rightarrow \K^{2n}/\Gamma_n$ is \'{e}tale in codimension 1. But any Poisson derivation of $S(\K^{2n})$ is Hamiltonian. It follows that $d_0$ is Hamiltonian, i.e, $d_0=\{f,\cdot\}$ for some $f\in S(\K^{2n})^{\Gamma_n}$. Since $d_0$ is of degree $-m$,  one can choose $f$ of degree $2-m$. But $m\geqslant 1$ so the degree of $f$ is less than 2. Since $\Gamma_n$ has no fixed points in $\K^{2n}$, we get a contradiction.
\end{proof}

Now consider the case $n=1$. Consider the subgroup $\widetilde{\Afr}$ of $\GL(\widehat{\z})$ consisting
of all maps $\widehat{\z}^*\rightarrow \widehat{\z}^*$ that send $h$ to $h$, preserve $\z^*_0$, and  coincide with an element of $W_{fin}$ on $\z^*_0$. We claim that the image of $\Afr$ in $\GL(\widehat{\z}^*)$ coincides with $\widetilde{\Afr}$.
This is a consequence of the following more general and more technical statement to be used also
later.

\begin{Prop}\label{Prop:AutKleinian_main}
Let $n=1$.
Let $U_1,U_2$ be finite dimensional vector spaces, 
$\A_i:=\K[U_i]\otimes \W_{\z_0}(\D Q,\vv,\ww)_h$. Let $\varphi:\A_1\rightarrow \A_2$ be a $\K[h]$-linear
endomorphism that maps $U_1^*\oplus\widehat{\z}_0^*$ to $U_2^*\oplus \widehat{\z}_0^*$
and induces the identity map on $S(\K^2)^{\Gamma}$. Then $\varphi$ maps $\z_0^*$ to $\z_0^*$
and induces an element from $W_{fin}$ on $\z_0^*$.
\end{Prop}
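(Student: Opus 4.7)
The plan is to extend the argument used in Lemma~\ref{Lem:AutKleinian} by exploiting the explicit hyperplane description of the singular parameters (those $\chi$ for which $\W_\chi(\D Q,\vv,\ww)$ has infinite homological dimension) coming from Theorem~\ref{Thm:iso_Kleinian2} and \cite{CBH}. Since $\varphi$ is $\K[h]$-linear and sends $U_1^*\oplus\widehat{\z}_0^*$ into $U_2^*\oplus\widehat{\z}_0^*$, its restriction to parameters dualizes to a linear map $\varphi^*\colon U_2\oplus\widehat{\z}_0\to U_1\oplus\widehat{\z}_0$ fixing the $h$-coordinate. Specializing at $h=1$ produces an affine map
\[
\bar\varphi^*(u_2,\chi_2)=\bigl(L_U(u_2)+M_U(\chi_2)+b_U,\ L_\z(u_2)+M_\z(\chi_2)+b_\z\bigr),
\]
with constants $b_U,b_\z$ coming from the $h$-coefficient of $\varphi$; the conclusion of the proposition is equivalent to $L_\z=0$, $b_\z=0$, and $M_\z\in W_{fin}$.

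For each $(u_2,\chi_2)\in U_2\times\z_0$, specializing $\varphi$ at $h=1$ and at this parameter point on the target produces an algebra homomorphism $\varphi_{(u_2,\chi_2)}\colon \W_{\chi_1}(\D Q,\vv,\ww)\to\W_{\chi_2}(\D Q,\vv,\ww)$ with $\chi_1:=L_\z(u_2)+M_\z(\chi_2)+b_\z$; both sides are filtered with associated graded $(S\K^2)^\Gamma=\K[\M_0(\D Q,\vv,\ww)]$ and the induced map on the associated graded is the identity by the hypothesis on $\varphi$, so this specialization is a filtered isomorphism. Such an isomorphism preserves the property of having infinite homological dimension, so the affine map $\chi_2\mapsto\chi_1$ sends the singular set $\Sigma\subseteq\z_0$ bijectively onto itself for every $u_2$.

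Next I identify $\Sigma$ explicitly. Via Theorem~\ref{Thm:iso_Kleinian2}, $\Sigma$ is the image of the singular set for $eH_\beta e$ in $\param_1$; by \cite{CBH}, the latter is a finite union of affine hyperplanes whose only common intersection is the unique $\beta^0$ characterized by $\tr_{N_i}\mathbf{c}^0=0$ for $i\geqslant 1$. A direct check shows that the unique element of the form $1+\sum_i c_i\sum_{\gamma\in S^0_i}\gamma$ in $Z(\K\Gamma)$ with these traces equal to zero is $\mathbf{c}^0=\sum_{\gamma\in\Gamma}\gamma$; character orthogonality then gives $\tr_{N_0}\mathbf{c}^0=|\Gamma|$, which plugged into the formula for $\upsilon_0^{-1}$ from Theorem~\ref{Thm:iso_Kleinian2} yields $\chi^*_i=0$ for $i\geqslant 1$ and $\chi^*_0=|\Gamma|-|\Gamma|=0$. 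Hence $\chi^*=0$, so $\Sigma$ is a central hyperplane arrangement in $\z_0$.

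An affine transformation $\chi\mapsto M_\z(\chi)+v$ preserving a central arrangement with intersection $\{0\}$ must satisfy $v\in\bigcap_\alpha H_\alpha=\{0\}$, since a translate $H_\alpha+v$ of a central hyperplane is itself central precisely when $v\in H_\alpha$. Applied with $u_2=0$ this forces $b_\z=0$; varying $u_2$ then forces $L_\z(u_2)\in\bigcap_\alpha H_\alpha=\{0\}$, so $L_\z=0$ and $\varphi(\z_0^*)\subseteq\z_0^*$. The linear part $M_\z$ then lies in the symmetry group of the ADE root hyperplane arrangement, which is $W_{fin}\rtimes\Aut(\text{Dynkin diagram of }Q)$; any nontrivial diagram automorphism would correspond via McKay to a nontrivial automorphism of $\Gamma$ acting nontrivially on $\K[\K^2/\Gamma]=(S\K^2)^\Gamma$, contradicting the identity-on-classical hypothesis. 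Hence $M_\z\in W_{fin}$. The main obstacle is the computation that $\chi^*=0$: it hinges on the precise shift $-|\Gamma|h$ in the $\epsilon_0$-coordinate of $\upsilon_0^{-1}$ matching the character-theoretic value $\tr_{N_0}\mathbf{c}^0=|\Gamma|$.
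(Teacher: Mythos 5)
Your computation that the singular locus $\Sigma\subset\z_0$ is a central hyperplane arrangement with common intersection $\{0\}$ is correct, and the resulting conclusion $b_\z=0$, $L_\z=0$ is sound. But the final step has two genuine gaps. First, the group of linear maps of $\z_0$ preserving a central hyperplane arrangement is not $W_{fin}\rtimes\Afr_0$: it also contains all nonzero scalars, since any $t\in\K^\times$ fixes every hyperplane through the origin. At this point you have only $M_\z\in\K^\times\cdot W_{fin}\cdot\Afr_0$. The paper eliminates the scalar by invoking a finer invariant: by \cite{CBH}, homological dimension $>1$ (rather than merely $=\infty$) for $eH_\beta e$, $\beta\in\param_1$, is characterized by the vanishing of some \emph{affine} real root of $Q$, which at $h=1$ cuts out a non-central union of hyperplanes in $\z_0$; a linear map preserving that locus must permute the finite roots up to sign, which kills the scalar. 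Your argument only sees the coarser, central locus and cannot exclude $\K^\times$.

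Second, the step ruling out nontrivial diagram automorphisms is not a proof. You assert that $\tau\in\Afr_0$ nontrivial "would correspond via McKay to a nontrivial automorphism of $S(\K^2)^{\Gamma}$," but nothing you write shows that $\varphi^*|_{\z_0}=\tau$ forces $\varphi$ itself to act by that automorphism on the associated graded; the hypothesis is precisely that $\varphi$ is the identity there, and you supply no coupling between the action on the parameter space and the action on a fiber. The paper provides the missing coupling via deformation theory: after reducing to $\varphi^*=\tau$ using the $W_{fin}$-automorphisms of Subsection \ref{SUBSECTION_SRA_aut}, it specializes $\varphi$ at the $\Afr_0$-fixed parameter $\upsilon_0^*(\check{h})$, whose fiber algebra is $\W_2^\Gamma$; an argument as in Lemma \ref{Lem:AutKleinian} forces the specialization to be the identity, and the $\Afr_0$-equivariant identification $\operatorname{HH}^2(\W_2^\Gamma)\cong\param_1$ from \cite{EG} then shows $\varphi$ acts trivially on $\param_1$, contradicting $\tau\neq 1$. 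Some such tangent-space argument linking the fiber to the parameter space is essential, and a bare appeal to McKay does not supply it.
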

\begin{proof}
It is enough to consider the case when $U_1=0$.
Let $(\widehat{\z}_0)_{sing}$ be the set of all $\beta\in \widehat{\z}_0$ with $\langle\beta,h\rangle=1$
and $\langle\beta,\alpha^\vee\rangle=0$ for some $\alpha\in \Delta$, where $\Delta\subset \z^*_0$
is the finite  part of the  root system of $Q$.

Pick $u\in U_2$.
The endomorphism $\varphi$ induces a filtered algebra homomorphism $$\W_{\varphi^*(\beta+u)}(\D Q,\vv,\ww)\rightarrow
\W_{\beta}(\D Q,\vv,\ww)$$
whose associated  graded is the identity. Hence this homomorphism is an isomorphism.
It follows that for any $u\in U_2$
and $\beta\in (\widehat{\z}_0)_{sing}$
we have $\varphi^*(\beta)+\varphi^*(u)=\varphi^*(\beta+u)\in (\widehat{\z}_0)_{sing}$. Since
$(\widehat{\z}_0)_{sing}$  is not stable under translations by a vector, we see that $\varphi^*(u)=0$.
By the same reason, $\varphi^*$ is bijective on $\widehat{\z}_0$.

It remains to show that $\varphi$ preserves $\z_0^*$ and induces an element from $W_{fin}$ on this space.
To show this it is enough to consider the case when $U_2=0$. Here $\varphi$ is an automorphism
of $\W_{\z_0}(\D Q,\vv,\ww)_h$.

Thanks to Lemma \ref{Lem:AutKleinian},
 we can identify $\varphi$ with its image in $\GL(\widehat{\z}^*_0)$.
From the discussion at the end of the previous subsection we see  that $\varphi$ preserves $(\z_0^*)_{sing}$.
From here it is easy to deduce that $\varphi \in \K^\times W_{fin}  \Afr_0$, where $\Afr_0$
stands for the automorphism group of the Dynkin diagram and $\K^\times$ is viewed as
the group of scalar matrices. Now, according to \cite{CBH}, the global dimension
of  $e H_\beta e$ for $\beta\in \param^*_1$ is bigger than 1 if and only if $c$ is annihilated
by some real root of the affine quiver $Q$. The automorphism $\varphi$ preserves the corresponding subset
of $\widehat{\z}_0$. From here it is easy to deduce that $\varphi\in W_{fin} \Afr_0$. So it remains to show
that $\varphi\in \Afr_0$ implies $\varphi=\operatorname{id}$.

Assume the converse, let $\varphi$ be a nontrivial element of $\Afr_0$.
Recall the element $\check{h}\in \param$.
 The element $\upsilon_0^*(\check{h})\in \widehat{\z}_0$ is $\Afr_0$-stable.
Consider the algebra $H_{\check{h}}$. This algebra is just $\W_2\#\Gamma$, where $\W_2$
stands for the usual (not homogenized) Weyl algebra of $\K^2$.
So the spherical subalgebra $e\Halg_{\check{h}} e$ is just $\W_2^{\Gamma}$.
The restriction of $\varphi$ to $e\Halg_{\check{h}} e=\W_2^{\Gamma}$ is the identity
because the associated graded of $\varphi$ is the identity
(compare with the proof of Lemma \ref{Lem:AutKleinian}).
On the other hand, according to \cite{EG}, Theorem 2.16, the completion of $e\Halg e/(h-1)$ at $\check{h}$
is the universal formal deformation of $\W_2^\Gamma$. In particular, we have an
$\Afr_0$-equivariant isomorphism $\operatorname{HH}^2(\W_2^\Gamma)\xrightarrow{\sim} \param_1$
(the r.h.s. is viewed
as a vector space with origin $\check{h}$).
But the action of $\varphi$ on $\operatorname{HH}^2(\W_2^\Gamma)$
is trivial because $\varphi$ restricts to the identity automorphism of $\W_2^\Gamma$. Contradiction.
\end{proof}

Let us proceed to the case of $n>1$. We still have $W_{fin}$ acting on $\W(\D Q,\vv,\ww)_h$
such that $W_{fin}$ acts on $\widehat{\z}$ exactly as in $n=1$ case. We will see below
that $\Afr=W_{fin}\times \Z/2\Z$ and describe the action of $\Z/2\Z$. Now let us describe
a certain element of $\Afr$. We will see later that this element does not lie in $W_{fin}$.
Namely, we have an antiautomorphism $\tau$ of $\Halg$ given by $\tau(v)=v, \tau(g)=g^{-1}$
for $g\in \Gamma_n, \tau(h)=-h, \tau(k)=-k, \tau(c(\gamma))=-c(\gamma^{-1})$ for $\gamma\in \Gamma$.
Then $\tau$ fixes $e$ and so descends to $e\Halg e$. Also we note that $\tau$ is the identity on
$e\Halg e/(\param)$. So we get antiautomorphism $\tau':=\Upsilon\circ \tau\circ \Upsilon^{-1}$
of $\W_h(\D Q,\vv,\ww)$. Set $\nu:=\sigma\circ \tau'$, where $\sigma$ is the parity antiautomorphism
induced from $\W_h(V)$. So $\nu$ is a $\K[h]$-linear automorphism
of $\W_h(\D Q,\vv,\ww)$ that preserves $\widehat{\z}^*$ and is the identity modulo
$\widehat{\z}^*$. Moreover, the action of $\Upsilon^{-1}\circ \nu\circ \Upsilon$ on $\param/\K h$
coincides with that of $\tau$.

\subsection{Completions}\label{SUBSECTION:SRA_completions}
Below $n>1$.

So we have two tasks: to describe the restriction of $\Upsilon$ to $\param^*$ and to describe the group
$\Afr$. The first is our primary goal, but the second is also very important. We will see that we
are able to recover $\Upsilon|_{\param^*}$ only up to an element of $\Afr$.

To approach both these questions we will study isomorphisms of certain completions
induced by $\Upsilon$. The completions will have a form $(e\Halg e)^{\wedge_b}, \W(\D Q,\vv,\ww)_h^{\wedge_b}$
for certain points $b\in \K^{2n}/\Gamma_n$.

A general definition is as follows.
Let $A$ be an algebra such that its center $Z$ is finitely generated, and $A$ is finite over $Z$.
Further, let $\A$ be another algebra equipped with an epimorphism $\A\twoheadrightarrow A$.
Pick a point $b\in \Spec(Z)$ and let $\m_b$ be the ideal in $A$ generated by the maximal
ideal of $b$ in $Z$. Then consider the preimage $\widetilde{\m}_b$ of $\m_b$ in $\A$.
We put $\A^{\wedge_b}:=\varprojlim\A/\widetilde{\m}_b^n$.

In this subsection we will study the structure of the completions we need and in the next one we
will apply these results to accomplish the two goals mentioned above.

Until a further notice we consider the general SRA $\Halg$ corresponding to a space $V$
and a group $\Gr$.
Taking $A=SV\#\Gr, \A:=\Halg$ we get the ideals $\widetilde{\m}_b\subset \Halg,
e\widetilde{\m}_b e\subset e\Halg e$ and the corresponding completions $\Halg^{\wedge_b}, (e\Halg e)^{\wedge_b}$.
The following lemma implies that the completions $(e\Halg e)^{\wedge_b}$ and $e\Halg^{\wedge_b} e$ are naturally
identified.

\begin{Lem}\label{Lem:filtr_comp}
The filtrations $e \widetilde{\m}_b^n e$ and $(e\widetilde{\m}_b e)^n$ on $e\Halg e$ are compatible.
\end{Lem}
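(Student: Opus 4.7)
The inclusion $(e\widetilde{\m}_b e)^n \subseteq e\widetilde{\m}_b^n e$ is immediate from $e^2 = e$, so the content of the lemma lies in the reverse direction. Writing $R := e\Halg e$ and $J := e\widetilde{\m}_b e$, I will in fact prove the stronger statement that $e\widetilde{\m}_b^n e = J^n$ for every $n$, giving equality (not just cofinality) of the two filtrations.

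The plan proceeds in two steps. First, I would establish the two-sided ideal identity
\[
\widetilde{\m}_b \;=\; J\cdot \Halg + (\param)\cdot \Halg,
\]
together with its companion $\Halg\cdot J \subseteq J\cdot\Halg + (\param)\cdot\Halg$, where $(\param)$ denotes the augmentation ideal of $\K[\param]$. Reducing modulo $(\param)$, the first identity becomes $\m_b = \m_b^Z\cdot(SV\#\Gr)$, where $\m_b^Z$ is the maximal ideal of $b$ in $Z = (SV)^\Gr$; this is literally the definition of $\m_b$. The second reduces to $(SV\#\Gr)\cdot\m_b^Z = \m_b^Z\cdot(SV\#\Gr)$, which holds because $Z$ is central in $SV\#\Gr$. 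Both then lift to $\Halg$ by a standard argument: since $e\Halg e$ surjects onto $Z$, any generator of $\m_b^Z$ lifts to an element of $J$, and the discrepancy between a given element of $\widetilde{\m}_b$ (resp.\ of $\Halg\cdot J$) and such an expression lies in $(\param)\Halg$.

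Second, I would iterate these identities, using at each step the companion relation $\Halg\cdot J \subseteq J\cdot\Halg + (\param)\cdot\Halg$ and the centrality of $\param^*$ in $\Halg$, to obtain
\[
\widetilde{\m}_b^n \;\subseteq\; \sum_{k=0}^n (\param)^k \, J^{n-k}\, \Halg.
\]
Sandwiching by $e$ on both sides and pulling each $\alpha\in\param^*$ outside the $e$'s (legal by centrality) turns the right-hand side into $\sum_k (\param)^k\cdot eJ^{n-k}\Halg e$. Here I would use two simple observations: first, $J\cdot \Halg\cdot e\subseteq J$, a direct consequence of $e\widetilde{\m}_b\cdot e\Halg e \subseteq e\widetilde{\m}_b e$, which iterates to $eJ^{n-k}\Halg e \subseteq J^{n-k}$; and second, $\alpha e = e\alpha e \in J$ for $\alpha\in \param^*$, so that $\alpha_1\cdots\alpha_k\cdot e = (\alpha_1 e)(\alpha_2 e)\cdots(\alpha_k e) \in J^k$. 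Combined with the vanishing $j(1-e) = 0$ for $j\in R$ (which lets one pull an $e$ through an element of $R$ cleanly), these identities give $(\param)^k J^{n-k} \subseteq J^{n-k}\cdot J^k = J^n$ for every $k$, and the result follows.

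The main obstacle will be verifying the two ideal identities of Step 1. Both come from the centrality of $Z$ in the fiber $\Halg/(\param)\Halg = SV\#\Gr$, but require a small lifting argument (essentially a graded Nakayama step) to transfer from the fiber back to $\Halg$ itself. Once these are in hand, the remainder is a mechanical bookkeeping exercise along the lines sketched above, and the desired compatibility (in fact equality) of the two filtrations follows.
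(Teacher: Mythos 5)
Your ultimate conclusion $e\widetilde{\m}_b^n e = (e\widetilde{\m}_be)^n$ is in fact correct, but the Step~1 identity $\widetilde{\m}_b = J\cdot\Halg + (\param)\cdot\Halg$ (with $J := e\widetilde{\m}_be$) is false, and with it the inclusion $\widetilde{\m}_b^n \subseteq \sum_k(\param)^k J^{n-k}\Halg$ already fails at $n=1$. Since $J\subseteq e\Halg e$, every element of $J\Halg$ is fixed under left multiplication by $e$. Take a $\Gr$-invariant lift $\tilde z\in\widetilde{\m}_b$ of a nonzero element $\bar z$ of $\m_b^Z$ and consider $(1-e)\tilde z\in\widetilde{\m}_b$. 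If $(1-e)\tilde z = u+v$ with $u\in J\Halg$ and $v\in(\param)\Halg$, left multiplication by $e$ gives $0 = u + ev$, so $u\in(\param)\Halg$ and hence $(1-e)\tilde z\in(\param)\Halg$; but this reduces modulo $(\param)$ to $(1-e)\bar z\neq 0$ in $SV\#\Gr$, a contradiction. The slip is in the sentence \emph{``this is literally the definition of $\m_b$''}: modulo $(\param)$ the ideal $J$ becomes $e\m_b e = \m_b^Z e$, and the right ideal it generates in $SV\#\Gr$ is $\m_b^Z e\,(SV\#\Gr)$, a proper subset of $\m_b = \m_b^Z\,(SV\#\Gr)$ because $e\,(SV\#\Gr)\subsetneq SV\#\Gr$. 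The abstract isomorphism $e(SV\#\Gr)e\cong(SV)^\Gr$ does not turn $\m_b^Z e$ into $\m_b^Z$ as a subset of the ambient ring.

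Your Step~2 observations ($J\Halg e\subseteq J$, $\alpha e\in J$, $(\param)^k e\subseteq J^k$) are correct and are exactly what is needed; the trouble is that you introduce the factor of $e$ too early. The paper's route avoids this: $\widetilde{\m}_b$ is generated as a \emph{left} ideal by $\param$ together with $\Gr$-invariant lifts $\tilde x_1,\ldots,\tilde x_m$ of generators of $\m_b^Z$ chosen so that $[\tilde x_i,\cdot]\subseteq(\param)\Halg$. Iterating this description, commuting the $\tilde x_i$'s past coefficients at the cost of raising the $(\param)$-degree, one obtains $\widetilde{\m}_b^n\subseteq\sum_{k+l=n}(\param)^k\Halg P_l\Halg$, where $P_l$ is the span of $l$-fold products of the $\tilde x_i$. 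Only now sandwich by $e$: because each $\tilde x_i$ commutes with $\Gr$, one has $\tilde x_{i_1}\cdots\tilde x_{i_l}\,e = (e\tilde x_{i_1}e)\cdots(e\tilde x_{i_l}e)\in J^l$, whence $e\Halg P_l\Halg e\subseteq(e\Halg e)\,J^l\subseteq J^l$, and your own estimate $(\param)^k J^l\subseteq J^{k+l}$ closes the argument. Deferring the insertion of $e$ until the non-invariant coefficients have been absorbed is what makes the bookkeeping work.
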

\begin{proof}
Let us remark that  $(e \widetilde{\m}_b e)^n \subset e \widetilde{\m}_b^n e$.
On the other hand, as a left $\Halg$-ideal, $\widetilde{\m}_b$ is generated by $\param$ and some
elements $x_1,\ldots,x_m$ of $\Halg$ such that each $x_i$ commutes with $\Gr\subset \Halg$
and $[x_i,x]\subset \param \Halg$ for any $x\in \Halg$. From this description it is easy to deduce that
for any $n$ the ideal $(e\widetilde{\m}_b e)^n$ contains $e\widetilde{\m}_b^N e$ for $N\gg 0$.
\end{proof}

The structure of the algebra $\Halg^{\wedge_b}$ was studied in \cite{sraco}.
Namely, let $\widetilde{b}$ be a point in the preimage of $b$ in $V$ and set $\underline{\Gr}:=\Gr_{\widetilde{b}}$. Then define the algebra $\underline{\Halg}$ as the quotient of $TV[\param]\#\underline{\Gr}$ by the relations
\begin{equation}\label{eq:SRA_relation1}
[u,v]=h \omega(u,v)+\sum_{i=1}^r c_i\sum_{s\in S_i\cap \underline{\Gr}}\omega_s(x,y)s,
\end{equation}
Following \cite{BE},  consider the {\it centralizer algebra} $Z(\Gr,\underline{\Gr}, \underline{\Halg}^{\wedge_0})$
that comes equipped with an embedding $\K\Gr\hookrightarrow Z(\Gr,\underline{\Gr},\underline{\Halg}^{\wedge_0})$.
What we need to know about this algebra is that there is a $\K[\param]\Gr$-linear isomorphism
$\Halg^{\wedge_b}\xrightarrow{\sim} Z(\Gr,\underline{\Gr},\underline{\Halg}^{\wedge_0})$,
\cite{sraco}, see Theorems  1.2.1, 2.7.3 and that $e Z(\Gr,\underline{\Gr},\underline{\Halg}^{\wedge_0}) e$
is naturally identified with $\underline{e}\underline{\Halg}^{\wedge_0}\underline{e}$,
\cite{BE}, Lemma 3.1, where $\underline{e}$ is the trivial idempotent in $\K \underline{\Gr}$. So we get a $\K[\param]$-linear isomorphism $e\Halg^{\wedge_b}e\cong
\underline{e}\underline{\Halg}^{\wedge_0}\underline{e}$.

The algebra $\underline{\Halg}^{\wedge_0}$ can be decomposed into a completed tensor product as follows.
There is a unique $\underline{\Gr}$-stable decomposition $V=V^{\underline{\Gr}}\oplus V^+$.
Consider the Weyl algebra $\W_{V^{\underline{\Gr}},h}$ and the the algebra $\underline{\Halg}^+$ that is the quotient
of $TV^+[\param]\#\underline{\Gr}$ by  relations (\ref{eq:SRA_relation1}). It is clear that $\underline{\Halg}=
\W_{V^{\underline{\Gr}},h}\otimes_{\K[h]}\underline{\Halg}^+$. So $\underline{\Halg}^{\wedge_0}=
\W_{V^{\underline{\Gr}},h}^{\wedge_0}\widehat{\otimes}_{\K[[h]]}\underline{\Halg}^{+\wedge_0}$.
Summarizing we get a $\K[\param]$-linear isomorphism
\begin{equation}\label{eq:iso_SRA1}
(e\Halg e)^{\wedge_b}\cong \W_{V^{\underline{\Gr}},h}^{\wedge_0}\widehat{\otimes}_{\K[[h]]}\underline{e}\underline{\Halg}^{+\wedge_0}\underline{e}.
\end{equation}

Let us proceed to the completions of quantum Hamiltonian reductions.
Let $V$ be a symplectic vector space and $G$ be a reductive group
acting on $V$ by linear symplectomorphisms. Let $\mu:V\rightarrow \g^*$
denote the moment map. Pick a point $b\in V\red_0 G$.
We are interested in the structure of $(\W_h(V)\red G)^{\wedge_b}$.
Let $x\in V$ be a point from a unique closed orbit in the fiber
of $b$. Automatically, $\mu(x)=0$. Set $H:=Gx$ and let $U$ denote the symplectic part of the normal
space to $Gx$ at $x$, i.e., $U:=V/(T_xGx)^{\skewperp}$ (since $\mu(x)=0$, we see that $T_xGx$
is an isotropic subspace in $V$). Then $U$ is a symplectic $H$-module.
Set $\z:=(\g^*)^G, \z':=(\h^*)^H$. We have a natural (restriction) map
$\z\rightarrow \z'$.

\begin{Lem}\label{Lem:Ham_red_completion}
We have a $\K[[\z,h]]$-linear isomorphism $$(\W_h(V)\red G)^{\wedge_b}\cong \K[[\z,h]]\widehat{\otimes}_{\K[[\z',h]]}(\W_h(U)\red H)^{\wedge_0}.$$
\end{Lem}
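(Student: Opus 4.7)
The plan is to establish a quantum analogue of the Guillemin--Sternberg--Marle symplectic slice theorem in the formal neighbourhood of $Gx\subset V$, and then transport the $G$-reduction through this identification.

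Classically, since $Gx$ is closed and $H=G_x$ is reductive, Luna's slice theorem combined with the symplectic cross-section gives a $G$-equivariant formal symplectomorphism $V^{\wedge_{Gx}}\cong\bigl((T^*G\times U)\red_0 H\bigr)^{\wedge_{G}}$, where $H$ acts on $T^*G$ by right translations (with standard moment map to $\h^*$) and on $U$ via its given symplectic representation (with its quadratic moment map). The $G$-action on the right is via left translations on the $T^*G$ factor; the $G$-moment map for $V$ is identified, via this symplectomorphism, with the one induced from left translations. Taking $G$-reduction on both sides and noting that $(T^*G)\red_0 G$ is (formally, at the base point) $\Spec\K[[\z]]$ while $H$-reduction on the $T^*G$ factor alone yields $\K[[\z']]$, one obtains the classical analogue of the statement: $(V\red G)^{\wedge_b}\cong \K[[\z]]\widehat\otimes_{\K[[\z']]}(U\red H)^{\wedge_0}$.

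Step 2 is to upgrade this identification to the quantum setting. One needs a $G$-equivariant isomorphism, after completion along the $G$-orbit through $x$, between $\W_h(V)$ and $(\Dcal_h(G)\widehat\otimes_{\K[[h]]}\W_h(U))\red_0 H$, intertwining the quantum comoment maps for $G$. Such a ``quantum slice'' identification is essentially Theorem~3.3.4 of \cite{Wquant} (recalled in the proof of Lemma~\ref{Lem:equivalence} above): both sides are $G\times H$-equivariant homogeneous quantizations of canonically isomorphic formal symplectic schemes admitting quantum comoment maps, so by Corollary~\ref{Cor:quant_classif} together with Theorem~\ref{Thm:even_quant} they are isomorphic in a manner compatible with the comoment maps. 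A short cohomological argument (as in the proof of Lemma~\ref{Lem:equivalence}) rigidifies the isomorphism modulo inner automorphisms.

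Step 3 is a quantum Fubini. Because $G$ acts on the model only through the $\Dcal_h(G)$ factor and commutes with $H$, one may first perform the $G$-reduction: by Example~\ref{Ex_diff_op} applied to the left/right trivializations, $\Dcal_h(G)\red G=U_h(\g)^G$, whose $\K^\times$-finite part along the base is $\K[\z][h]$. Thus
\[
\bigl((\Dcal_h(G)\widehat\otimes\W_h(U))\red_0 H\bigr)\red G \;\cong\; \K[[\z,h]]\widehat\otimes_{\K[[\z',h]]}(\W_h(U)\red H)^{\wedge_0},
\]
where the map $\K[\z]\to\K[\z']$ is the restriction of $G$-invariant polynomials on $\g^*$ to $H$-invariant ones on $\h^*$ (i.e.\ the Chevalley-type restriction coming from $H\hookrightarrow G$). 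Completing at $b$ (equivalently, at $0\in U\red_0 H$ on the right-hand side) yields the claimed isomorphism.

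The main obstacle is Step~2: matching the quantum comoment maps on the nose (rather than up to an inner derivation) and verifying that the completions on the two sides correspond under the symplectomorphism of Step~1. This is where the evenness criterion of Theorem~\ref{Thm:even_quant} is essential, as it rules out the possibility of a comoment-map shift that would distort the map $\z\to\z'$ in Step~3; once this is checked, the remaining arguments are formal manipulations with completions.
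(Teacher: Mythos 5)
Your strategy matches the paper's: invoke the slice theorem of \cite{slice} to identify the formal neighborhood of $Gx$ in $V$ with the model $(T^*G\times U)\red_0 H$ as a Hamiltonian $G$-scheme, upgrade to an isomorphism of quantizations intertwining quantum comoment maps via evenness (Proposition \ref{Prop:even_reduction}) and the rigidification argument of Theorem~3.3.4 in \cite{Miura} (not \cite{Wquant}, as you cite), and then commute the two reductions and compute the $G$-reduction of the model.

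Your Step~3, however, contains a genuine error. You assert $\Dcal_h(G)\red G=U_h(\g)^G$. This fails whenever $\g$ has a nontrivial semisimple part: classically $T^*G\red G\cong\z$ with trivial relative Poisson structure, so the correct answer is $\Dcal_h(G)\red G=\K[\z][h]$; whereas $U_h(\g)^G$ is the homogenized center of $U(\g)$, i.e.\ $\K[\h^*,h]^W$ by Harish--Chandra, a polynomial ring in $\rk\g$ variables (already $\K^\times$-locally finite, so that caveat does not rescue the claim). The paper uses the correct identity $\Dcal_h(G)\red G=\K[\z][h]$ and records the residual quantum comoment map for $H$, $\xi\mapsto-\rho(\xi)\otimes 1+1\otimes\Phi(\xi)$ in $\K[\z][h]\otimes_{\K[h]}\W_h(U)$, from which the $\K[[\z,h]]\widehat{\otimes}_{\K[[\z',h]]}$-structure follows. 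Relatedly, the map relating $\z$ and $\z'$ is not the ``Chevalley-type restriction of invariant polynomials'' you describe (and your arrow points the wrong way); it is the linear restriction $(\g^*)^G\to(\h^*)^H$ of $G$-fixed covectors to $\h$, which induces $\K[\z']\to\K[\z]$. Both slips stem from conflating the vector space $(\g^*)^G$ with the quotient $\g^*\quo G$; repairing them brings your Step~3 in line with the paper.
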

\begin{proof}
Set $Y=(T^*G\times U)\red_0 H$, where $H$ acts on $T^*G$ from the right. This is a Hamiltonian
$G$-variety  that is naturally isomorphic to the model variety $M_G(H,0,U)$ from \cite{slice}.
Let $y\in Y$ be a point corresponding
to the orbit of $(1,0,0)\in T^*G\times U$. According to the main result of \cite{slice},
the Hamiltonian formal $G$-schemes $V^{\wedge_{Gx}}, Y^{\wedge_{Gy}}$ are isomorphic.
By Proposition \ref{Prop:even_reduction}, the quantization
$(\Dcal_h(G)\otimes_{\K[h]}\W_h(U))\red_0 H$ of $Y$ is even. Also it is graded and therefore canonical.
It follows that the topological algebras
$\W_h(V)^{\wedge_{Gx}}, [(\Dcal_h(G)\otimes_{\K[h]}\W_h(U))\red_0 H]^{\wedge_{Gy}}$
are $G$-equivariantly isomorphic.
Furthermore, similarly to Theorem 3.3.4 in \cite{Miura}, we see that there is a $G$-equivariant
$\K[h]$-linear isomorphism
$\W_h(V)^{\wedge_{Gx}}\cong [(\Dcal_h(G)\otimes_{\K[h]}\W_h(U))\red_0 H]^{\wedge_{Gy}}$
intertwining the quantum comoment maps. It follows that the topological $\K[[\z,h]]$-algebras
$\W_h(V)^{\wedge_{Gx}}\red G, [(\Dcal_h(G)\otimes_{\K[h]}\W_h(U))\red_0 H]^{\wedge_{Gy}}\red G$
are isomorphic. Similarly to the proof of Proposition \ref{Prop:even_reduction},
we see that  $\W_h(V)^{\wedge_{Gx}}\red G\cong (\W_h(V)\red G)^{\wedge_b}$ and
$$[(\Dcal_h(G)\otimes_{\K[h]}\W_h(U))\red_0 H]^{\wedge_{Gy}}\red G\cong
[(\Dcal_h(G)\otimes_{\K[h]}\W_h(U))\red_0 H\red G]^{\wedge_0}.$$ So it remains
to identify $(\Dcal_h(G)\otimes_{\K[h]}\W_h(U))\red_0 H\red G$ with
$\K[\z,h]\otimes_{\K[\z',h]}\W_h(U)\red H$.

First of all, we note that, by the definition of a quantum  Hamiltonian reduction,
$(\Dcal_h(G)\otimes_{\K[h]}\W_h(U))\red_0 H\red G\cong
(\Dcal_h(G)\otimes_{\K[h]}\W_h(U))\red G\red_0 H$. But it is easy to
see that   $(\Dcal_h(G)\otimes_{\K[h]}\W_h(U))\red G$ is naturally identified
with  $\Dcal_h(G)\red G\otimes_{\K[h]}\W_h(U)=(\K[\z][h]\otimes_{\K[h]}\W_h(U))\red_0 H$,
where the quantum comoment map $\h\rightarrow \K[\z][h]\otimes_{\K[h]}\W_h(U)$
 has the form $\xi\mapsto -\rho(\xi)\otimes 1+1\otimes\Phi(\xi)$, $\rho$ being
the natural projection $\h\rightarrow \z^*$ and $\Phi:\h\rightarrow \W_h(U)$
being the quantum comoment map for the action of $H$ on $U$. This implies that
$(\Dcal_h(G)\otimes_{\K[h]}\W_h(U))\red_0 H\red G$ and
$\K[\z,h]\otimes_{\K[\z',h]}(\W_h(U)\red H)$   are $\K[\z,h]$-linearly isomorphic.
\end{proof}

Now let us specify what choices of $b\in \K^{2n}/\Gamma_n\cong \M_0(\D Q,\vv,\ww)$
we need.

The stratification of $\K^{2n}/\Gamma_n$ by the symplectic leaves is the same
as the stabilizer stratification. In particular, there are two symplectic leaves of codimension 2:
the first one, $\mathcal{L}_{\Gamma}$ corresponds to the first copy of $\Gamma$ inside  $\Gamma^n\subset \Gamma_n$,
while the second one, $\mathcal{L}_{sym}$, -- to the subgroup of order 2
generated by the transposition $s_{12}\in S_n\subset \Gamma_n$. In other
words, $\overline{\mathcal{L}}_\Gamma,\overline{\mathcal{L}}_{sym}$ are the images of $\{0,u_2,\ldots,u_n\},\{u_1,u_1,u_3,\ldots,u_n\}$ in $\K^{2n}/\Gamma_n$.

We will need points $b^1\in \mathcal{L}_{\Gamma}, b^2\in \mathcal{L}_{sym}$. The algebra
$\underline{\Halg}^{+1}$ constructed from $b^1$ is just $\K[k]\otimes \Halg(\Gamma)$, where $\Halg(\Gamma)$
is the SRA constructed from $\Gamma$ (over $\K[h,c_1,\ldots,c_r]$). Similarly,
the algebra $\underline{\Halg}^{+2}$ constructed from $b^2$ is $\K[c_1,\ldots,c_r]\otimes \Halg(S_2)$,
where the algebra $\Halg(S_2)$ is constructed from $S_2$ (over $\K[k]$).

Now let us consider the Hamiltonian reduction side. Recall that we reduce the space $V:=R(\D Q^{\ww},\vv^{\ww})$
by the action of $G:=\GL(\vv)$.
Pick an element $x\in R(\D Q,
\vv)$ that is decomposed into the sum $\bigoplus_{i=1}^{n-1} x_i\oplus 0$, where $x_i$ are generic
non-isomorphic elements of $R(\D Q,\delta)$ mapping to 0 under the moment map. Since the representation
of $x$ is semisimple, its $G$-orbit is closed. The stabilizer $H:=G_x$ is naturally
identified with $\GL(\delta)\times \K^{\times (n-1)}$.  The symplectic part $U$ of the slice module $T_x V/ \g x$ is identified with $R(\D Q^{\ww},\delta^{\ww})\oplus \K^{2n-2}\oplus \K^{2n-2}$, where the stabilizer $H$ acts via the projection $H\twoheadrightarrow \GL(\delta)$ on the first summand, trivially on the second one, and via the projection $H\twoheadrightarrow \K^{\times (n-1)}$ on the third one.

It is easy to see that $x$ maps into  $\mathcal{L}_\Gamma$ under the quotient map (in fact, any point of $\mathcal{L}_{\Gamma}$ is obtained in this way but we will not need this fact). So we may assume that
$b^1$ coincides with the image of $x$.

Now let us study the structure of the completion $(\W_h(V)\red G)^{\wedge_b}$ in more detail.
By Lemma \ref{Lem:Ham_red_completion}, $(\W_h(V)\red G)^{\wedge_b}\cong \K[[\z,h]]\widehat{\otimes}_{\K[[\z',h]]}
(\W_h(U)\red H)^{\wedge_0}$. Recall the basis $\check{\epsilon}_i,i=0,\ldots,r$ in $\z$. Let $\check{\epsilon}'_i,i=0,\ldots,r$
be the analogous basis in $\gl(\delta)/[\gl(\delta),\gl(\delta)]\subset \z'$. Also let $\lambda_1,\ldots,\lambda_{n-1}$
be the elements in $\z'$ corresponding to the $n-1$ copies of $\K^\times$. The map $\z\rightarrow \z'$
is given by $\epsilon_i\mapsto \epsilon_i'+\delta_i\sum_{j=1}^{n-1} \lambda_j$ and, in particular, is an embedding.

\begin{Lem}\label{Lem:ham_red_iso1} There is a
$\K[\z]$-linear isomorphism $\K[\z]\otimes_{\K[\z_0]}\W_{\z_0}(\D Q, \delta,\ww)_h\cong \K[\z,h]\otimes_{\K[\z',h]}\W_h(U)\red H$
of graded algebras.
\end{Lem}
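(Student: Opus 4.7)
The plan is to expand the right-hand side via the $H$-equivariant symplectic decomposition $U = U_1 \oplus U_2 \oplus U_3$ with $U_1 = R(\D Q^\ww, \delta^\ww)$ and $U_2, U_3 \cong \K^{2n-2}$, compute each factor of the resulting tensor decomposition of $\W_h(U) \red H$, and then identify the outcome with the left-hand side by carrying out the base change along the embedding $\z \hookrightarrow \z'$.

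Since $H = \GL(\delta) \times \K^{\times(n-1)}$ acts on $U$ through only one factor at a time (trivially on $U_2$, through $\GL(\delta)$ on $U_1$, and through $\K^{\times(n-1)}$ on $U_3$), the quantum comoment map $\Phi: \h \to \W_h(U)$ splits across the tensor factorization $\W_h(U) = \W_h(U_1) \otimes_{\K[h]} \W_h(U_2) \otimes_{\K[h]} \W_h(U_3)$, and the corresponding compatibility of quantum Hamiltonian reduction with such splittings (essentially Lemma \ref{Lem:quant_comm_red} applied factor by factor) yields
\[
\W_h(U) \red H \;\cong\; \bigl(\W_h(U_1) \red \GL(\delta)\bigr) \otimes_{\K[h]} \W_h(U_2) \otimes_{\K[h]} \bigl(\W_h(U_3) \red \K^{\times(n-1)}\bigr)
\]
as graded $\K[\z', h]$-algebras. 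The first factor is, by definition, $\W(\D Q, \delta, \ww)_h$, carrying its $\K[\z'_{\GL}, h]$-structure. For the third factor, each of the $n-1$ copies of $\K^\times$ acts standardly on its $\K^2$-block, and the $\K^\times$-invariants in $\W_h(\K^2)$ form the commutative subalgebra $\K[xy, h]$; the moment identification $xy = \lambda h$ then yields $\W_h(\K^2) \red \K^\times \cong \K[\lambda, h]$, and hence $\W_h(U_3) \red \K^{\times(n-1)} \cong \K[\lambda_1, \ldots, \lambda_{n-1}, h]$ matching the $\lambda_j$-coordinates of $\z'$.

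I would then perform the base change $\K[\z, h] \otimes_{\K[\z', h]} (-)$. The surjection $\K[\z', h] \twoheadrightarrow \K[\z, h]$ dual to the formula $\check\epsilon_i \mapsto \check\epsilon'_i + \delta_i \sum_j \lambda_j$ sends $\epsilon'_i \mapsto \epsilon_i$ and each $\lambda_j$ to $\sum_i \delta_i \epsilon_i$, so the tensor product collapses the polynomial factor $\K[\lambda, h]$ by identifying each $\lambda_j$ with the common value $\sum_i \delta_i \epsilon_i$, while reparametrizing the $\K[\z'_{\GL}, h]$-structure on $\W(\D Q, \delta, \ww)_h$ as a $\K[\z, h]$-structure via $\epsilon'_i \mapsto \epsilon_i$. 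The reparametrized Kleinian reduction is then identified with $\K[\z] \otimes_{\K[\z_0]} \W_{\z_0}(\D Q, \delta, \ww)_h$, since both realize the universal Kleinian reduction with the $\delta$-direction of the central parameter left free and the restriction to $\z_0 = \delta^\perp$ playing the role of the genuine reduction parameter.

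The main obstacle is ensuring that all the parameter shifts match precisely. Concretely, one must verify that the descent of the $\GL(n\delta)$-comoment map on $V$ to the local model at the base point $b^1$ (via Lemma \ref{Lem:Ham_red_completion}) agrees, on the level of central characters, with the sum of the Kleinian and torus comoment maps in exactly the way dictated by the formula $\check\epsilon_i \mapsto \check\epsilon'_i + \delta_i \sum_j \lambda_j$ — so that the $\lambda_j$-contribution from the $\K^{\times(n-1)}$-action on $U_3$ absorbs precisely into the $\delta$-component of the big $\z$, and the remaining passive Weyl factor $\W_h(U_2)$ is accommodated consistently within the structure prescribed on the left-hand side.
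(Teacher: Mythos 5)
Your plan---decompose $U$ into its $H$-isotypic summands $U_1\oplus U_2\oplus U_3$ (Kleinian, trivial, and toric), reduce factor by factor, and then base change along the embedding $\z\hookrightarrow\z'$---is essentially the same as the paper's (much terser) argument, carried out more explicitly: the paper passes through an intermediate reduction by the central torus $Z(H)^\circ$ rather than splitting the tensor product, but the computation is the same. Your analysis of the first and third factors and of the base change, which identifies each $\lambda_j$ with $\sum_i\delta_i\epsilon_i$ and thereby collapses the polynomial factor $\K[\lambda_1,\ldots,\lambda_{n-1},h]$ into the $\z/\z_0$-direction of the Kleinian central parameter, is correct.

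The gap is exactly the point you flagged but deferred as a ``one must verify'' clause: the trivial-action factor $\W_h(U_2)\cong\W_{2n-2,h}$. Since $H$ acts trivially on $U_2$, that factor is untouched by the reduction, and since the base change $\K[\z,h]\otimes_{\K[\z',h]}(-)$ only affects the $\K[\lambda]$ variables, it passes through intact as a genuine Weyl algebra tensor factor on the right-hand side. There is simply no place for it on the left: $\K[\z]\otimes_{\K[\z_0]}\W_{\z_0}(\D Q,\delta,\ww)_h$ is a one-variable polynomial extension of a Kleinian reduction, and comparing the dimensions of the spectra of the $h=0$ reductions of the associated graded rings gives $r+3$ on the left against $2n+r+1$ on the right, which disagree for $n>1$. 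So $\W_h(U_2)$ cannot be ``accommodated consistently within the structure prescribed on the left-hand side''---the claim as you state it is false, not merely unverified. The display in the paper immediately after this lemma, $\W(\D Q,\vv,\ww)_h^{\wedge_{b^1}}\cong\W_{2n-2,h}^{\wedge_0}\widehat{\otimes}\bigl(\K[\z]\otimes_{\K[\z_0]}\W_{\z_0}(\D Q,\delta,\ww)_h^{\wedge_0}\bigr)$, obtained by combining Lemma \ref{Lem:Ham_red_completion} with the present one, shows that the $\W_{2n-2,h}$ factor is meant to split off separately; thus the lemma should be read with $\W_h(U_1\oplus U_3)\red H$ on the right (equivalently, with $\W_{2n-2,h}$ as an extra tensor factor on the left). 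After that correction your argument goes through, but a complete proof has to notice the discrepancy and state how it is resolved rather than leave it as a to-do.
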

\begin{proof}
Let $\z_0'\subset \z'$ be defined analogously to $\z_0\subset \z$. The spaces $\z_0'$ and $\z_0$
are naturally identified. We need to show that $\W_{\z_0}(\D Q,\delta,\ww)_h\cong \W_h(U)\red_{\z_0} H$.
This follows from the observation that $U\red_{\z_0}Z(H)^\circ$ is  naturally identified with
$R(\D Q,\vv)$.
\end{proof}

So we see that $\W(\D Q,\vv,\ww)_h^{\wedge_{b^1}}\cong \W_{2n-2,h}^{\wedge_0}\widehat{\otimes}_{\K[[h]]} (\K[\z]\otimes_{\K[\z_0]}\W_{\z_0}(\D Q,\delta,\ww)^{\wedge_0}_h)$.

Now let us proceed to the second leaf.
Pick an element $x\in R(\D Q,
n\delta)$ that is decomposed into the sum $\bigoplus_{i=1}^{n-2} x_i\oplus x_{n-1}^{\oplus 2}$, where $x_i, i=1,\ldots,n-1,$ are generic
non-isomorphic elements of $R(\D Q,\delta)$ mapping to 0 under the moment map.
The stabilizer $H:=G_x$ is naturally
identified with $\GL(2)\times \K^{\times (n-2)}$. The symplectic part $U$ of the slice module $T_x V/ \g x$ is identified with $$(\End(\K^2)\oplus \End(\K^2)^*\oplus \K^2\oplus \K^{2*})\oplus (\K^{n-2}\oplus \K^{n-2*})
\oplus \K^{2n-2}$$ (where $\K^2$ should be
viewed as the multiplicity space for the representation $x_{n-1}$). The stabilizer $H$ acts via the projection $H\twoheadrightarrow \GL(2)$ on the first summand (denote it by $U_1$), via the projection $H\twoheadrightarrow \K^{\times (n-2)}$ on the second one, and trivially on the  third one.

Again, it is easy to see that $x$ maps into  $ \mathcal{L}_{sym}$ under the quotient map  and hence we may assume that
$b^2$ coincides with the image of $x$. Set $\z_1:=\gl(2)^{*\GL(2)}$. We have a basis $\lambda,\lambda_1,\ldots,\lambda_{n-2}$ in $\z'$, where $\lambda$ corresponds to the determinant of
$\GL(2)$ and $\lambda_1,\ldots,\lambda_{n-2}$ correspond to the $n-2$ copies of $\K^\times$.
A natural map $\z\rightarrow \z'$ is given by $\epsilon_i\mapsto \delta_i(\lambda+\sum_{i=1}^{n-2}\lambda_i)$.
Again, $\K[\z]\otimes_{\K[\z']}\W_h(U)\red H=
\K[\z]\otimes_{\K[\z_1]}\W_{h}(U_1)\red \GL(2)$, where the map $\z\rightarrow \z_1$ is given by
$\chi\mapsto (\delta\cdot \chi)\lambda$.

\subsection{Reduction to the Kleinian case}\label{SUBSECTION_red_to_Klein}
The following proposition constitutes a reduction procedure.
\begin{Prop}\label{Prop:Kleinian_reduction}
There are isomorphisms $\Upsilon_1:\K[k]\otimes \underline{e}_1\underline{\Halg}^{+1}\underline{e}_1
\rightarrow \W(\D Q,\delta,\ww)_h$ and $\Upsilon_2:\K[c_1,\ldots,c_r]\otimes \underline{e}_2\Halg(S_2)\underline{e}_2
\rightarrow \K[\z]\otimes_{\K[\z_1]}\W_{h}(U_1)$ such that $\Upsilon_1|_{\param^*},\Upsilon_2|_{\param^*}=\Upsilon|_{\param^*}$.
\end{Prop}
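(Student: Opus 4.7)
The plan is to obtain $\Upsilon_1$ and $\Upsilon_2$ by completing the already constructed isomorphism $\Upsilon$ at the points $b^1, b^2\in \K^{2n}/\Gamma_n$ and cancelling the common Weyl-algebra tensor factors on both sides.

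I will first combine the decomposition (\ref{eq:iso_SRA1}) (together with Lemma \ref{Lem:filtr_comp}) on the SRA side with Lemma \ref{Lem:Ham_red_completion} applied to the quiver-variety side, using the explicit descriptions of the slice representations at $b^1, b^2$ recorded in Subsection \ref{SUBSECTION:SRA_completions}. Since $\Upsilon$ is $\K[\widetilde{\param}][h]$-linear and sends $\widetilde{\param}^*$ to $\widehat{\z}^*$, it respects the two completions and thereby induces, for $i=1,2$, a topological algebra isomorphism
$$\Upsilon^{\wedge_{b^i}}\colon \W_{2n-2i,h}^{\wedge_0}\,\widehat{\otimes}_{\K[[h]]}\, L_i \xrightarrow{\sim} \W_{2n-2i,h}^{\wedge_0}\,\widehat{\otimes}_{\K[[h]]}\, R_i,$$
where $L_i$ is the completion at the origin of $\underline{e}_i\underline{\Halg}^{+i}\underline{e}_i$ and $R_i$ is the completion at the origin of the right-hand target in the proposition.

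Next, I will strip the common Weyl-algebra factor from both sides. For this I plan to use the same rigidity argument as in Step 5 of the proof of Proposition \ref{Prop:even_reduction} and in Lemma \ref{Lem:equivalence}: any $\K[h]$-linear algebra isomorphism between completed tensor products of the above form can, after composition with a suitable inner automorphism, be arranged to preserve the tensor decomposition. The key input is the vanishing of the first De Rham cohomology of a formal polydisc, which ensures that every continuous $\K[[h]]$-linear derivation of $\W_{2n-2i,h}^{\wedge_0}$ is inner. The cocycle argument at the end of Subsection \ref{SUBSECTION_Basic_def} will then allow the adjustment to be chosen $\K^\times$-equivariantly. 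Inner automorphisms act trivially on central scalars, so they do not affect the map induced on $\widetilde{\param}^*$.

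Once the tensor decomposition is respected, the resulting isomorphism $\widetilde{\Upsilon}_i\colon L_i\xrightarrow{\sim} R_i$ of the remaining factors will be $\K[[\widetilde{\param}]]$-linear and $\K^\times$-equivariant. Both $L_i$ and $R_i$ are completions of positively graded algebras whose parameters all have positive weight, so taking $\K^\times$-finite vectors recovers the graded algebras themselves (compare with the end of the proof of Lemma \ref{Lem:global_sections}); this yields the claimed graded algebra isomorphisms $\Upsilon_1, \Upsilon_2$. The equality $\Upsilon_i|_{\param^*}=\Upsilon|_{\param^*}$ is then automatic: this restriction is induced by $\Upsilon$ before any completion, is unaffected by inner adjustments, and survives the passage to $\K^\times$-finite vectors. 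The main obstacle will be the second step, producing the $\K^\times$-equivariant inner-automorphism adjustment that makes $\Upsilon^{\wedge_{b^i}}$ respect the tensor decomposition; this is a Darboux-type rigidity statement for completed deformation quantizations in the spirit of Lemma \ref{Lem:equivalence}, and care is needed to run the derivation-is-inner argument compatibly with the $\K^\times$-action.
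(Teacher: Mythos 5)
Your overall plan matches the paper's: complete at $b^1,b^2$, decompose both sides as (formal Weyl)$\,\widehat{\otimes}\,$(smaller piece) using \eqref{eq:iso_SRA1} and Lemma \ref{Lem:Ham_red_completion}, strip the Weyl factor by a Darboux-type rigidity argument, then descend to $\K^\times$-finite vectors. The identification of the Weyl-factor cancellation with the rigidity mechanism of Lemma \ref{Lem:equivalence} is also the right instinct.

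However, there is a genuine gap at the last stage, and it is precisely the point the paper spends its Step 3 on. The point $b^i$ is \emph{not} fixed by the contracting $\K^\times$-action on $\K^{2n}/\Gamma_n$, so the completion of $\Upsilon$ at $b^i$ has no a priori $\K^\times$-equivariance whatever, and the inner adjustment supplied by the Weyl-cancellation step cannot be ``chosen $\K^\times$-equivariantly'' --- there is no $\K^\times$-action on the ambient completed algebra for it to be equivariant with respect to. The Hilbert-90 cocycle argument from Subsection \ref{SUBSECTION_Basic_def} that you invoke is about promoting a quantization whose isomorphism class is $\K^\times$-stable to an actually homogeneous one; it does not apply to comparing two gradings on a completion at a non-fixed point. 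As a result, $\widetilde{\Upsilon}_i\colon L_i\to R_i$ intertwines the filtrations by powers of the maximal ideal but not the gradings, and your final claim that ``taking $\K^\times$-finite vectors recovers the graded algebras themselves'' is not automatic: the $\K^\times$-finite vectors of $L_i$ with respect to the grading transported from $R_i$ need not coincide with $\K[k]\otimes \underline{e}_1\underline{\Halg}^{+1}\underline{e}_1$.

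What is missing is the comparison of the two $\K^\times$-actions on $L_i$ (the intrinsic one and the transported one). The paper shows that the corresponding Euler derivations $E$ and $E_{st}$ differ by $\frac{1}{h}\ad(a)$ for some $a$, that one may take $a=\sum_{i\ge 2}a_i$ in the intrinsic grading, and then --- crucially --- that $a_2$ must be central. This last step requires an explicit analysis: in types $D,E$ there are no degree-$2$ elements of $\K[\K^2]^\Gamma$ at all, while in type $A$ one has the one-dimensional span of $xy$ and one compares eigenvalues on the cotangent space at the origin. Only after concluding $a_2$ is central can one conjugate $E$ to $E_{st}$ by $\exp(\frac1h\ad b)$ and deduce that the transported $\K^\times$-finite part is the expected algebra. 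Your proposal does not contain an argument for this, so as written it does not establish the proposition.
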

\begin{proof}
Let us prove the existence of $\Upsilon_1$.

{\it Step 1.}

Pick a point $b_1$ as in the previous subsection. The isomorphism $\Upsilon:e\Halg e\rightarrow \W(\D Q,\vv,\ww)_h$
induces the isomorphism of completions $\Upsilon: e\Halg^{\wedge_{b_1}} e\rightarrow \W(\D Q,\vv,\ww)_h^{\wedge_{b_1}}$. As we have seen in the previous subsection we have isomorphisms
\begin{align}\label{eq:iso21}
e\Halg^{\wedge_{b_1}}e\cong\W_{2n-2,h}^{\wedge_0}\widehat{\otimes}_{\K[[h]]}\K[[k]]\widehat{\otimes}
\underline{e}_1\underline{\Halg}^{+1\wedge_0}\underline{e}_1,\\\label{eq:iso22}
\W(\D Q,\vv,\ww)^{\wedge_{b_1}}_h\cong \W_{2n-2,h}^{\wedge_0}\widehat{\otimes}_{\K[[h]]}\W(\D Q,\delta,\ww)_h^{\wedge_0}.
\end{align}
So we get an isomorphism $\Upsilon^{\wedge_{b_1}}$ of the right hand sides. We are going to show that
there is an isomorphism \begin{equation}\label{eq:iso4}\Upsilon_1^{\wedge}: \K[[k]]\widehat{\otimes}
\underline{e}_1(\underline{\Halg}^{+1})^{\wedge_0}\underline{e}_1\rightarrow
\W(\D Q,\delta,\ww)^{\wedge_0}_h\end{equation}
that coincides with $\Upsilon^{\wedge_{b_1}}$ on $\param^*$.
This follows from the next step.

{\it Step 2.}
Let $Z$ be a commutative algebra, and
let $\A_h$ be a $Z[[h]]$-algebra such that:
\begin{itemize}
\item $\A_h$ is flat over $\K[h]$.
\item $\A_h$ is complete in the $\m$-adic topology, where $\m$ is a maximal ideal of $\A_h$ containing $h$
and such that $\A_h/ \m=\K$.
\item $\A_h/h\A_h$ is commutative and Noetherian.
\end{itemize}
Let $\iota_1,\iota_2:\W_{2n-2,h}^{\wedge_0}\rightarrow \A_h$ be  $\K[[h]]$-linear homomorphisms such that
the  images of the maximal ideal of $\W_{2n-2,h}^{\wedge_0}$ lie in $\m$.
There is a $Z[h]$-linear automorphism $A$ of $\A_h$ such that
$\iota_1=A\circ \iota_2$.

Let us prove this claim. Using an easy induction, we reduce the proof to the case when $n=2$. So we have
elements $u_i,v_i\in \A_h, i=1,2,$ with $[u_i,v_i]=h$. First of all, we notice that
$\A_h$ is decomposed into the completed tensor product  \begin{equation}\label{eq:tens_decomp} \W_h(U)^{\wedge_0}\widehat{\otimes}_{\K[[h]]}\underline{\A}_h,\end{equation} where
 $U\subset \A_h$ with $u_1,v_1\in U$ and $\underline{\A}_h$ is some subalgebra of $\A_h$ such that the maximal ideal in $\underline{\A}_h/ h\underline{\A}_h$ is Poisson, compare with \cite{Miura}, Subsection 7.2.
In particular, for $a,b\in \underline{\A}_h$ we have $[a,b]\in h \m$. From here it is easy to
see that $u_2,v_2\in U+\m^2$ and modulo $\m^2$ the elements $u_2,v_2$
are linearly independent. So applying the automorphism
of $\A_h$ induced by a linear symplectomorphism of $U$, we may assume that $u_1\equiv u_2, v_1\equiv v_2\mod \m^2$.

Now we claim that there is an element $a\in \m^3$ such that $$\exp(\frac{1}{h}\ad(a))u_1=u_2, \exp(\frac{1}{h}\ad(a))v_1=v_2$$ (the series in the left hand sides automatically converges because $a\in \m^3$). First of all, from the fact that $u_1$ is a generator in the Weyl algebra
and the decomposition (\ref{eq:tens_decomp}), we can deduce that
the map $\frac{1}{h}\ad(u_1): \m^{i+1}\rightarrow \m^i$ is surjective. So if $u_2-u_1\in \m^i$, then there is  $a_1\in \m^{i+1}$ such that $u_2-u_1=\frac{1}{h}[a_1,u_1]$. Now $u_2-\exp(\frac{1}{h}a_1)u_1\in \m^{i+1}$. Thanks to the Campbell-Hausdorff
theorem, we can use an induction to prove  that there is $a'\in \m^3$ with $\exp(\frac{1}{h}\ad(a'))u_1=u_2$.
So we may assume that there is $u_1=u_2$.

Now we need to show that there is $a''$ with $[a'',u_1]=0$ and $\exp(\frac{1}{h}\ad(a''))v_1=v_2$.
Let us note that $[u_1,v_2-v_1]=0$.  In other words, $v_2-v_1$ is expressed as a (non-commutative)
power series of $u_1$, the elements of the skew-orthogonal complement to $u_1,v_1$ in $U$,
and the elements of $\underline{\A}_h$. Now we can find $a''$ using the argument of the previous
paragraph.

{\it Step 3.} So we have proved the existence of an isomorphism in (\ref{eq:iso4}).
Now we are going to show that there is an isomorphism \begin{equation}\label{eq:isom3}\Upsilon_1: \K[k]\otimes
\underline{e}_1\underline{\Halg}^{+1}\underline{e}_1\rightarrow \W(\D Q,\delta,\ww)_h\end{equation} that coincides with
$\Upsilon_1^\wedge$ on $\param^*$.
On both algebras in (\ref{eq:iso4}) there are $\K^\times$-actions such that the algebras of
$\K^\times$-finite vectors coincide with the algebras in (\ref{eq:isom3}).

So we only need to prove
the following claim:

Let $(\K[k]\otimes
\underline{e}_1\underline{\Halg}^{+1}\underline{e}_1)^{\wedge_0}$
be equipped with a $\K^\times$-action such that
\begin{itemize}
\item[(i)]
$t. \alpha=t^2\alpha$ for any $\alpha\in \param, t\in \K^\times$
\item[(ii)] and the induced action on $\K[\K^2]^{\Gamma\wedge_0}=(\K[k]\otimes
\underline{e}_1\underline{\Halg}^{+1}\underline{e}_1)^{\wedge_0}/(\param)$
is the standard one.
\end{itemize}
Then the algebra of $\K^\times$-finite vectors in $(\K[k]\otimes
\underline{e}_1\underline{\Halg}^{+1}\underline{e}_1)^{\wedge_0}$ is isomorphic to $\K[k]\otimes
\underline{e}_1\underline{\Halg}^{+1}\underline{e}_1$.
Moreover, the two gradings on the last algebra (the standard one, and one induced by the $\K^\times$-action)
differ by a compatible inner grading.

We say that a grading of  $\K[k]\otimes \underline{e}_1\underline{\Halg}^{1+}\underline{e}_1$ is compatible and inner if it is the grading by eigenvalues of a derivation of the form $\frac{1}{h}\ad(a)$, where
$a$ is an element of degree 2 with respect to the standard grading on this algebra.

Let $E,E_{st}$ be the derivations induced by the $\K^\times$-action under consideration and by
the standard $\K^\times$-action. Then $E-E_{st}$ is a $\K[\param]$-linear derivation of
$\K[k]\otimes \underline{e}_1\underline{\Halg}^{1+\wedge_0}\underline{e}_1$.
Every Poisson derivation of $(\K[\K^2]^{\Gamma})^{\wedge_0}=\K[[k]]\widehat{\otimes} \underline{e}_1(\underline{\Halg}^{1+})^{\wedge_0}\underline{e}_1/(\param)$ is inner.
It follows that $E-E_{st}=\frac{1}{h}\ad (a)$ for some $a\in \K[[k]]\widehat{\otimes} \underline{e}_1\underline{\Halg}^{1+\wedge_0}\underline{e}_1$, compare with Lemma 2.11.2 in \cite{sraco}.
The algebra $\K[k]\otimes \underline{e}_1\underline{\Halg}^{1+}\underline{e}_1$
has no elements of degree 1, so we can assume that $a=\sum_{i \geqslant 2} a_i$, where $a_i$ has degree $i$
with respect to the standard grading.

We claim that $a_2$ is central. Indeed, when $\Gamma$ is of type $D$ or $E$, the algebra $\K[\K^2]^\Gamma$
has no nonzero elements of degree 2.  Now consider the case when $\Gamma=\mathbb{Z}_n$. Choose
an eigenbasis $x,y\in \K^{2*}$ for $\Gamma$. The algebra $\K[\K^2]^\Gamma$ is generated by $x^n, y^n,xy$.
The subspace of elements of degree 2 in $\K[\K^2]^\Gamma$ is one-dimensional and
is spanned by $xy$. Therefore $a_2=cxy$ for some $c\in \K$.  Consider the actions of $E,E_{st}$ on the (three-dimensional) cotangent space to $0$ in $\K^2/\Gamma$. The eigenvalues of these operators are $n,n,2$ (because of (ii)) and for both the image of $xy$ has eigenvalue 2.
On the other hand, the eigenvalues of $E_{st}-\{c xy,\cdot\}$  equal $n+c,n-c,2$
and again $xy$ corresponds to the eigenvalue 2. But these eigenvalues coincides
with those for $E$. So we see that $c=0$ and so  $a_2$ is central.

So $E=E_{st}+\sum_{i\geqslant 3}\frac{1}{h}\ad(a_i)$. Since $E_{st}(\frac{1}{h}a_i)=(i-2)\frac{1}{h}a_i$, it follows  that there is $b\in \K[[k]]\widehat{\otimes} \underline{e}_1\underline{\Halg}^{+1\wedge_0}\underline{e}_1$
with $E= \exp(-\frac{1}{h} \ad b) E_{st}\exp(\frac{1}{h}\ad b)$. This implies the claim of this step
and completes the proof of the existence of $\Upsilon_1$.

The proof that $\Upsilon_2$ exists is completely analogous.
\end{proof}

The existence of $\Upsilon_1$ together with Proposition \ref{Prop:AutKleinian_main} imply
that  $\Upsilon$ maps $h,c_1,\ldots,c_r$ to $\widehat{\z}_0^*$ and there is $w\in W_{fin}$
such that $\Upsilon(c_i)=w\upsilon(c_i)$. Similarly the existence of $\Upsilon_2$
implies that $\Upsilon(k)=\pm \upsilon(k)$.

Define an action of $W_{fin}\times \Z/2\Z$ on $\widehat{\z}$ in the following way:
$w\in W_{fin}$ fixes $\delta$ and acts on $\widehat{\z}_0$ as before,
while a non-trivial element $\varsigma\in \Z/2\Z$ maps $\delta$ to $-\delta$
and is the identity on $\widehat{\z}_0$. From the previous paragraph we see that
$\Upsilon|_{\param^*}$ coincides with $g\upsilon$ for some $g\in W_{fin}\times \Z/2\Z$.

So it remains to check that the group $\Afr$ established in Subsection 6.4 coincides with $W_{fin}\times \Z/2\Z$.

First of all, let us show that $\Afr\subset W_{fin}\times \Z/2\Z$. Any element
$a\in \Afr$ acts on $\W(\D Q,\vv,\ww)^{\wedge_{b_1}}$ preserving $\widehat{\z}^*$
and coinciding with the identity modulo $\widehat{\z}^*$. Following  the proof
of Proposition \ref{Prop:Kleinian_reduction}, we see that there is an automorphism
of $\W(\D Q,\delta,\ww)_h$ that coincides with $a$ on $\widehat{\z}^*$.
Thanks to Proposition \ref{Prop:AutKleinian_main}, we see that $a$ preserves
$\widehat{\z}_0$ and acts on this space as an element of $W_{fin}$.  Similarly, we can
consider, $b_2$ instead of $b_1$ and get that $a$ preserves $\K\delta$ and acts on this
line by $\pm 1$. This proves the inclusion $\Afr\subset W_{fin}\times \Z/2\Z$.

Let us prove now that $\Afr=W_{fin}\times \Z/2\Z$.
Recall the automorphism $\nu\in \Afr$. It is enough to check that $\nu\not\in W_{fin}$.

Transport the action of $\Afr$ to $e\Halg e$ by means of $\Upsilon$, and the action of
$W_{fin}\times \Z/2\Z$ by means of $\upsilon$. Since $\Upsilon|_{\param^*}=g\circ \upsilon$
for some $g\in W_{fin}\times \Z/2\Z$, we see that the image of $\Afr$ in $\GL(\param^*)$
is still contained in $W_{fin}\times \Z/2\Z$. From the  description
of $\Upsilon^{-1}\circ \nu\circ \Upsilon$ given in the end of Subsection \ref{SUBSECTION_SRA_aut}
we see that this automorphism does not lie in
$W_{fin}$. Hence our claim.

This completes the proof of Theorem \ref{Thm_SRA_iso}.

\begin{Rem}\label{Rem:uniqueness}
It seems to be impossible to produce explicit formulas for the isomorphism $\Upsilon$ using our approach.
However, there is a uniqueness property for $\Upsilon$: this is a unique isomorphism satisfying
the conditions of Theorem \ref{Thm_SRA_iso}. This follows from
Lemma \ref{Lem:AutKleinian}.
\end{Rem}

\subsection{$S_l$-invariance property for the Euler element}
Here we consider the SRA $\Halg$ for the group $\Gamma_n$, where  $\Gamma=\Z/l\Z$.
Below $\h=\K^n$ is the reflection representation of $\Gamma_n$.
Recall that the Euler element ${\bf eu}\in \Halg$ is defined by
$${\bf eu}=\sum_{i=1}^n x_i y_i+\dim \h/2+\sum_{s\in S}\frac{c_s}{1-\lambda_s}s,$$
where $x_1,\ldots,x_n$ is a basis of $\h^*$, $y_1,\ldots,y_n$ is the dual basis of $\h$,
$c_s=c_i$ for $c\in S_i, i=1,\ldots,r$, $c_s=k$ for $s\in S_{sym}$, and $\lambda_s$ is the only non-unit eigenvalue of
$s$ in $\h^*$. Our formula looks different from a usual one, see, for instance, \cite{BE},
because our parameters $c_i$'s differ from ones used in the standard presentation of
the rational Cherednik algebras (by the factor of $-2$).

We remark that ${\bf eu}$ is independent of the choice of $x_1,\ldots,x_n$.
The reason to consider ${\bf eu}$ is that $[{\bf eu},x]=x, [{\bf eu},y]=-y$
for all $x\in \h^*, y\in \h$, where we consider $\h,\h^*$ as lagrangian subspaces in $\mathcal{V}=\K^{2n}$.
Also ${\bf eu}$ is $\Gamma_n$-invariant.

Recall that the group $W_{fin}$ acts on $e\widetilde{\Halg} e\cong \W(\D Q,\vv,\ww)_h$
as was explained in Subsection \ref{SUBSECTION_SRA_aut}. In our case $W_{fin}=S_l$.
The main result of this appendix to be used in a subsequent paper is the following proposition.

\begin{Prop}\label{Prop:eu_invariance}
The element ${\bf eu}^{sph}:=e{\bf eu}\in e\widetilde{\Halg}e$ is $S_l$-invariant.
\end{Prop}

One of the corollaries of this proposition is that the $S_l$-action commutes with the $\K^\times$-action on
$e\widetilde{\Halg}e$ induced from the action by algebra automorphisms on $\widetilde{\Halg}$ given by $t.x=tx, t.y=t^{-1}y, t.w=w,
t.h=h, t.c_i=c_i$, where $x\in \h^*, y\in \h, w\in W$. However, this can also  be easily deduced from Lemma \ref{Lem:AutKleinian}:
for each $a\in \Afr$ the automorphism $tat^{-1}$ also lies in $\Afr$ and the restrictions of $a$ and $tat^{-1}$
to $\widehat{\z}^*$ (or to $\widetilde{\param}^*$) coincide.


Our proof is basically in two steps. First we prove the invariance for a similar element  $\tilde{\bf eu}\in\W(\D Q,\vv,\ww)_h$
and then relate $e {\bf eu}$ with $\tilde{\bf eu}$.

Let us define $\tilde{\bf eu}$.
In our case the quiver $Q$ is the affine Dynkin quiver of type $A_{l-1}$. Consider the action of $\K^\times$
on $V$ given by $t.((A_i),(B_i), \Gamma_0,\Delta_0):=((tA_i),(t^{-1}B_i), \Gamma_0,\Delta_0)$. This
action commutes with $G$ and preserves the symplectic form on $V$. So it gives rise to a quantum
comoment map $\K\rightarrow \W_h(V)$ sending $1$ to the {\it symmetrized Euler element} $\tilde{\bf eu}\in \W_h(V)^G$. Abusing the notation, by $\tilde{\bf eu}$ we also denote the image of $\tilde{\bf eu}$ in $\W_h(V)\red G$.
Recall that $S_l$ acts on $\W_h(V)\red G$ by $\K[h]$-linear automorphisms.

\begin{Lem}\label{Prop:invariant}
The element $\tilde{\bf eu}\in \W_h(V)\red G$ is $S_l$-invariant.
\end{Lem}
\begin{proof}
The Hamiltonian $\K^\times$-action on $V$ considered in the previous paragraph
induces Hamiltonian actions on all reductions $\M^\theta(\D Q,\vv,\ww)$.
Inspecting the Maffei construction recalled in Subsection \ref{SUBSECTION_SRA_aut},
we see that the isomorphisms constructed there are equivariant with respect to the Hamiltonian
$\K^\times$-actions. It follows that  $S_l$ acts on $\W_h(V)\red G$ by
$\K^\times$-equivariant automorphisms. Now all $\sigma(\tilde{\bf eu}), \sigma\in S_l,$
are elements of degree 2 defining  quantum comoment maps for the $\K^\times$-action.
So they all differ from one another by elements of $\widehat{\z}^*$.

It is enough to prove that $\sigma(\tilde{\bf eu})-\tilde{\bf eu}\in \K h$ because
$h$ is $S_l$-invariant. This will follow if we check that the image of $\tilde{\bf eu}$
(also denoted by $\tilde{\bf eu}$) in $\K[\Lambda(\vv,\ww)]^{\GL(\vv)}$ is $S_l$-stable.
We may  assume that the quiver affine quiver $Q$ is oriented
counterclockwise and the framing is attached to vertex 0. Then $\tilde{\bf eu}=\sum_{i=0}^{l-1} \tr(A_i B_i)$.
The condition that the point $x=((A_i),(B_i),\Gamma_0,\Delta_0)$ lies in $\Lambda_\chi(\vv,\ww)$
is $B_0A_0-A_{l-1}B_{l-1}+\Gamma_0\Delta_0=\chi_0$ and $B_iA_i-A_{i-1}B_{i-1}=\chi_i$ for $i\neq 0$.
Let us prove that $s (\tilde{\bf eu})=\tilde{\bf eu}$ for the reflection $s$
constructed from the vertex $i\neq 0$.
Recall the variety $Z$ introduced in Subsection \ref{SUBSECTION_SRA_aut} and the projections
$\pi,\pi':Z\rightarrow \Lambda(\vv,\ww)$. We remark that in the definition of $Z$ we required
$i$ to be the source. With the orientation we have chosen this is not so and to fix this we need
to replace the pair $(A_{i-1},B_{i-1})$ with $(-B_{i-1},A_{i-1})$.
Condition (3) in the definition of $Z$ then implies (in the notation
of Subsection \ref{SUBSECTION_SRA_aut}) that $B_i'A_i'=B_iA_i-\chi_i, A_{i-1}'B_{i-1}'=A_{i-1}B_{i-1}+\chi_i$.
Also we remark that for $j\neq i,i-1$ we have $B_j'A_j'=B_jA_j$. So we have $\sum_{i}\tr(B_i'A_i')=\sum_i \tr(B_iA_i)$.
But this precisely means that $S^* (\tilde{\bf eu})=\tilde{\bf eu}$. Since this equality
holds for all generators of $S_l$, we see that $\tilde{\bf eu}$ is $S_l$-stable.
\end{proof}

Identify $\W(\D Q,\vv,\ww)_h$ and $e\widetilde{\Halg}e$ by means of the isomorphism $\Upsilon$ from Theorem \ref{Thm_SRA_iso}.
Again, for any $\sigma\in S_l$ the operators $[\sigma({\bf eu}^{sph}),\cdot],[{\bf eu}^{sph},\cdot]$
are the same because $S_l$ commutes with the $\K^\times$-action introduced above in this subsection. Since ${\bf eu}^{sph}$ is homogeneous of degree 2, this implies $\sigma({\bf eu}^{sph})-{\bf eu}^{sph}\in \widehat{\z}^*$.
From the explicit form of the correspondence between the parameters, see Theorem \ref{Thm_SRA_iso}, we see that $h,k$
are $S_l$-invariant. So it is enough to show that $\tilde{\bf eu}$ lies in the linear span of ${\bf eu}^{sph}$ and $1$
modulo the ideal generated by $h,k$.

The algebra $e\widetilde{\Halg}e/(h,k)$ is just $(\underline{e}\underline{\Halg}^+_0\underline{e}^{\otimes n})^{S_n}$,
where $\underline{\Halg}^+_0$ is the SRA for $\Gamma$ at $h=0$, and $\underline{e}$ is the trivial idempotent in this algebra.
On the other hand, $\W(\D Q,\vv,\ww)_h/(k,h)$ is the algebra of functions on the reduced scheme
$R(\D Q, n\delta)\red G'$. Here $G'$ is the quotient of $G$
by the subgroup $\{x \operatorname{1}, x\in \K^\times\}$. The induced isomorphism
$$(\underline{e}\underline{\Halg}^+_0\underline{e}^{\otimes n})^{S_n}
\cong \K[R(\D Q, n\delta)\red G']$$ can be described as follows.

First, consider the case $n=1$. Here $\Spec(\underline{e}\underline{\Halg}^+_0\underline{e})$ is the moduli
space parameterizing semisimple $\Halg^+_0$-modules that are $\Gamma$-equivariantly isomorphic
to $\K \Gamma$. To a module we assign the pair of operators $x\in \h^*, y\in \h$.
This assignment is known to  define an isomorphism  $\Spec(\underline{e}\underline{\Halg}^+_0\underline{e})\rightarrow
R(\D Q, \delta)\red G'$ coinciding with the isomorphism we need. The element ${\bf eu}^{sph}$ corresponds to
$xy+\frac{1}{2}+\sum_{s\in S}\frac{c_s}{1-\lambda_s}s$, where $x,y$ are basis
elements in $\h^*,\h$ subject to $\langle x,y\rangle=1$ and viewed as operators
on $\K \Gamma$. These operators are subject to the relation $[y,x]=\sum_{s\in S}c_s s$,
where $S$ is just $\Gamma\setminus \{1\}$. In other words, $x,y$ are just
$\ell$-tuples $(x^0,\ldots,x^{\ell-1}),(y^0,\ldots, y^{\ell-1})$ (that represent maps
between isotypic components of $\K\ZZ/\ell \ZZ$) subject to
\begin{equation}\label{eq:relation}
x^iy^i-x^{i-1}y^{i-1}=-\sum_{j=1}^{\ell-1}c_j \eta^{ji},
\end{equation}
where  $\eta$ is the primitive
$\ell$-th root of 1 defining the action of $\Gamma$ on $\h^*$. The element
${\bf eu}$ is central and so acts by the same scalar on all isotypic components.
The scalar equals $\frac{1}{\ell}\sum_{i=0}^{\ell-1}(x^iy^i+\frac{1}{2}+\sum_{j=1}^{\ell-1}\frac{c_j}{1-\eta^{j}}\eta^{ji})$.
Changing the summation order, we see that the last sum is just $\frac{1}{\ell}\sum_{i=0}^{\ell-1}x^iy^i+\frac{1}{2}=
\frac{1}{\ell}\tilde{\bf eu}+\frac{1}{2}$.

Now let us consider the case of arbitrary $n$. Here the isomorphism $e\Halg e/(k,h)\cong \K[R(\D Q,n\delta)\red G']$
is given as follows. Recall that
$$e\Halg e/(h,k)=(\underline{e}\underline{\Halg}^+_0\underline{e}^{\otimes n})^{S_n}=(\K[R(\D Q,\delta)\red G_0']^{\otimes n})^{S_n},$$
where $G_0'$ is the quotient of $\GL(\delta)$ analogous to $G'$. So we need to define a morphism
from $(R(\D Q,\delta)\red G_0')^n/S_n\rightarrow R(\D Q, n\delta)\red G'$. This morphism
just sends the $n$-tuple of representations $(z_1,\ldots,z_n)$ to their direct sum. The morphism
of interest is an isomorphism and the corresponding homomorphism of algebra is the required one. Under
 the isomorphism $(R(\D Q,\delta)\red G_0')^n/S_n\xrightarrow{\sim} R(\D Q, n\delta)\red G'$
 the element $\tilde{\bf eu}$ on the right hand side corresponds to the sum $\sum_{i=1}^n \tilde{\bf eu}_i$, where
 $\tilde{\bf eu}_i, i=1,\ldots,n,$ are analogous trace polynomials for the $n$ copies of
$\K[R(\D Q,\delta)\red G_0']$. But we also have ${\bf eu}=\sum_{i=1}^n {\bf eu}_i$. Now using the previous
paragraph we prove our claim for arbitrary $n$.

\end{document}